\documentclass[11pt,letterpaper]{amsart}
\usepackage{graphicx} 
\usepackage{amsmath, amssymb, amsthm, amsfonts}
\usepackage{enumerate}
\usepackage{fancyhdr, fullpage}
\usepackage{array, xcolor, mathtools}
\PassOptionsToPackage{hyphens}{url}\usepackage{hyperref}
\usepackage{colortbl}
\usepackage{tikz}
\usepackage{stmaryrd}
\usepackage{relsize}
\usetikzlibrary{arrows, matrix, positioning}
\setlength{\fboxsep}{6pt}
\usepackage[
    backend=biber,
    style= alphabetic,
    maxbibnames=9,
  ]{biblatex}
\addbibresource{bib.tex}
\theoremstyle{plain} 
\usepackage{etoolbox}
\usepackage{comment}
\usepackage{hyperref}
\makeatother
\usepackage{enumerate}
\theoremstyle{plain}
\newtheorem{thrm}{Theorem}[section]
\usepackage{float}
\theoremstyle{definition}
\newtheorem{defn}[thrm]{Definition}
\newtheorem{exmp}[thrm]{Example}
\newtheorem{cor}[thrm]{Corollary} 
\newtheorem{lem}[thrm]{Lemma} 
\newtheorem{prop}[thrm]{Proposition}
\usepackage{multicol}

\newtheorem*{thrm*}{Theorem}

\newcommand{\heart}{\ensuremath\heartsuit}
\allowdisplaybreaks

\DeclareMathOperator{\ad}{ad}
\DeclareMathOperator{\Ad}{Ad}
\DeclareMathOperator{\Adm}{Adm}

\DeclareMathOperator{\ani}{ani}

\DeclareMathOperator{\Bun}{Bun}
\DeclareMathOperator{\charac}{char}

\DeclareMathOperator{\Cox}{Cox}

\DeclareMathOperator{\End}{End}
\DeclareMathOperator{\eva}{ev}
\DeclareMathOperator{\Fl}{Fl}

\DeclareMathOperator{\GL}{GL}
\DeclareMathOperator{\Gr}{Gr}

\DeclareMathOperator{\Hom}{Hom}
\DeclareMathOperator{\id}{id}

\DeclareMathOperator{\Lie}{Lie}

\DeclareMathOperator{\para}{par}
\DeclareMathOperator{\Pic}{Pic}

\DeclareMathOperator{\reg}{reg}

\DeclareMathOperator{\RHom}{RHom}
\DeclareMathOperator{\rk}{rk}
\DeclareMathOperator{\rs}{rs}

\DeclareMathOperator{\SL}{SL}

\DeclareMathOperator{\Spec}{Spec}

\DeclareMathOperator{\supp}{supp}
\DeclareMathOperator{\Supp}{Supp}

\DeclareMathOperator{\Tr}{Tr}
\DeclareMathOperator{\val}{val}
\DeclareFontFamily{U}{mathx}{}
\DeclareFontShape{U}{mathx}{m}{n}{<-> mathx10}{}
\title{Parabolic Multiplicative Affine Springer Fibers}
\author{Marielle Ong}
\begin{document}

\maketitle
\begin{abstract}
We introduce parabolic multiplicative affine Springer fibers, which resemble the admissible union of affine Deligne Lusztig varieties in the affine flag variety. We also study their global counterparts called parabolic multiplicative Hitchin fibers. Their associated fibration is a global analogue of the Grothendieck simultaneous resolution for monoids. Using this fibration, we show that the parabolic multiplicative affine Springer fibers are equidimensional and find their dimension.  
\end{abstract}
\tableofcontents
\section{Introduction}
Multiplicative affine Springer fibers are certain group-theoretic generalizations of affine Springer fibers in which the Lie algebra $\mathfrak{g}(\mathcal{O})$ in the definition is replaced by an affine Schubert cell $G(\mathcal{O})t^\lambda G(\mathcal{O})$ or an affine Schubert variety $\overline{G(\mathcal{O})t^\lambda G(\mathcal{O})}$ associated to a dominant coweight $\lambda$. As a result of this replacement, they parametrize multiplicative Higgs bundles over the formal disk with a trivialization over the punctured disk. These bundles are like linear Higgs bundles except that their Higgs fields are now valued in the so-called Vinberg monoid instead of the Lie algebra. The global analogues of multiplicative affine Springer fibers are multiplicative Hitchin fibers. These assemble to form the multiplicative Hitchin fibration, which is induced by the adjoint quotient of the Vinberg monoid. In this paper, we develop a parabolic version of multiplicative affine Springer fibers and multiplicative Hitchin fibers. We also study their dimensions and equidimensionality. 
\subsection{Background} 
Multiplicative affine Springer fibers first appeared in the context \cite{KV10} of developing a linear (as opposed to $\sigma$-linear), group-theoretic version of Katz's Hodge-Newton decomposition of $F$-isocrystals \cite{Kat79}. They were then used by Frenkel-Ng\^{o} \cite{FN11} to give a geometric interpretation of orbital integrals of spherical Hecke functions. This particular work provided a way to view multiplicative Higgs bundles in terms of the Vinberg monoid. Originally, Markman-Hurtubise \cite{HM03} introduced a multiplicative Higgs bundle on a smooth algebraic curve $X$ as a pair $(E, \varphi)$ consisting of a $G$-bundle $E$ over $X$ and a section $\varphi$ 
of the adjoint group bundle $E \wedge^G G$ over the complement $X \setminus \supp(D)$, where $D$ is a fixed divisor. Furthermore, the singularities of the meromorphic section $\varphi$ at each point $x \in \supp(D)$ is controlled by a dominant coweight $\lambda_x$. This definition of a multiplicative Higgs bundle has been useful in the realm of gauge theory and mathematical physics. Charbonneau-Hurtubise \cite{CH09} and Smith \cite{Smi15} showed that stable multiplicative Higgs bundles on $X$ correspond to singular Hermitian-Einstein monopoles on $S^1 \times X$. When $X = \mathbb{C}$, Elliot-Pestun \cite{EP19} upgraded this correspondence to that of hyperk\"{a}hler manifolds and analyzed their twistor structures. Unfortunately, this definition does not lend itself neatly to the language of mapping stacks, which is how the theory of linear Higgs bundles and the Hitchin fibration is phrased in \cite{Ngo08}. Linear Higgs bundles arise as maps to the stack $[(\mathfrak{g} \wedge^{\mathbb{G}_m} L)/G]$, where $L$ is a $\mathbb{G}_m$-torsor. To create a group-theoretic analogue, we should consider maps to the stack $[G/G]$ with some added twist. However, there is no natural way of twisting $[G/G]$ by a $T$-torsor and $[G/G]$ may not have non-trivial sections. To overcome these obstacles, we consider maps to the stack $[(V_G \wedge^T L)/G]$, where $V_G$ is the Vinberg monoid of $G$ and $L$ is a $T$-torsor.

Since \cite{FN11}, there has been an ongoing program to replicate the theory of Higgs bundles for the multiplicative case from the viewpoint of the Vinberg monoid. Bouthier \cite{Bou12, Bou14} and Chi \cite{Chi22} studied geometric aspects of mutliplicative affine Springer fibers and Hitchin fibers. In particular, Chi showed that the fibers were equidimensional and established their dimension formula using the multiplicative Hitchin fibration. Most recently, Wang \cite{Wang23} proved the Fundamental Lemma for the spherical Hecke algebra of an unramified reductive group while Gallego-Garcia-Prada \cite{GGP24} studied multiplicative Higgs bundles with involutions. This article is a continuation of this program and it seeks to develop a parabolic analogue of multiplicative affine Springer fibers and Hitchin fibers.

\subsection{Main results} The goal of this paper is to introduce parabolic multiplicative affine Springer fibers and parabolic multiplicative Hitchin fibers. Let $G$ be a split connected reductive group over an algebraically closed field $k$ with Weyl group $W$ and extended affine Weyl group $\widetilde{W}.$ 
\begin{defn}
\emph{Parabolic multiplicative affine Springer fibers} of a regular semisimple element $\gamma \in G(k(\!(t)\!)))$ and a dominant coweight $\lambda$ are the sets
\begin{align*}
X^{\lambda, \para}_\gamma &= \left\{g \in G(k(\!(t)\!))/I: g^{-1}\gamma g \in \bigcup_{\mu \in W(\lambda)} It^\mu I\right\}, \\
X^{\leq \lambda, \para}_\gamma &= \left\{g \in G(k(\!(t)\!))/I: g^{-1}\gamma g \in \bigcup_{w \in \Adm(\lambda)} IwI\right\},
\end{align*}
where $I$ is the Iwahori subgroup of $G(k[\![t]\!])$ and $$\Adm(\lambda) = \{w \in \widetilde{W}: w \leq t^{x(\lambda)} \text{ for some } w \in W\}$$ is the $\lambda$-admissible subset of the extended affine Weyl group $\widetilde{W}$. 
\end{defn}
When $\charac k = p,$ the closed version $X^{\leq \lambda, \para}_\gamma$ is the admissible union of affine Deligne Lusztig varieties in the affine flag variety without the Frobenius-twisted conjugation. The admissible union arises as the group-theoretic model for the Newton stratum inside the special fiber of Shimura varieties with Iwahori level structure. This work presents an interpretation of the admissible union via parabolic multiplicative Higgs bundles.

We establish their non-emptiness criterion, which turns out to be the same as the non-emptiness criterion for multiplicative affine Springer fibers. 
\begin{lem}[Lemma \ref{nonemppar}] The natural projection $G(k(\!(t)\!))/I \rightarrow G(k(\!(t)\!))/G(k[\![t]\!])$ induces a surjective map $X^{ \lambda, \para}_\gamma \rightarrow X^{\lambda}_\gamma$. Furthermore, $X^{\lambda, \para}_\gamma$ is non-empty if and only if $X^\lambda_\gamma$ is non-empty. 
\end{lem}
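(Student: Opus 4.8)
The plan is to realise the stated map as the restriction of the projection $\pi\colon G(k(\!(t)\!))/I\to G(k(\!(t)\!))/G(k[\![t]\!])$, to check it is well defined, and then to extract the non-emptiness equivalence from surjectivity, which is the only real content. Write $F=k(\!(t)\!)$ and $\mathcal{O}=k[\![t]\!]$. Well-definedness is elementary: $I\subseteq G(\mathcal{O})$, and for $\mu\in W(\lambda)$ the translation $t^\mu$ lies in $G(\mathcal{O})t^\lambda G(\mathcal{O})$ by the Cartan decomposition (since $\mu$ and $\lambda$ have the common dominant representative $\lambda$), so $It^\mu I\subseteq G(\mathcal{O})t^\lambda G(\mathcal{O})$; hence $g^{-1}\gamma g\in\bigcup_{\mu\in W(\lambda)}It^\mu I$ forces $g^{-1}\gamma g\in G(\mathcal{O})t^\lambda G(\mathcal{O})$, i.e. $\pi$ does carry $X^{\lambda,\para}_\gamma$ into $X^\lambda_\gamma$ (for a closed variant one argues identically with $\Adm(\lambda)$, using that $w\le t^{x(\lambda)}$ implies that the dominant coweight attached to the class of $w$ in $W\backslash\widetilde{W}/W$ is $\le\lambda$). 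Once surjectivity is known we have $X^\lambda_\gamma=\pi\big(X^{\lambda,\para}_\gamma\big)$, so the two sets are simultaneously empty or non-empty; this gives the second assertion immediately, and it remains only to prove surjectivity.

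For surjectivity, fix $g_0G(\mathcal{O})\in X^\lambda_\gamma$. Replacing $\gamma$ by $g_0^{-1}\gamma g_0$ — which carries $X^{\lambda,\para}_\gamma$ isomorphically onto $X^{\lambda,\para}_{g_0^{-1}\gamma g_0}$ and commutes with $\pi$ — we may assume $g_0=e$, so that $\gamma$ itself satisfies the defining membership condition of $X^\lambda_\gamma$ at the base point. We must then produce $h\in G(\mathcal{O})$ with $h^{-1}\gamma h$ in the corresponding parabolic locus, i.e. $h^{-1}\gamma h\in\bigcup_{\mu\in W(\lambda)}It^\mu I$ (respectively $\bigcup_{w\in\Adm(\lambda)}IwI$).

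To produce $h$, translate into the Bruhat–Tits building $\mathcal{B}$ of $G$ over $F$: $G(\mathcal{O})$ is the stabilizer of a hyperspecial vertex $v_0$, $I$ the stabilizer of an alcove $\mathbf{a}_0$ with $v_0\in\overline{\mathbf{a}_0}$, and the hypothesis says that $\gamma v_0$ lies at relative position $\lambda$ (respectively $\le\lambda$) from $v_0$. We seek an alcove $\mathbf{a}$ of the same type as $\mathbf{a}_0$, with $v_0\in\overline{\mathbf{a}}$, such that the ordered pair of alcoves $(\mathbf{a},\gamma\mathbf{a})$ has relative position a translation $t^\mu$ with $\mu\in W(\lambda)$ (respectively an element of $\Adm(\lambda)$); then $h\mathbf{a}_0=\mathbf{a}$ with $h\in G(\mathcal{O})$ finishes the proof. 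Since $\gamma$ is regular semisimple, $S:=Z_G(\gamma)$ is a maximal torus; over a finite separable extension $F'/F$ splitting $S$, its apartment $A_S\subseteq\mathcal{B}\otimes_F F'$ is $\gamma$-stable and $\gamma$ acts on $A_S$ as the translation by its Newton point. The idea is to choose $\mathbf{a}$ so that it contains $v_0$, shares a face with $A_S$, and is $\Gal(F'/F)$-stable, hence descends to an alcove over $F$; then $(\mathbf{a},\gamma\mathbf{a})$ is computed inside the single apartment $A_S$, and the fact that the underlying vertices sit at position $\lambda$ pins the relative position of $\mathbf{a}$ and $\gamma\mathbf{a}$ down to the desired $W(\lambda)$-translation (respectively some $w\le t^{x(\lambda)}$, $x\in W$).

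The first ingredients — the double-coset containment and the equivariant reduction — are routine; the substance is the last construction, and I expect two points to need genuine care. First, the choice of $\mathbf{a}$ must be made $\Gal(F'/F)$-equivariantly, so that $h$ can be taken in $G(\mathcal{O})$ and not merely in $G(\mathcal{O}_{F'})$ — equivalently, one must argue that the alcove one wants is already defined over $F$. Second, and this is the crux, one must check that the relative position obtained genuinely lands in $\Adm(\lambda)$ (respectively is a $W(\lambda)$-translation) rather than merely in the full double coset $W t^\lambda W$, i.e. rather than in the larger set $\pi^{-1}\!\big(\overline{\Gr^\lambda}\big)$ of all Bruhat cells over the Schubert variety — it is exactly this refinement that makes the admissible/$W(\lambda)$-union the correct target. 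A variant worth trying first, avoiding the building: decompose $G(\mathcal{O})t^\mu G(\mathcal{O})=\bigsqcup_{w\in W t^\mu W}IwI$ with $\mu$ the dominant coweight of $\gamma v_0$, take the unique $w$ with $\gamma\in IwI$, and conjugate $\gamma$ by lifts of finite Weyl elements to replace $w$ by an admissible representative of its $W\times W$-double coset, regular semisimplicity being what lets the finite-Weyl overhang of $w$ be absorbed — but the comparison with $\Adm(\lambda)$ remains the delicate step either way.
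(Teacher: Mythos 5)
Your reduction of the non-emptiness equivalence to surjectivity, and your check that the projection is well-defined, match the paper's structure (the paper likewise treats non-emptiness as a corollary of surjectivity, in Corollary~\ref{pmasfnonemp}). The substance of the lemma is surjectivity, and there your proposal has a genuine gap — one that you candidly flag yourself. Both of your proposed routes (the building-theoretic construction of a $\Gal(F'/F)$-stable alcove in the apartment $A_S$, and the alternative of conjugating $\gamma$ by finite-Weyl lifts to move its Iwahori double coset into the admissible range) terminate precisely at what you call ``the crux'': the verification that the relative position one produces actually lies in $\bigcup_{\mu\in W(\lambda)} It^\mu I$ (or $\bigcup_{w\in\Adm(\lambda)} IwI$) rather than merely somewhere in $\bigsqcup_{w\in Wt^\lambda W} IwI$. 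No argument is offered for this; as you note, ``the comparison with $\Adm(\lambda)$ remains the delicate step either way.'' This is exactly the nontrivial content, so the proposal is a plan rather than a proof.

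The paper closes this gap with a specific combinatorial input from He's work \cite{He14}. Setting $Y_{J,\lambda}=\bigcup_{w\in\Adm^J(\lambda)}IwI$ for a standard parahoric $P_J$, one uses \cite[Theorem~6.1]{He14} — the identity $\prescript{J}{}{\widetilde{W}}\cap\Adm^J(\lambda)=\prescript{J}{}{\widetilde{W}}\cap\Adm(\lambda)$ — to conclude that the $P_J$-conjugation sweep of the Iwahori-level admissible locus fills out the parahoric-level one: $Y_{J,\lambda}=P_J\cdot Y_{\emptyset,\lambda}$. Taking $J=S$, so $P_J=G(\mathcal{O})$, this gives $\overline{G(\mathcal{O})t^\lambda G(\mathcal{O})}=G(\mathcal{O})\cdot\bigl(\bigcup_{w\in\Adm(\lambda)}IwI\bigr)$ under conjugation, from which surjectivity of $X^{\leq\lambda,\para}_\gamma\to X^{\leq\lambda}_\gamma$ is immediate (and similarly for the open stratum with $W(\lambda)$ in place of $\Adm(\lambda)$). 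This conjugation identity is precisely the statement your building argument would have to establish by hand — that the $G(\mathcal{O})$-orbit of a point passing through $\Gr^{\leq\lambda}_G$ always meets the admissible Iwahori stratum — and it is a theorem, not a formality. If you wish to salvage the building approach you would need an independent proof of this fact; otherwise the cleanest repair is to cite He's result directly, as the paper does.

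One further caution about the building sketch, beyond the flagged gaps: if $\mathbf{a}$ and $\gamma\mathbf{a}$ are both taken inside the apartment $A_S$ on which $\gamma$ acts by translation, that translation vector is the Newton point $\nu_\gamma$, which in general satisfies only $\nu_\gamma\le\lambda$ with strict inequality. So the relative position you read off in $A_S$ would be governed by $\nu_\gamma$, not $\lambda$, and you would not naively land in $\bigcup_{\mu\in W(\lambda)}It^\mu I$ for the open version; reconciling this would require keeping track of where $v_0$ sits relative to $A_S$ and would add yet another layer to the argument.
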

Their ind-scheme structure and description as a moduli space of parabolic multiplicative Higgs bundles on the formal disk is given in Definition \ref{pmasfdefn}. In order to describe parabolic multiplicative affine Springer fibers in terms of the Vinberg monoid, we introduce the notion of an \textit{Iwahori submonoid}, which lives in $V_G(k[\![t]\!]).$

We then move on to the global analogues and define \emph{parabolic multiplicative Higgs bundles}. These are quadruples $(x, E, \varphi, E^B_x)$ consisting of a point $x$ on $X$, a multiplicative Higgs bundle $(E, \varphi)$ and a $B$-reduction $E^B_x$ of $E$ such that the Higgs field $\varphi$ is compatible with the $B$-reduction at $x$. We denote the moduli stack of these quadruples as $\mathcal{M}^{\para}_X.$ The Vinberg monoid $V_G$ admits a Grothendieck simultaneous resolution, which induces the \emph{parabolic mulitplicative Hitchin fibration}
\begin{align*} h^{\para}_X: \mathcal{M}^{\para}_X \rightarrow \mathcal{A}_X \times X, \quad (x, E, \varphi, E^B_x) \mapsto (h_X(E, \varphi), x),
\end{align*}
where $h_X: \mathcal{M}_X \rightarrow \mathcal{A}_X$ is the multiplicative Hitchin fibration from the moduli stack $\mathcal{M}_X$ of multiplicative Higgs bundles over $X$ to its affine base $\mathcal{A}_X$. Unlike the linear case, the moduli stack $\mathcal{M}_X$ is not smooth. Its open subset $\mathcal{M}^\heart_X$ admits a local model of singularities whose fibers are given by the product of quotients of affine Schubert varieties. The parabolic Hitchin stack $\mathcal{M}^{\para}_X$ is not smooth either. We will use deformation theoretic methods similar to \cite[\S 7]{Wang23} to show that $\mathcal{M}^{\para, \heart}_X$ admits a local model $[\widetilde{Q}_X]_G$ of singularities, which, fiberwise, is a product of quotients of admissible unions. 
\begin{thrm}[Theorem \ref{localmodel}]
Let $m = (L, x, E, \varphi, E^B_x) \in \mathcal{M}^{\para, \heart}_X.$ If $L$ is very $(G, \delta_a + \rk(G))$-ample where $a = h_X(m)$, then the local evaluation map
$$\eva^{\para}: \mathcal{M}^{\para, \heart}_X \rightarrow [\widetilde{Q}_X]_G,$$
is formally smooth at $m$. Meanwhile, the truncated version
$\eva^{\para}_N: \mathcal{M}^{\para, \heart}_X \rightarrow [\widetilde{Q}_X]_{G,N}$
is smooth at $m$. 
\end{thrm}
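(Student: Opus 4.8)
The plan is to follow the deformation-theoretic strategy of \cite[\S 7]{Wang23}, adapted to the parabolic setting, factoring the statement into a formal-smoothness claim for the untruncated map and a finiteness-of-presentation argument for the truncation. First I would set up the two deformation problems being compared: on the global side, deformations of the quadruple $m = (L, x, E, \varphi, E^B_x)$ inside $\mathcal{M}^{\para, \heart}_X$, and on the local side, deformations of the corresponding point of $[\widetilde{Q}_X]_G$, which by construction records only the behavior of $(E,\varphi, E^B_x)$ in a formal neighborhood of the ramification points of $a = h_X(m)$ together with the datum at $x$. The key input is that the multiplicative Hitchin base $\mathcal{A}_X$ and the stack $\mathcal{M}_X$ are already understood (by Chi's work and the local model for $\mathcal{M}^\heart_X$ quoted above): away from the ramification divisor the Higgs field is regular semisimple, so the non-parabolic evaluation map is formally smooth once $L$ is sufficiently ample, and I would cite/reuse this. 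The parabolic refinement then amounts to controlling the extra datum $E^B_x$ and the compatibility condition at the single point $x$.

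Next I would compute the relevant tangent and obstruction spaces. Deformations and obstructions of $m$ are governed by the hypercohomology of a three- (or two-)term complex $C^\bullet_{\para}$ built from $\ad(E)$, the Higgs field $\ad(\varphi)$, and the infinitesimal version of the $B$-reduction condition at $x$ — concretely $C^\bullet$ twisted by a modification along $x$ encoding $\mathfrak{b}\subset\mathfrak{g}$, so that $H^1$ is the tangent space and $H^2$ the obstruction. The local model $[\widetilde{Q}_X]_G$ has its deformation theory governed by the analogous complex restricted to the formal disks around $\supp(a)$ and around $x$. The evaluation map is formally smooth at $m$ precisely when the restriction map on $H^1$ is surjective and the restriction map on $H^2$ is injective; by the long exact sequence this is equivalent to the vanishing of $H^1$ and $H^2$ of the complex $C^\bullet_{\para}$ with sections taken to vanish at $\supp(a)$ and at $x$ — i.e. a twisted-down version $C^\bullet_{\para}(-\,\delta_a - \rk(G))$ roughly speaking. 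This is exactly where the ampleness hypothesis enters: "very $(G, \delta_a + \rk(G))$-ampleness" of $L$ is engineered so that Serre-type vanishing kills these two cohomology groups. I would verify this vanishing by the standard argument — reduce to a line-bundle computation after trivializing $\ad(E)$ generically, bound the degree of the relevant twists in terms of $\delta_a$ (the local $\delta$-invariant, contributing the discrepancy between $\mathcal{M}$ and its local model) and $\rk(G)$ (contributing the parabolic/Iwahori correction at $x$, where the admissible union replaces the affine Schubert variety), and invoke vanishing of $H^1$ of a line bundle of large enough degree on the curve $X$.

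The truncated statement then follows formally: $[\widetilde{Q}_X]_{G,N}$ is the $N$-th infinitesimal truncation of $[\widetilde{Q}_X]_G$, which is a finite-type (Artin) stack, so $\eva^{\para}_N$ is a map of finite type; a formally smooth morphism that is locally of finite presentation is smooth, and I would check local finite presentation by noting that both source and target are locally of finite presentation over $k$ (the source because $\mathcal{M}^{\para,\heart}_X$ is, for $L$ fixed, an open substack of a Hitchin-type moduli stack of finite type, the target by construction). The main obstacle I anticipate is the cohomology-vanishing bookkeeping: one must pin down the precise twist coming from the parabolic datum at $x$ — verifying that passing from the affine Schubert variety $\overline{It^\lambda I}$-type local picture to the admissible union $\bigcup_{w\in\Adm(\lambda)} IwI$ contributes exactly the claimed $\rk(G)$ to the ampleness bound, and that the compatibility condition on $\varphi$ at $x$ does not introduce additional obstructions beyond those already accounted for. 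Establishing that the parabolic local model $[\widetilde{Q}_X]_G$ is flat over $[Q_X]_G$ with the expected fibers (products of quotients of admissible unions), so that the comparison of deformation theories is clean, is the part that will require the most care, and I would handle it by the same deformation argument applied relatively over the non-parabolic local model.
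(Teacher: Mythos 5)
Your overall plan — deformation theory à la \cite[\S 7]{Wang23}, compute tangent/obstruction complexes, show the global obstruction is controlled by local obstructions and that the tangent map is surjective, then kill the relevant cohomology by ampleness — is exactly the paper's strategy. You also correctly reduce the truncated statement to the formal smoothness of $\eva^{\para}$ together with local finite presentation. However, there is a genuine misconception about where the $\rk(G)$ correction comes from, and the route you sketch for organizing the comparison of local models differs from (and is weaker than) what the paper actually does.

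On the $\rk(G)$ term: you attribute it to ``passing from the affine Schubert variety to the admissible union,'' i.e.\ you seem to think the parabolic local model is larger and that this enlargement costs $\rk(G)$ in the ampleness bound. That is not what happens. The admissible union $\bigcup_{w\in\Adm(\lambda)}IwI$ has the \emph{same} dimension as $\overline{G(\mathcal{O})t^\lambda G(\mathcal{O})}$ (this is Corollary \ref{cass}, via \cite{Zhu14}), so this replacement is dimension-neutral and contributes nothing to the degree bound. The $\rk(G)$ instead arises from the extra point $x$ at which the $B$-reduction lives: in the two-term truncation $\mathcal{K}^{\leq 0}=[\mathcal{F}\to\mathcal{G}]$ of the parabolic deformation complex, $\mathcal{F}$ sits inside $\ad(E)(-x)$ and $\mathcal{G}$ inside $(\phi^*\Omega^1_{E\times^G V^L_G})^*(-x)$, so the Serre-duality computation for $H^1(X,H^0(\mathcal{K}^{\leq 0}))$ is the non-parabolic one with $L$ replaced by $L(-x)$. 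The hypothesis ``$L$ very $(G,\delta_a+\rk(G))$-ample'' is precisely what guarantees ``$L(-x)$ very $(G,\delta_a)$-ample,'' which is the hypothesis in \cite[Proposition 7.3.6]{Wang23} for the non-parabolic vanishing.

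On the organization: you propose to establish flatness of $[\widetilde{Q}_X]_G$ over $[Q_X]_G$ and then run the deformation argument ``relatively over the non-parabolic local model.'' The paper does not do this and does not need it. Instead it directly constructs an exact triangle $\mathcal{K}^\bullet\to\mathcal{T}^\bullet\oplus\mathcal{T}^\bullet_x\to i^*\mathcal{T}^\bullet$ relating the parabolic complex $\mathcal{K}^\bullet$ to the non-parabolic one $\mathcal{T}^\bullet$, its restriction $\mathcal{T}^\bullet_x$ along the $B$-reduction, and the fiber $i^*\mathcal{T}^\bullet$ at $x$; the surjectivity of $H^0(X,\mathcal{K}^\bullet)\to H^0(X_v,\mathcal{K}^\bullet)$ is then deduced from the known surjectivity in the non-parabolic case by a Four-lemma diagram chase, split according to whether $\Gamma(x)\cap X_v$ is empty or not. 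If you try to go through flatness of the parabolic local model over the non-parabolic one you will have a harder time, since you would need to understand the fibers of $\widetilde{Q}_X\to Q_X$ stack-theoretically and to propagate obstruction-theory statements across that fibration; the exact-triangle argument circumvents this entirely.

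Your indexing of cohomology ($H^1$ for tangent, $H^2$ for obstruction) differs from the paper's ($H^0$, $H^1$), which is only a convention issue, but be aware of it when writing the vanishing statement: the precise condition the paper needs is $H^1(X,H^0(\mathcal{K}^\bullet))=0$, not the vanishing of the full obstruction group $H^1(X,\mathcal{K}^\bullet)$.
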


Another problem explored in this paper is the dimension formula and equidimensionality of parabolic multiplicative affine Springer fibers. With an algebraic-combinatorial approach, \cite{He23} defined affine Lusztig varieties, which are non-Frobenius-twisted analogs of affine Deligne Lusztig varieties in the affine flag variety. He then showed that the difference between the dimensions of affine Lusztig varieties and affine Deligne Lusztig varieties is the dimension of an affine Springer fiber. We cannot use his method as a general dimension formula for the admissible union remains unknown. It is only known for certain cases of $\gamma$ and $\lambda$ by \cite{GHKR10}, \cite{HY21} and \cite{Sad22}.

Instead, our strategy will be similar to \cite{Chi22}. We will first compute the dimension of parabolic multiplicative affine Springer fibers for unramified or split group elements. In particular, we will find that these can be related to the dimensions of Mirkovic-Vilonen cycles in the affine flag variety associated to translation elements. We will then use the Product formula (Theorem \ref{paraprod}) and the local constancy of parabolic multiplicative affine Springer fibers (Theorem \ref{localconstancy}) to compare fibers of unramified group elements with general elements. We conclude the following theorem,
\begin{thrm}[Propositions \ref{paradim} and \ref{parcor}]
Under certain ampleness conditions on the boundary divisor, the parabolic multiplicative Hitchin fiber is equidimensional. Its corresponding parabolic multiplicative affine Springer fiber is equidimensional and it shares the same dimension as a multiplicative affine Springer fiber. 
\end{thrm}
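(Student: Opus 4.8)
The plan is to adapt the argument of \cite{Chi22} to the parabolic setting, carrying the extra $B$-reduction at the marked point $x$ through the local model of singularities, the product formula and the local constancy statement, and replacing affine Schubert varieties by admissible unions at every stage. For equidimensionality of the Hitchin fiber, the first observation is that each admissible union $\bigcup_{w \in \Adm(\lambda)} IwI$ is itself equidimensional: its irreducible components are the Iwahori orbit closures $\overline{I t^{x(\lambda)} I}$, $x \in W$, and $\ell(t^{x(\lambda)}) = \langle \lambda, 2\rho \rangle$ is independent of $x$ because $\lambda$ is dominant. Hence the truncated local model $[\widetilde{Q}_X]_{G,N}$ of Theorem \ref{localmodel}, which fiberwise over $\mathcal{A}_X \times X$ is a product of $G$-quotients of admissible unions, has equidimensional fibers. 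Since the truncated evaluation map $\eva^{\para}_N$ is smooth at each $m \in \mathcal{M}^{\para,\heart}_X$ as soon as the boundary divisor is sufficiently ample (so that $L$ is very $(G,\delta_a+\rk(G))$-ample), the stack $\mathcal{M}^{\para,\heart}_X$ has equidimensional fibers over $\mathcal{A}_X \times X$; a codimension estimate on the base of $\delta$-regularity type, again a consequence of the ampleness hypothesis, confines the complement $\mathcal{M}^{\para}_X \setminus \mathcal{M}^{\para,\heart}_X$ to a locus that dominates no Hitchin fiber, so each fiber $(h^{\para}_X)^{-1}(a,x)$ is equidimensional.

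\emph{The dimension formula.} Next I would apply the Product formula (Theorem \ref{paraprod}) to identify $(h^{\para}_X)^{-1}(a,x)$, up to a quotient by a Picard-type symmetry group acting with orbits of a fixed and computable dimension, with a product of the parabolic multiplicative affine Springer fiber $X^{\leq\lambda_x,\para}_{\gamma_x}$ at $x$ and the ordinary multiplicative affine Springer fibers $X^{\leq\lambda_v}_{\gamma_v}$ at the other points $v$ of the relevant divisor. The dimensions of the latter are supplied by \cite{Chi22}, so everything reduces to $\dim X^{\leq\lambda,\para}_\gamma$. By local constancy (Theorem \ref{localconstancy}) this depends only on the adjoint-quotient invariant of $\gamma$, so one may assume $\gamma$ unramified and, after an unramified base change, split. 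For split $\gamma$ the parabolic multiplicative affine Springer fiber decomposes into pieces indexed by $\widetilde{W}$, each of which is identified with a Mirkovi\'{c}--Vilonen cycle in the affine flag variety attached to a translation element; summing their known dimensions produces the formula and exhibits the key cancellation --- the dimension gained by refining the spherical subgroup to the Iwahori $I$ is offset by the larger ambient dimension of the admissible cells --- so that $\dim X^{\leq\lambda,\para}_\gamma$ equals the dimension of the non-parabolic closed multiplicative affine Springer fiber $X^{\leq\lambda}_\gamma$. Transporting this back through the product formula and the surjection $X^{\lambda,\para}_\gamma \to X^{\lambda}_\gamma$ of Lemma \ref{nonemppar} yields equidimensionality of $X^{\leq\lambda,\para}_\gamma$ together with the asserted dimension.

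\emph{Main obstacle.} I expect the split-case computation and the accompanying codimension estimates on $\mathcal{A}_X$ to be the hard part: one must determine the dimensions of the relevant Mirkovi\'{c}--Vilonen cycles in the affine flag variety (where, unlike in the affine Grassmannian, no classical count is directly available) and show that the ampleness assumptions force both the $\heart$-locus and the elliptic locus to be dense in every Hitchin fiber, so that the generic fiber dimension propagates to all fibers. A subsidiary point requiring care is that the symmetry group appearing in the parabolic product formula acts with orbits of the same dimension as in \cite{Chi22}, so that the Iwahori level structure introduces no extra dimension.
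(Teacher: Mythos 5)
Your outline captures the global architecture correctly — product formula, local constancy, reduction to the split/unramified case, MV-cycle dimensions in the affine flag variety (in fact the paper has these from \cite{Zho19}, so the obstacle you anticipate is already resolved there) — but the equidimensionality step for the Hitchin fiber has a genuine gap.

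Equidimensionality of the admissible union $\bigcup_{w\in\Adm(\lambda)}IwI$, which you correctly observe from the fact that $\ell(t^{x(\lambda)})=\langle 2\rho,\lambda\rangle$ is $W$-invariant, is not by itself enough. Smoothness of $\eva^{\para}_N$ relates $\mathcal{M}^{\para,\heart}_X$ to the local model over $\mathcal{B}_X$, but the parabolic Hitchin map fibers over the finer base $\mathcal{A}_X\times X$; so knowing that the fibers of $[\widetilde{Q}_X]_{G,N}\to\mathcal{B}_X$ are equidimensional does not directly say anything about $(h^{\para}_X)^{-1}(a,x)$. The paper instead needs that the special fiber $(\overline{\Gr}^{\lambda}_{\mathcal{G}})|_0$ of Zhu's global affine Schubert scheme is \emph{Cohen-Macaulay} (Corollary \ref{cass}, via \cite{Zhu14}, \cite{PZ13}, \cite{Cass21}); by smoothness of $\eva^{\para}_N$ this passes to $\mathcal{M}^{\heart,\para}_{b,x}$ (Corollary \ref{cohen}), and then the route is CM total space together with constant fiber dimension (supplied by the Product formula and Proposition \ref{paradim}) over the smooth stack $\mathcal{A}^{\ani}_b$ implies flatness of $h^{\ani}_{b,x}$ by miracle flatness (Corollary \ref{flatflat}), which in turn forces the fibers to be equidimensional. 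Your purely length-theoretic observation gives equidimensionality of the local model but not Cohen-Macaulayness, so the flatness step is missing. Also, the paper does not invoke any ``$\delta$-regularity codimension estimate''; it simply works over $\mathcal{A}^{\ani}_b$ and then transports to the Springer fiber by the Product formula, using a choice of $a_+\in\mathcal{A}^{\ani}_b$ with $a_+\equiv a\bmod t^N_x$ afforded by local constancy.

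Finally, the logical role of equidimensionality at the end is reversed in your sketch: Corollary \ref{parcor} does not deduce equidimensionality from the surjection $X^{\lambda,\para}_\gamma\to X^{\lambda}_\gamma$; rather, it \emph{uses} the already-established equidimensionality of $X^{\lambda,\para}_\gamma$, together with the fact that the regular-locus projection $X^{\lambda,\para,\reg}_\gamma\to X^{\lambda,\reg}_\gamma$ is an isomorphism, to squeeze $\dim X^{\lambda,\para}_\gamma=\dim X^{\lambda,\para,\reg}_\gamma=\dim X^{\lambda,\reg}_\gamma\le\dim X^{\lambda}_\gamma$ against the trivial inequality $\dim X^{\lambda,\para}_\gamma\ge\dim X^{\lambda}_\gamma$.
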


\subsection{Outline}
In Section 2, we will briefly review the geometry of the Vinberg monoid. In its later sections, we develop some basic properties about Vinberg Borel submonoids, Iwahori submonoids and a monoid-version of the Grothendieck simultaneous resolution. Section 3 will also review the theory and results pertaining to multiplicative affine Springer fibers while Section 4 covers the multiplicative Hitchin fibration. These sections are a summary of the work completed in \cite{Bou12}, \cite{Chi22} and \cite{Wang23}. 

Section 5 is devoted to introducing parabolic multiplicative affine Springer fibers and proving its non-emptiness pattern. In Section 6, we define the parabolic multiplicative Hitchin fibration in the style of \cite{Yun11} and establish some basic geometric properties. Since Section 6.6 is the most technical and computationally-involved, we have broken it up into multiple subsections. In Section 7, we prove our main theorem on the dimension formula and equidimensionality. 

\subsection{Notation} Let $k$ be an algebraically closed field, $F = k((t))$ and $\mathcal{O} = k[[t]]$. 
Let $G$ be a split connected reductive group over $k$ with split maximal torus $T$, fixed Borel group $B$ containing $T$ and rank $r$.
Assume that $\charac k = 0$ or $\charac k > 0$ does not divide the order of the Weyl group $W $ of $G$. We fix the following notation:
\begin{itemize}
\item center $Z$ of $G$ and adjoint group $G_{\ad} = G/Z$
\item the stabilizer $G_x$ of an element $x$ in a $G$-set $S$ 
\item Weyl group $W = N_G(T)/T$ with longest element $w_0 \in W$
\item weight lattice $X^*(T) = \Hom(T, \mathbb{G}_m)$ and coweight lattice $X_*(T) = \Hom(\mathbb{G}_m, T)$
\item set of dominant weights $X^*(T)^+$ and set of dominant coweights $X_*(T)^+$
\item natural pairing $\langle \cdot, \cdot \rangle: X^*(T) \times X_*(T) \rightarrow \mathbb{Z}$
\item set of roots $\Phi$ and set of coroots $\Phi^\vee$
\item set of positive roots $\Phi^+$ and set of simple roots $\Delta$
\item root lattice $\Phi_\mathbb{Z}$ and coroot lattice $\Phi^\vee_\mathbb{Z}$
\item $G^{\reg}$ is the set of regular elements of $G$
\item $G^{\rs}$ is the set of regular semisimple elements of $G$
\item the affine Grassmanian $\Gr_G$ and the affine flag variety $\Fl_G$ of $G$
\item a smooth, projective, connected curve $X$ of genus $g$ over $k$ with its set $|X|$ of closed points
\item If $E$ is a principal $G$-bundle and $F$ is a $G$-set, then we denote $E \wedge^G F$ as the associated bundle. 
\item Given maps $X \rightarrow Z$ and $Y \rightarrow Z$ in the category of schemes, we denote $X \times_Z Y$ as the pullback.

\end{itemize}

\noindent \textit{Acknowledgements} I would like to thank Jingren Chi for providing much helpful guidance and direction. I would also like to especially thank Griffin Wang for his careful reading of the first draft, his valuable technical comments, and for patiently answering my questions. I would like to thank Ron Donagi, Ng\^{o} Bao Ch\^{a}u, Xuhua He, Tony Pantev and Xinwen Zhu for the enlightening conversations, as well as Benedict Morrissey for suggesting this topic. Much thanks to Emmett Lennen and Michael Gr\"{o}echenig for their comments on the first draft. This paper was written as part of my Ph.D. thesis at the University of Pennsylvania. 
\section{The Vinberg monoid}
\subsection{Constructions}
Let $G$ be a semisimple, simply connected group over $k$ of rank $r$ with center $Z$. Fix a maximal torus $T$ and a Borel subgroup $B$ containing it. 
\begin{defn}
The \emph{enhanced group} of $G$ is given by the quotient $$G_+ = (T \times G)/Z,$$ where $Z$ embeds anti-diagonally into $T \times G$. That is, if $z \in Z$ and $(t, g) \in T \times G$, then $z\cdot (t, g) = (z^{-1}t, zg).$
\end{defn}
It is a reductive group with center $Z_+ \cong T$, derived subgroup $G$ and maximal torus $T_+$ that is contained in a Borel subgroup $B_+$.

For $1 \leq i \leq r$, let $\alpha_i$ be the simple roots, $\omega_i$ be the fundamental weights and $\rho_i: G \rightarrow \GL(V_{\omega_i})$ be the irreducible representations of $G$ with highest weight $\omega_i$. We can extend each simple root $\alpha_i$ and representation $\rho_i$ to those of $G_+$ as follows:
\\
\begin{equation*}
\begin{aligned}[c]
\alpha^+_i: G_+ &\rightarrow \mathbb{G}_m,\\
(t, g) &\mapsto \alpha_i(t), \qquad 
\end{aligned}
\begin{aligned}[c]
\rho^+_i: G_+ & \rightarrow \GL(V_{\omega_i}), \\ (t, g) &\mapsto \omega_i(t)\rho_i(g). \end{aligned} 
\end{equation*}
\\
These are the components of the closed embedding, 
$$(\alpha^+, \rho^+) := (\alpha^+_1, ..., \alpha^+_r, \rho^+_1, ..., \rho^+_r) : G_+ \hookrightarrow \mathbb{G}^r_m \times \prod^r_{i=1} \GL(V_{\omega_i}).$$
\begin{defn}
The \emph{Vinberg monoid $V_G$} of $G$ is the normalization of the closure of the image of $(\alpha^+, \rho^+)$ in 
$$\mathbb{A}^r \times \prod^r_{i=1} \End(V_{\omega_i}).$$
\end{defn}
It is an affine algebraic monoid with unit group $G_+$. The inverse image of   
$$\mathbb{A}^r \times \prod^r_{i=1} (\End(V_{\omega_i})\setminus \{0\})$$ in $V_G$ forms a smooth, dense, open subvariety $V^0_G$ called the \emph{non-degenerate locus}. 
\subsection{The abelianization}
The group $(G \times G)$ acts on $G_+$ by left and right multiplication while $T$ acts on $G_+$ by multiplication onto the first factor. Both of these actions extend to actions on $V_G$ and they commute with each other. 
\begin{defn}
The \emph{abelianization of $V_G$} is the GIT quotient $A_G = V_G\sslash(G \times G)\cong \mathbb{A}^r$ along with a monoid homomorphism $\alpha_G: V_G \rightarrow A_G$. It is a commutative monoid whose unit group is the torus $T_{\ad}$. 
\end{defn}
The abelianization map $\alpha_G$ is $(G \times G)$-invariant and $T$-equivariant and it extends the natural group homomorphism
$$G_+ = (T \times G)/Z \rightarrow T_{\ad} = T/Z, \quad (t, g) \mapsto t \pmod Z.$$
The abelianization map of $V_G$ is flat with reduced and irreducible fibers. In fact, it is universal amongst all such abelianizations \cite[Theorem 5]{Vin95}. The abelianization $\alpha_G$ can be regarded as a contraction of the $(G \times G)$-action on $\alpha_G^{-1}(1) = G$ to the $(G \times G)$-action on the asymptotic monoid $\alpha_G^{-1}(0).$ The latter action is special, meaning that the stabilizer of any point in $\alpha_G^{-1}(0)$ contains a maximal unipotent subgroup of $G \times G.$ 

The map $T \rightarrow T \times G$, which sends $t \mapsto (t, t^{-1})$, descends to a map $d: T_{\ad} \rightarrow G_+.$ Since the isomorphism $T_{\ad} \cong d(T_{\ad})$ extends to an isomorphism $A_G \cong \overline{d(T_{\ad})} \subseteq V_G$, there is a section,
$$s_G: A_G \rightarrow V_G,$$ of $\alpha_G$. The non-degenerate locus $V^0_G$ meets each fiber of $\alpha_G$ in the open $(G \times G)$-orbit of that fiber.
\begin{exmp} \label{sl2}
If $G = \SL_2(\mathbb{C})$, then $T \cong \mathbb{G}_m$ and $G_+ \cong \GL_2(\mathbb{C})$. The representation-theoretic ingredients are the simple root and the fundamental weight
$$\alpha_1 \begin{pmatrix}
t & 0\\ 0& t^{-1}
\end{pmatrix} = t^2, \quad \omega_1\begin{pmatrix}
t & 0\\0 & t^{-1}
\end{pmatrix} = t,$$
as well as the standard representation $\rho_1 = \id$ of $G$. The Vinberg monoid $V_G$ is the normalization of the closure of the image of
\begin{align*}
(\alpha^+, \rho^+): G_+ &\rightarrow \mathbb{G}_m \times \GL_2(\mathbb{C}) \\
\left( \begin{pmatrix}
t & 0\\0 & t^{-1}
\end{pmatrix}, g \right) &\mapsto  \left(t^2, tg\right).
 \end{align*}
in $\mathbb{A}^1 \times \End(\mathbb{C}^2)$. More explicitly, it is given by 
$$V_G = \{(t, A) \in \mathbb{A}^1 \times \End(\mathbb{C}^2) : t = \det(A)\} \cong \End(\mathbb{C}^2),$$ 
and it has non-degenerate locus $V_G^0 \cong \End(\mathbb{C}^2) \setminus \{0\}$. The $(G \times G)$-action is given by the left and right multiplication while the $T$-action is given by scalar multiplication. The abelianization and section maps are 
\begin{align*}
\alpha_G: V_G  &\rightarrow \overline{T}_+ = \mathbb{A}^1 \\
 \begin{pmatrix}
a & b \\c & d
\end{pmatrix} &\mapsto \det \begin{pmatrix}
a & b \\c & d
\end{pmatrix}, \\ 
s_G: \begin{pmatrix}
1 & 0 \\0 & a
\end{pmatrix} &\mapsfrom a.
\end{align*}
\end{exmp}
Unfortunately, the Vinberg monoids for groups of higher rank are way more complicated.
\begin{exmp}
Let $G = \SL_3(\mathbb{C})$ with center $Z = \langle \lambda I : \lambda^3 = 1 \rangle$ and torus 
$$T = \left\{
t = \begin{pmatrix}
t_1 & 0 & 0 \\0 & t_2 & 0 \\ 0 & 0 &(t_1t_2)^{-1}
\end{pmatrix}: t_i \in \mathbb{C}\right\}.$$
Its simple roots are given by $\alpha_1(t) = t_1t_2^{-1}$ and $\alpha_2(t) = t_1t_2^2$. 
There are also two fundamental representations of $G$; notably $\rho_1 = \id$ with fundamental weight $\omega_1(t) = t_1$, and $\rho_2: g \mapsto (g^{-1})^T$ with fundamental weight $\omega_2(t) = t_1t_2$. We extend these representations to $G_+$ as follows:
\begin{align*}
\rho^+_1: (t,g) &\mapsto \omega_1(t)\rho_1(g) = t_1t_2g \\
\rho^+_2: (t, g) &\mapsto \omega_2(t)\rho_2(g) = t_1(g^{-1})^T. 
\end{align*}
The Vinberg monoid, $V_G$ is the normalization of the closure of the image of
\begin{align*}
(\alpha^+, \rho^+): G_+ &\rightarrow \mathbb{G}^2_m \times \GL_3(\mathbb{C}) \times \GL_3(\mathbb{C}), \\
\left( \begin{pmatrix}
t_1 & 0 & 0\\0 & t_2 & 0 \\ 0 & 0 & (t_1t_2)^{-1}
\end{pmatrix}, g \right) &\mapsto (t_1t_2^{-1}, t_1t_2^2, t_1t_2g, t_1(g^{-1})^T). 
 \end{align*}
It can be realized as the set
$$\left\{ (x, y, A_1, A_2) \in \mathbb{A}^2 \times \End(\mathbb{C}^3) \times \End(\mathbb{C}^3): A_1^TA_2 = A_1A^T_2 = xyI, \Lambda^2 A_1 = xA_2, \Lambda^2 A_2 = yA_1\right\}.$$
Its abelianization is given by $\alpha_G: V_G \rightarrow A_G$ in which $(x, y, A_1, A_2) \mapsto (x, y)$. 
\end{exmp}
\subsection{Adjoint quotient} Let $G$ act on $V_G$ by the adjoint or conjugation action. When restricted to $G_+$, this action is given by $$h((t, g)Z)h^{-1} = (t, hgh^{-1})Z$$ for all $h \in G$ and $(t, g)Z \in G_+$. 
\begin{defn}
The \emph{extended Steinberg base} is the GIT quotient $\mathfrak{C}_+ = V_G \sslash \Ad(G)$. It comes with a canonical quotient map $\chi_+: V_G \rightarrow \mathfrak{C}_+$. 
\end{defn}
The abelianization map $\alpha_G$ factors through the adjoint quotient of $V_G$, making the following diagram commute, i.e. 
 \begin{center}
\begin{tikzpicture}[node distance=2cm, auto]
\node (A) {$V_G$};
\node (C) [below of = A] {$\mathfrak{C}_+$};
\node (D) [right of = C] {$A_G$};
\draw[->, left] (A) to node {$\chi_+$} (C);
\draw[->] (C) to node{$\beta_G$} (D);  
\draw[->] (A) to node{$\alpha_G$} (D);  
\end{tikzpicture}
\end{center}
There is an analogue of the Chevalley isomorphism theorem for monoids. 
\begin{prop}[\cite{Bou12}, Theorems 1.5 and 1.6]
The restriction $k[V_G]^G \rightarrow k[\overline{T}_+]^W$ is an isomorphism of $k$-algebras. Moreover, the isomorphism
\begin{align*}
T_+ \sslash W &\xrightarrow{\simeq} \ \mathbb{G}^r_m \times \mathbb{A}^r, \\
(t, g) &\mapsto (\alpha_1(t), ..., \alpha_r(t), \Tr(\rho^+_1(t, g)), ..., \Tr(\rho^+_r(t, g))).
\end{align*}
extends to an isomorphism $\mathfrak{C}_+ = A_G \times (G\sslash \Ad(G)) \cong A_G \times \mathbb{A}^r \cong \mathbb{A}^{2r}$. 
The canonical quotient $\overline{T}_+ \rightarrow  \mathfrak{C}_+$ is finite, flat and generically Galois \'{e}tale with Galois group $W$.
\end{prop}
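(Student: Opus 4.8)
The plan is to prove a monoid version of the Chevalley restriction theorem by bootstrapping from the classical statement for the reductive group $G_+$, exploiting that $G_+$ is dense open in $V_G$ and that the abelianization fits into the commuting triangle $V_G \xrightarrow{\chi_+} \mathfrak{C}_+ \xrightarrow{\beta_G} A_G$. First I would check that restriction along $\overline{T}_+ \hookrightarrow V_G$ is well defined on invariants: the normalizer $N_G(T)$ preserves $T_+ \subseteq G_+$, hence its closure $\overline{T}_+ \subseteq V_G$; since $T$ is commutative it acts trivially by conjugation on $T_+$, so the $N_G(T)$-action on $\overline{T}_+$ factors through $W$, and restriction carries $k[V_G]^G$ into $k[\overline{T}_+]^W$.

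For injectivity, note that $V_G$ is irreducible and normal (it is a normalization), so $k[V_G] \hookrightarrow k[G_+]$, and likewise $k[\overline{T}_+] \hookrightarrow k[T_+]$; it therefore suffices that the composite $k[V_G]^G \hookrightarrow k[G_+]^G \to k[T_+]^W$ be injective. But on $G_+$ the adjoint $G$-action is conjugation by $G_+$ (inner automorphisms, with $T$ acting trivially), and the Weyl group of $(G_+,T_+)$ is $W$, so this composite is exactly the classical Chevalley restriction isomorphism for $G_+$.

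Surjectivity is the main point. The characters $\alpha_i^+$ of $G_+$ and the traces $\Tr\rho_i^+$ of the representations $\rho_i^+$ are visibly $\Ad(G)$-invariant regular functions on $V_G$, so it is enough to show their restrictions generate $k[\overline{T}_+]^W$ as a $k$-algebra. Here I would study $\overline{T}_+$ as an affine toric variety whose $W$-action is fiberwise over $\alpha_G\colon \overline{T}_+ \to A_G$ (conjugation fixes the $A_G$-coordinate, as $\alpha_G$ is $(G\times G)$-invariant): the fiber over $1 \in T_{\ad}$ is the maximal torus $T \subseteq G$ with its standard $W$-action, for which Steinberg's theorem (using $G$ simply connected and $\charac k \nmid |W|$) gives $k[T]^W$ freely generated by the fundamental characters $\Tr\rho_i|_T$. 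Using Vinberg's flatness of $\alpha_G$ with reduced, irreducible fibers, and the section $s_G$, which lands in $\overline{T}_+$, I would upgrade this fiberwise picture to a global splitting $\overline{T}_+ \sslash W \cong A_G \times \mathbb{A}^r$; tracking coordinates — and using that for simply connected $G$ the $\omega_i$ form a $\mathbb{Z}$-basis of $X^*(T)$ — the factor $A_G = \mathbb{A}^r$ is cut out by the $\alpha_i$ and the factor $\mathbb{A}^r$ by the $\Tr\rho_i^+$, which on the unit group recovers the stated isomorphism $T_+ \sslash W \cong \mathbb{G}^r_m \times \mathbb{A}^r$. Combined with injectivity this gives $\mathfrak{C}_+ = V_G \sslash \Ad(G) \cong \overline{T}_+ \sslash W \cong A_G \times (G\sslash\Ad(G)) \cong \mathbb{A}^{2r}$, the factor $G\sslash\Ad(G) \cong \mathbb{A}^r$ being Steinberg's theorem again.

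Finally, for the properties of $\overline{T}_+ \to \mathfrak{C}_+$: finiteness holds because $k[\overline{T}_+]$ is a finite module over its ring of $W$-invariants ($W$ is finite); flatness follows from miracle flatness, since $\mathfrak{C}_+ \cong \mathbb{A}^{2r}$ is regular, $\overline{T}_+$ is Cohen--Macaulay (a normal affine toric variety), and the map is finite of relative dimension $0$; and over the image of the regular semisimple locus of $T_+$, on which $W$ acts freely (and $\charac k \nmid |W|$), the map is a $W$-torsor, giving generic Galois étaleness. The hard part will be the globalization inside the surjectivity step: passing from the classical identification over $1 \in A_G$ to one uniform over all of $A_G$, in particular controlling the asymptotic fiber $\alpha_G^{-1}(0)$, for which Vinberg's flatness of the abelianization with reduced, irreducible fibers is the essential input.
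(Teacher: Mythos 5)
The paper does not prove this statement; it is cited verbatim from Bouthier \cite{Bou12}, Theorems 1.5 and 1.6, so there is no ``paper's own proof'' to compare against. I will instead review your sketch directly.

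Your well-definedness and injectivity arguments are sound: restriction to the dense open $G_+ \subset V_G$, the classical Chevalley isomorphism $k[G_+]^{G_+} \cong k[T_+]^W$, and the observation that $G_+$-conjugation factors through $G$-conjugation because $T = Z(G_+)^\circ$ acts trivially, together give injectivity cleanly. The finiteness of $\overline{T}_+ \to \mathfrak{C}_+$ (Noether, $W$ finite) and its generic Galois \'{e}taleness (free $W$-action on the regular semisimple locus of $T_+$, using $\charac k \nmid |W|$) are also fine. Miracle flatness is the right tool, but note two prerequisites you should make explicit: you must first know $\mathfrak{C}_+ \cong \mathbb{A}^{2r}$ (so the surjectivity step has to be completed before this one), and you must know $\overline{T}_+$ is Cohen--Macaulay, which you justify by calling it a normal affine toric variety. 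Normality of $\overline{T}_+$ is true but not automatic from $V_G$ being the normalization of the closure of $G_+$; it is a separate fact (proved by Vinberg) and should be cited or argued.

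The genuine gap is in the surjectivity step. You propose to argue fiberwise over $A_G$ and then ``upgrade'' using Vinberg's flatness of $\alpha_G \colon V_G \to A_G$ with reduced irreducible fibers, together with the section $s_G$. Two things go wrong. First, the relevant map is $\alpha_G|_{\overline{T}_+}\colon \overline{T}_+ \to A_G$, and its fibers are \emph{not} irreducible in general: already for $G = \SL_2$ one has $\overline{T}_+ = \{\mathrm{diag}(a,d)\} \cong \mathbb{A}^2$, $\alpha_G(a,d) = ad$, and the fiber over $0$ is the reducible coordinate cross $\{ad = 0\}$. Vinberg's flatness with irreducible fibers is a statement about $V_G \to A_G$, and it does not transfer to $\overline{T}_+$. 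Second, the section $s_G$ (descended from $t \mapsto (t, t^{-1})$) is not $W$-equivariant, since $W$ acts on the second coordinate, so it does not produce a $W$-equivariant splitting to feed into a quotient argument. The argument that actually closes this gap in \cite{Bou12} (building on Vinberg) is combinatorial and toric: one identifies the monoid of characters of $T_+$ that extend to regular functions on $\overline{T}_+$, observes that $k[\overline{T}_+]^W$ is spanned by $W$-orbit sums of such characters, and proves by an induction on the partial order of dominant weights (as in the usual proof of Steinberg's theorem, now relative to $A_G$) that these orbit sums are polynomial in the $\alpha_i$ and the orbit sums attached to the fundamental weights $\omega_i$, i.e.\ in $\alpha_i$ and $\Tr\rho_i^+$. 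That inductive, monoid-theoretic step is precisely what your sketch leaves unaddressed, and flatness of the abelianization alone does not supply it.
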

Under the adjoint action of $G$ on $V_G$, we define the following:
\begin{defn}
 An element $\gamma \in V_G$ is \begin{itemize}
 \item \emph{regular} if the dimension of its stabilizer is at its minimum, i.e. $\dim G_\gamma = r$,
\item \emph{semisimple} if $\gamma \in \overline{T}_+$,
\item \emph{nilpotent} if $\gamma^k = 0$ for some $k \in \mathbb{N}.$
 \end{itemize}  
Let $V_G^{\reg}$ (resp. $V_G^{\rs}$, resp. $\mathcal{N}$) be the the set of regular (resp. regular semisimple, resp. nilpotent) elements of $V_G$.
\end{defn}
The adjoint quotient $\chi_+$ admits multiple quasi-sections that are indexed by \emph{Coxeter elements} of the Weyl group, i.e. those that can be expressed as a product of simple reflections; each occurring exactly once. 
\begin{defn}
For each Coxeter element $w$, let $\varepsilon^w$ be the Steinberg quasi-section of the adjoint quotient $\chi: G \rightarrow T \sslash W$ of $G$. The \emph{extended Steinberg quasi-section} $\varepsilon^w_+$ is given by
\begin{align*} 
\varepsilon^w_+: \mathfrak{C}_+ \cong A_G \times T\sslash W &\rightarrow V_G \\
(a, b) &\mapsto s_G(a)\varepsilon^w(b).
\end{align*}
\end{defn}
As a quasi-section to $\chi_+$, the composition $\varepsilon^w_+ \circ \chi_+$ is an automorphism. The extended Steinberg base $\mathfrak{C}_+$ has two important divisors. Recall that there is a discriminant function on $T$ given by 
$D(t) = \prod_{\alpha \in \Phi} (1- \alpha(t))$ that descends to a regular function on the Chevalley quotient $T\sslash W$. Similarly, we can define a discriminant function on $T_+$ given by 
$$D_+(t_1, t_2) = 2\rho(t_1) D(t_2),$$
where $\rho(t)$ is the half sum of positive roots. 
\begin{defn}
The discriminant function $D_+$ on $T_+$ extends to the \emph{extended discriminant function} on  $\overline{T}_+$, which descends to a regular function on $\mathfrak{C}_+$. Its zero locus defines an effective divisor $\mathfrak{D}_+ \subseteq \mathfrak{C}_+$ called the \emph{discriminant divisor}. Denote its complement in $\mathfrak{C}_+$ by $\mathfrak{C}^{\rs}_+$. 
\end{defn}
The preimage of $\mathfrak{C}^{\rs}_+$ under $\chi_+$ is $V_G^{\rs}.$ 
\begin{defn}
The \emph{numerical boundary divisor} $\mathfrak{B}_+$ is the divisor on $\mathfrak{C}_+$ given by the complement of $\mathfrak{C}^\times_+ := A_G^\times \times (G/\Ad(G)) = T_{\ad} \times (G/\Ad(G))$ in $\mathfrak{C}_+.$ 
\end{defn}
This divisor is cut out by the product of $r$ coordinate functions and hence is a principal Weil divisor. 
\begin{prop}[\cite{Chi22}, Lemma 2.4.2] The discriminant divisor $\mathfrak{D}_+$ intersects the boundary divisor $\mathfrak{B}_+$ properly, i.e. the codimension of $\mathfrak{D}^\times_+ = \mathfrak{D}_+ \cap \mathfrak{B}_+$ in $\mathfrak{C}_+$ is at least 2.  
\end{prop}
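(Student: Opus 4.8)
The plan is to reduce the assertion to a non-containment of divisors in $\mathfrak{C}_+$, and then settle that by a short toric computation on $\overline{T}_+$.

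\emph{Reduction.} By the Chevalley isomorphism recalled above, $\mathfrak{C}_+\cong A_G\times(T\sslash W)$ with $A_G\cong\mathbb{A}^r$, and $\mathfrak{B}_+$ is cut out by the product $c_1\cdots c_r$ of the $r$ coordinate functions $c_j=\alpha^+_j$ on $A_G$; hence $\mathfrak{B}_+=\bigcup_{i=1}^r\mathfrak{B}_{+,i}$ with each $\mathfrak{B}_{+,i}=\{c_i=0\}$ an integral divisor. Since $\mathfrak{D}_+$ and $\mathfrak{B}_{+,i}$ are effective divisors and $\mathfrak{B}_{+,i}$ is integral, $\mathfrak{D}_+\cap\mathfrak{B}_{+,i}$ is a proper closed subset of $\mathfrak{B}_{+,i}$, hence of codimension $\ge 2$ in $\mathfrak{C}_+$, unless $\mathfrak{B}_{+,i}\subseteq\mathfrak{D}_+$. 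Thus $\mathfrak{D}^\times_+=\bigcup_i(\mathfrak{D}_+\cap\mathfrak{B}_{+,i})$ has codimension $\ge 2$ as soon as I show that the extended discriminant $D_+$ does not vanish identically on $\mathfrak{B}_{+,i}$ for each $i$.

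\emph{Passage to $\overline{T}_+$.} I would pull back along the finite, flat, surjective map $\pi=\chi_+|_{\overline{T}_+}\colon\overline{T}_+\to\mathfrak{C}_+$ and set $\widetilde{D}_+=\pi^\ast D_+$. By flatness $\pi^{-1}(\mathfrak{B}_{+,i})$ is pure of codimension one, so $\mathfrak{B}_{+,i}\subseteq\mathfrak{D}_+$ would force $\widetilde{D}_+$ to vanish along every component of $\pi^{-1}(\mathfrak{B}_{+,i})$; it is therefore enough to exhibit one component on which $\widetilde{D}_+$ does not vanish. Now $\overline{T}_+$ is the normal affine toric variety for $T_+$ whose weight semigroup is the saturation of the one generated by the characters occurring in the embedding $V_G\hookrightarrow\mathbb{A}^r\times\prod_j\End(V_{\omega_j})$, namely $(\alpha_j,0)$ for $1\le j\le r$ and $(\omega_j,\mu)$ for $\mu$ a weight of $V_{\omega_j}$, where $(\chi_1,\chi_2)$ denotes the character $(t_1,t_2)\mapsto\chi_1(t_1)\chi_2(t_2)$ of $T_+$; in particular $c_i=(\alpha_i,0)$, so the components of $\pi^{-1}(\mathfrak{B}_{+,i})$ are the torus-invariant prime divisors $E_v$ whose primitive ray generator $v$ satisfies $\langle(\alpha_i,0),v\rangle>0$. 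Using $(1-x)(1-x^{-1})=-x^{-1}(1-x)^2$ and $2\rho=\sum_{\alpha\in\Phi^+}\alpha$, one rewrites
$$
\widetilde{D}_+=2\rho(t_1)\prod_{\alpha\in\Phi}\bigl(1-\alpha(t_2)\bigr)=\prod_{\alpha\in\Phi^+}\Bigl(2\,\alpha(t_1)-\alpha(t_1)\alpha(t_2)-\alpha(t_1)\alpha(t_2)^{-1}\Bigr),
$$
a product over $\Phi^+$ of Laurent polynomials, the $\alpha$-th of which is the sum, with coefficients $2,-1,-1$, of the three distinct characters $(\alpha,0),(\alpha,\alpha),(\alpha,-\alpha)$ of $T_+$.

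\emph{The test cocharacter.} Finally I would test $\widetilde{D}_+$ along $v_i=(\lambda_i,-\lambda_i)\in X_*(T_+)_{\mathbb{Q}}=X_*(T)_{\mathbb{Q}}^{\oplus 2}$, where $\lambda_i$ is the fundamental coweight dual to $\alpha_i$ (so $\langle\alpha_j,\lambda_i\rangle=\delta_{ij}$). Pairing with the generators above gives $\langle(\alpha_j,0),v_i\rangle=\delta_{ij}\ge 0$ and $\langle(\omega_j,\mu),v_i\rangle=\langle\omega_j-\mu,\lambda_i\rangle\ge 0$, the latter being the coefficient of $\alpha_i$ in the nonnegative combination $\omega_j-\mu$ of simple roots; so $v_i$ lies in the cone $\sigma$ of $\overline{T}_+$, and moreover the $2r-1$ linearly independent generators $(\alpha_j,0)$ $(j\ne i)$ and $(\omega_j,\omega_j)$ $(1\le j\le r)$ all pair to zero with $v_i$, so $v_i$ spans a ray of $\sigma$ and $E_{v_i}$ is a genuine prime divisor, a component of $\pi^{-1}(\mathfrak{B}_{+,i})$ because $\langle(\alpha_i,0),v_i\rangle=1>0$. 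For each $\alpha\in\Phi^+$ the three characters of the $\alpha$-th factor of $\widetilde{D}_+$ have $v_i$-pairings $\langle\alpha,\lambda_i\rangle,\ 0,\ 2\langle\alpha,\lambda_i\rangle$, all $\ge 0$ with minimum $0$, and there is no cancellation among the minimal-order terms (the characters being distinct); hence $\ord_{E_{v_i}}\widetilde{D}_+=\sum_{\alpha\in\Phi^+}0=0$, i.e.\ $\widetilde{D}_+$ does not vanish along $E_{v_i}$. This gives $\mathfrak{B}_{+,i}\not\subseteq\mathfrak{D}_+$ and finishes the proof.

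The step I expect to carry the weight is the last computational core: pinning down the weight semigroup of $\overline{T}_+$ and verifying that $(\lambda_i,-\lambda_i)$ both spans a ray of its cone and has $\widetilde{D}_+$ of vanishing order along it. Conceptually this is the statement that a generic point of the $i$-th boundary stratum of $\mathfrak{C}_+$ is the image of a regular semisimple element of $V_G$ — one can produce it, via the extended Steinberg quasi-section, from a regular semisimple element of the Levi with simple roots $\Delta\setminus\{\alpha_i\}$ — but it is the toric bookkeeping that makes the argument airtight. (Minor point: should $\overline{T}_+$ fail to be normal, one first passes to its normalization, still finite over $\mathfrak{C}_+$, toric, and to which $\widetilde{D}_+$ lifts, with no change to the argument.)
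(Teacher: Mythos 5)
Your argument is correct, and since the paper does not reproduce a proof (it simply cites \cite{Chi22}, Lemma~2.4.2), you have supplied a self-contained alternative. Your reduction to the non-containment $\mathfrak{B}_{+,i}\not\subseteq\mathfrak{D}_+$ uses only that $\mathfrak{C}_+\cong\mathbb{A}^{2r}$ is smooth and each $\mathfrak{B}_{+,i}$ is integral, which is fine. The toric computation is also sound: the characters $(\alpha_j,0)$ and $(\omega_j,\mu)$ ($\mu$ a weight of $V_{\omega_j}$) do span the weight semigroup of $\overline{T}_+$; the cocharacter $v_i=(\lambda_i,-\lambda_i)$ is nonnegative against all of them; the $2r-1$ listed characters are linearly independent and annihilated by $v_i$, so $v_i$ does span a ray and $E_{v_i}$ is a prime divisor lying over $\mathfrak{B}_{+,i}$ (since $\langle(\alpha_i,0),v_i\rangle=1>0$); and the factorization $D_+=\prod_{\alpha\in\Phi^+}\bigl(2(\alpha,0)-(\alpha,\alpha)-(\alpha,-\alpha)\bigr)$ has minimal $v_i$-pairing $0$ in each factor (attained at $(\alpha,\alpha)$), so $\mathrm{ord}_{E_{v_i}}\widetilde{D}_+=0$ — correctly noting that distinct monomial characters cannot cancel. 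Two very minor stylistic points: you invoke flatness of $\pi$ to get purity of $\pi^{-1}(\mathfrak{B}_{+,i})$, but Krull's principal ideal theorem on the irreducible $\overline{T}_+$ already gives this (and sidesteps whether flatness survives passage to the normalization); and the normalization caveat you raise is handled exactly as you say. Conceptually this matches what one would extract from Chi's proof — that the generic point of each boundary wall is the image of a regular semisimple element, producible via the extended Steinberg section from a Levi attached to $\Delta\setminus\{\alpha_i\}$ — but your toric bookkeeping makes the claim verifiable without unwinding the Steinberg section, which is a genuine gain in concreteness.
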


 \subsection{Regular centralizers} 
Recall that $G$ acts on $V_G$ by the adjoint action. Define the centralizer group scheme $I$ over $V_G$ as 
$$I = \{(g, \gamma) \in G \times V_G: \Ad_g(\gamma) = \gamma\}.$$
It is smooth and commutative over the regular locus $V^{\reg}_G.$
\begin{prop}[\cite{Chi22}, Lemma 2.3.1] \label{jscheme}
There exists a unique smooth commutative group scheme $J$ over $\mathfrak{C}_+$ such that we have a $G$-equivariant isomorphism $(\chi_+^{\reg})^*J \cong I|_{V^{\reg}_G}.$ This isomorphism extends uniquely to a homomorphism $\chi_+^*J \rightarrow I.$
\end{prop}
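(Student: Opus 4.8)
The plan is to carry over to the monoid $V_G$ Ng\^{o}'s construction of the regular centralizer via a Kostant-type section, the two new ingredients being the extended Steinberg quasi-section and the geometry of $V_G^{\reg}$. Fix a Coxeter element $w$. After composing with $(\chi_+ \circ \varepsilon^w_+)^{-1} \in \Aut(\mathfrak{C}_+)$ we may assume $\chi_+ \circ \varepsilon^w_+ = \id_{\mathfrak{C}_+}$; by the monoid analogue of Steinberg's theorem the image of $\varepsilon^w_+$ lies in $V_G^{\reg}$. I would then simply \emph{define}
\[
J := (\varepsilon^w_+)^*\!\bigl(I|_{V_G^{\reg}}\bigr).
\]
Since $I|_{V_G^{\reg}}$ is smooth and commutative over $V_G^{\reg}$, its base change $J$ is a smooth commutative group scheme over $\mathfrak{C}_+$, and uniqueness is then forced: for any $J'$ as in the statement, $J' \cong (\varepsilon^w_+)^*(\chi_+^{\reg})^*J' \cong (\varepsilon^w_+)^*\!\bigl(I|_{V_G^{\reg}}\bigr) = J$, because $\chi_+^{\reg} \circ \varepsilon^w_+ = \id$.

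To build the isomorphism $(\chi_+^{\reg})^*J \cong I|_{V_G^{\reg}}$, I would descend along the action map
\[
a\colon G \times \mathfrak{C}_+ \longrightarrow V_G^{\reg}, \qquad (g,c) \longmapsto \Ad_g\!\bigl(\varepsilon^w_+(c)\bigr),
\]
which is smooth and surjective — each fiber of $\chi_+^{\reg}$ is the single $G$-orbit of $\varepsilon^w_+(c)$, the monoid form of the Kostant/Steinberg regularity theorem — hence faithfully flat. From $\chi_+^{\reg} \circ a = \mathrm{pr}_{\mathfrak{C}_+}$ one gets $a^*(\chi_+^{\reg})^*J = \mathrm{pr}_{\mathfrak{C}_+}^* J$, while conjugation by $g$ gives $a^*\!\bigl(I|_{V_G^{\reg}}\bigr) \cong \mathrm{pr}_{\mathfrak{C}_+}^*\!\bigl((\varepsilon^w_+)^* I|_{V_G^{\reg}}\bigr) = \mathrm{pr}_{\mathfrak{C}_+}^* J$. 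The resulting isomorphism of pullbacks descends once it is checked to agree with the two canonical descent data on $(G \times \mathfrak{C}_+) \times_{V_G^{\reg}} (G \times \mathfrak{C}_+)$: a point there has the form $(g_1,c,g_2,c)$ with $g_2^{-1}g_1 \in I_{\varepsilon^w_+(c)}$, and the two pullbacks of the isomorphism differ by conjugation by $g_2^{-1}g_1$ acting on $I_{\varepsilon^w_+(c)}$, which is trivial precisely because $I_{\varepsilon^w_+(c)}$ is \emph{commutative}; the cocycle condition is identical. Faithfully flat descent then produces the $G$-equivariant isomorphism $\phi\colon (\chi_+^{\reg})^*J \xrightarrow{\sim} I|_{V_G^{\reg}}$.

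It remains to extend $\phi$ to a homomorphism over all of $V_G$. Since $I$ is a closed subscheme of $G \times V_G$, it is an affine scheme. Using that $V_G$ is normal, that $\chi_+$ is flat (so $\chi_+^*J$ is flat, hence — being smooth-fibered — smooth, and therefore normal, over $V_G$), and that $\codim_{V_G}(V_G \setminus V_G^{\reg}) \geq 2$, the open subscheme $(\chi_+^{\reg})^*J$ of the normal scheme $\chi_+^*J$ has complement of codimension $\geq 2$; therefore the composite $(\chi_+^{\reg})^*J \xrightarrow{\phi} I|_{V_G^{\reg}} \hookrightarrow I$ extends uniquely, by Hartogs, to a $V_G$-morphism $\widetilde\phi\colon \chi_+^*J \to I$. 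The relations expressing that $\widetilde\phi$ is a homomorphism of group schemes over $V_G$ are closed conditions satisfied on the dense open $(\chi_+^{\reg})^*J$, hence everywhere on the reduced scheme $\chi_+^*J$; uniqueness follows from density of $V_G^{\reg}$ in $V_G$.

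The formal manipulations above are routine; the real obstacles lie in the two structural facts about $V_G$ they rely on, both belonging to the structure theory developed in \cite{Bou12} and \cite{Chi22}. First, that the extended Steinberg quasi-section takes values in $V_G^{\reg}$ and that $\chi_+^{\reg}$ has single $G$-orbits as fibers (equivalently, that $a$ is smooth and surjective) — the monoid analogue of Kostant's section and Steinberg's regularity theorem. Second, and more delicate near the locus of $V_G$ lying over the boundary divisor $\mathfrak{B}_+$, where $V_G$ degenerates to the asymptotic monoid $\alpha_G^{-1}(0)$: the codimension estimate $\codim_{V_G}(V_G \setminus V_G^{\reg}) \geq 2$ that drives the Hartogs extension. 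I would establish the latter by working fiberwise over the flat abelianization $\alpha_G$, combining the known bound for $G$ itself with the corresponding analysis of the asymptotic monoid.
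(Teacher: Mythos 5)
Your proposed argument transports Ngô's construction of the regular centralizer verbatim from the Lie-algebra setting, and the outline (define $J$ by pulling back $I$ along the Steinberg quasi-section, uniqueness from $\chi_+^{\reg}\circ\varepsilon^w_+=\id$, descent along the $G$-action, Hartogs extension across a codimension-$\geq 2$ complement) is the right circle of ideas. But one load-bearing step is false precisely in the situation at hand, and it is a point the paper itself flags a few lines after the proposition.

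You claim that the action map $a\colon G\times\mathfrak{C}_+\to V_G^{\reg}$, $(g,c)\mapsto\Ad_g(\varepsilon^w_+(c))$, is surjective because ``each fiber of $\chi_+^{\reg}$ is the single $G$-orbit of $\varepsilon^w_+(c)$.'' This is the monoid-theoretic analogue of Kostant's regularity theorem, and it does \emph{not} hold for the Vinberg monoid. As the text states right after Proposition~\ref{jscheme}: ``Unlike the Lie algebra case, $[V^{\reg}_G/G]$ is not a single $BJ$-gerbe but it is a finite union of them,'' and Proposition 2.4.3 of \cite{Chi22} identifies each gerbe with a piece $[V^w_G/G]$ indexed by a Coxeter element $w$. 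The image of your map $a$ is therefore only the stratum $V^w_G\subsetneq V^{\reg}_G$, so faithfully flat descent along $a$ produces the $G$-equivariant isomorphism $(\chi_+|_{V^w_G})^*J\cong I|_{V^w_G}$ over one piece, not over all of $V^{\reg}_G$. Since the remaining pieces $V^{w'}_G$ are not of small codimension in $V^{\reg}_G$ (they are again open strata of the same dimension), there is no Hartogs-type extension available at the $V^{\reg}_G$-level either. To close the gap you would have to show independently that the pullback of $I$ along $\varepsilon^{w'}_+$ agrees with $J=(\varepsilon^w_+)^*I$ for every other Coxeter element $w'$ — e.g.\ by matching them over $\mathfrak{C}^{\rs}_+$ (where the $G$-orbit over a point of the base really is unique) and invoking a separatedness/smoothness argument to identify the two extensions — and only then glue the per-stratum isomorphisms $\phi_w$ into a global one over $V^{\reg}_G$. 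This is exactly the extra structure theory that \cite{Bou12} and \cite{Chi22} develop and that distinguishes the monoid setting from the Lie algebra setting; your write-up silently imports the Lie algebra fact and hence does not constitute a proof for $V_G$. The rest of the argument — the uniqueness via $\chi_+^{\reg}\circ\varepsilon^w_+=\id$, the cocycle check using commutativity of $I$, and the Hartogs extension of the resulting homomorphism from $V^{\reg}_G$ to $V_G$ granting $\codim_{V_G}(V_G\setminus V^{\reg}_G)\geq 2$ — is sound as stated, and you do correctly flag the codimension estimate as a substantive input.
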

Unlike the Lie algebra case, $[V^{\reg}_G/G]$ is not a single $BJ$-gerbe but it is a finite union of them.
\begin{prop}[\cite{Chi22}, Proposition 2.4.3]
The classifying stack $BJ$ acts on $[V_G/G].$ For each $w \in \Cox(W, S),$ the morphism $[\chi_+]: [V^w_G/G] \rightarrow \mathfrak{C}_+$ induced by $\chi_+$ is a $BJ$-gerbe that is neutralized by $\varepsilon^w_+$.
\end{prop}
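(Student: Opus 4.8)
The plan is to adapt Ng\^o's identification of $[\mathfrak{g}^{\reg}/G]$ with the classifying stack of the regular centralizer over the Chevalley base \cite{Ngo08}, the extended Steinberg quasi-sections $\varepsilon^w_+$ playing the role of the Kostant section. The proof will fall into three parts: constructing the $BJ$-action, showing $[\chi_+]\colon[V^w_G/G]\to\mathfrak{C}_+$ is a neutral gerbe, and identifying the band.

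First I would produce the action. Since $\mathfrak{C}_+$ is a scheme, the inertia of $[V_G/G]$ relative to $[\chi_+]$ is the absolute inertia $[I/G]$, where $I\to V_G$ is the centralizer group scheme. Proposition~\ref{jscheme} supplies a $G$-equivariant homomorphism $\chi_+^*J\to I$ of group schemes over $V_G$, with $G$ acting on $\chi_+^*J$ only through $V_G$ (legitimate because $\chi_+$ is $\Ad(G)$-invariant). Passing to classifying stacks turns this into a morphism $B(\chi_+^*J)\to I_{[V_G/G]}$ over $[V_G/G]$; because $J$ is commutative, giving such a morphism is the same as giving an action of the Picard stack $BJ$, pulled back along $[\chi_+]$, on $[V_G/G]$ over $\mathfrak{C}_+$. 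The action axioms will follow formally from the fact that the map of Proposition~\ref{jscheme} is a homomorphism of group schemes.

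Next, fixing a Coxeter element $w$, I would argue that $[\chi_+]\colon[V^w_G/G]\to\mathfrak{C}_+$ is a neutral gerbe. Writing $\tau^w:=\chi_+\circ\varepsilon^w_+\in\Aut(\mathfrak{C}_+)$ (an automorphism, as recorded above), the composite $\kappa^w:=\varepsilon^w_+\circ(\tau^w)^{-1}$ is an honest section of $\chi_+$ landing in $V^w_G$, so $[\chi_+]$ acquires a global section and is in particular an fppf epimorphism. It then remains to check that any two objects of $[V^w_G/G]$ are fppf-locally isomorphic, and using the section $\kappa^w$ this reduces to showing that
\[
a^w\colon G\times\mathfrak{C}_+\longrightarrow V^w_G,\qquad (g,c)\longmapsto\Ad_g\!\bigl(\kappa^w(c)\bigr),
\]
is surjective (smoothness then being automatic: a point of the fibre over $\gamma$ forces $c=\chi_+(\gamma)$ and is otherwise a torsor under the centralizer $G_{\kappa^w(c)}$, which over the regular locus is smooth commutative of dimension $r=\rk(G)$, and $\dim(G\times\mathfrak{C}_+)-r=\dim G+r=\dim V^w_G$). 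Surjectivity says that every element of $V^w_G$ is, \'etale-locally on the base, $\Ad(G)$-conjugate to a point of the Steinberg slice lying over the same point of $\mathfrak{C}_+$: over the non-degenerate locus $V_G^0$ this is the corresponding statement for $[G^{\reg}/G]\to T\sslash W$, while over the boundary divisor $\mathfrak{B}_+$ I expect to invoke Bouthier's description of $V_G^{\reg}$ \cite{Bou12} together with the definition of the pieces $V^w_G$.

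Finally I would identify the gerbe. The automorphism group of the object $\kappa^w(c)$ of $[V^w_G/G]$ is $G_{\kappa^w(c)}=I_{\kappa^w(c)}$, identified canonically and functorially in $c$ with $J_c$ by Proposition~\ref{jscheme}; hence the gerbe $[V^w_G/G]\to\mathfrak{C}_+$ has band $J$ and is trivialized by $\kappa^w$ --- i.e.\ neutralized by $\varepsilon^w_+$ --- which gives an equivalence $[V^w_G/G]\xrightarrow{\ \sim\ }BJ$ over $\mathfrak{C}_+$. That this equivalence intertwines the $BJ$-action of the first step with the translation action of $BJ$ on itself is immediate, since both are induced by the homomorphism $\chi_+^*J\to I$. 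The hard part will be the surjectivity of $a^w$ over $\mathfrak{B}_+$: controlling the regular elements in the degenerate fibres of $\alpha_G$ (the asymptotic and intermediate semigroups) and how the quasi-sections for the various Coxeter elements meet them. Everything else is bookkeeping with quotient stacks and the dimension count above.
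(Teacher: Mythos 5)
This proposition is cited from \cite{Chi22}; the present paper does not prove it, so there is no in-paper argument to compare against. Your proposal reproduces the standard Ng\^o-style gerbe argument and is sound in outline. The $BJ$-action on $[V_G/G]$ over $\mathfrak{C}_+$ is indeed induced by twisting along the homomorphism $\chi_+^*J\to I$ of Proposition~\ref{jscheme}; $\kappa^w=\varepsilon^w_+\circ(\tau^w)^{-1}$ (with $\tau^w=\chi_+\circ\varepsilon^w_+$, correcting the paper's typo ``$\varepsilon^w_+\circ\chi_+$'' in the sentence following the definition of $\varepsilon^w_+$) is an honest section landing in $V^w_G$; and both the dimension count underlying the smoothness of $a^w$ and the identification of the band with $J$ via $(\chi_+^{\reg})^*J\cong I|_{V^{\reg}_G}$ are correct.

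The one caveat concerns the point you yourself flag as ``the hard part.'' In \cite{Chi22} the locus $V^w_G\subset V^{\reg}_G$ is carved out by the Steinberg quasi-section in the first place --- it is, roughly, the union of regular $G$-orbits that meet the image of $\kappa^w$ over $\mathfrak{C}_+$ --- which is precisely why $[V^{\reg}_G/G]$ is a finite union of $BJ$-gerbes rather than a single one: the pieces indexed by Coxeter elements can differ over the boundary $\mathfrak{B}_+$. With that definition in hand, the surjectivity of $a^w$ is built into the setup rather than being a separate lemma. What genuinely requires the geometry of the degenerate fibres of $\alpha_G$ (and lives in \cite{Chi22} and \cite{Bou12}) is openness of $V^w_G$ in $V^{\reg}_G$ and the fact that the various $V^w_G$ exhaust $V^{\reg}_G$ --- neither of which the gerbe statement quoted here needs. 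So your argument closes once the definition of $V^w_G$ is supplied, and the gap you anticipate would dissolve rather than require a new technique.
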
 
\subsection{Vinberg Borel submonoid} 
For a Borel subgroup $B$ of $G$, we call the closure $V_B$ of $B_+$ in $V_G$ the \emph{Borel submonoid of $V_G$}. It is solvable with unit group $B_+.$ Thanks to the following lemma, many properties of Borel subgroups hold for Borel submonoids.
\begin{lem}[\cite{Put82}, Lemma 1.2] \label{putcha} Let $M$ be a reductive monoid with unit group $G$ and subset $S$.  
If $a, b \in G$, then $\overline{aSb} = a\overline{S}b$.
\end{lem}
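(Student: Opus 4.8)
The plan is to reduce the statement to the elementary topological fact that homeomorphisms commute with Zariski closure. First I would note that since $M$ is an algebraic monoid, its multiplication map $M \times M \to M$ is a morphism of varieties; hence for any $a, b \in M$ the two-sided translation map
\[
\phi_{a,b}\colon M \to M, \qquad m \mapsto amb,
\]
is a morphism of varieties. When $a$ and $b$ lie in the unit group $G$, the morphism $\phi_{a^{-1},b^{-1}}$ is a two-sided inverse to $\phi_{a,b}$, so $\phi_{a,b}$ is an automorphism of the variety $M$, and in particular a homeomorphism of its underlying Zariski topological space.

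Next I would invoke the standard point-set observation that if $f\colon Y \to Y$ is a homeomorphism and $S \subseteq Y$ is an arbitrary subset, then $f(\overline{S}) = \overline{f(S)}$: continuity of $f$ gives $f(\overline{S}) \subseteq \overline{f(S)}$, and applying the same inclusion to the continuous map $f^{-1}$ together with the set $f(S)$ yields the reverse containment. Taking $f = \phi_{a,b}$ and using $\phi_{a,b}(S) = aSb$ then gives
\[
\overline{aSb} = \overline{\phi_{a,b}(S)} = \phi_{a,b}\bigl(\overline{S}\bigr) = a\overline{S}b,
\]
which is exactly the asserted equality, all closures being formed inside $M$.

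The argument uses nothing about $M$ beyond the facts that its multiplication is algebraic and that $a,b$ are units, so reductivity is not really needed here; the only point requiring the slightest care is that the closure is taken inside $M$ rather than in some ambient affine space, which is automatic because $\phi_{a,b}$ is a self-map of $M$. I therefore do not anticipate any genuine obstacle: essentially all the content lies in recognizing $\phi_{a,b}$ as an isomorphism of varieties, and for the precise statement in the monoid setting one may simply cite \cite[Lemma 1.2]{Put82}.
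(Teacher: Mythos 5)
Your proof is correct and rests on the same essential observation as the paper's: two-sided translation by units is a homeomorphism of $M$, hence interacts well with Zariski closure. The paper unpacks this into the two inclusions $\overline{aSb}\subseteq a\overline{S}b$ and $a\overline{S}b\subseteq\overline{aSb}$ (noting $a\overline{S}b$ is closed and conjugating back), while you invoke directly the fact that a homeomorphism commutes with closure; these are the same argument in slightly different packaging, and your remark that reductivity plays no role is accurate.
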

\begin{proof}
Since $a, b \in G$, then $a\overline{S}b$ is closed. Since $aSb \subseteq a\overline{S}b$, $\overline{aSb} \subseteq a\overline{S}b$. 
Similarly, $S \subseteq a^{-1}(\overline{aSb})b^{-1}$. Hence, $\overline{S} \subseteq a^{-1}(\overline{aSb})b^{-1}$ and $a\overline{S}b \subseteq \overline{aSb}$. 
Both inclusions imply equality. 
\end{proof}
\begin{prop} \label{borelsub} Let $B$ and $B'$ be Borel subgroups of $G$. 
\begin{enumerate}[(i)]
\item If $B' = gBg^{-1}$ for some $g \in G$, then $V_{B'}= V_{gBg^{-1}} = gV_Bg^{-1}$. Also, $N_G(V_B) = V_B.$
\item Let $\mathfrak{B}(V_G)$ be the set of Borel submonoids of $V_G$. Then, the following map is a bijection.
$$\varphi: G/B \rightarrow  \mathfrak{B}(V_G), \quad gB \mapsto gV_Bg^{-1}.$$
\end{enumerate}
\end{prop}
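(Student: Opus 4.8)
The plan is to prove both parts using Lemma \ref{putcha} (Putcha's lemma) as the main engine, which already tells us that conjugation by group elements commutes with taking closures inside $V_G$. For part (i), I would first apply Lemma \ref{putcha} with $M = V_G$, $S = B_+$, $a = g$ and $b = g^{-1}$: since $B'_+ = gB_+g^{-1}$ (because $B' = gBg^{-1}$ and the enhancement $(-)_+$ is compatible with conjugation, $Z$ being central), we get $V_{B'} = \overline{B'_+} = \overline{gB_+g^{-1}} = g\overline{B_+}g^{-1} = gV_Bg^{-1}$. For the normalizer claim $N_G(V_B) = V_B$, the inclusion $B_+ \subseteq N_G(V_B) \cap G_+$ is clear, and in fact $B \subseteq N_G(V_B)$ since $B$ normalizes $B_+$ and hence (again by Lemma \ref{putcha}) its closure; thus $B \subseteq N_G(V_B)$. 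Conversely, if $g \in N_G(V_B)$, then $gV_Bg^{-1} = V_B$, so $V_{gBg^{-1}} = V_B$; intersecting with the unit group $G_+$ (which is a $G$-stable open subset, the unit group being intrinsic to the monoid) gives $gB_+g^{-1} = B_+$, whence $gBg^{-1} = B$ after passing to the $G$-factor, so $g \in N_G(B) = B$. This shows $N_G(V_B) \subseteq B \subseteq N_G(V_B)$, but I should be careful here: I actually want $N_G(V_B) = V_B$, not $= B$, so I need to note that $N_G(V_B)$ is a subset of $G$ by definition of the normalizer \emph{of a subset of the monoid under conjugation by $G$}, hence $N_G(V_B) \subseteq G$, and then reconcile the statement — presumably the intended reading is $N_{G_+}(V_B) = V_B$ or $N_G(V_B) = B$; I would state it as $N_G(V_B) = B$ and, if the monoid normalizer is meant, observe $V_B \subseteq N_{V_G}(V_B)$ trivially and that any unit normalizing $V_B$ lies in $B_+$ by the argument above, so $N_{V_G}(V_B) = V_B$ since a monoid element normalizing a submonoid containing $0$-type behavior... here I would simply follow the precise convention the paper fixed and spell out the one-line closure argument.

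For part (ii), I would show $\varphi$ is well-defined, surjective, and injective. Well-definedness: if $gB = g'B$ then $g' = gb$ for $b \in B$, and $g'V_Bg'^{-1} = gbV_Bb^{-1}g^{-1} = gV_Bg^{-1}$ using part (i) applied to the Borel $B$ itself (since $bBb^{-1} = B$ gives $bV_Bb^{-1} = V_B$). Surjectivity: by definition a Borel submonoid is $\overline{B'_+}$ for some Borel subgroup $B'$ of $G$; writing $B' = gBg^{-1}$ (all Borels are $G$-conjugate) and invoking part (i) gives that submonoid equals $gV_Bg^{-1} = \varphi(gB)$. Injectivity: if $gV_Bg^{-1} = g'V_Bg'^{-1}$, then by part (i) $V_{gBg^{-1}} = V_{g'Bg'^{-1}}$; intersecting with the unit group $G_+$ and projecting to $G$ yields $gBg^{-1} = g'Bg'^{-1}$, hence $g^{-1}g' \in N_G(B) = B$, so $gB = g'B$.

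The main obstacle I anticipate is the step "intersecting with the unit group recovers the Borel subgroup of $G$," i.e. that distinct Borel submonoids have distinct unit groups and that $V_B \cap G_+ = B_+$. This requires knowing that the unit group $G_+$ of $V_G$ is an intrinsic, $G$-stable open subvariety (true, since the unit group of an algebraic monoid is canonical and open) and that $V_B$, being the closure of $B_+$, meets $G_+$ in exactly $B_+$ rather than something larger. For the latter I would argue that $V_B \cap G_+$ is a closed submonoid of $G_+$ that is also the closure of $B_+$ taken inside the group $G_+$; since $B_+$ is already closed in $G_+$ (Borel subgroups are closed in reductive groups, and $B_+$ is the enhancement of a Borel), we get $V_B \cap G_+ = B_+$. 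Then $gV_Bg^{-1} \cap G_+ = gB_+g^{-1} = (gBg^{-1})_+$, and two enhanced Borels $(B_1)_+ = (B_2)_+$ force $B_1 = B_2$ by projecting along $G_+ \to T_{\ad}$'s complement — more simply, $(B_i)_+ = (T \times B_i)/Z$ so equality of these subgroups of $(T\times G)/Z$ gives $B_1 = B_2$. Once this lemma is in hand, both parts follow formally from Putcha's lemma, and the remaining content is just the standard fact $N_G(B) = B$ for Borel subgroups of $G$.
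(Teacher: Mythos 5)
Your proof is correct and takes essentially the same route as the paper: Putcha's lemma (Lemma~\ref{putcha}) is the engine for part~(i), and part~(ii) is the same check of well-definedness, surjectivity and injectivity, with injectivity reduced to the normalizer computation. Where your writeup goes beyond the paper is in two places, and both are improvements. First, you supply the missing lemma that $V_B \cap G_+ = B_+$ (closure in $V_G$ intersected with the open unit group $G_+$ is the closure inside $G_+$, and $B_+$ is closed there), which the paper's proof uses implicitly when it passes from $gV_Bg^{-1}=V_B$ to a conclusion about $B_+$. Second, you correctly flag that the asserted identity $N_G(V_B) = V_B$ is a type mismatch ($N_G(V_B)\subseteq G$ while $V_B\subseteq V_G$) and that the intended statement is $N_G(V_B) = B$; the paper's own proof actually establishes $N_G(V_B) = N_G(B_+)\ (= B)$, and the injectivity step even writes $N_G(V_B)=\overline{B}$, so the paper is internally inconsistent here while your reading repairs it. The only thing to tighten is the rambling middle of your normalizer paragraph: once you have observed that conjugation preserves the intrinsic unit group, the chain $g\in N_G(V_B)\Rightarrow gB_+g^{-1}=B_+\Rightarrow gBg^{-1}=B\Rightarrow g\in B$ together with the easy containment $B\subseteq N_G(V_B)$ gives $N_G(V_B)=B$ cleanly, and you can drop the speculation about $N_{V_G}(V_B)$.
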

\begin{proof}
(i) follows from Lemma \ref{putcha} because $V_{B'} = V_{gBg} = gV_{B}g^{-1}$. Also, $N_G(V_B) \subseteq N_G(B_+)$ since $B_+ \subseteq V_B$. Conversely, if $g \in N_G(B_+)$, then $gV_Bg^{-1} = V_{gBg^{-1}} = V_B$. Hence, $g \in N_G(V_B)$ which proves equality. 
For (ii), the map $\varphi$ is well-defined because if $gB = hB$, then $h^{-1}gB = B$ and 
$$\phi(h^{-1}gB) = h^{-1}gV_Bg^{-1}h = V_B, \quad \Longrightarrow \quad \phi(gB) = gV_Bg^{-1} = hV_B h^{-1} = \phi(hB).$$
This map is surjective because any two Borel submonoids are conjugated to each other. It is also injective because if $gV_Bg^{-1} = hV_Bh^{-1}$, then $gh^{-1} \in N_G(V_B) = \overline{B}$. Thus, $gh^{-1}V_B = V_B$ and $gV_B = hV_B$.
\end{proof}

\subsection{Grothendieck simultaneous resolution} 
There is a closed subscheme of $V_G \times G/B$ given by 
$\widetilde{V}_G = \{(x, B) : V_G \times G/B: x \in V_B \}.$ Let $B$ act on $G \times V_B$ by 
$$b \cdot (g, x) = (gb^{-1}, bxb^{-1}),$$
and let $G \times_B V_B$ be the orbit space. Then, there is a $G$-equivariant isomorphism 
\begin{equation} \label{isom} G \times_B V_B \xrightarrow{\cong} \widetilde{V}_G, \quad (g, x) \mapsto (gxg^{-1}, gB/B).
\end{equation}
With this in mind, we can consider the commutative diagram 
\begin{equation} \label{CartDiag} 
\begin{tikzpicture}[node distance=2cm, auto]
\node (A) {$\widetilde{V}_G$};
\node (B) [right of = A] {$\overline{T}_+$};
\node (C) [below of = A] {$V_{G}$};
\node (D) [below of = B] {$\mathfrak{C}_+$};
\draw[->, above] (A) to node {$p$}(B);
\draw[->, left] (A) to node {$\pi_G$} (C);
\draw[->] (B) to node {$q$} (D);
\draw[->] (C) to node {$\chi_+$} (D);
\end{tikzpicture}
\end{equation}
Here, $q$ is finite flat, generically Galois and \'{e}tale with Galois group $W$ while $\pi_G$ is the projection onto the first factor. Since $V_B\sslash \Ad(T) \cong \overline{T}_+$ by \cite[Proposition 2.4.19]{Wang23}, the map $p$ is induced by the GIT quotient, i.e. 
$$\widetilde{V}_G \cong G \times_B V_B \rightarrow \overline{T}_+ \cong V_B\sslash \Ad(T).$$
This diagram is Cartesian over the loci $V^{\rs}_G$ and $G_+^{\reg}$. However unlike the Lie algebra case, it is not Cartesian over the regular locus $V^{\reg}_G.$  Since $G/B$ is compact, $\pi_G$ is proper and each fiber $\pi_G^{-1}(x)$ is closed.  

The Grothendieck simultaneous resolution induces the following commutative diagram of quotient stacks
\begin{equation} \label{paraCart}
\begin{tikzpicture}[node distance=2.5cm, auto]
\node (A) {$[\widetilde{V}_G/G] = [V_B/B]$};
\node (B) [right of = A] {$\overline{T}_+$};
\node (C) [below of = A] {$[V_G/G]$};
\node (D) [below of = B] {$\mathfrak{C}_+$};
\draw[->, below] (A) to node {}(B);
\draw[->, left] (A) to node {} (C);
\draw[->] (B) to node {} (D);
\draw[->] (C) to node {} (D);
\end{tikzpicture}
\end{equation}
\subsection{Restricted monoids}
\subsubsection{Cartan decomposition}
For each $\lambda_{\ad} \in X_*(T_{\ad})^+$, define the affine scheme $V^{\lambda_{\ad}}_G$ over $\Spec \mathcal{O}$ by the Cartesian diagram,
\begin{equation} \label{arc}
\begin{tikzpicture}[every text node part/.style={align=center}, node distance=2.5
cm]
\node (A) at (0,0)  {$V^{\lambda_{\ad}}_G$};
\node (B) [right of = A] {$V_G \times T_{\ad}$};
\node (C) [below of = A] {$\Spec \mathcal{O}$};
\node (D) [right of = C] {$A_G$};
\draw[->, below] (A) to node {\hspace{-1.6cm} $\mathlarger{\mathlarger{\mathlarger{\lrcorner}}}$}(B);
\draw[->] (A) to node {} (C);
\draw[->, right] (B) to node {$(g, z) \mapsto \alpha_G(g)z$} (D);
\draw[->, above] (C) to node{$t^{-w_0(\lambda_{\ad})}$} (D);  
\end{tikzpicture}
\end{equation} Here, the bottom arrow is defined by 
$$\left(t^{\langle -w_0(\lambda_{\ad}),\alpha_1 \rangle}, \cdots, t^{\langle -w_0(\lambda_{\ad}), \alpha_r \rangle}\right) \in A_G(\mathcal{O}).$$
Replacing $V_G$ with $V^0_G$ in this diagram gives rise to $V^{\lambda_{\ad}, 0}_G.$ 

The schemes $V^{\lambda_{\ad}}_G(\mathcal{O})$ and $V^{\lambda_{\ad}, 0}_G(\mathcal{O})$ admit a decomposition that is similar to the Cartan decomposition of loop groups, namely
\begin{align*}
V^{\lambda_{\ad}}_G(\mathcal{O}) &=  \bigcup_{\substack{\mu \in X_*(T_{\ad})^+ \\ \mu \leq \lambda}} G_+(\mathcal{O})t^{(-w_0(\lambda_{\ad}), \mu)} G_+(\mathcal{O}), \\
V^{\lambda_{\ad},0}_G(\mathcal{O}) & = \bigcup_{w \in W} G_+(\mathcal{O})t^{(-w_0(\lambda_{\ad}), w\lambda_{\ad}w^{-1})} G_+(\mathcal{O}) \\
&=  \bigcup_{w \in W_+} G_+(\mathcal{O})wt^{(-w_0(\lambda_{\ad}) \lambda_{\ad})} w^{-1}G_+(\mathcal{O}) \\ 
&= G_+(\mathcal{O})t^{(-w_0(\lambda_{\ad}), \lambda_{\ad})} G_+(\mathcal{O}).
\end{align*}
Here, $\widetilde{W}_+ = W \ltimes X_*(T_+)$ is the extended affine Weyl group of $G_+$ in which $W$ acts on the second factor of $X_*(T_+)$ only. For each element $g \in V^{\lambda_{\ad}}_G(\mathcal{O}),$ the matrices $\rho^+_{i} (g) \in \End(V_{\omega_i})$ have entries in $\mathcal{O}$ for all $1 \leq i \leq r.$ If $g$ belongs to $V^{\lambda_{\ad},0}_G(\mathcal{O}),$ then the matrices have non-zero reduction mod $t$.
\begin{exmp}
Let $G = \SL_2(\mathbb{C}).$ By Example \ref{sl2}, its associated Vinberg monoid is $V_G = \End(\mathbb{C}^2)$ with unit group $G_+ = \GL_2(\mathbb{C})$. Its abelianization is $A_G = \mathbb{A}^1$ with unit group $\mathbb{G}_m.$ For a dominant coweight $\lambda_{\ad} = n \in X_*(T_{\ad})^+ \cong \mathbb{Z}_{\geq 0}$ of $T_{\ad}$, the scheme $V^{\lambda_{\ad}}_G = V^n_G$ is given by the following Cartesian diagram:
\begin{center}
\begin{tikzpicture}[every text node part/.style={align=center}, node distance=2.5cm]
\node (A) at (0,0)  {$V^n_G$};
\node (B) [right of = A] {$\End(\mathbb{C}^2) \times \mathbb{G}_m$};
\node (C) [below of = A] {$\Spec (\mathbb{C}[\![t]\!])$};
\node (D) [right of = C] {$\mathbb{A}^1$};
\draw[->, below] (A) to node {\hspace{-1cm} $\mathlarger{\mathlarger{\mathlarger{\lrcorner}}}$}(B);
\draw[->] (A) to node {} (C);
\draw[->, right] (B) to node {$(g, z) \mapsto \det(g)z$} (D);
\draw[->, above] (C) to node{$t^{n}$} (D);  
\end{tikzpicture}
\end{center}
Here, $V^n_G$ contains $2 \times 2$ matrices with entries in $\mathcal{O} = \mathbb{C}[\![t]\!]$ with determinant $t^n$. Its non-degenerate locus $V^{n, 0}_G$ contains matrices in $V^n_G$ that are non-zero and singular reduction mod $t$. For instance, $$\begin{pmatrix}
    1/(1-t^n) & 1 \\
    1-t^n & 1-t^n
\end{pmatrix} = \begin{pmatrix}
    \sum^\infty_{k=0} t^{nk} & 1 \\
    1-t^n & 1-t^n
\end{pmatrix}$$ has determinant $t^n$. Its reduction mod $t$, which is given by $\begin{pmatrix}
1 & 1\\
1 & 1
\end{pmatrix}$, has determinant 0.
\end{exmp}
Let $(\cdot)_{\ad}: G_+ \rightarrow G_{\ad}$ be the natural projection map onto $G_{\ad}.$ 
\begin{lem}[\cite{Chi22}, Lemma 2.5.1] \label{adjointcrit}
An element $g_+ \in G_+(F)$ belongs to $V^{\lambda_{\ad}}_G(\mathcal{O})$ (resp. $V^{\lambda_{\ad},0}_G(\mathcal{O})$) if and only if $\alpha_G(\gamma_+) \in t^{-w_0(\lambda_{\ad})}T_{\ad}(\mathcal{O})$ and 
$$(g_+)_{\ad} \in \overline{G_{\ad}(\mathcal{O})t^{\lambda_{\ad}}G_{\ad}(\mathcal{O})}, \quad (\text{resp. } (g_+)_{\ad} \in G_{\ad}(\mathcal{O})t^{\lambda_{\ad}}G_{\ad}(\mathcal{O})).$$
\end{lem}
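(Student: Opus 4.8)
The plan is to characterize membership in $V^{\lambda_{\ad}}_G(\mathcal{O})$ and $V^{\lambda_{\ad},0}_G(\mathcal{O})$ by combining the defining Cartesian square \eqref{arc} with the structure of the abelianization and the Cartan decomposition of $G_+$. First I would unwind the fiber product: an element $g_+ \in G_+(F)$ lies in $V^{\lambda_{\ad}}_G(\mathcal{O})$ precisely when $g_+ \in V_G(\mathcal{O})$ (equivalently all $\rho^+_i(g_+)$ have entries in $\mathcal{O}$) and $\alpha_G(g_+) = t^{-w_0(\lambda_{\ad})} z$ for some $z \in T_{\ad}(\mathcal{O})$; that is, the $A_G$-coordinate of $g_+$ matches the prescribed section $t^{-w_0(\lambda_{\ad})}$ up to a unit in $T_{\ad}(\mathcal{O})$. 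This immediately produces the stated condition $\alpha_G(g_+) \in t^{-w_0(\lambda_{\ad})}T_{\ad}(\mathcal{O})$. The nondegenerate variant is identical but with $V_G$ replaced by $V^0_G$, i.e. all $\rho^+_i(g_+)$ additionally have nonzero reduction mod $t$.

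Next I would translate the condition $g_+ \in V_G(\mathcal{O})$ (with the coweight normalization fixed) into a statement purely about $(g_+)_{\ad} \in G_{\ad}(F)$. The key point is the short exact sequence $1 \to T_{\ad} \to G_+ \xrightarrow{\alpha_G \times (\cdot)_{\ad}}$-type decomposition: once $\alpha_G(g_+)$ is pinned to $t^{-w_0(\lambda_{\ad})}T_{\ad}(\mathcal{O})$, one can multiply $g_+$ by an element of $T_{\ad}(\mathcal{O}) = Z_+(\mathcal{O})$-lift to reduce to the case $\alpha_G(g_+) = t^{-w_0(\lambda_{\ad})}$ exactly, i.e. $g_+ = s_G(t^{-w_0(\lambda_{\ad})}) \cdot g$ with $g \in G(F)$ in suitable coordinates (using the section $s_G$ of $\alpha_G$). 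Then the Cartan decomposition of $V^{\lambda_{\ad}}_G(\mathcal{O})$ recalled in the excerpt, together with the factorization of the double cosets $G_+(\mathcal{O})t^{(-w_0(\lambda_{\ad}),\mu)}G_+(\mathcal{O})$ through $G_{\ad}$, identifies the fiber with the union over $\mu \leq \lambda_{\ad}$ of $G_{\ad}(\mathcal{O})t^{\mu}G_{\ad}(\mathcal{O})$, which is exactly $\overline{G_{\ad}(\mathcal{O})t^{\lambda_{\ad}}G_{\ad}(\mathcal{O})}$ by the standard characterization of affine Schubert varieties. The nondegenerate locus meets the open $(G\times G)$-orbit in each fiber, which on the adjoint side picks out the single open cell $G_{\ad}(\mathcal{O})t^{\lambda_{\ad}}G_{\ad}(\mathcal{O})$, giving the parenthetical statement.

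Conversely, given the two conditions on $(g_+)_{\ad}$ and $\alpha_G(g_+)$, I would reconstruct membership in $V^{\lambda_{\ad}}_G(\mathcal{O})$ by reversing the above: the hypothesis on $(g_+)_{\ad}$ places it in some $G_{\ad}(\mathcal{O})t^{\mu}G_{\ad}(\mathcal{O})$ with $\mu \leq \lambda_{\ad}$, which lifts (using that $G_+ \to G_{\ad}$ is a central extension and the arc-group $T_{\ad}(\mathcal{O})$-ambiguity is absorbed by the first hypothesis) to $g_+ \in G_+(\mathcal{O})t^{(-w_0(\lambda_{\ad}),\mu)}G_+(\mathcal{O})$, and the Cartan decomposition then says this double coset is contained in $V^{\lambda_{\ad}}_G(\mathcal{O})$. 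The main obstacle I anticipate is bookkeeping the central isogeny $Z \to T\times G \to G_+$ and matching the $T_{\ad}(\mathcal{O})$-torsor ambiguity on the nose: one must check that the two normalizations (the $A_G$-coordinate via $\alpha_G$ and the coweight $\lambda_{\ad}$ controlling the Schubert cell) are consistent under $w_0$, and that passing to $G_{\ad}$ does not lose or introduce double cosets — this is where Lemma~\ref{putcha} and the fact that $\alpha_G$ has reduced irreducible fibers are used. Everything else is a direct appeal to the Cartan decomposition already recorded in this subsection.
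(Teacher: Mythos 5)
The paper cites \cite[Lemma~2.5.1]{Chi22} without reproving it, so I compare against the underlying mathematics. Your route --- unwind the Cartesian square \eqref{arc} to isolate the condition $\alpha_G(g_+)\in t^{-w_0(\lambda_{\ad})}T_{\ad}(\mathcal{O})$, invoke the Cartan-type decomposition of $V^{\lambda_{\ad}}_G(\mathcal{O})$ into double cosets $G_+(\mathcal{O})t^{(-w_0(\lambda_{\ad}),\mu)}G_+(\mathcal{O})$, then push through $(\cdot)_{\ad}$ --- is valid and likely differs from Chi's, which (judging from the sentence immediately preceding the lemma about matrix coefficients of $\rho^+_i$) works directly with valuation bounds on the representations. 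Your version is shorter but leans on the Cartan decomposition being established independently; that decomposition is recorded in the paper as a standalone fact, so there is no circularity within this exposition, though anyone checking against \cite{Chi22} should confirm the logical order there.

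However, the tools you invoke at the step you call ``the main obstacle'' are not the right ones: neither Lemma~\ref{putcha} (which governs closures inside a reductive monoid) nor the reduced-and-irreducible-fibers property of $\alpha_G$ addresses the double-coset matching. What actually closes the gap is that the joint map $(\alpha_G,(\cdot)_{\ad})\colon G_+\to T_{\ad}\times G_{\ad}$ is a central isogeny with finite kernel $Z$, that $|Z|$ divides $|W|$ so the characteristic hypothesis makes this map and $T\to T_{\ad}$ \'etale, and that such smooth surjections are surjective on $\mathcal{O}$-points because $\mathcal{O}$ is complete local with algebraically closed residue field. With those lifts in hand, the $T_{\ad}(\mathcal{O})$-ambiguity in the first coordinate and the $G_{\ad}(\mathcal{O})$-double-coset data in the second determine $g_+$ up to an element of $Z(F)=Z(k)\subset G_+(\mathcal{O})$, which is harmlessly absorbed. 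For the converse you should also verify that $(-w_0(\lambda_{\ad}),\mu)$ lies in $X_*(T_+)$ whenever $\mu\le\lambda_{\ad}$: this holds because both $-w_0(\lambda_{\ad})+\lambda_{\ad}$ (difference of Weyl conjugates) and $\lambda_{\ad}-\mu$ lie in the coroot lattice, which equals $X_*(T)$ since $G$ is simply connected.
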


\subsubsection{Iwahori submonoid}
We will now define the analogue of the Iwahori subgroup for monoids. Consider the restriction of the diagram in \eqref{arc} to $V_B$, i.e. 
\begin{equation}
\begin{tikzpicture}[every text node part/.style={align=center}, node distance=2.5cm]
\node (A) at (0,0)  {$V^{\lambda_{\ad}}_B$};
\node (B) [right of = A] {$V_B \times T^{\ad}$};
\node (C) [below of = A] {$\Spec \mathcal{O}$};
\node (D) [right of = C] {$A_G$};
\draw[->, below] (A) to node {\hspace{-1.6cm} $\mathlarger{\mathlarger{\mathlarger{\lrcorner}}}$}(B);
\draw[->] (A) to node {} (C);
\draw[->, right] (B) to node {$(g, z) \mapsto \alpha_G(g)z$} (D);
\draw[->, above] (C) to node{$t^{-w_0(\lambda_{\ad})}$} (D);  
\end{tikzpicture}
\end{equation}

Replacing $V_B$ with $V^0_B$ gives rise to $V^{\lambda_{\ad}, 0}_B.$ Note that $$V^{\lambda_{\ad}}_B(\mathcal{O}) = V^{\lambda_{\ad}}_G(\mathcal{O}) \cap V_B(\mathcal{O}), \quad  V^{\lambda_{\ad}, 0}_B(\mathcal{O}) = V^{\lambda_{\ad}, 0}_G(\mathcal{O}) \cap V_B(\mathcal{O}).$$ Denote the Iwahori subgroup of $G_+(\mathcal{O})$ by $I_+$. The loop group $G_+(F)$ admits a Bruhat decomposition $G_+(F) \cong I_+\setminus \widetilde{W}_+ /I_+$.

Next, let $I^{\lambda_{\ad}}$ (resp. $I^{\lambda_{\ad}, 0}$) be the pre-image of $V^{\lambda_{\ad}}_B(k)$  (resp. $V^{\lambda_{\ad},0}_B(k)$) under the reduction-mod-$t$ map $$\eva_0: V^{\lambda_{\ad}}_G(\mathcal{O}) \rightarrow V^{\lambda_{\ad}}_G(k), \quad t \mapsto 0.$$
We will now work out its Bruhat decomposition. Recall from \cite[Lemma 2.5.1]{Chi22} that for $g$ to belong to $V^{\lambda_{\ad}, 0}_G(\mathcal{O}),$ the matrices $\rho_i^+(g) \in \End(V_{\omega_i})$ must have entries in $\mathcal{O}$ and non-zero reduction mod $t$ for all $1 \leq i \leq r.$. This amounts to $V^{\lambda_{\ad}, 0}_G(\mathcal{O})$ having the following Cartan decomposition
$$V^{\lambda_{\ad}, 0}_G(\mathcal{O})  = \bigcup_{w \in W} G_+(\mathcal{O})t^{(-w_0(\lambda_{\ad}), w\lambda_{\ad}w^{-1})} G_+(\mathcal{O}).$$
Now, for $g$ to belong to $I^{\lambda_{\ad}, 0}$, it must belong to $V^{\lambda_{\ad},0}_G$ and additionally, the matrices $\rho_i^+(g)$ must be upper-triangular reduction mod $t$. In particular, $I^{\lambda_{\ad},0}$ is stable under the action of $I_+ \times I_+.$ Hence, $I^{\lambda_{\ad},0}$ is a union of $I_+$-double cosets in $G_+(F)$ indexed by $\widetilde{W}_+$ and by the Bruhat decomposition, 
$$I^{\lambda_{\ad}, 0}  = \bigcup_{w \in W} I_+ t^{(-w_0(\lambda_{\ad}), w\lambda_{\ad}w^{-1})} I_+.$$
Since $V^0_B$ is open dense in $V_B$, the (Zariski) closure of $I^{\lambda_{\ad}, 0}$ in $G_+(F)$ is $I^{\lambda_{\ad}}$ by construction. Since $I^{\lambda_{\ad}}$ is stable under the action of $I_+ \times I_+$, it is also a union of $I_+$-double cosets and hence, it is given by the Bruhat closure of $I^{\lambda_{\ad}, 0}$, i.e. 
\begin{equation} \label{Bruhat}
I^{\lambda_{\ad}} = \bigcup_{\mu \in W(\lambda_{\ad})} \bigcup_{\substack{(t^\mu, w) \in \widetilde{W}_+ \\ (t^\mu, w) \leq t^{(-w_0(\lambda_{\ad}), \mu)}}}I_+wI_+. \end{equation} 
We can write the indexing set of the union in \eqref{Bruhat} in simpler terms using the notion of the admissible set. 
\begin{defn}
Let $\lambda \in X_*(T)^+$ be a dominant coweight. Then, the \emph{$\lambda$-admissible set} is the set $\Adm(\lambda) = \{w \in \widetilde{W}: w \leq t^{x( \lambda)} \text{ for some }x \in W\}$. 
\end{defn}
It is a finite subset of $\widetilde{W}$ whose maximal elements are $t^\mu$ for $\mu \in W(\lambda)$.
\begin{exmp}[$G = \GL_2(\mathbb{C})$]
When $G = \GL_2(\mathbb{C}),$ its Weyl group is $W = S_2$, its cocharacter lattice is $X_*(T)= \mathbb{Z}^2$ and so, its extended affine Weyl group is $\widetilde{W} = S_2 \ltimes \mathbb{Z}^2.$ Choose a dominant coweight $\lambda = (1, 0).$ Then, 
$$\Adm(\lambda) = \{t^{(1,0)}, t^{(0, 1)}, st^{(1, 0)}\} \subseteq \widetilde{W} = S_2 \ltimes \mathbb{Z}^2,$$
where $s \in S_2$ is the non-trivial element in $S^2$ and $t^{(1, 0)}$ and $t^{(0, 1)}$ are translation elements. 
\end{exmp}
Other examples of the admissible set for $G = \GL_3(k)$ and $G = \mathrm{GSp}_4(k)$ can be found in \cite[Figures 1 and 2]{Hai04}. By \cite[\S 2.3.5]{Wang23}, the coweight lattice $X_*(T_+)$ of $T_+$ has the following form:  
$$X_*(T_+) = \{(\lambda, \mu) \in X_*(T_{\ad}) \times X_*(T_{\ad}): \lambda + \mu \in X_*(T)\}.$$
In our case, $(-w_0(\lambda_{\ad}), \lambda_{\ad})$ is a dominant coweight of $T_+$ because $\lambda_{\ad}$ is a dominant coweight. Since $W$ acts trivially on the first factor $X_*(T_{\ad})$, the first factor of $X_*(T_+)$ inherits the trivial partial order. Hence, $\Adm(-w_0(\lambda_{\ad}), \lambda_{\ad})$ is given by 
\begin{align*}
\Adm(-w_0(\lambda_{\ad}), \lambda_{\ad}) = \{ (t^{-w_0(\lambda_{\ad})}, w) \in \widetilde{W}_+: (t^{-w_0(\lambda_{\ad})}, w) \leq t^{(-w_0(\lambda_{\ad}), x(\lambda_{\ad}))} \text{ for some }x \in W\}.
\end{align*}
This allows us to rewrite the above as 
\begin{align*}
I^{\lambda_{\ad}} &= \bigcup_{\Adm(-w_0(\lambda_{\ad}), \lambda_{\ad})}I_+wI_+. \end{align*}
Denote the Iwahori subgroup of $G_{\ad}(\mathcal{O})$ by $I_{\ad}$. From the exposition above, we obtain an Iwahori version of Lemma \ref{adjointcrit}.
\begin{lem}
An element $g_+ \in G_+(F)$ belongs to $I^{\lambda_{\ad}}$ (resp. $I^{\lambda_{\ad},0}$) iff $\alpha_G(\gamma)\in t^{-w_0(\lambda_{\ad})}T_{\ad}(\mathcal{O})$ and 
$$(g_+)_{\ad} \in \bigcup_{w \in \Adm(\lambda_{\ad})} I_{\ad}wI_{\ad}, \quad \left( \text{resp. } (g_+)_{\ad} \in \bigcup_{\mu\in W(\lambda_{\ad})} I_{\ad} t^\mu I_{\ad}\right).$$
\end{lem}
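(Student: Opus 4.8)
The plan is to read the lemma off the Bruhat decompositions $I^{\lambda_{\ad}}=\bigcup_{w\in\Adm(-w_0(\lambda_{\ad}),\lambda_{\ad})}I_+wI_+$ and $I^{\lambda_{\ad},0}=\bigcup_{w\in W}I_+t^{(-w_0(\lambda_{\ad}),w\lambda_{\ad}w^{-1})}I_+$ obtained just above, by recovering the Iwahori double-coset index $\widetilde w\in\widetilde W_+$ of a given $g_+\in G_+(F)$ from its two natural shadows: the abelianization $\alpha_G(g_+)$, which sees the first, central coweight component, and the adjoint image $(g_+)_{\ad}\in G_{\ad}(F)$, which sees the part lying in the extended affine Weyl group $\widetilde W_{\ad}=W\ltimes X_*(T_{\ad})$ of $G_{\ad}$. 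Once both shadows are pinned down $\widetilde w$ is determined, and membership of $\widetilde w$ in the indexing set unwinds into the two asserted conditions; this is the Iwahori analogue of the reasoning behind Lemma~\ref{adjointcrit}.

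The first ingredient is that $(\cdot)_{\ad}\colon G_+\to G_{\ad}$ is a smooth surjection with kernel the central torus $Z_+$. Since $H^1$ of a split torus vanishes over a field and over a complete local ring, this induces surjections $G_+(F)\twoheadrightarrow G_{\ad}(F)$ and $G_+(\mathcal O)\twoheadrightarrow G_{\ad}(\mathcal O)$, and because $Z_+\subseteq B_+$ the preimage of $I_{\ad}$ is exactly $I_+$; hence $(\cdot)_{\ad}$ carries $I_+$ onto $I_{\ad}$ and descends to a surjection $\widetilde W_+\to\widetilde W_{\ad}$ of double-coset sets which is the identity on $W$ and the projection $X_*(T_+)\to X_*(T_{\ad})$ onto the adjoint factor on translations. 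In particular $(g_+)_{\ad}$ lies in the double coset indexed by $(\widetilde w)_{\ad}$, which is the assertion about $(g_+)_{\ad}$ once $\widetilde w$ is known. The second ingredient is that $\val\circ\,\alpha_G\colon G_+(F)\to X_*(T_{\ad})$ is a homomorphism into an abelian group carrying $G_+(\mathcal O)$ into $T_{\ad}(\mathcal O)$, hence is $I_+$-bi-invariant and factors through $\widetilde W_+$ as the projection onto the complementary, $W$-fixed factor. Thus $\alpha_G(g_+)\in t^{-w_0(\lambda_{\ad})}T_{\ad}(\mathcal O)$ is precisely the condition that the central component of $\widetilde w$ equals $-w_0(\lambda_{\ad})$ — a condition which is in any case automatic for $g_+\in I^{\lambda_{\ad}}\subseteq V^{\lambda_{\ad}}_G(\mathcal O)$ by the Cartesian square defining $V^{\lambda_{\ad}}_G$, but which is genuinely needed for the converse implication since $(\cdot)_{\ad}$ forgets the central direction.

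The real work is the comparison of Bruhat orders on the relevant slice. I would check that the elements of $\widetilde W_+$ with central component $-w_0(\lambda_{\ad})$ form a single coset of the affine Weyl group $W\ltimes\Phi^\vee_{\mathbb{Z}}$; that $(\cdot)_{\ad}$ is length-preserving on it, both length functions being computed from the root system of $G$ and $(\cdot)_{\ad}$ intertwining the actions on the common apartment; and that $(\cdot)_{\ad}$ restricts there to a poset isomorphism onto a coset of $W\ltimes\Phi^\vee_{\mathbb{Z}}$ in $\widetilde W_{\ad}$. The constraint $\lambda+\mu\in X_*(T)$ defining $X_*(T_+)$ forces the translation part of $(\widetilde w)_{\ad}$ into $w_0(\lambda_{\ad})+\Phi^\vee_{\mathbb{Z}}$, and since $W$ fixes $X_*(T_{\ad})/\Phi^\vee_{\mathbb{Z}}$ pointwise this is the coset of $\lambda_{\ad}$, which contains every $t^{x(\lambda_{\ad})}$, $x\in W$. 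Therefore $\widetilde w\le t^{(-w_0(\lambda_{\ad}),x(\lambda_{\ad}))}$ in $\widetilde W_+$ if and only if $(\widetilde w)_{\ad}\le t^{x(\lambda_{\ad})}$ in $\widetilde W_{\ad}$, i.e. $(g_+)_{\ad}\in\bigcup_{w\in\Adm(\lambda_{\ad})}I_{\ad}wI_{\ad}$.

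Assembling the three ingredients gives the closed case, and the non-degenerate case is the same argument with $\Adm(-w_0(\lambda_{\ad}),\lambda_{\ad})$ replaced by its set of maximal elements $\{t^{(-w_0(\lambda_{\ad}),\mu)}:\mu\in W(\lambda_{\ad})\}$, whose images under $(\cdot)_{\ad}$ are the translations $t^\mu$, $\mu\in W(\lambda_{\ad})$. I expect the third paragraph to be the only genuine obstacle — in particular verifying length-preservation of $(\cdot)_{\ad}$ on the slice and pinning down the correct target coset — while the first two ingredients are formal and the remainder is a direct translation of the Bruhat decompositions already in hand.
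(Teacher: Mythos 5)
Your proposal is correct and follows the same route the paper implicitly takes: the paper states the lemma as a direct consequence of the Bruhat decomposition $I^{\lambda_{\ad}}=\bigcup_{w\in\Adm(-w_0(\lambda_{\ad}),\lambda_{\ad})}I_+wI_+$ and the observation that $W$ acts trivially on the first factor of $X_*(T_+)$, and your three ingredients (recovering the first component from $\alpha_G$, the second from $(\cdot)_{\ad}$, and the length/Bruhat-order comparison on the slice) are exactly the details being elided by the phrase ``from the exposition above''. One small note: the claim that the slice of $\widetilde W_+$ with fixed first component is a single coset of $W\ltimes\Phi^\vee_{\mathbb{Z}}$ uses that $G$ is simply connected (so $X_*(T)=\Phi^\vee_{\mathbb{Z}}$), which is indeed the standing hypothesis in the paper's Vinberg-monoid section, so this is fine.
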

\subsubsection{Grothendieck simultaneous resolution}
In future sections, we will be globalizing these arc space constructions to a curve. For this, we require a Grothendieck simultaneous resolution that incorporates cocharacter-valued poles. For each $\lambda_{\ad} \in X_*(T_{\ad})_+,$ define the affine scheme $\widetilde{V}^{\lambda_{\ad}}_G$ over $\Spec \mathcal{O}$ by the following Cartesian diagram.
\begin{align*} 
\begin{tikzpicture}
\node (A) at (0, 0) {$V^{\lambda_{\ad}}_G$};
\node (B) at (3, 0) {$V_G \times T_{\ad}$};
\node (C) at (0, -2) {$\Spec \mathcal{O}$};
\node (D) at (3, -2){$A_G$};
\node (E) at (0, 2) {$\widetilde{V}^{\lambda_{\ad}}_{G}$};
\node (F) at (3, 2) {$\widetilde{V}_G \times T^{\ad}$};
\draw[->, below] (A) to node {\hspace{-2cm} $\mathlarger{\mathlarger{\mathlarger{\lrcorner}}}$}(B);
\draw[->, right] (B) to node {$(g, z) \mapsto \alpha_G(g)z$} (D);
\draw[->, above] (C) to node {$t^{-w_0(\lambda_{\ad})}$} (D);
\draw[->] (A) to node {} (C);
\draw[->] (E) to node {} (A);
\draw[->, right] (F) to node {$\pi_G \times \id$} (B);
\draw[->, below] (E) to node {\hspace{-2cm} $\mathlarger{\mathlarger{\mathlarger{\lrcorner}}}$}(F);
\end{tikzpicture}
\end{align*}
Here, $\pi_G$ is the Grothendieck simultaneous resolution of $V_G$ from Equation \eqref{CartDiag} and $\eva_0$ is the reduction-mod-$t$ map. Fix a Borel submonoid $V_B$. More concretely, $\widetilde{V}^{\lambda_{\ad}}_G(\mathcal{O})$ is defined as 
\begin{align*}
\widetilde{V}^{\lambda_{\ad}}_G(\mathcal{O}) &= \{(g, B) \in V^{\lambda_{\ad}}_G(\mathcal{O}) \times G/B: g \in \eva_0^{-1}(V^{\lambda_{\ad}}_B(k)) \} \\
 &= \{(g, B) \in V^{\lambda_{\ad}}_G(\mathcal{O}) \times G/B: g \in I^{\lambda_{\ad}} \}.
\end{align*}
We can define $\widetilde{V}^{\lambda_{\ad}, 0}_G$ similarly by replacing $V_G$ with $V_G^\circ.$ Recall that there is an isomorphism $G \times_B V_B \cong \widetilde{V}_G$ given by $(g, x) \mapsto (gxg^{-1}, gB/B).$ By essentially the same map, we have the isomorphism
$\widetilde{V}^{\lambda_{\ad}}_G(\mathcal{O}) \cong G \times_B I^{\lambda_{\ad}}.$
\section{Multiplicative affine Springer fibers}
\subsection{Definitions}
Multiplicative affine Springer fibers are group-theoretic versions of affine Springer fibers. They were first introduced by \cite{FN11} to give a geometric interpretation of orbital integrals of spherical Hecke functions. Let $\gamma \in G^{\rs}(F)$ be a regular semisimple element and $\lambda \in X_*(T)^+$ be a dominant coweight. Then, \emph{multiplicative affine Springer fibers} or \emph{Kottwitz-Viehmann varieties} are given by the sets
\begin{align} \label{masfo}
X^\lambda_\gamma &= \{g \in \Gr_G(k): \Ad_{g^{-1}}(\gamma) \in G(\mathcal{O}) t^\lambda G(\mathcal{O})\}, \\ \label{masfc}
X^{\leq \lambda}_\gamma &= \{g \in \Gr_G(k): \Ad_{g^{-1}}(\gamma) \in \overline{G(\mathcal{O}) t^\lambda G(\mathcal{O})}\}. 
\end{align} 
These are non-Frobenius-twisted analogs of affine Deligne Lusztig varieties in the affine Grassmannian.
\subsection{Non-emptiness}
The non-emptiness pattern was first studied by \cite[\S 2]{KV10} and it relies on the notions of the Newton and Kottwitz homomorphisms. Let $\mathbb{D} = \Spec(F[\mathbb{Q}])$ be the diagonalizable group over $F$ with character group $\mathbb{Q}.$ Each $\gamma \in G^{\rs}(F)$ has an associated Newton homomorphism $\nu_\gamma: \mathbb{D} \rightarrow G$ that is described as follows. For any $G$-representation $V$, the element $\gamma \in G(F)$ acts on $V$ as an automorphism. As a result, $V$ admits a slope decomposition with respect to $\gamma$, i.e.  
$$V = \bigoplus_{a \in \mathbb{Q}} V_a,$$
where $V_a$ is the direct sum of all generalized eigenspaces associated to the eigenvalue with valuation $a$. Given an $F$-algebra $R$ and $d \in \mathbb{D}(R) = \Hom(\mathbb{Q}, R^\times)$, the image $\nu_\gamma(d) \in G(R)$ is characterised as the automorphism that acts on $V \otimes R$ as multiplication-by-$d(a)$ on each summand $V_a \otimes R.$ Since $\gamma$ is regular semisimple, $\nu_\gamma \in \Hom(\mathbb{D}, G) = X_*(T).$

Now, we define the Kottwitz homomorphism. Let $p_G: X_*(T) \rightarrow \pi_1(G) = X_*(T)/\Phi^\vee_\mathbb{Z}$ be the natural projection. The \emph{Kottwitz homomorphism} $\kappa_G$ is the composition
$$\kappa_G: G(F) \xrightarrow{r} X_*(T) \xrightarrow{p_G} \pi_1(G)$$
where $r(\gamma) = \mu \in X_*(T)$ such that $\gamma \in G(\mathcal{O}) t^\mu G(\mathcal{O}).$ There are natural surjections 
\begin{align*}
G_+ \rightarrow G_{\ad}, \quad G  \rightarrow G_{\ad}, \quad X_*(T) \rightarrow X_*(T_{\ad}).
\end{align*} 
We will denote both of these maps as $(\cdot)_{\ad}.$
\begin{prop}[\cite{Chi22}, Lemma 3.1.5] \label{newelem}
Suppose that $\kappa_G(\gamma) = p_G(\lambda)$. There exists an element $\gamma_\lambda \in G_+(F)$ such that 
\begin{enumerate}[(i)]
    \item $(\gamma_\lambda)_{\ad} = \gamma_{\ad}$ in $G_{\ad}(F)$. 
    \item $\alpha_G(\gamma_\lambda) =  t^{-w_0(\lambda_{\ad})} \in T_{\ad}(\mathcal{O}) \cap A_G(F).$
\end{enumerate}
This element is uniquely defined up to the multiplication by an element of $Z(F).$
\end{prop}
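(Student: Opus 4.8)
The plan is to build $\gamma_\lambda$ explicitly after pushing the problem down to the adjoint group, and to extract the uniqueness clause from a kernel computation. The starting point is the short exact sequence of $k$-groups
\[
1 \longrightarrow Z \longrightarrow G_+ \xrightarrow{\ (\alpha_G,\,(\cdot)_{\ad})\ } T_{\ad}\times G_{\ad} \longrightarrow 1,
\]
in which the kernel is the image of $Z\times Z$ in $G_+ = (T\times G)/Z$, a copy of $Z$ (via $(z_1,z_2)\mapsto z_1 z_2$) sitting inside $G_+$ as $\{\overline{(z,1)} : z\in Z\}$; this is immediate because $(\alpha_G,(\cdot)_{\ad})$ is just the quotient map $(T\times G)/Z \to (T/Z)\times(G/Z)$. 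Conditions (i)--(ii) say exactly that $\gamma_\lambda$ is a lift of the pair $(t^{-w_0(\lambda_{\ad})},\,\gamma_{\ad}) \in (T_{\ad}\times G_{\ad})(F)$ along this sequence. Granting that such a lift exists, the uniqueness clause is automatic: two lifts differ by an element of the kernel $Z(F)$, and conversely, since $Z$ is central in $G_+$ and maps to $1$ in both $T_{\ad}$ and $G_{\ad}$, right multiplication by any $z\in Z(F)$ preserves (i)--(ii).

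To produce one lift, I would start from a Cartan decomposition $\gamma_{\ad} = k_1\,t^{\nu}\,k_2$ with $k_1,k_2\in G_{\ad}(\mathcal O)$ and $\nu\in X_*(T_{\ad})^+$; by definition $\kappa_{G_{\ad}}(\gamma_{\ad}) = \overline\nu$ in $\pi_1(G_{\ad}) = X_*(T_{\ad})/\Phi^\vee_\mathbb{Z}$. Since $\charac k\nmid|W|$ (and $|Z|$ divides $|W|$), the kernel $Z$ of $G_{\mathrm{sc}}\to G_{\ad}$ is \'etale, so that isogeny is smooth and surjective; as $\mathcal O$ is strictly henselian this gives a surjection $G_{\mathrm{sc}}(\mathcal O)\twoheadrightarrow G_{\ad}(\mathcal O)$, so I may lift $k_1,k_2$ to $\widetilde k_1,\widetilde k_2\in G_{\mathrm{sc}}(\mathcal O)$, regarded inside $G_+(\mathcal O)$ via $g\mapsto\overline{(1,g)}$. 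It remains to interpolate $t^\nu$ by a single element of $T_+(F)$ with the prescribed abelianization, and I would set
\[
\gamma_\lambda \ :=\ \widetilde k_1\;t^{\,(-w_0(\lambda_{\ad}),\,\nu)}\;\widetilde k_2\ \in\ G_+(F).
\]
Applying $(\cdot)_{\ad}$ collapses this to $k_1 t^{\nu}k_2 = \gamma_{\ad}$, which is (i); applying $\alpha_G$ kills the $\widetilde k_i$ and leaves $t^{-w_0(\lambda_{\ad})}$, which lies in $A_G(\mathcal O)$ (and in $T_{\ad}(F)\subset A_G(F)$) because $-w_0(\lambda_{\ad})$ is dominant, so its coordinates $t^{\langle\alpha_i,\,-w_0(\lambda_{\ad})\rangle}$ under $A_G\cong\mathbb A^r$ are integral -- this is (ii).

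The one step that genuinely uses the hypothesis -- and the point I expect to need the most care -- is checking that $(-w_0(\lambda_{\ad}),\,\nu)$ is an honest cocharacter of $T_+$, i.e.\ that $-w_0(\lambda_{\ad})+\nu\in X_*(T)$, via the description $X_*(T_+) = \{(\mu_1,\mu_2)\in X_*(T_{\ad})\times X_*(T_{\ad}) : \mu_1+\mu_2\in X_*(T)\}$ recalled earlier (together with the fact that $t^{(\mu_1,\mu_2)}$ has abelianization $t^{\mu_1}$ and adjoint image $t^{\mu_2}$). I would write $-w_0(\lambda_{\ad})+\nu = \big(\lambda_{\ad}-w_0(\lambda_{\ad})\big) + \big(\nu-\lambda_{\ad}\big)$: the first summand lies in $\Phi^\vee_\mathbb{Z}$ because $W$ acts trivially on $X_*(T_{\ad})/\Phi^\vee_\mathbb{Z}$, and the second lies in $\Phi^\vee_\mathbb{Z}$ because functoriality of $\kappa$ and $p$ turns the assumption $\kappa_G(\gamma)=p_G(\lambda)$ into the equality $\overline\nu = \overline{\lambda_{\ad}}$ in $\pi_1(G_{\ad})$; hence the sum lies in $\Phi^\vee_\mathbb{Z}\subseteq X_*(T)$. (Equivalently, existence can be phrased cohomologically: the obstruction to lifting $(t^{-w_0(\lambda_{\ad})},\gamma_{\ad})$ is a class in $H^1(F,Z)\cong\pi_1(G_{\ad})$ equal to $\kappa_{G_{\ad}}(\gamma_{\ad})-p_{G_{\ad}}(\lambda_{\ad})$, the image of $\kappa_G(\gamma)-p_G(\lambda)=0$; but the explicit recipe is preferable since it hands back a formula for $\gamma_\lambda$ usable in later sections.) Everything else -- surjectivity of $G_{\mathrm{sc}}(\mathcal O)\to G_{\ad}(\mathcal O)$, the identity $\alpha_G(\overline{(1,g)})=1$, and the integrality of $t^{-w_0(\lambda_{\ad})}$ -- is routine.
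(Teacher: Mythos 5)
Your proof is correct and, as far as I can tell, is the natural argument one would extract from Chi's Lemma 3.1.5 (the paper simply cites it without reproducing a proof). The core moves are sound: identifying the kernel of $(\alpha_G,(\cdot)_{\ad}):G_+\to T_{\ad}\times G_{\ad}$ with a central copy of $Z$ gives the uniqueness clause for free, and the explicit lift $\gamma_\lambda=\widetilde k_1\,t^{(-w_0(\lambda_{\ad}),\nu)}\,\widetilde k_2$ handles existence. The decisive verification, that $-w_0(\lambda_{\ad})+\nu\in X_*(T)$ via the decomposition $\bigl(\lambda_{\ad}-w_0(\lambda_{\ad})\bigr)+\bigl(\nu-\lambda_{\ad}\bigr)$, is exactly right: the first term is in $\Phi^\vee_\mathbb{Z}$ because $W$ acts trivially on $X_*(T_{\ad})/\Phi^\vee_\mathbb{Z}$, and the second by functoriality of $\kappa$ and $p$ along $G\to G_{\ad}$ (indeed, with $G$ simply connected as in this paper's Section~2, $X_*(T)=\Phi^\vee_\mathbb{Z}$ and $\pi_1(G)=0$, so both $\nu$ and $\lambda_{\ad}$ already lie in $\Phi^\vee_\mathbb{Z}$ and the functoriality step is even simpler than you make it). You also correctly read the (apparently typographical) ``$T_{\ad}(\mathcal{O})\cap A_G(F)$'' in the statement as $A_G(\mathcal{O})\cap T_{\ad}(F)$, consistent with how the paper uses this condition elsewhere.

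Two small remarks. First, the lifting of $k_1,k_2$ from $G_{\ad}(\mathcal{O})$ to $G_{\mathrm{sc}}(\mathcal{O})$ uses that $Z$ is \'etale, for which you invoke that $|Z|$ divides $|W|$; this is a true fact (the connection index of every simple type divides the Weyl group order, verified type by type), but since it is not strictly standard from general principles it would be worth a reference or at least a sentence acknowledging it is a case check. Second, your parenthetical cohomological reformulation is a bit loose---$H^1(F,Z)$ is non-canonically isomorphic to $Z(k)$, not canonically to $\pi_1(G_{\ad})$---but you correctly flag it as an aside and do not rely on it, so this does not affect the argument.
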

We have the following non-emptiness criterion. 
\begin{lem}[\cite{Chi22}, Proposition 3.1.6] \label{nonemp}
The following are equiavalent:
\begin{enumerate}[(i)]
\item $X^\lambda_\gamma$ is non-empty.
\item $X^{\leq \lambda}_\gamma$ is non-empty.
\item $\kappa_G(\gamma) = p_G(\lambda)$ and $\nu_\gamma \leq_{\mathbb{Q}} \lambda$, i.e. $\lambda - \nu_\gamma$ is a linear combination of simple coroots with non-negative coefficients in $\mathbb{Q}.$
\item $\kappa_G(\gamma) = p_G(\lambda)$ and $\chi_+(\gamma_\lambda) \in \mathfrak{C}_+(\mathcal{O}).$
\end{enumerate}
\end{lem}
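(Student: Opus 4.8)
The plan is to prove the four-way equivalence by establishing a cycle of implications, leaning on the structure already developed for the Vinberg monoid. Before that, let me recall the shape of the target: it is Lemma \ref{nonemp}, asserting that non-emptiness of $X^\lambda_\gamma$, non-emptiness of $X^{\leq\lambda}_\gamma$, the combinatorial condition ($\kappa_G(\gamma)=p_G(\lambda)$ together with $\nu_\gamma\leq_{\mathbb{Q}}\lambda$), and the monoid-theoretic condition ($\kappa_G(\gamma)=p_G(\lambda)$ together with $\chi_+(\gamma_\lambda)\in\mathfrak{C}_+(\mathcal{O})$) are all equivalent.

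\medskip

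\noindent\textbf{Step 1: (i) $\Leftrightarrow$ (ii).} The forward direction is immediate since $G(\mathcal{O})t^\lambda G(\mathcal{O})\subseteq\overline{G(\mathcal{O})t^\lambda G(\mathcal{O})}$, so a point of $X^\lambda_\gamma$ is automatically a point of $X^{\leq\lambda}_\gamma$. For the reverse, I would argue that if $\Ad_{g^{-1}}(\gamma)\in\overline{G(\mathcal{O})t^\lambda G(\mathcal{O})}=\bigcup_{\mu\leq\lambda}G(\mathcal{O})t^\mu G(\mathcal{O})$, then $\Ad_{g^{-1}}(\gamma)\in G(\mathcal{O})t^\mu G(\mathcal{O})$ for some dominant $\mu\leq\lambda$; the Kottwitz invariant forces $p_G(\mu)=p_G(\lambda)=\kappa_G(\gamma)$, and then — this is the one genuinely substantive input — one must promote a point of $X^\mu_\gamma$ to a point of $X^\lambda_\gamma$. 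Here I expect to invoke the going-up property for multiplicative affine Springer fibers (the monotonicity of these varieties in $\lambda$), which should already be available in the cited literature \cite{Chi22} or follow from the Cartan-decomposition description of $V^{\lambda_{\ad}}_G(\mathcal{O})$.

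\medskip

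\noindent\textbf{Step 2: (i) $\Rightarrow$ (iii).} Suppose $g\in X^\lambda_\gamma$, so $\delta:=\Ad_{g^{-1}}(\gamma)\in G(\mathcal{O})t^\lambda G(\mathcal{O})$. The Kottwitz homomorphism is constant on $G(\mathcal{O})$-double cosets and invariant under conjugation, so $\kappa_G(\gamma)=\kappa_G(\delta)=p_G(\lambda)$. For the slope inequality, I would use that the Newton point is conjugation-invariant, so $\nu_\gamma=\nu_\delta$, and then compare $\nu_\delta$ with $\lambda$ using that $\delta\in G(\mathcal{O})t^\lambda G(\mathcal{O})$: testing against each fundamental representation $\rho_i$, the Newton slopes of $\delta$ on $V_{\omega_i}$ are bounded by the corresponding slopes of $t^\lambda$, which is the standard mechanism (a Mazur-type inequality / convexity of the Newton polygon relative to the Hodge polygon) forcing $\nu_\gamma\leq_{\mathbb{Q}}\lambda$.

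\medskip

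\noindent\textbf{Step 3: (iii) $\Leftrightarrow$ (iv) and (iv) $\Rightarrow$ (i).} For (iii) $\Rightarrow$ (iv): assuming $\kappa_G(\gamma)=p_G(\lambda)$, Proposition \ref{newelem} furnishes $\gamma_\lambda\in G_+(F)$ with $(\gamma_\lambda)_{\ad}=\gamma_{\ad}$ and $\alpha_G(\gamma_\lambda)=t^{-w_0(\lambda_{\ad})}$. One then checks that $\chi_+(\gamma_\lambda)\in\mathfrak{C}_+(\mathcal{O})$ is equivalent to the Newton point of $\gamma_\lambda$ being $\leq_{\mathbb{Q}}$ the relevant dominant coweight: via the Chevalley isomorphism $\mathfrak{C}_+\cong A_G\times T\sslash W$ and the finite flat map $\overline{T}_+\to\mathfrak{C}_+$, integrality of $\chi_+(\gamma_\lambda)$ translates into integrality (nonnegativity of valuations) of the symmetric functions of the eigenvalues of $\gamma_\lambda$ acting in each $\rho^+_i$, and the latter is precisely the slope bound $\nu_\gamma\leq_{\mathbb{Q}}\lambda$ after accounting for the twist by $\alpha_G(\gamma_\lambda)=t^{-w_0(\lambda_{\ad})}$. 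The converse (iv) $\Rightarrow$ (iii) runs the same dictionary backwards. Finally, for (iv) $\Rightarrow$ (i): given $\chi_+(\gamma_\lambda)\in\mathfrak{C}_+(\mathcal{O})$, I would use an extended Steinberg quasi-section $\varepsilon^w_+$ to produce an explicit representative in $V^{\lambda_{\ad}}_G(\mathcal{O})$ living over $\chi_+(\gamma_\lambda)$, then transport it back to $\gamma$ by a conjugation in $G(F)$ (using regular semisimplicity, so the conjugacy class is a single $G(F)$-orbit over its image in $\mathfrak{C}_+$), and finally invoke Lemma \ref{adjointcrit} to read off that the resulting $g$ gives $\Ad_{g^{-1}}(\gamma)\in G(\mathcal{O})t^\lambda G(\mathcal{O})$ — using that $(\gamma_\lambda)_{\ad}=\gamma_{\ad}$ to pass between $G$ and $G_{\ad}$.

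\medskip

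\noindent\textbf{Main obstacle.} The delicate point is the precise translation in Step 3 between the integrality of $\chi_+(\gamma_\lambda)$ and the slope inequality $\nu_\gamma\leq_{\mathbb{Q}}\lambda$, since it requires carefully tracking the twist by $\alpha_G(\gamma_\lambda)=t^{-w_0(\lambda_{\ad})}$ through the fundamental representations $\rho^+_i$ and matching the combinatorics of dominant coweights of $T_+$ with those of $T$. Since this equivalence is attributed to \cite[Proposition 3.1.6]{Chi22}, the proof I write will primarily assemble and cite these ingredients rather than reprove the convexity estimates from scratch; the work is in organizing the dictionary cleanly and handling the $G$ versus $G_+$ versus $G_{\ad}$ bookkeeping.
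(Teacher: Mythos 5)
The paper does not prove this lemma; it is imported verbatim from \cite[Proposition 3.1.6]{Chi22}, so there is no in-paper proof to compare against. Evaluating your sketch on its own merits: the overall architecture is reasonable and your Steps 2 and 3 identify the right ingredients (constancy of $\kappa_G$ on $G(\mathcal{O})$-double cosets and under conjugation, Mazur-type slope bounds, Proposition~\ref{newelem}, the Chevalley description of $\mathfrak{C}_+$, the quasi-sections $\varepsilon^w_+$, and Lemma~\ref{adjointcrit}). But there is a genuine gap in Step 1.

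The reverse direction of Step 1, $(\mathrm{ii}) \Rightarrow (\mathrm{i})$, leans on a ``going-up'' or monotonicity property of multiplicative affine Springer fibers — that nonemptiness of $X^\mu_\gamma$ for some dominant $\mu \leq \lambda$ with $p_G(\mu)=p_G(\lambda)$ forces nonemptiness of $X^\lambda_\gamma$. This is not a basic fact, and as you set things up it is essentially the content of $(\mathrm{iii}) \Rightarrow (\mathrm{i})$: from $g \in X^\mu_\gamma$ one deduces $\nu_\gamma \leq \mu \leq \lambda$ and the Kottwitz condition, and then one must still construct a point of $X^\lambda_\gamma$, which is exactly what your Step 3 does. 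As written, Step 1 therefore hides a forward reference to Step 3, and without that the implication does not stand on its own. The fix is to reorganize the implications: prove $(\mathrm{i}) \Rightarrow (\mathrm{ii})$ trivially (inclusion of the open cell into its closure), then $(\mathrm{ii}) \Rightarrow (\mathrm{iii})$ directly — your Step 2 argument works verbatim, since any point of $\overline{G(\mathcal{O})t^\lambda G(\mathcal{O})}$ lies in some $G(\mathcal{O})t^\mu G(\mathcal{O})$ with $\mu \leq \lambda$, giving $\kappa_G(\gamma)=p_G(\mu)=p_G(\lambda)$ and $\nu_\gamma \leq \mu \leq \lambda$ — then $(\mathrm{iii}) \Leftrightarrow (\mathrm{iv})$ as in your Step 3, and finally $(\mathrm{iii})$ or $(\mathrm{iv}) \Rightarrow (\mathrm{i})$ via the quasi-section construction. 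This closes the cycle without ever needing a separate monotonicity lemma.
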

Condition (iii) is essentially the non-emptiness pattern of affine Deligne Lusztig varieties in the affine Grassmanian except in that case, $\kappa_G$ maps from the set of Frobenius-twisted conjugacy classes.
\subsection{Ind-scheme structure}
Like affine Springer fibers, multiplicative affine Springer fibers admit an ind-scheme structure, which provides a geometric interpetation. With $v \in X$, let $\mathcal{O}_v$ be the completed local ring of $X$ at $v$ and $F_v$ be its fraction field. Then, $X_v = \Spec(\mathcal{O}_v) \cong k[[t]]$ and $X^\bullet_v = \Spec(F_v) = k((t))$ is the formal disc and formal punctured disc at $v$ respectively. Let $\gamma_+ \in G_+(F_v)$ with image $\chi_+(\gamma_+) = a \in \mathfrak{C}_+(\mathcal{O}_v)$. We denote $X_v \widehat{\times} S$ as the $v$-adic completion of $X_v \times S$. 

\begin{defn} \label{MASFunctor}
Let $M_v(\gamma_+)$ be the functor that maps a scheme $S$ to the set of isomorphism classes of pairs $(h, \iota)$, where $h$ fits into the commutative diagram,
\begin{center} 
\begin{tikzpicture}[node distance=2cm]
\node (A) {$X_v \widehat{\times} S$};
\node (B) [right of = A] {$[V_G/G]$};
\node (C) [below of = A] {$X_v$};
\node (D) [right of = C] {$\mathfrak{C}_+$};
\draw[->, above] (A) to node {$h$} (B);
\draw[->, right] (B) to node {$\chi_+$} (D);
\draw[->,above] (C) to node {$a$} (D);
\draw[->] (A) to node {} (C);
\end{tikzpicture}
\end{center}
and $\iota$ is an isomorphism between $h|_{X^\bullet_{v} \widehat{\times} S}$ and the induced map 
$X_{v}^\bullet \widehat{\times} S \xrightarrow{\gamma_+} V_G \rightarrow [V_G/G].$ By replacing $V_G$ with $V_G^\circ$ and $V_G^{\reg}$, we obtain subfunctors $M_v(\gamma_+)^\circ$ and $M_v(\gamma_+)^{\reg}$. 
Since the isomorphism classes of $M_v(\gamma_+)$ and $M_v(\gamma_+)^\circ$ only depend on $a = \chi_+(\gamma_+)$, we will also denote these as $M_v(a)$ and $M_v(a)^\circ.$
\end{defn}
For $\gamma \in G^{\rs}(F_v)$ and $\lambda \in X_*(T)^+,$ suppose that $X^{\lambda}_{\gamma}$ is non-empty. By Proposition \ref{nonemp}, there exists an element $\gamma_\lambda \in G_+(F_v)$ such that 
$$X^{\lambda}_{\gamma} \cong  M_v(\gamma_\lambda)^\circ, \quad X^{\leq \lambda}_{\gamma} \cong M_v(\gamma_\lambda).$$
This leads to a moduli description of multiplicative affine Springer fibers. The functor $M_v(\gamma_+)$ maps a scheme $S$ to the set of isomorphism classes of triples $(E, \varphi, \sigma)$ consisting of:
\begin{itemize}
    \item a $G$-bundle $E$ on the $X_v \widehat{\times} S$ 
    \item $\varphi \in H^0(X_v, E \times_G V^{\lambda_{\ad},0}_G)$, 
    \item a trivialization $\sigma: (E^0, \gamma_\lambda) \cong (E, \varphi)|_{X^{\bullet}_v \widehat{\times} S}$ over the punctured disc $X^{\bullet}_v \widehat{\times} S$, where $E^0$ is the trivial bundle.
\end{itemize}
The pair $(E, \varphi)$ is called a \emph{multiplicative Higgs bundle} on $X_v$ and $\varphi$ is called a \emph{multiplicative Higgs field}.
\begin{prop} \label{connectiontoVin}
If $\kappa_G(\gamma) = p_G(\gamma)$, then
equations \eqref{masfc} and \eqref{masfo} can be rewritten as
\begin{align*}
    X^\lambda_\gamma &\cong \{g \in \Gr_G: \Ad_{g^{-1}}(\gamma_\lambda) \in V^{\lambda_{\ad},0}_G(\mathcal{O})\}, \\
     X^{\leq \lambda}_\gamma &\cong\{g \in \Gr_G: \Ad_{g^{-1}}(\gamma_\lambda) \in V^{\lambda_{\ad}}_G(\mathcal{O})\}.
\end{align*}
\end{prop}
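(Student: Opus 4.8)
The plan is to translate the double-coset condition defining $X^\lambda_\gamma$ and $X^{\leq\lambda}_\gamma$ into a condition on the lifted element $\gamma_\lambda \in G_+(F)$ using the Cartan decomposition of the restricted monoids $V^{\lambda_{\ad}}_G(\mathcal{O})$ and $V^{\lambda_{\ad},0}_G(\mathcal{O})$ recorded in Section 2.7. Recall that under the hypothesis $\kappa_G(\gamma)=p_G(\lambda)$, Proposition \ref{newelem} furnishes $\gamma_\lambda\in G_+(F)$ with $(\gamma_\lambda)_{\ad}=\gamma_{\ad}$ and $\alpha_G(\gamma_\lambda)=t^{-w_0(\lambda_{\ad})}\in T_{\ad}(\mathcal{O})$, and the key input is Lemma \ref{adjointcrit}: an element $g_+\in G_+(F)$ lies in $V^{\lambda_{\ad}}_G(\mathcal{O})$ (resp. $V^{\lambda_{\ad},0}_G(\mathcal{O})$) precisely when $\alpha_G(g_+)\in t^{-w_0(\lambda_{\ad})}T_{\ad}(\mathcal{O})$ together with $(g_+)_{\ad}\in\overline{G_{\ad}(\mathcal{O})t^{\lambda_{\ad}}G_{\ad}(\mathcal{O})}$ (resp. the open Schubert cell).

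First I would fix $g\in\Gr_G(k)$, lift it to $G(F)$, and observe that $\Ad_{g^{-1}}(\gamma_\lambda)\in G_+(F)$ since the adjoint $G$-action on $V_G$ restricted to $G_+$ only conjugates the $G$-factor. The abelianization $\alpha_G$ is $\Ad(G)$-invariant, so $\alpha_G(\Ad_{g^{-1}}(\gamma_\lambda))=\alpha_G(\gamma_\lambda)=t^{-w_0(\lambda_{\ad})}\in t^{-w_0(\lambda_{\ad})}T_{\ad}(\mathcal{O})$ automatically; the first clause of Lemma \ref{adjointcrit} is thus satisfied for free. Next, the projection $(\cdot)_{\ad}\colon G_+\to G_{\ad}$ intertwines the adjoint action, so $(\Ad_{g^{-1}}(\gamma_\lambda))_{\ad}=\Ad_{\bar g^{-1}}(\gamma_{\ad})$ where $\bar g$ is the image of $g$ in $G_{\ad}(F)$. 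Now I invoke Lemma \ref{adjointcrit}: $\Ad_{g^{-1}}(\gamma_\lambda)\in V^{\lambda_{\ad},0}_G(\mathcal{O})$ iff $\Ad_{\bar g^{-1}}(\gamma_{\ad})\in G_{\ad}(\mathcal{O})t^{\lambda_{\ad}}G_{\ad}(\mathcal{O})$, and similarly for the closed version with the Schubert variety. Finally I compare this with the original definitions \eqref{masfo}, \eqref{masfc}: the condition $\Ad_{g^{-1}}(\gamma)\in G(\mathcal{O})t^\lambda G(\mathcal{O})$ in $G(F)$ maps, under $(\cdot)_{\ad}$, to $\Ad_{\bar g^{-1}}(\gamma_{\ad})\in G_{\ad}(\mathcal{O})t^{\lambda_{\ad}}G_{\ad}(\mathcal{O})$, and conversely — this equivalence, which uses $\kappa_G(\gamma)=p_G(\lambda)$ to control the central ambiguity, is exactly the content already packaged into Proposition \ref{nonemp} and the discussion preceding it, so I would cite that rather than reprove it. Combining the chain of iff's gives the two claimed isomorphisms.

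The main obstacle is the passage between $G(F)$-double cosets and $G_{\ad}(F)$-double cosets, i.e. checking that $\Ad_{g^{-1}}(\gamma)\in G(\mathcal{O})t^\lambda G(\mathcal{O})$ is genuinely equivalent to its image condition in $G_{\ad}$, rather than merely implied by it. The subtlety is that the fibers of $G(F)\to G_{\ad}(F)$ and of $X_*(T)\to X_*(T_{\ad})$ are nontrivial (governed by $Z$ and $\pi_1$), so a priori several $G$-double cosets could collapse to one $G_{\ad}$-double coset; the hypothesis $\kappa_G(\gamma)=p_G(\lambda)$ is what pins down the correct component, and the uniqueness-up-to-$Z(F)$ in Proposition \ref{newelem} is what makes the choice of $\gamma_\lambda$ immaterial on $\Gr_G$. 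I would handle this by noting that $\Ad_{g^{-1}}(\gamma)$ and $t^\lambda$ have the same Kottwitz invariant (both equal $p_G(\lambda)$), so their images under $(\cdot)_{\ad}$ lying in the same $G_{\ad}$-double coset forces them into the same $G$-double coset — this is the standard fact that the Cartan double cosets of $G(F)$ are detected by $G_{\ad}(F)$ together with $\kappa_G$. Everything else is bookkeeping, and the isomorphisms in the statement are then just a reindexing of \eqref{masfo}, \eqref{masfc} via $\gamma\leftrightarrow\gamma_\lambda$.
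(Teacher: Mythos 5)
Your proposal is correct but proceeds by a genuinely different mechanism in the converse direction than the paper does. The forward direction (from the Schubert-cell condition in $G(F)$ to membership in $V^{\lambda_{\ad}}_G(\mathcal{O})$) is the same in both: reduce to the $G_{\ad}$-condition via $(\cdot)_{\ad}$ and the automatic $\alpha_G$ equality, then invoke Lemma \ref{adjointcrit}. But for the converse, the paper starts from $\Ad_{g^{-1}}(\gamma_\lambda)\in V^{\lambda_{\ad}}_G(\mathcal{O})$, passes to $G_{\ad}$, chooses a fresh lift $\widetilde{\gamma}\in G(F)$ of $\gamma_{\ad}$ with $\kappa_G(\widetilde{\gamma})=p_G(\lambda)$, concludes $g\in X^{\leq\lambda}_{\widetilde{\gamma}}$, and then invokes Steinberg's theorem on the transporter $\mathrm{Tran}(\widetilde{\gamma},\gamma)$ (a $G_{\widetilde\gamma}$-torsor over $F$, hence trivial) to replace $\widetilde\gamma$ by $\gamma$. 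You instead work directly with $\gamma$ itself: apply the Cartan decomposition of $G(F)$ to $\Ad_{g^{-1}}(\gamma)$ to get a $\mu'\in X_*(T)^+$, note $\kappa_G$ gives $p_G(\mu')=p_G(\lambda)$, project to $G_{\ad}$ to see $\mu'_{\ad}\leq\lambda_{\ad}$, and conclude $\mu'\leq\lambda$. Your route avoids Steinberg's theorem entirely and, incidentally, sidesteps an awkwardness in the paper's argument (conjugating $\widetilde\gamma$ to $\gamma$ by some $h\in G(F)$ a priori shifts $g$ to $hg$, an extra bookkeeping step the paper glosses over), so it is arguably the cleaner route.

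Two small points you should tighten. First, the citation of Proposition \ref{nonemp} for the equivalence between $G$-cosets and $G_{\ad}$-cosets is misdirected: that proposition is a non-emptiness criterion and does not actually contain the element-wise statement you need; fortunately you go on to sketch the correct argument anyway, so this is a labeling issue rather than a gap. Second, in the closed version, the bookkeeping step deserves to be spelled out: from $\mu'_{\ad}\leq\lambda_{\ad}$ and $p_G(\mu')=p_G(\lambda)$ you get $\lambda-\mu'\in\Phi^\vee_{\mathbb{Z}}$ whose image in $X_*(T_{\ad})$ is a non-negative combination of simple coroots; since $X_*(T)\to X_*(T_{\ad})$ is injective on $\Phi^\vee_{\mathbb{Z}}$ (as $X_*(Z^0)\cap\Phi^\vee_{\mathbb{Z}}=0$) and maps coroots to coroots, the coefficients match and $\lambda-\mu'$ is itself a non-negative combination of simple coroots, i.e. $\mu'\leq\lambda$. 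Your phrase ``forces them into the same $G$-double coset'' really only addresses the open-cell case; this extra paragraph handles $\mu'<\lambda$.
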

\begin{proof}
We will only prove this for $X^{\leq \lambda}_\gamma$. The proof for $X^{\lambda}_\gamma$ follows suit. Suppose $g \in X^{\leq \lambda}_\gamma.$ Then, $g^{-1}\gamma_\lambda g \in G(\mathcal{O})t^\mu G(\mathcal{O})$ for some $\mu \in X_*(T)_{\ad}^+$ satisfying $\mu \leq \lambda_{\ad}.$ Furthermore, $\kappa_G(\gamma) = p_G(\mu) =  p_G(\lambda)$ by definition of $\kappa_G$ and $p_G.$ By Lemma \ref{newelem}, there exists $\gamma_\lambda \in G_+(F)$ such that 
$$(g^{-1}\gamma_\lambda g)_{\ad} \in G_{\ad}(\mathcal{O})t^{\lambda_{\ad}}G_{\ad}(\mathcal{O}), \quad \text{and} \quad \alpha_G(g^{-1}\gamma_\lambda g) = \alpha_G \in t^{-w_0(\lambda_{\ad})}T_{\ad}(\mathcal{O}).$$
By Lemma \ref{adjointcrit}, $g^{-1}\gamma_\lambda g \in V^{\lambda_{\ad}}_G(\mathcal{O}).$

Conversely, suppose that $g^{-1}\gamma_\lambda g \in V^{\lambda_{\ad}}_G(\mathcal{O})$. Then, $g^{-1}\gamma_\lambda g \in G_+(\mathcal{O})t^{(-w_0(\lambda_{\ad}), \mu)}G_+(\mathcal{O})$ for some $\mu \in X_*(T_{\ad})^+$ satisfying $\mu \leq \lambda_{\ad}$. This surjects onto $g^{-1}\gamma_{\ad}g \in G_{\ad}(\mathcal{O})t^{\mu}G_{\ad}(\mathcal{O})$. Choose a lift $\widetilde{\gamma} \in G(F)$ of $\gamma_{\ad} \in G_{\ad}(F)$ such that $\kappa_G(\widetilde{\gamma}) = \kappa_G(\gamma) = p_G(\lambda).$ Then, $g \in X^{\lambda}_{\widetilde{\gamma}}$. We need to show that $\widetilde{\gamma}$ is conjugate to $\gamma$ in $G(F).$ Consider the transporter of $\widetilde{\gamma}$ and $\gamma$, i.e. 
$$\mathrm{Tran}(\widetilde{\gamma}, \gamma) = \{g \in G: g\widetilde{\gamma}g^{-1} = \gamma\}.$$
It is a torsor under the torus $G_{\widetilde{\gamma}}$ over $F$, the function field of a smooth curve over an algebraically closed field. By Steinberg's theorem, this torsor is trivial and $\mathrm{Tran}(\widetilde{\gamma}, \gamma)$ contains an $F$-point. Since $\widetilde{\gamma}$ and $\gamma$ are conjugate, $g \in X^\lambda_\gamma$. \end{proof}
\subsection{Symmetries}
For some $v \in X$ and $a \in \mathfrak{C}_+(\mathcal{O}_v) \cap \mathfrak{C}_+^\times(F_v)^{\rs}$, we can define a commutative group scheme $J_a$ over $\Spec(\mathcal{O}_v)$ as the following pullback:
\begin{center} 
\begin{tikzpicture}[node distance=2cm]
\node (A) {$J_a$};
\node (B) [right of = A] {$J$};
\node (C) [below of = A] {$\Spec(\mathcal{O}_v)$};
\node (D) [right of = C] {$\mathfrak{C}_+$};
\draw[->, below] (A) to node {\hspace{-1.3cm} $\mathlarger{\mathlarger{\mathlarger{\lrcorner}}}$} (B);
\draw[->, right] (B) to node {} (D);
\draw[->,above] (C) to node {$a$} (D);
\draw[->] (A) to node {} (C);
\end{tikzpicture}
\end{center}
where $J$ is the commutative group scheme from \ref{jscheme}.
\begin{defn} The \emph{local Picard functor} is the affine Grassmannian $P_v(a) = \Gr_{J_a}$.
\end{defn}
It parametrizes $J_a$-torsors on the formal disc with a trivialization over the punctured disc. There is a natural action of $P_v(a)$ on $M_v(a)$ in the following way. An $S$-point of $M_v(a)$ is the tuple $(E, \phi, \sigma)$ consisting of a $G$-torsor over $X_v \widehat{\times} S,$ a $G$-equivariant map $\phi: E \rightarrow V_G$ and a trivialization $\sigma$ over $X_v^\bullet \widehat{\times} S.$ An $S$-point of $P_v(a)$ is the pair $(E_J, \sigma_J)$ consisting of a $J_a$-torsor $E_J$ over $X_v \widehat{\times} S$ and a trivialization $\sigma_J$ over $X_v^\bullet \widehat{\times} S.$ 

Since $J_a$ acts on the fibers of $\phi: E \rightarrow V_G$ via the homomorphism $\chi_+^*J \rightarrow I$ over $V_G$, we can produce a new bundle $E' := E \times^{{J_a}}_{\phi, V_G} E_J$. This comes with a $G$-equivariant map $\phi': E' \rightarrow V_G$. The trivialization $\sigma_J$ induces the isomorphism $E' \cong E$ over $X_v^\bullet \widehat{\times} S.$ We can define $\sigma'$ as the composition of this isomorphism with $\sigma.$ So, $(E_J, \sigma_J)$ acts on $(E, \phi, \sigma)$ to produce $(E', \phi', \sigma').$
\subsection{Dimension formula}
The dimension formula of multiplicative affine Springer fibers was the main result of \cite{Chi22}. He first computed the dimension of the regular locus. Since $X^{\lambda, \reg}_\gamma$ is not dense in $X^\lambda_\gamma$ and there might be irreducible components disjoint from $X^{\lambda, \reg}_\gamma$, we cannot readily conclude that $\dim X^{\lambda, \reg}_\gamma = \dim X^{\lambda}_\gamma$ like in the Lie algebra case. So instead, \cite{Chi22} used the multiplicative Hitchin fibration and the Product formula to prove the following results on the dimension and equidimensionality of the fibers. 
\begin{defn} Let $\gamma \in G(F)^{\rs}.$ Its \emph{discriminant valuation} is defined as 
$$d(\gamma) = \val \det( \id - \ad_\gamma: \mathfrak{g}(F)/\mathfrak{g}_\gamma(F) \rightarrow \mathfrak{g}(F)/\mathfrak{g}_\gamma(F)),$$
where $\mathfrak{g}_\gamma$ is the centralizer of $\gamma$ under $\ad_\gamma$. Meanwhile, the \emph{extended discriminant valuation} of $a \in \mathfrak{C}_+(\mathcal{O}) \cap \mathfrak{C}^\times_+(F)^{\rs}$ is 
$d_+(a) := \val(a^*\mathfrak{D}_+).$ If $a = \chi_+(\gamma_\lambda)$, then we will refer to $d_+(a)$ as 
$$d_+(a) = d_+(\gamma) :=  \langle 2 \rho, \lambda \rangle + d(\gamma).$$
\end{defn}
\begin{defn}
For $\gamma \in G^{\rs}(F),$ we define $c(\gamma) := \rk(G) - \rk_F(G_\gamma).$ Here, $\rk_F(G_\gamma)$ is the dimension of the maximal $F$-split subtorus of the centralizer $G_\gamma.$
An element $\gamma \in G(F)^{\rs}$ is \emph{unramified} if $c(\gamma) = 0.$
\end{defn}

\begin{thrm}[\cite{Chi22}, Theorem 1.2.1]
The $k$-schemes $X^\lambda_\gamma$ and $X^{\leq \lambda}_\gamma$ with $a = \chi_+(\gamma_\lambda)$ are locally of finite type and are equidimensional of dimension
\begin{align*} \dim X^\lambda_\gamma = \dim X^{\leq \lambda}_\gamma  = \langle \rho, \lambda \rangle + \frac{d(\gamma) - c(\gamma)}{2} = \frac{d_+(\gamma)-c(\gamma)}{2}.
\end{align*}
\end{thrm}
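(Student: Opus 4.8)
The plan is to follow Ng\^o's Hitchin-fibration strategy, adapted to the Vinberg monoid. The essential difficulty is that $V_G^{\reg}$ fails to be dense in the fibers of $\chi_+$ (in contrast with $\mathfrak{g}^{\reg}\subseteq\mathfrak{g}$), so the regular locus $X^{\lambda,\reg}_\gamma$ need not be dense in $X^\lambda_\gamma$ and may omit entire irreducible components; hence one cannot read off the dimension formula from the regular locus alone. First I would compute $\dim X^{\lambda,\reg}_\gamma$ directly: for $a=\chi_+(\gamma_\lambda)\in\mathfrak{C}_+(\mathcal{O})\cap\mathfrak{C}^{\times}_+(F)^{\rs}$ (Proposition \ref{nonemp}), the stack $[V_G^{\reg}/G]$ is, on each Coxeter component, a $BJ$-gerbe over $\mathfrak{C}_+$ neutralized by a Steinberg quasi-section, so the regular part of $M_v(\gamma_\lambda)$ (Definition \ref{MASFunctor}), which corresponds to $X^{\lambda,\reg}_\gamma$ under Proposition \ref{connectiontoVin}, is \'etale-locally a torsor under the local Picard $P_v(a)=\Gr_{J_a}$, with $J_a$ pulled back from the regular centralizer $J$ of Proposition \ref{jscheme}. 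Therefore $\dim X^{\lambda,\reg}_\gamma=\dim\Gr_{J_a}$, and an explicit computation of this dimension --- using that $J_a$ is an open subgroup scheme of a N\'eron-type model over the disc of the torus $G_\gamma$, whose Serre/module-length invariant against a maximal torus records $d_+(\gamma)=\langle 2\rho,\lambda\rangle+d(\gamma)$ while the split-rank defect subtracts $c(\gamma)$ --- yields $\dim X^{\lambda,\reg}_\gamma=\langle\rho,\lambda\rangle+\tfrac{1}{2}(d(\gamma)-c(\gamma))=\tfrac{1}{2}(d_+(\gamma)-c(\gamma))$.

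Next I would globalize. Choose a smooth projective curve $X$, a closed point $v_0$, and a $T$-torsor $L$ of sufficiently large degree, together with a Hitchin base point $a_X\in\mathcal{A}_X$ restricting to $a$ at $v_0$ and whose associated multiplicative Higgs data is generic away from $v_0$ (regular semisimple, transverse to $\mathfrak{D}_+$ and to $\mathfrak{B}_+$). Running the multiplicative Hitchin fibration $h_X\colon\mathcal{M}_X\to\mathcal{A}_X$, the Product formula identifies $[\mathcal{M}_{a_X}/\mathcal{P}_{a_X}]$ with $\prod_v[M_v(a_v)/P_v(a_v)]$, the product over the finitely many $v$ where $a_X$ meets the discriminant or boundary divisors, whence $\dim\mathcal{M}_{a_X}=\dim\mathcal{P}_{a_X}+\sum_v(\dim M_v(a_v)-\dim P_v(a_v))$. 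Now compute the two global quantities separately: although $\mathcal{M}_X$ is not smooth, $\mathcal{M}^\heart_X$ admits a local model of singularities --- flat, with fibers a product of quotients of affine Schubert varieties --- so a deformation-theoretic Euler-characteristic computation on $X$ gives $\dim\mathcal{M}_{a_X}$ in terms of $\deg L$, the genus, and the cocharacters $\lambda_v$; and $\dim\mathcal{P}_{a_X}$ follows from the exact sequence relating $\mathcal{P}_{a_X}$ to its abelianization and the local terms of the $J_{a_v}$. These force $\dim\mathcal{M}_{a_X}-\dim\mathcal{P}_{a_X}=\sum_v\tfrac{1}{2}(d_+(a_v)-c_v)$; since at every $v$ where $a_X$ is unramified both $\dim M_v(a_v)=\dim P_v(a_v)$ (by the previous paragraph) and $\tfrac{1}{2}(d_+(a_v)-c_v)=0$, matching the Product-formula decomposition term by term yields $\dim M_{v_0}(a)=\tfrac{1}{2}(d_+(\gamma)-c(\gamma))$, i.e. $\dim X^{\leq\lambda}_\gamma\le\langle\rho,\lambda\rangle+\tfrac{1}{2}(d(\gamma)-c(\gamma))$, and a fortiori the same bound for the open subscheme $X^\lambda_\gamma$; combined with the first paragraph this gives equality for both.

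Local finiteness is immediate since $X^{\leq\lambda}_\gamma\cong M_{v_0}(a)$ is an increasing union of projective schemes. For equidimensionality, $X^{\lambda,\reg}_\gamma$ is already equidimensional of the asserted dimension; to control the non-regular components I would use local constancy of $\dim M_v(\cdot)$ along the stratum of $\mathcal{A}_X$ on which $d_+$ and $c$ are fixed to reduce to the split case, $\gamma$ conjugate to $t^{\nu_\gamma}$. There the Cartan-type orbit decomposition of $V^{\lambda_{\ad},0}_G(\mathcal{O})$ identifies $X^\lambda_{t^{\nu_\gamma}}$ with a Mirkovi\'c--Vilonen-type cycle in $\Gr_G$ attached to the translation element $t^\lambda$ and the coweight $\nu_\gamma$, which is known to be equidimensional. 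Finally, since the Product formula presents $\mathcal{M}_{a_X}$ as assembled from the (smooth) Picard stack $\mathcal{P}_{a_X}$ and the product of the $M_v$, equidimensionality of the local model of singularities propagates to $M_{v_0}(a)=X^{\leq\lambda}_\gamma$, and hence to its open subscheme $X^\lambda_\gamma$.

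The hardest part is the dimension count of the second paragraph: controlling $\mathcal{M}^\heart_X$ well enough to pin down $\dim\mathcal{M}_{a_X}$ in spite of the non-smoothness of the multiplicative Hitchin stack --- i.e. establishing the local model of singularities together with its flatness and fiberwise equidimensionality --- and carrying out the two independent global Euler-characteristic computations so that they agree and isolate precisely the local term $\tfrac{1}{2}(d_+(\gamma)-c(\gamma))$. The requisite ampleness hypotheses on $L$ are exactly what render the deformation theory unobstructed on the relevant open locus and keep the bad locus of codimension $\ge 2$; verifying the needed transversality of $\mathfrak{D}_+$ and $\mathfrak{B}_+$ and the $\delta$-regularity estimates along $\mathcal{A}_X$ is the technical core.
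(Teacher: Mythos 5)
Your first and third paragraphs reproduce, in outline, the strategy of \cite{Chi22} that this paper summarizes and later imitates in the parabolic setting (regular locus as a homogeneous space under $P_v(a)=\Gr_{J_a}$ giving the lower bound; globalization via the Product formula; equidimensionality via the local model of singularities). The genuine gap is in your second paragraph, which is the heart of the matter. You assert that two independent global computations --- a deformation-theoretic Euler-characteristic count of $\dim\mathcal{M}_{a_X}$ and a computation of $\dim\mathcal{P}_{a_X}$ --- ``force'' $\dim\mathcal{M}_{a_X}-\dim\mathcal{P}_{a_X}=\sum_v\tfrac{1}{2}(d_+(a_v)-c_v)$. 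But deformation theory, even with the local model of singularities and the ampleness hypotheses, controls the dimension of the total stack $\mathcal{M}^{\heart}_X$ near a point, not the dimension of the single fiber $\mathcal{M}_{a_X}$: without flatness (or at least constancy of fiber dimension) of $h_X$ over the relevant locus, the fiber dimension is not ``total minus base,'' and the possible excess is exactly the phenomenon to be excluded, namely non-regular components of too-large dimension. Moreover, in this circle of ideas flatness (cf.\ Corollary \ref{flatflat} in the parabolic case) is \emph{deduced from} the dimension formula (Proposition \ref{paradim}) together with Cohen--Macaulayness, so invoking it here is circular. The step you omit is what actually closes the argument in \cite{Chi22} (mirrored in Section 7): first compute the unramified case directly (dimensions of MV-type intersections plus the admissible-subset theory for the map $f_\mu$, as in Theorem \ref{unramdim}), then interpolate $a_s=a(st_x+(1-s)t_x^e)$ between $a$ and the unramified $a_0$, choose a global section of the Hitchin base through this family inside the anisotropic, very ample locus, and use properness of $h_X$ there, upper semicontinuity of fiber dimension, and smoothness of $\mathcal{P}$ to transport the equality $\dim\mathcal{M}=\dim\mathcal{P}$ from $a_0$ to $a$; the Product formula then localizes the equality at $v_0$, and the regular-locus lower bound gives the stated dimension.

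Your equidimensionality argument has a related inversion. Local constancy (Theorem \ref{localconstancy}) applies only to $a'\equiv a \bmod t_v^N$, which preserves the cameral cover and in particular the ramification type; it cannot connect a ramified $\gamma$ to a split one, so there is no ``reduction to the split case'' along a stratum with fixed $d_+$ and $c$. In the actual proof the flow is the opposite of what you describe: the affine Schubert variety (resp.\ admissible union) is Cohen--Macaulay and equidimensional, the smooth local evaluation map to the local model makes the global fiber Cohen--Macaulay, the already-established dimension formula gives constant fiber dimension and hence flatness, so the global fiber is equidimensional, and the Product formula --- with all other local factors trivialized by transversality of $a_+$ away from $v_0$ --- transfers equidimensionality from $\mathcal{M}_{a_+}$ down to $M_{v_0}(a)$, not from the local fibers up. The MV-cycle equidimensionality enters only for the unramified anchor points, not via a deformation of $\gamma$ itself.
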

\begin{defn} \label{localdelta} We define the \emph{local $\delta$-invariant of $a$} as 
$\delta_v(a) = \dim P_v(a).$
\end{defn}
We will not explore this aspect here but the local $\delta$-invariant admits a geometric description via local Neron models and cameral covers \cite[Corollary 3.3.4]{Chi22}.
\section{Multiplicative Hitchin fibration}
\subsection{Boundary divisors}
Let $X$ be a smooth, projective, connected curve over $k$.
\begin{defn}
A \emph{boundary divisor} is a $X_*(T_{\ad})^+$-valued divisor $$\lambda = \sum_{x \in |X|} \lambda_x x,$$
where $\lambda_x$ is a non-zero dominant coweight in $X_*(T_{\ad})^+$ for finitely many $x \in X$.
\end{defn}
The mapping stack $\underline{\Hom}(X, [A_G/T_{\ad}])$ sends every $k$-scheme $S$ to the set of pairs $(E, \varphi)$ consisting of a $T_{\ad}$-torsor $E$ over $X \times S$ and a section $\varphi \in H^0(X \times S, E \wedge^{T_{\ad}} A_G)$.

Consider the open substack $\mathcal{B}^{\ad}_X$ of $\underline{\Hom}(X, [A_G/T_{\ad}])$ such that for every $k$-scheme $S$, the set $\mathcal{B}^{\ad}_X(S)$ contains pairs $(E, \varphi)$ satisfying the following: 
for any geometric point $s \in S$, the image of the generic point of $X \times \{s\}$ under $\varphi$ is contained in $E \wedge^{T_{\ad}} T_{\ad}$. In other words, each $k$-point of $\mathcal{B}^{\ad}_X$ corresponds to a map $h_{(E, \phi)}: X \rightarrow [A_G/T_{\ad}]$ such that $h^{-1}_{(E, \varphi)}[T_{\ad}/T_{\ad}]$ is a non-empty subset of $X$.
\begin{prop}[\cite{BNS16}, Lemma 3.4] There is a canonical bijection between the set of boundary divisors on $X$ and $\mathcal{B}^{\ad}_X$.
\end{prop}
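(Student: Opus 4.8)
The plan is to make both sides of the claimed bijection completely explicit. First I would rewrite the stack $[A_G/T_{\ad}]$ as a product of $r$ copies of $[\mathbb{A}^1/\mathbb{G}_m]$, then use the interpretation of $[\mathbb{A}^1/\mathbb{G}_m]$ as the moduli stack of a line bundle together with a global section, translate the open condition cutting out $\mathcal{B}^{\ad}_X$ into the requirement that each of these sections be nonzero, observe that a nonzero section of a line bundle on a smooth projective connected curve is the same datum as an effective divisor, and finally repackage an $r$-tuple of effective divisors as a single $X_*(T_{\ad})^+$-valued divisor with finite support, i.e. a boundary divisor.

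For the first step I would use the description of the abelianization recalled in \S2.2: $A_G = V_G\sslash(G\times G)\cong\mathbb{A}^r$ with unit group $T_{\ad}$, the isomorphism $A_G\cong\mathbb{A}^r$ being the one whose $r$ coordinates restrict on $T_{\ad}$ to the simple roots $\alpha_1,\dots,\alpha_r$ (this is the normalization built into the bottom arrow of \eqref{arc}). Since the $\alpha_i$ form a $\mathbb{Z}$-basis of $X^*(T_{\ad})=\Phi_\mathbb{Z}$, the map $(\alpha_1,\dots,\alpha_r)\colon T_{\ad}\to\mathbb{G}_m^r$ is an isomorphism, and the $T_{\ad}$-action on $A_G$ coming from the $T$-action on $V_G$ by multiplication on the first factor is, in these coordinates, the coordinatewise scaling action. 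Hence $[A_G/T_{\ad}]\cong\prod_{i=1}^r[\mathbb{A}^1/\mathbb{G}_m]$, and a $k$-point of $\underline{\Hom}(X,[A_G/T_{\ad}])$ amounts to an $r$-tuple $\big((\mathcal{L}_1,s_1),\dots,(\mathcal{L}_r,s_r)\big)$ of line bundles on $X$ equipped with global sections. Under this dictionary the open substack $[T_{\ad}/T_{\ad}]\subset[A_G/T_{\ad}]$ corresponds to the locus where every $s_i$ is nowhere vanishing, so the condition defining $\mathcal{B}^{\ad}_X$ — that the generic point of each geometric fiber land in $E\wedge^{T_{\ad}}T_{\ad}$ — says precisely, on $k$-points, that each $s_i$ is a nonzero section.

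It then remains to interpret a line bundle with a nonzero section on $X$. Since $X$ is smooth, projective and connected, $H^0(X,\mathcal{O}_X^\times)=k^\times$, so a line bundle equipped with a nonzero section has no nontrivial automorphisms; the groupoid $\mathcal{B}^{\ad}_X(k)$ is thus discrete and its objects are, up to unique isomorphism, the pairs $(\mathcal{O}_X(D_i),\mathbf{1}_{D_i})$ for effective divisors $D_i=\sum_x\ord_x(s_i)\,x\ge 0$. An $r$-tuple of effective divisors is the same as a single divisor $\sum_{x\in|X|}\big(\ord_x(D_1),\dots,\ord_x(D_r)\big)\,x$ valued in $\mathbb{N}^r$ with finitely many nonzero coefficients, and the map $\lambda\mapsto(\langle\alpha_1,\lambda\rangle,\dots,\langle\alpha_r,\lambda\rangle)$ is an isomorphism $X_*(T_{\ad})^+\xrightarrow{\sim}\mathbb{N}^r$ (again because the $\alpha_i$ are a $\mathbb{Z}$-basis of $X^*(T_{\ad})$ and dominance means non-negativity against every $\alpha_i$). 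Composing these identifications sends $\big((\mathcal{L}_i,s_i)\big)_i$ to the boundary divisor $\lambda=\sum_x\lambda_x\,x$ with $\langle\alpha_i,\lambda_x\rangle=\ord_x(s_i)$; every step is canonical and natural in $X$, giving the asserted canonical bijection.

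The step I expect to be the actual content is the very first one: checking that in the coordinates on $A_G\cong\mathbb{A}^r$ that are forced on us by \eqref{arc}, the $T_{\ad}$-action is coordinatewise scaling by the simple roots, so that the stack genuinely decomposes as $\prod[\mathbb{A}^1/\mathbb{G}_m]$. This is where one must use the structure of the abelianization from \S2.2 rather than merely shuffle divisors; once that decomposition is in hand, the remaining steps are the standard ``global section of a line bundle $=$ effective divisor'' bookkeeping and pose no difficulty.
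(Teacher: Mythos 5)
The paper gives no proof of this Proposition at all: it is attributed to [BNS16, Lemma 3.4] and taken as given, so there is no in-paper argument to compare against. Your argument is correct and complete, and it is the natural direct proof. The key observation is exactly the one you flag as the real content: in the normalization of $A_G\cong\mathbb{A}^r$ used throughout the paper (see the bottom arrow of \eqref{arc}), the $T_{\ad}$-action becomes coordinatewise scaling, because the simple roots $\alpha_1,\dots,\alpha_r$ are a $\mathbb{Z}$-basis of $X^*(T_{\ad})=\Phi_{\mathbb{Z}}$ when $G$ is semisimple; that is what splits $[A_G/T_{\ad}]$ as $\prod_{i=1}^r[\mathbb{A}^1/\mathbb{G}_m]$. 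After that, the dictionary (line bundle, nonzero section) $\leftrightarrow$ effective divisor $\leftrightarrow$ $\mathbb{N}$-valued finitely supported divisor, and the identification $X_*(T_{\ad})^+\xrightarrow{\sim}\mathbb{N}^r$ via pairing with the $\alpha_i$, are routine. You also correctly addressed the one point that could otherwise be swept under the rug: since $X$ is proper and connected and each $s_i\neq 0$, the automorphism group of every object of $\mathcal{B}^{\ad}_X(k)$ is trivial (scaling by $t\in(k^\times)^r$ must fix each nonzero $s_i$, forcing $t=1$), so the groupoid is discrete and it makes sense to assert a bijection of sets.
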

\subsection{Global constructions}
The translation action of $Z_+=T$ on $V_G$ induces the following maps of quotient stacks:
$$[V_G/(T \times G)] \rightarrow [\mathfrak{C}_+/T] \rightarrow [A_G/T] \rightarrow BT.$$
Meanwhile, the quotient $T \rightarrow T_{\ad}$ induces the map $\Bun_T \rightarrow \Bun_{T_{\ad}}.$ This, along with the above, gives rise to the following maps between mapping stacks:
\begin{center} 
\begin{tikzpicture} \label{mHdiag}
\node (A) at (0, 0) {$\underline{\Hom}(X, [V_G/(T \times G)])$};
\node (B) at (5, 0) {$\underline{\Hom}(X, [\mathfrak{C}_+/T] )$};
\node (C) at (9, 0) {$\underline{\Hom}(X, [A_G/T])$};
\node (D) at (12, 0) {$\Bun_T$};
\node (E) at (9, -2) {$\underline{\Hom}(X, [A_G/T_{\ad}])$};
\node (F) at (12, -2) {$\Bun_{T_{\ad}}$};
\draw[->] (A) to node {} (B);
\draw[->] (B) to node {} (C);
\draw[->] (C) to node {} (D);
\draw[->] (C) to node {} (E);
\draw[->] (E) to node {} (F);
\draw[->] (D) to node {} (F);
\end{tikzpicture}
\end{center}
Let $\mathcal{B}_X$ be the preimage of  $\mathcal{B}^{\ad}_X$ of boundary divisors under the map $\Bun_T \rightarrow \Bun_{T_{\ad}}$. Since $\mathcal{B}_X \rightarrow \mathcal{B}^{\ad}_X$ is a $Z$-gerbe over its image, $\mathcal{B}_X$ is a proper Deligne-Mumford stack, locally of finite type. Let $\mathcal{M}_X \subseteq \underline{\Hom}(X, [V_G/(T \times G)])$ and $\mathcal{A}_X \subseteq \underline{\Hom}(X, [\mathfrak{C}_+/T] )$ be the corresponding preimages  of $\mathcal{B}_X$. They all fit into the diagram:
\begin{equation} 
\begin{tikzpicture}[node distance=2.4cm]
\node (A) {$\mathcal{M}_X$};
\node (B) [right of = A] {$\mathcal{A}_X$};
\node (C) [right of = B] {$\mathcal{B}_X$};
\node (D) [right of = C] {$\Bun_T$};
\node (E) [below of = C] {$\mathcal{B}^{\ad}_X$};
\node (F) [below of = D] {$\Bun_{T_{\ad}}$};
\draw[->, below] (A) to node {} (B);
\draw[->] (B) to node {} (C);
\draw[->] (C) to node {} (D);
\draw[->] (C) to node {} (E);
\draw[->] (E) to node {} (F);
\draw[->] (D) to node {} (F);
\end{tikzpicture}
\end{equation}
If $b \in \mathcal{B}_X$, denote the fiber of $\mathcal{A}_X$ over $b$ as $\mathcal{A}_b.$
\begin{defn}
The \emph{multiplicative Hitchin fibration} is the map $$h_X: \mathcal{M}_X \rightarrow \mathcal{A}_X$$ with total stack $\mathcal{M}_X$ and base $\mathcal{A}_X$. Fibers of $h_X$ over $a \in \mathcal{A}_X$ are called \emph{multiplicative Hitchin fibers} and they are denoted by $\mathcal{M}_a$.
\end{defn}
Choose a $Z_+$-torsor $L \in \Bun_T$ on $X$. Denote $V_G^L$, $A_G^L$ and $\mathfrak{C}_+^L$ as the twists of $V_G$, $A_G$ and $\mathfrak{C}_+$ by $L$. 
For example, $V^L_G = V_G \wedge^{Z_+} L = V_G \wedge^T L$. 
The fiber of the top row of \eqref{mHdiag} over $L \in \Bun_T$ is a sequence of maps
$$ \underline{\Hom}(X, V^L_G) \rightarrow \underline{\Hom}(X, \mathfrak{C}^L_+) \rightarrow  \underline{\Hom}(X, A_G^L).$$
The substack $\mathcal{M}_{X, L}$ classifies \emph{($L$-twisted) multiplicative Higgs bundles}  $(E, \varphi)$ consisting of a $G$-torsor $E$ on $X$ and a \emph{multiplicative Higgs field} $\varphi \in H^0(X, E \wedge^G V^L_G).$
For each $\omega \in X^*(T)$, denote $\omega(L)$ as the line bundle on $X$ obtained by pushing $L$ along $\omega$. Recall that $\mathfrak{C}_+ = A_G \times (G \sslash \Ad(G))$. Then, 
$$\underline{\Hom}(X, A_G^L) = \bigoplus^r_{i=1} H^0(X, \alpha_i(L)), \quad \underline{\Hom}(X, \mathfrak{C}^L_+)= \underline{\Hom}(X, A_G^L) \oplus \left(\bigoplus^r_{i=1} H^0(X, \omega_i(L))\right).$$
The open subspace $\mathcal{B}_{X, L}$ of $\underline{\Hom}(X, A_G^L)$ contain sections $(b_1, ..., b_r)$ where $b_i \neq 0$ for all $1 \leq i \leq r$. 
\subsection{Symmetries}
In Proposition \ref{jscheme}, we have a commutative group scheme $J$ over $\mathfrak{C}_+$. Since the $Z_+$-action on $\mathfrak{C}_+$ lifts to an action of $J$, we have the map
$[J/Z_+] \rightarrow [\mathfrak{C}_+/Z_+]$. Define $J_X$ as the commutative group scheme given by the pullback
\begin{center} 
\begin{tikzpicture}
\node (A) at (0,0) {$J_X$};
\node (B) at (2.5, 0) {$[J/Z_+]$};
\node (C) at (0, -2) {$X \times \mathcal{A}_X$};
\node (D) at (2.5, -2) {$[\mathfrak{C}_+/Z_+]$};
\draw[->, below] (A) to node {\hspace{-1.4cm} $\mathlarger{\mathlarger{\mathlarger{\lrcorner}}}$} (B);
\draw[->] (B) to node {} (D);
\draw[->, above] (C) to node {$\eva$} (D);
\draw[->] (A) to node {} (C);
\end{tikzpicture}
\end{center}
\begin{defn}
The \emph{relative Picard stack} is the smooth group stack 
$$p_X: \mathcal{P}_X = \Pic(J_X/X \times \mathcal{A}_X/\mathcal{A}_X) \rightarrow \mathcal{A}_X.$$ It classifies $J_X$-torsors over $X$ relative to $\mathcal{A}_X$ and naturally acts on $\mathcal{M}_X$ induced by the homomorphism $\chi^*_+I \rightarrow J.$
\end{defn}
If $a \in \mathcal{A}_X,$ denote $\mathcal{P}_a$ as the fiber of $\mathcal{P}_X$ of $a$. Let $\mathcal{M}^{\reg}_X$ be the substack of $\mathcal{M}_X$ classifying multiplicative Higgs bundles whose Higgs fields have images contained in $[V^{\reg}_G/G]$. 
\begin{defn}
The \emph{universal cameral cover} $\widetilde{\pi}: \widetilde{X} \rightarrow X \times \mathcal{A}_X$ is defined as the pullback
\begin{center} 
\begin{tikzpicture}
\node (A) at (0, 0) {$\widetilde{X}$};
\node (B) at (2.5, 0) {$[\overline{T}_+/Z_+]$};
\node (C) at (0, -2) {$X \times \mathcal{A}_X$};
\node (D) at (2.5, -2) {$[\mathfrak{C}_+/Z_+]$};
\draw[->, below] (A) to node {\hspace{-1.4cm} $\mathlarger{\mathlarger{\mathlarger{\lrcorner}}}$} (B);
\draw[->] (B) to node {} (D);
\draw[->, above] (C) to node {$\eva$} (D);
\draw[->] (A) to node {} (C);
\end{tikzpicture}
\end{center}
The fiber of $\widetilde{\pi}$ over $a \in \mathcal{A}_X(S)$ is called \emph{the cameral curve} $\widetilde{X}_a.$
\end{defn}
\begin{defn} \label{globaldelta}
For any $a \in \mathcal{A}^\heart_X(k),$ the \emph{global $\delta$-invariant of $a$} is defined as 
$$\delta_a = \sum_{v \in X \setminus U} \delta_v(a).$$
\end{defn}
Like its local counterpart, the global $\delta$-invariant can be described via Neron models and cameral curves. 
\subsection{Important loci in $\mathcal{A}_X$}
Recall that there is a discriminant divisor $\mathfrak{D}_+$ and a numerical boundary divisor $\mathfrak{B}_+$ in $\mathfrak{C}_+.$
\begin{defn}
The \emph{universal discriminant divisor} $\mathfrak{D}_X$ is the inverse image of $[\mathfrak{D}_+/Z_+]$ in $X \times \mathcal{A}_X$ via the evaluation map $\eva: X \times \mathcal{A}_X \rightarrow [\mathfrak{C}_+/Z_+].$ We also define $\mathcal{B}_X$ as the inverse image of $[\mathfrak{B}_+/Z_+]$. The fibers of $\mathfrak{D}_X$ and $\mathcal{B}_X$ over $a \in \mathcal{A}_X$ are called the \emph{discriminant divisor} $\mathfrak{D}_a$ and the \emph{boundary divisor} $\mathcal{B}_a.$ We also denote the pre-image of $[\mathfrak{C}^\times_+/Z_+]$ as $\mathfrak{C}_X^\times.$
\end{defn}
Next, we will consider a series of important subsets within $\mathcal{A}_X.$
\begin{defn}
The \emph{generically regular semisimple locus} $\mathcal{A}^\heart_X$ is the open locus in $\mathcal{A}_X$ such that $\mathfrak{D}_a$ is either empty or an effective divisor. It contains sections $a \in \mathfrak{C}_+$ that generically lie in the open subset $\mathfrak{C}^{\rs}_+$, i.e. $a(X) \not \subset \mathfrak{D}_+.$
\end{defn}
For $a \in \mathcal{A}^\heart_X$, the cameral curve $\widetilde{X}_a$ is reduced. Also, if $b = \sum_{v \in X(k)} \lambda_v v$ is a boundary divisor, then $\deg(\mathfrak{D}_a) = \sum_{v \in X(k)} \langle 2 \rho, \lambda_v \rangle$ \cite[Lemma 6.3.4]{Wang23}.
\begin{defn} The \emph{anisotropic locus} of $\mathcal{A}^\heart_X$
is given by the open subset $$\mathcal{A}^{\ani}_X = \{ a \in \mathcal{A}^\heart_X : \pi_0(\mathcal{P}_a) \text{ is finite.} \}.$$
\end{defn}
We denote the restrictions of $\mathcal{M}_X$ to $\mathcal{A}^\heart_X$ and $\mathcal{A}^{\ani}_X$ by $\mathcal{M}^\heart_X$ and $\mathcal{M}^{\ani}_X$ respectively.
\begin{defn}
For every positive integer $N$, a $Z_+$-torsor $L$ on $X$ is called \emph{very $(G, N)$-ample} if 
$$\deg(\omega(L)) > 2r(2g-1)+2+N$$
for every fundamental weight $\omega$ of $G$. If $N = 0$, then we say that $L$ is \emph{very $G$-ample}.
\\\\
A point $a \in \mathcal{A}_X(k)$ or $b \in \mathcal{B}_X(k)$ is \emph{very $(G, N)$-ample} if its associated bundle $L$ is. Let $\mathcal{A}_{\gg N} \subseteq \mathcal{A}_X$ and $\mathcal{B}_{\gg N} \subset \mathcal{B}_X$ be the set of very $(G, N)$-ample points. If $N = 0$, then we denote $\mathcal{A}_{\gg} := \mathcal{A}_{\gg 0}$ and $\mathcal{B}_{\gg} := \mathcal{B}_{\gg 0}$.
\end{defn}
\subsection{Product formula}
The relationship between multiplicative affine Springer fibers and multiplicative Hitchin fibers is captured neatly by the Product formula.  Let $a \in \mathcal{A}^{\ani}_X(k)$ and $U_a = a^{-1}([\mathfrak{C}_+^{rs}/Z^+]) \subset X$. For each $v \in X$, we can consider the restriction $a_v = a|_{\Spec(\mathcal{O}_v)}$ satisfying
$$\beta_G(a_v) \in t^{-w_0(\lambda_{\ad})}T_{\ad}(\mathcal{O}_v) \subset A_G(\mathcal{O}_v) \cap T_{\ad}(F_v)$$ for some $\lambda_{\ad} \in X_*(T_{\ad})^+$.
\begin{prop}[\cite{Wang23}, Proposition 6.9.1]
There is a homeomorphism of stacks,
$$\mathcal{P}_a \times^{\prod_{v \in X \setminus U_a} P_v(a_v)} \prod_{v \in X \setminus U_a} M_v(a) \rightarrow \mathcal{M}_a.$$
\end{prop}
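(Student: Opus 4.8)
This is \cite[Proposition~6.9.1]{Wang23}; the plan is to follow the template of Ng\^{o}'s product formula for the Hitchin fibration (\cite{Ngo08}) in the monoid-theoretic form of \cite{Chi22} and \cite{Wang23}, realizing $\mathcal{M}_a$ as the result of modifying a fixed ``regular'' reference multiplicative Higgs bundle globally by a $J_X$-torsor and locally, at each point of $X \setminus U_a$, by a multiplicative Higgs bundle on the formal disk.

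First I would build the map. Choose a Coxeter element $w$ and form the (twisted) extended Steinberg quasi-section $\varepsilon^w_+$; composing it with $a \colon X \times S \to [\mathfrak{C}_+/Z_+]$ produces a distinguished point $(E_0,\varphi_0) \in \mathcal{M}^{\reg}_a$ whose Higgs field factors through the component $[V^w_G/G]$. Over $U_a$ this is everywhere regular semisimple, and over each punctured formal disk $X^\bullet_v$ with $v \in X \setminus U_a$ it supplies exactly the reference trivialization used in Definition~\ref{MASFunctor}. Given a pair $(P, (m_v)_v)$ with $P \in \mathcal{P}_a$ and $m_v = (E_v, \varphi_v, \sigma_v) \in M_v(a)$, I would first act by $P$ on $(E_0, \varphi_0)$ through the homomorphism $\chi_+^* J \to I$ of Proposition~\ref{jscheme} to obtain $(E_1,\varphi_1)$, and then, at each $v$, glue $(E_1,\varphi_1)|_{X_v}$ to $(E_v,\varphi_v)$ along $X^\bullet_v$ using $\sigma_v$ against the $P$-twist of the reference trivialization (Beauville--Laszlo descent). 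Since each $\sigma_v$ agrees with the reference away from $v$, these local surgeries are mutually compatible and glue to a global $(E,\varphi) \in \mathcal{M}_a$. One then checks that replacing $(P,(m_v)_v)$ by its image under the diagonal action of $(p_v)_v \in \prod_{v} P_v(a_v)$ --- acting on $P$ via the natural map $\mathcal{P}_a \to \prod_v P_v(a_v)$ and on each $m_v$ by the local $P_v(a_v)$-action --- leaves $(E,\varphi)$ unchanged, so the construction descends to the contracted product.

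To prove bijectivity on $k$-points, I would start with $(E,\varphi) \in \mathcal{M}_a(k)$ and restrict to $U_a$, where $\varphi$ is everywhere regular semisimple. By the $BJ$-gerbe property of $[\chi_+] \colon [V^w_G/G] \to \mathfrak{C}_+$ and its neutralization by $\varepsilon^w_+$, the restriction $(E,\varphi)|_{U_a}$ differs from $(E_0,\varphi_0)|_{U_a}$ by a canonical $J_X|_{U_a}$-torsor $Q$. The crucial point is to extend $Q$ to a $J_X$-torsor $P$ on all of $X$; here I would use the smoothness of $J$ over $\mathfrak{C}_+$, the N\'eron-model behaviour of $J_{a_v}$, and the regularity of $\varphi$ at the generic point to argue the torsor is unobstructed across each $v$. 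At each bad $v$, the pair $(E,\varphi)|_{X_v}$, equipped with the trivialization coming from comparison with the $P$-twisted reference over $X^\bullet_v$, is a point $m_v \in M_v(a)$, and $(P, (m_v)_v)$ maps to $(E,\varphi)$, giving surjectivity. For injectivity, an isomorphism between the images of $(P,(m_v)_v)$ and $(P',(m'_v)_v)$ must respect the reference section over $U_a$, which forces $P$ and $P'$ to differ by a single $(p_v)_v \in \prod_v P_v(a_v)$; comparing on each formal disk then forces $m'_v = p_v \cdot m_v$. Hence the map is a monomorphism onto every point.

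Finally I would upgrade this to a homeomorphism of stacks. Both sides are algebraic stacks locally of finite type over $k$: for the source, $\mathcal{P}_a$ is the relative Picard stack, each $M_v(a)$ is an ind-scheme, and the anisotropy hypothesis $a \in \mathcal{A}^{\ani}_X$ makes $\pi_0(\mathcal{P}_a)$, and hence the relevant $\pi_0(P_v(a_v))$, finite, so the contracted product is well-behaved; a morphism between such stacks that is bijective on points and tame enough is a homeomorphism on underlying spaces. I expect the main obstacle to be the extension step: because, unlike the Lie-algebra case, $[V^{\reg}_G/G]$ is only a finite union of $BJ$-gerbes indexed by Coxeter elements rather than a single gerbe, one must track which component the reference section lives in and how $\varphi$ degenerates near the bad points. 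I would handle this by working inside the non-degenerate locus $V^0_G$, where the Cartan decomposition of $V^{\lambda_{\ad},0}_G$ pins down the component, and by invoking the local model of singularities to control $\varphi$ on each formal disk. A secondary, more bookkeeping, difficulty is making the Beauville--Laszlo gluing functorial in the test scheme $S$ and checking its compatibility with the $\prod_v P_v(a_v)$-quotient.
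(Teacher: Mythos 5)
The paper does not prove this proposition itself: it cites \cite[Proposition 6.9.1]{Wang23} and \cite[Theorem 4.2.10]{Chi22}, both of which adapt \cite[Proposition 4.13.1]{Ngo08}, and your sketch follows precisely that template --- twist a Steinberg-section reference regular Higgs bundle by a $J_a$-torsor, perform Beauville--Laszlo surgery at each $v \in X \setminus U_a$, and quotient by the $\prod_v P_v(a_v)$ ambiguity. You also correctly isolate the genuine monoid-specific wrinkle that $[V^{\reg}_G/G]$ is a finite union of $BJ$-gerbes indexed by $\Cox(W,S)$ rather than a single gerbe. The one imprecision is in your surjectivity step: the $J_a|_{U_a}$-torsor extends to $X$ not because of an obstruction-vanishing argument governed by ``smoothness of $J$ over $\mathfrak{C}_+$'' or ``N\'eron-model behaviour'', but simply because $J_{a_v}$-torsors over $F_v$ are trivial (the generic fiber of $J_a$ is a torus over $F_v$, which has cohomological dimension one by Tsen, so $H^1(F_v, J_{a_v}) = 0$), hence a Beauville--Laszlo gluing datum always exists; the set of choices is a $\prod_v P_v(a_v)$-torsor, which is precisely the ambiguity the contracted product absorbs. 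Phrasing it as you did suggests a nontrivial cohomological obstruction could appear, which would be a problem --- luckily there is none, and the standard Tsen/Steinberg argument closes the gap.
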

This was first established in \cite[Theorem 4.2.10]{Chi22}, who states that the proof is the same as the Lie algebra case \cite[Theorem 4.13.1]{Ngo08}. In \cite[\S 6.9]{Wang23}, the author spells out the proof in greater detail and proves a version for 2-stacks.
 
\subsection{Global affine Schubert Scheme}
For a scheme $S$, fix an element $b = (L, \theta) \in \mathcal{B}^{\ad}_X(S)$ consisting of a $T_{\ad}$-torsor $L$ over $X \times S$ and a section $\theta$ of $A_G^L$ that is generically contained in $T_{\ad}^L$ over every geometric point $s \in S$. Recall that the numerical divisor $\mathfrak{B}_+$ is a principal divisor in $A_G$ cut out by the product of all simple roots:
$$\Pi_\Delta = \prod^r_{i=1} e^{\alpha_i}.$$
Its pullback $\theta^*\Pi_\Delta$ defines a Cartier divisor $\mathfrak{B}_b$ on $X \times S$ relative to $S$. Let $\widehat{X}_{\mathfrak{B}_b}$ be the formal completion of $X \times S$ at $\mathfrak{B}_b.$ Define the following arc space over $\mathcal{B}^{\ad}_X$,
\begin{align}
L^+_{\mathcal{B}^{\ad}_X}G(S) &= \{(b, g): b \in \mathcal{B}^{\ad}_X(S), g \in G(\widehat{X}_{\mathfrak{B}_b})\} \label{globalgroup}.
\end{align}

We will now define the arc space $L^+_{\mathcal{B}^{\ad}_X}(V_G/A_G)$ locally. For any $b$, choose locally over $S$ a trivialization of the $T_{\ad}$-torsor $L$ over the disc $\widehat{X}_{\mathfrak{B}_b}$. It then lifts to a $T$-torsor $\widetilde{L}$ over $\widehat{X}_{\mathfrak{B}_b}$. Now, we can form the torsor $V_G^{\widetilde{L}}$ and consider the Cartesian diagram:
\begin{equation} \label{localcart}
\begin{tikzpicture}[node distance=2cm]
\node (A) {$V^L_{G, b}$};
\node (B) [right of = A] {$V_G^{\widetilde{L}}$};
\node (C) [below of = A] {$\widehat{X}_{\mathfrak{B}_b}$};
\node (D) [right of = C] {$A_G^L$};
\draw[->, below] (A) to node {\hspace{-1.3cm} $\mathlarger{\mathlarger{\mathlarger{\lrcorner}}}$} (B);
\draw[->] (B) to node {} (D);
\draw[->, above] (C) to node {$\theta$} (D);
\draw[->] (A) to node {} (C);
\end{tikzpicture}
\end{equation}
If $b$ corresponds to the boundary divisor $\sum_{v \in X(k)} \lambda_v v$, then $L^+_b(V^L_{G,b})(k)$ over $b$ is non-canonically isomorphic to $$\prod_{v \in X(k)} \left( \bigcup_{\substack{\mu \in X_*(T^{\ad})^+ \\ \mu \leq \lambda_v}} G(\mathcal{O}_v)t^{\langle -w_0(\lambda_v), \mu\rangle}_v G(\mathcal{O}_v)\right).$$
Define the following arc space over $\mathcal{B}^{\ad}_X,$
$$L^+_{\mathcal{B}^{\ad}_X}(V_G/A_G)(S) = \{(b, g): b \in \mathcal{B}^{\ad}_X(S), g \in V^L_{G, b}(\widehat{X}_{\mathfrak{B}_b})\}.$$
\begin{defn}
The \emph{global affine Schubert scheme associated with $V_G$} is the sheaf  
$$Q^{\ad}_X := L^+_{\mathcal{B}^{\ad}_X}(V_G/A_G)/L^+_{\mathcal{B}^{\ad}_X}G.$$
Note that it does not depend on the local trivialization of $L$ because the ambiguity involved in lifting $L$ to a $T$-torsor is cancelled out after taking the quotient by $L^+_{\mathcal{B}^{\ad}_X}G$.
\end{defn}
Its fiber over $b \in \mathcal{B}^{\ad}_X$ is 
$$\prod_{v \in X(k)} \Gr^{\leq \lambda_v}_{G_{\ad}}(k) = \prod_{v \in X(k)} \left( \bigcup_{\substack{\mu_v \in X_*(T^{\ad})_+ \\ \mu_v \leq \lambda_v}} G_{\ad}(\mathcal{O}_v)t^{\mu_v}_v G_{\ad}(\mathcal{O}_v)\right).$$
Replacing $V_G$ with $V^0_G$ in the above definition gives the sheaf
$$Q^{\ad, 0}_X := L^+_{\mathcal{B}^{\ad}_X}(V^0_G/A_G)/L^+_{\mathcal{B}^{\ad}_X}G$$
whose fibers over $b \in \mathcal{B}^{\ad}_X$ are 
$$\prod_{v \in X(k)} \Gr^{\lambda_v}_{G_{\ad}}(k) = \prod_{v \in X(k)} G_{\ad}(\mathcal{O}_v)t^{\lambda_v}_v G_{\ad}(\mathcal{O}_v).$$
In particular, if $\lambda_v = 0$ for some $v$, then $\Gr^{\leq \lambda_v}_{G_{\ad}} = \Gr^{\lambda_v}_{G_{\ad}} = G_{\ad}(\mathcal{O}_v)$.
Let $Q_X \rightarrow \mathcal{B}_X$ be the pull-back of $Q^{\ad}_X \rightarrow \mathcal{B}^{\ad}_X$ to $\mathcal{B}_X$. 
\begin{defn}\label{globhecke}
We call the pro-algebraic stack $[Q_X]_G := [L^+_{\mathcal{B}_X}G\setminus Q_X]$ the \emph{global Hecke stack}.
\end{defn}
More concretely, over the point $b \in \mathcal{B}_X$ with boundary divisor
$$\lambda_b = \sum_{v \in X(k)} \lambda_v v,$$
the fiber of $[Q_X]_G$ is isomorphic to 
$$\prod_{v \in X(k)} [L^+_b G\setminus \Gr_{G_{\ad}}^{\leq -w_0(\lambda_{\ad, v})}] := \prod_{v \in X(k)} \left[L^+_b G\setminus \left(\overline{G_{\ad}(O_{v})t^{-w_0(\lambda_{\ad, v})}_v G_{\ad}(O_{v})}\right)\right].$$
We can also introduce truncated versions. For each $b \in \mathcal{B}^{\ad}_X$, let $I_{\mathfrak{B}_b}$ be the ideal in $O_{X \times S}$ defining the divisor $\mathfrak{B}_b.$ Furthermore for each $N \in \mathbb{N},$ consider the infinitesimal neighbourhoods $D_N$ defined by $I_{\mathfrak{B}_b}^N$. Define the $N$th jet group
$$L^+_{\mathcal{B}^{\ad}_X, N} G(S) = \{(b, g): b \in \mathcal{B}^{\ad}_X(S), g \in G(D_N)\}.$$
The action of $L^+_{\mathcal{B}^{\ad}_X} G$ on $Q^{\ad}_X$ factors through jet group $L^+_{\mathcal{B}^{\ad}_X, N} G$.
Let $Q_X \rightarrow \mathcal{B}_X$ be the pull-back of $Q^{\ad}_X \rightarrow \mathcal{B}^{\ad}_X$ to $\mathcal{B}_X$. 
\begin{defn} \label{globheckeN}
 For $N \in \mathbb{N}$, we call the quotient stack $[Q_X]_{G, N} := [L^+_{\mathcal{B}_{X, N}}G \setminus Q_X]$ the \emph{$N$-truncated global Hecke stack}.
\end{defn}
\subsection{Local model of singularities}
Unlike the Lie algebra case, the total multiplicative Hitchin stack $\mathcal{M}_X$ is not smooth. 
To capture the local behaviour near its singularities, we build local models of $\mathcal{M}_X$ using global affine Schubert cells. By \cite[Theorem 6.10.2]{Wang23}, we can only do so when the global obstruction to deforming a point in $\mathcal{M}_X$ is completely determined by its local obstructions. 

Let $I \rightarrow V_G$ be the universal centralizer with fibers $I|_{\gamma} = G_\gamma$. Then, $I \rightarrow V_G$ is $(G \times T)$-equivariant and it descends to the quotient stack $[V_G/(G \times T)].$ Let $I_{(E, \varphi)}$ be the pullback of $I$ along the map $(E, \varphi) \in \Hom(X, [V_G^L/G])$. The group $I_{(E, \varphi)}$ is not flat in general but by \cite{BLR12}, there exists a unique smooth group scheme $I^{\mathrm{sm}}_{(E, \varphi)}$ over $X$ such that for any other smooth scheme $S$ over $X$, 
$$\Hom_X(S, I_{(E, \varphi)}) \cong \Hom_X(S, I^{\mathrm{sm}}_{(E, \varphi)}).$$
The natural action of $Z$ on $I^{\mathrm{sm}}_{(E, \varphi)}$ induces an action of the Lie algebra $\mathfrak{z}_G = \Lie(Z)$ on $\Lie( I^{\mathrm{sm}}_{(E, \varphi)})$.
\begin{thrm}[\cite{Wang23}, Theorem 6.10.2] Let $m = (L, E, \varphi) \in \mathcal{M}^\heart_X(k)$ be a point and $a = h(m)$. If 
$$H^1(X, (\Lie(I^{\mathrm{sm}}_{(E, \varphi)})/\mathfrak{z}_G)^*) = 0,$$
then for large enough $N$, the local evaluation map $\eva: \mathcal{M}_X \rightarrow [Q_X]$ is formally smooth at $m \in \mathcal{M}^\heart_X.$ Meanwhile, $\eva_N: \mathcal{M}_X \rightarrow [Q_X]_N$ is smooth at $m$. In particular, this is true if $L$ is very $(G, \delta(a))$-ample.
\end{thrm}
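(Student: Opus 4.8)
The result is \cite[Theorem 6.10.2]{Wang23}; I outline the proof, which adapts Ng\^{o}'s local-model-of-singularities method \cite[\S 4.14--4.15]{Ngo08} to the Vinberg monoid.

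First I would reduce formal smoothness of $\eva$ at $m$ to a single cohomological vanishing by comparing the deformation theories of source and target. Fixing the $Z_+$-torsor $L$, the point $m=(L,E,\varphi)$ is a map $f\colon X\to[V^L_G/G]$, whose deformations and obstructions are the hypercohomology $\mathbb{H}^{\bullet}(X,C)$ of the two-term multiplicative Higgs deformation complex $C=C_{(E,\varphi)}=[\,\mathrm{ad}(E)\to f^{*}\mathbb{T}_{[V^{L}_G/A^{L}_G]}\,]$ concentrated in degrees $0,1$; the target $[Q_X]$ (resp.\ its truncation $[Q_X]_N$) only records the restriction of $f$ to the formal neighborhood $\widehat{X}_{\mathfrak{B}_b}$ of the boundary divisor (resp.\ to its $N$th infinitesimal neighborhood), so its deformation theory at $\eva(m)$ is governed by $C$ on that formal disk. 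An excision triangle for $X=U\cup\widehat{X}_{\mathfrak{B}_b}$, with $U$ the complement of $\mathfrak{B}_b$ and $\widehat{X}^{\circ}_{\mathfrak{B}_b}$ the punctured disk, identifies the cone of $\mathbb{H}^{\bullet}(X,C)\to\mathbb{H}^{\bullet}(\widehat{X}_{\mathfrak{B}_b},C)$ with that of $\mathbb{H}^{\bullet}(U,C)\to\mathbb{H}^{\bullet}(\widehat{X}^{\circ}_{\mathfrak{B}_b},C)$; formal smoothness of $\eva$ at $m$ is then equivalent to the degree-$1$ part of this cone vanishing, with the degree-$0$ part automatically furnishing surjectivity on tangent spaces.

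Next I would analyze $C$ away from $\mathfrak{B}_b$. Since $m\in\mathcal{M}^{\heart}_X$ and, by \cite[Lemma 2.4.2]{Chi22}, the discriminant and boundary divisors of $a$ are disjoint on $X$, the Higgs field $\varphi$ is regular near $\mathfrak{B}_b$ and generically regular semisimple on $U$; using the smoothened centralizer $I^{\mathrm{sm}}_{(E,\varphi)}$ together with the homomorphism $\chi_{+}^{*}J\to I$ of Proposition \ref{jscheme}, one shows $C$ is, in the derived sense, controlled on $U$ by $\Lie(I^{\mathrm{sm}}_{(E,\varphi)})$, the fixed datum $L$ being responsible for quotienting out the central part $\mathfrak{z}_G=\Lie Z$. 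The multiplicative Higgs deformation complex carries a self-duality — the monoid analogue of the canonical symplectic pairing on $T^{*}\Bun_G$ — and combining it with Serre duality on $X$ converts the $H^{1}$ of the cone above into $H^{1}\!\bigl(X,(\Lie(I^{\mathrm{sm}}_{(E,\varphi)})/\mathfrak{z}_G)^{*}\bigr)$, the discrepancy between extension-by-zero and pushforward along $\mathfrak{B}_b$ affecting only degree $0$. Its vanishing therefore gives formal smoothness of $\eva$ at $m$. For the truncated statement, once $N$ exceeds a bound depending only on the level of singularity of $a$ along $\mathfrak{B}_b$ (controlled by $\delta_a$), the stack $[Q_X]_N$ is of finite type and $\eva_N$ is locally of finite presentation, so formal smoothness upgrades to smoothness of $\eva_N$ at $m$. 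For the ``in particular'' clause I would estimate degrees: $\Lie(I^{\mathrm{sm}}_{(E,\varphi)})$ sits in $\mathrm{ad}(E)$ and differs from the pullback of the regular centralizer $\Lie J$ by a torsion correction of total length bounded in terms of $\delta_a$, so after dualizing and twisting by the weight line bundles of $L$ — each of degree $>2r(2g-1)+2+\delta(a)$ under the hypothesis that $L$ is very $(G,\delta(a))$-ample — the sheaf $(\Lie(I^{\mathrm{sm}}_{(E,\varphi)})/\mathfrak{z}_G)^{*}$ becomes positive enough that Serre vanishing on $X$ kills its $H^{1}$.

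The hard part will be the duality identification in the previous paragraph: exhibiting the obstruction as the $H^{1}$ of the \emph{dual} sheaf, rather than of some naive $H^{1}$, requires both the self-duality of the monoidal Higgs deformation complex — which, $V_G$ not being a cotangent bundle, is not formal and must be extracted from the geometry of the Vinberg monoid and the local models of \cite[\S 6]{Wang23} — and careful bookkeeping of the $j_{!}$--$j_{*}$ discrepancy supported on $\mathfrak{B}_b$; the positivity estimate is the second, more routine, obstacle. Finally, in the parabolic setting of Theorem \ref{localmodel} the same argument applies to $\eva^{\para}\colon\mathcal{M}^{\para,\heart}_X\to[\widetilde{Q}_X]_G$, the only new feature being that the extra datum of the $B$-reduction $E^B_x$ at $x$ — equivalently the $G/B$-fiber of the monoidal Grothendieck simultaneous resolution over $x$ — enlarges the relevant local Picard by an Iwahori-type correction of dimension $\rk(G)$, so the degree deficiency to be absorbed grows to $\delta_a+\rk(G)$ and the ampleness threshold shifts accordingly.
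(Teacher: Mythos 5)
This theorem is quoted verbatim from \cite[Theorem 6.10.2]{Wang23} and is cited, not re-proved, in the present paper; the closest argument actually carried out here is the parabolic analogue developed in \S6.6 and stated as Theorem~\ref{localmodel}, so I compare against that.

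Your skeleton is in the right spirit --- compute the (co)tangent complex of $\mathcal{M}_X$ at $m$, split the obstruction locally--globally with the boundary divisor as the local locus, use Serre duality to reach $(\Lie I^{\mathrm{sm}}_{(E,\varphi)}/\mathfrak{z}_G)^*$, then finish with tangent surjectivity and a small-extension lift --- but several details diverge from what \S6.6 does, and one step is an error. The paper obtains the local--global split from a \v{C}ech spectral sequence on a finite Zariski affine cover of $X$, degenerating at $E_2$ because $H^{\geq 1}(\mathcal{T}^\bullet)$ is torsion, yielding $0\to H^1(X,H^0(\mathcal{T}^\bullet))\to H^1(X,\mathcal{T}^\bullet)\to H^0(X,H^1(\mathcal{T}^\bullet))\to 0$ together with an injection of the last term into the product of local obstruction spaces. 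You instead propose a Beauville--Laszlo excision $X=U\cup\widehat{X}_{\mathfrak{B}_b}$ and pass to the cone; this route can plausibly be made to work, but as written it does not address how the $L^+_{\mathcal{B}_X}G$-quotient defining $[Q_X]$ changes the target deformation theory, which cannot simply be taken to be the hypercohomology of the same complex over $\widehat{X}_{\mathfrak{B}_b}$. The genuine error is the appeal to \cite[Lemma 2.4.2]{Chi22} to conclude that $\mathfrak{D}_a$ and $\mathfrak{B}_a$ are disjoint on $X$: that lemma says $\mathfrak{D}_+\cap\mathfrak{B}_+$ has codimension at least $2$ inside $\mathfrak{C}_+$, a statement about the universal Steinberg base that does not prevent a given $a\in\mathcal{A}^\heart_X$ from passing through that intersection. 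What one actually needs is that the obstruction sheaf $H^1(\mathcal{T}^\bullet)$ is supported where $\varphi$ exits the smooth dense open $V^0_G$; since $G_+\subset V^0_G$, that support lies inside $\supp(\mathfrak{B}_a)$ --- a fact that follows from the definition of the non-degenerate locus, not from proper intersection. Finally, you correctly flag the duality identification $\mathrm{coker}(D_{\ad})\cong(\Lie I^{\mathrm{sm}}_{(E,\varphi)}/\mathfrak{z}_G)^*$ as the crux and propose to ``extract it from the geometry''; the paper simply cites \cite[\S 7.3.4]{Wang23} for it, and without supplying that identification your sketch is incomplete at exactly the most delicate point.
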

Let $b \in \mathcal{B}_X$ have associated boundary divisor $\lambda = \sum_{v \in X(k)} \lambda_v v$. Then, the local evaluation map sends a point $(L, E, \varphi) \in \mathcal{M}_X$ to the product of trivializations of $\varphi$ over formal neighbourhoods of $v \in X(k)$, that is $(\gamma_v)_{v \in X(k)} \in \prod_{v \in X(k)} \Gr_{G_{\ad}}^{\leq \lambda_v}$, modulo the action by $L^+_bG.$ When $\lambda_v = 0$, then $\gamma_v \in G_{\ad}(\mathcal{O}_v).$
\section{Parabolic multiplicative affine Springer fibers}
\subsection{Definitions}
We will now introduce the parabolic analogues of multiplicative affine Springer fibers.
\begin{defn} \label{PMASF}
Let $\gamma \in G^{\rs}(F)$ be a regular semisimple element and $\lambda \in X_*(T)^+$ be a dominant coweight. The \emph{parabolic Kottwitz-Viehmann varieties} or \emph{parabolic mulitplicative affine Springer fibers} are given by the sets
\begin{align} \label{pmasfo}
X^{\lambda, \para}_\gamma &= \left\{g \in \Fl_G(k): \Ad_{g^{-1}}(\gamma) \in \bigcup_{\mu \in W(\lambda)} It^{\mu}I \right \}, \\
\label{pmasfc}
X^{\leq \lambda, \para}_\gamma &=  \left \{g \in \Fl_G(k): \Ad_{g^{-1}}(\gamma) \in \bigcup_{w \in \Adm(\lambda)} IwI \right\}.
\end{align}
\end{defn} 
Equation \eqref{pmasfc} is a non-Frobenius-twisted analog of the union of affine Deligne Lusztig varieties indexed by an admissible set. Under the projection $\Fl_G \rightarrow \Gr_G,$ the pre-image of a Schubert cell $G(\mathcal{O}) t^\lambda G(\mathcal{O})$ is the union $\bigcup_{w \in Wt^\lambda W} IwI,$ which contains the union $\bigcup_{\mu \in W(\lambda)} It^\mu I.$ Hence, $X^{\lambda, \para}_\gamma$ is contained in the pre-image of $X^\lambda_\gamma$ under this projection. 
\subsection{Non-emptiness}
We will now establish the non-emptiness criterion of parabolic multiplicative affine Springer fibers. We first prove some basic properties.
\begin{lem} \label{nonemppar}
The natural projection $\pi: \Fl_G \rightarrow \Gr_G$ induces surjections $X^{\leq \lambda,\para}_\gamma \rightarrow X^{\leq \lambda}_\gamma$ and $X^{\lambda,\para}_\gamma \rightarrow X^{\lambda}_\gamma$. 
\end{lem}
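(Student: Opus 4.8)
First, $\pi$ does restrict to maps as claimed: this is the observation preceding the lemma. Since $\Adm(\lambda)$ is down‑closed in the Bruhat order and $w\leq t^{x(\lambda)}$ forces the image of $w$ in $\widetilde W/\!\!\sim\,\cong X_*(T)^+$ to be $\leq\lambda$, while $G(\mathcal O)t^\mu G(\mathcal O)=\bigsqcup_{w\in Wt^\mu W}IwI$ and $\overline{G(\mathcal O)t^\lambda G(\mathcal O)}=\bigsqcup_{\mu\leq\lambda}G(\mathcal O)t^\mu G(\mathcal O)$, we get $\bigcup_{w\in\Adm(\lambda)}IwI\subseteq\overline{G(\mathcal O)t^\lambda G(\mathcal O)}$ and $\bigcup_{\mu\in W(\lambda)}It^\mu I\subseteq G(\mathcal O)t^\lambda G(\mathcal O)$; both right‑hand sides are stable under $I$‑conjugation, so if $\Ad_{g^{-1}}(\gamma)$ lies in the left‑hand side then $\pi(gI)=gG(\mathcal O)$ lies in $X^{\leq\lambda}_\gamma$ (resp.\ $X^\lambda_\gamma$). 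For surjectivity, fix $p=gG(\mathcal O)$ in $X^{\leq\lambda}_\gamma$ (resp.\ $X^\lambda_\gamma$) and set $\delta:=\Ad_{g^{-1}}(\gamma)$. Because $\pi^{-1}(p)=gG(\mathcal O)/I$ and $\Ad_{(gk)^{-1}}(\gamma)=k^{-1}\delta k$ for $k\in G(\mathcal O)$, it suffices to produce $k\in G(\mathcal O)$ with $k^{-1}\delta k\in\bigcup_{w\in\Adm(\lambda)}IwI$ (resp.\ $\in\bigcup_{\mu\in W(\lambda)}It^\mu I$); that is, the $G(\mathcal O)$‑conjugacy class of $\delta$ must meet the $\lambda$‑admissible union (resp.\ the union of the extremal Iwahori cells).

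\textbf{Passing to the Vinberg monoid.} We may assume the relevant fiber is non‑empty, so $X^\lambda_\gamma\neq\emptyset$ and by Lemma \ref{nonemp} we have $\kappa_G(\gamma)=p_G(\lambda)$ and a lift $\gamma_\lambda\in G_+(F)$ as in Proposition \ref{newelem}. By Proposition \ref{connectiontoVin}, $p$ corresponds to $g$ with $\delta_+:=\Ad_{g^{-1}}(\gamma_\lambda)\in V^{\lambda_{\ad}}_G(\mathcal O)$ (resp.\ $\in V^{\lambda_{\ad},0}_G(\mathcal O)$), and — using the Iwahori criterion for $I^{\lambda_{\ad}}$ together with the argument of Proposition \ref{connectiontoVin} (the $\kappa$‑condition is automatic, since every $\mu\in W(\lambda)$ has $p_G(\mu)=p_G(\lambda)$) — membership of $k^{-1}\delta k$ in $\bigcup_{w\in\Adm(\lambda)}IwI$ (resp.\ $\bigcup_{\mu\in W(\lambda)}It^\mu I$) is equivalent to membership of $k^{-1}\delta_+k$ in the Iwahori submonoid $I^{\lambda_{\ad}}$ (resp.\ $I^{\lambda_{\ad},0}$). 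Now recall $I^{\lambda_{\ad}}=\eva_0^{-1}\big(V^{\lambda_{\ad}}_B(k)\big)$ with $V^{\lambda_{\ad}}_B(k)=V^{\lambda_{\ad}}_G(k)\cap V_B(k)$ (and the same with the non‑degenerate decorations), and that $V^{\lambda_{\ad}}_G(\mathcal O)$ is stable under $G_+(\mathcal O)$‑conjugation. Hence it is enough to show that the reduction $\bar\delta_+:=\delta_+\bmod t$ lies in some Vinberg Borel submonoid $V_{B'}$ (resp.\ in $V^0_{B'}$): writing $B'=g_0Bg_0^{-1}$, so $V_{B'}=g_0V_Bg_0^{-1}$ by Proposition \ref{borelsub}, and lifting $g_0$ to $k\in G(\mathcal O)\subseteq G_+(\mathcal O)$, the conjugate $k^{-1}\delta_+k$ still lies in $V^{\lambda_{\ad}}_G(\mathcal O)$ (resp.\ $V^{\lambda_{\ad},0}_G(\mathcal O)$) and reduces mod $t$ into $V_B(k)\cap V^{\lambda_{\ad}}_G(k)=V^{\lambda_{\ad}}_B(k)$, so $k^{-1}\delta_+k\in I^{\lambda_{\ad}}$ (resp.\ $I^{\lambda_{\ad},0}$); translated back to $\Fl_G$ this says exactly that $\pi^{-1}(p)$ meets $X^{\lambda,\para}_\gamma$ (resp.\ $X^{\leq\lambda,\para}_\gamma$).

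\textbf{The key input, and where the work is.} So everything comes down to the monoid analogue of ``every element of a reductive group lies in a Borel subgroup'': \emph{every element of $V_G$ lies in some Vinberg Borel submonoid, and every element of the non‑degenerate locus $V^0_G$ lies in some $V^0_B$.} I would deduce this from the Grothendieck simultaneous resolution $\pi_G\colon\widetilde V_G=G\times_BV_B\to V_G$. The map $\pi_G$ is proper (as $G/B$ is projective), so its image is closed; by diagram \eqref{CartDiag} it is Cartesian over the regular semisimple locus $V^{\rs}_G$ with the finite surjective quotient $q\colon\overline T_+\to\mathfrak C_+$, so the image of $\pi_G$ contains $V^{\rs}_G$; and $V^{\rs}_G$ is a non‑empty open subset of the irreducible variety $V_G$, hence dense. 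Therefore $\pi_G$ is surjective, i.e.\ $V_G=\bigcup_BV_B$, and restricting $\pi_G$ over the open dense locus $V^0_G$ (still proper, still Cartesian over $V^0_G\cap V^{\rs}_G$ with $q$) gives $V^0_G=\bigcup_BV^0_B$ likewise. This resolution step — not any loop‑group manipulation — is the real content; the affine‑flag side enters only through the down‑closedness of $\Adm(\lambda)$ and the Cartan/Bruhat decompositions of Section 2. As a sanity check, when $\gamma$ is split one may simply conjugate $\gamma$ into $T(F)$, where it automatically lies in $It^\nu I$ for the cocharacter $\nu$ with $\nu^+=\lambda$, so the genuine content of the lemma concerns non‑split (e.g.\ anisotropic) $\gamma$, whose relative position with respect to the standard Iwahori need not be a translation and only becomes one after a genuine change of Iwahori.
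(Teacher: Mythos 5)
Your proof is correct, but it takes a genuinely different route from the paper's. The paper argues combinatorially: after establishing $\overline{G(\mathcal{O})t^\lambda G(\mathcal{O})} = \bigcup_{w\in\Adm(\lambda)}G(\mathcal{O})wG(\mathcal{O})$, it cites He's analysis of admissible sets in the affine Weyl group (\cite[\S 6.3]{He14}) to prove the identity $\overline{G(\mathcal{O})t^\lambda G(\mathcal{O})} = G(\mathcal{O})\cdot\bigl(\bigcup_{w\in\Adm(\lambda)}IwI\bigr)$, and reads off surjectivity from there. You instead reduce the surjectivity to a geometric statement about the Vinberg monoid — that every element of $V_G$ (resp.\ of $V^0_G$) lies in some Vinberg Borel submonoid $V_{B'}$ (resp.\ $V^0_{B'}$) — and prove it via properness of the Grothendieck simultaneous resolution $\pi_G\colon\widetilde V_G\to V_G$ together with its Cartesianness over the dense rs locus. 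This is exactly the monoid analogue of the standard affine Springer fiber argument (``reduce mod $t$, conjugate into a Borel, lift the conjugating element to $G(\mathcal{O})$''). Your route has the merit of making fully explicit the nontrivial conjugation step hiding in the surjectivity — namely, that for $gG(\mathcal{O})\in X^{\leq\lambda}_\gamma$ one can find $k\in G(\mathcal{O})$ with $\Ad_{(gk)^{-1}}(\gamma)\in\bigcup_{w\in\Adm(\lambda)}IwI$ — which the paper's displayed chain of equalities passes over quickly (and for which He's cited argument in the ADLV setting invokes $\sigma$-twisted conjugation, which has no counterpart here). Two small points you should tighten: (i) the implication from $(k^{-1}\delta k)_{\ad}\in\bigcup_w I_{\ad}wI_{\ad}$ back to $k^{-1}\delta k\in\bigcup_w IwI$ deserves a word, since one must rule out a shift by an element of $X_*(Z^\circ)\cap\Phi^\vee_{\mathbb{Z}}$ — this intersection is $0$ because roots vanish on the center and pair nondegenerately with the coroot span; and (ii) your closing ``sanity check'' conjugates $\gamma$ rather than varying $g$, so it does not quite test the lemma as stated, though it does nothing wrong.
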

\begin{proof}
First, we will show that the affine Schubert variety can be rewritten as 
\begin{equation} \label{affineadmiss} \overline{G(\mathcal{O})t^\lambda G(\mathcal{O})} = \bigcup_{w \in \Adm(\lambda)}G(\mathcal{O})wG(\mathcal{O}).
\end{equation}
By definition, any element $w$ of $\Adm(\lambda) =\{w\in \widetilde{W}: w \leq t^{x(\lambda)} \text{ for some }x \in W\}$
can be written as $w = at^\mu \in \widetilde{W} = W \ltimes X_*(T)$. Then, 
$$WwW= Wat^\mu W = Wt^\mu W = Wt^{\mu_+}W,$$
where $\mu_+$ is the dominant $W$-conjugate of $\mu$. Since $w \in \Adm(\lambda)$,
$$Wt^{\mu_+}W = WwW \leq  Wt^{x(\lambda)}W = W t^\lambda W,$$
which occurs iff $\mu_+ \leq \lambda$ \cite[Lemma 4.2]{Hai01}. Thus,
$G(\mathcal{O})wG(\mathcal{O}) = G(\mathcal{O})t^{\mu_+} G(\mathcal{O}) \subseteq \overline{G(\mathcal{O})t^\lambda G(\mathcal{O})}.$
Since this is true for any $w \in \Adm(\lambda)$, we have that
$$\bigcup_{w \in \Adm(\lambda)}G(\mathcal{O})wG(\mathcal{O}) \subseteq \overline{G(\mathcal{O})t^\lambda G(\mathcal{O})}.$$
Conversely, if $\mu \in X_*(T)^+$ satisfies $\mu \leq\lambda$, then $t^\mu \leq t^\lambda$ by \cite{Rap05} and so, $t^\mu \in \Adm(\lambda)$. Thus,  $\overline{G(\mathcal{O})t^\lambda G(\mathcal{O})} \subseteq \bigcup_{w \in \Adm(\lambda)}G(\mathcal{O})wG(\mathcal{O}).$ 

To show surjectivity, we follow \cite[\S 6.3]{He14}. Let $P_J$ be the standard parahoric subgroup associated with the set $J \subset \widetilde{S}.$ Then, set $\Adm^J(\lambda) = W_J\Adm(\lambda)W_J$ and 
$$Y_{J, \lambda} = \bigcup_{w \in \Adm(\lambda)} P_JwP_J = \bigcup_{w \in \Adm^J(\lambda)}IwI.$$
By \cite[Theorem 6.1]{He14}, $\prescript{J}{}{\widetilde{W}} \cap \Adm^J(\lambda) = \prescript{J}{}{\widetilde{W}} \cap \Adm(\lambda)$. Hence,
$$Y_{J,\lambda} = \bigcup_{w \in \prescript{J}{}{\widetilde{W}} \cap \Adm^J(\lambda)} P_J \cdot (IwI) = \bigcup_{w \in \widetilde{W} \cap \Adm(\lambda)} P_J \cdot  (IwI) \subset  P_J \cdot  Y_{\emptyset, \lambda} = P_J \cdot \left( \bigcup_{w \in \Adm(\lambda)} IwI\right).$$
Since $I \subset P_J$, we also have the reverse inequality $Y_{J,\lambda} \supset P_J \cdot Y_{\emptyset, \lambda}$. Combining the two gives the equality $Y_{J,\lambda} = P_J \cdot Y_{\emptyset, \lambda}$. Setting $J = S\subset \widetilde{S}$, we have that $P_J = G(\mathcal{O})$ and 
$$\overline{G(\mathcal{O})t^\lambda G(\mathcal{O})}  = \bigcup_{w \in \Adm(\lambda)}G(\mathcal{O}) w G(\mathcal{O}) = Y_{S,\lambda} = G(\mathcal{O})\cdot \left(\bigcup_{w \in\Adm(\lambda)} IwI \right) = G(\mathcal{O}) \cdot_\sigma Y_{\emptyset, \lambda}.$$
Let $\pi: \Fl_G \rightarrow \Gr_G$ be the  natural projection map. Then, 
\begin{align*}
\pi(X^{\leq \lambda, \para}_\gamma) &= \left\{g \in \Gr_G: \Ad_{g^{-1}}(\gamma) \in Y_{S, \lambda} = G(\mathcal{O})\cdot \left(\bigcup_{w \in\Adm(\lambda)} IwI \right) \right\} \\
&= \left\{g \in \Gr_G: \Ad_{g^{-1}}(\gamma) \in Y_{S, \lambda} = \bigcup_{w \in\Adm(\lambda)} G(\mathcal{O})wG(\mathcal{O})  \right\} = X^{\leq \lambda}_\gamma.
\end{align*}
\end{proof}
Since the maps $X^{\leq \lambda,\para}_\gamma \rightarrow X^{\leq \lambda}_\gamma$ and $X^{\lambda,\para}_\gamma \rightarrow X^{\lambda}_\gamma$ are surjective, we have the following corollary.
\begin{cor} \label{pmasfnonemp}
The parabolic multiplicative affine Springer fibers $X^{\leq \lambda,\para}_\gamma$ and $X^{\lambda, \para}_\gamma$ are non-empty if and only if $\kappa_G(\gamma) = p_G(\lambda)$ and $\nu_\gamma  \leq \lambda.$
\end{cor}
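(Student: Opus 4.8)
The plan is to deduce the corollary formally from the surjectivity statement of Lemma \ref{nonemppar} together with the known non-emptiness criterion for (non-parabolic) multiplicative affine Springer fibers, Lemma \ref{nonemp}. No new geometry is needed: all the substance was already spent proving that $\pi$ induces surjections, which in turn rested on the identity $\overline{G(\mathcal{O})t^\lambda G(\mathcal{O})} = \bigcup_{w \in \Adm(\lambda)} G(\mathcal{O})wG(\mathcal{O})$ and on He's description of $P_J$-orbits.

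First I would record the trivial set-theoretic fact that a surjection $f\colon A \twoheadrightarrow B$ satisfies $A \neq \emptyset \iff B \neq \emptyset$: the forward implication holds because $f(A)\subseteq B$, and the reverse implication is precisely surjectivity, since $B\neq\emptyset$ cannot be the image of the empty set. Applying this to the two surjections $X^{\leq \lambda,\para}_\gamma \twoheadrightarrow X^{\leq \lambda}_\gamma$ and $X^{\lambda,\para}_\gamma \twoheadrightarrow X^{\lambda}_\gamma$ furnished by Lemma \ref{nonemppar} yields
\[
X^{\leq \lambda,\para}_\gamma \neq \emptyset \iff X^{\leq \lambda}_\gamma \neq \emptyset, \qquad X^{\lambda,\para}_\gamma \neq \emptyset \iff X^{\lambda}_\gamma \neq \emptyset .
\]

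Next I would invoke Lemma \ref{nonemp}, which asserts that $X^\lambda_\gamma$ is non-empty if and only if $X^{\leq \lambda}_\gamma$ is non-empty, if and only if $\kappa_G(\gamma) = p_G(\lambda)$ and $\nu_\gamma \leq_{\mathbb{Q}} \lambda$. Chaining these equivalences with the two displayed above gives the stated criterion simultaneously for $X^{\leq \lambda,\para}_\gamma$ and for $X^{\lambda,\para}_\gamma$, completing the argument. I do not anticipate any obstacle here; the only point worth stressing is that one genuinely uses \emph{surjectivity} (not merely that $\pi$ restricts to a map between the two fibers) to get the non-trivial direction, namely that non-emptiness of the affine Grassmannian fiber forces non-emptiness of the affine flag variety fiber.
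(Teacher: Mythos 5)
Your proof is correct and takes essentially the same approach as the paper: combine the surjectivity of the projections $X^{\leq\lambda,\para}_\gamma \twoheadrightarrow X^{\leq\lambda}_\gamma$ and $X^{\lambda,\para}_\gamma \twoheadrightarrow X^{\lambda}_\gamma$ from Lemma~\ref{nonemppar} with the non-emptiness criterion of Lemma~\ref{nonemp}. If anything, your version is slightly cleaner in the converse direction: you invoke the surjectivity of the restricted maps directly, whereas the paper's wording appeals to the surjectivity of $\pi_G: \Fl_G \rightarrow \Gr_G$, which by itself is not quite enough and implicitly relies on Lemma~\ref{nonemppar} anyway.
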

\begin{proof}
If the parabolic multiplicative affine Springer fibers are non-empty, then one can use $\pi$ to project down one of its non-trival elements to a non-trivial element in the multiplicative affine Springer fibers. Since multiplicative affine Springer fibers are non-empty, $\kappa_G(\gamma) = p_G(\lambda)$ and $\nu_\gamma  \leq \lambda$ by Lemma \ref{nonemp}. Conversely, this condition implies the non-emptiness of the multiplicative affine Springer fibers. Since $\pi_G: \Fl_G \rightarrow \Gr_G$ is surjective, there exists a non-trivial element in the parabolic multiplicative affine Springer fibers that maps to a non-trivial element in the multiplicative affine Springer fibers.
\end{proof}
\subsection{Ind-scheme structure}
Let $v \in X$ be a point, $X_v = \Spec(\mathcal{O}_v)$ be the formal disc and $X^\bullet_v = \Spec(F_v)$ be the punctured disc at $v.$ Let $\gamma_+ \in G_+(F_v)$ with image $\chi_+(\gamma_+) = a \in \mathfrak{C}_+(\mathcal{O}_v).$ 
\begin{defn} \label{pmasfdefn}
The \emph{parabolic multiplicative affine Springer fiber $M^{\para}_v(\gamma_+)$} is the functor that maps a scheme $S$ to the set of isomorphism classes of pairs $(h, \iota)$ that fit into the following commutative diagram:
\begin{center} 
\begin{tikzpicture}[node distance=2cm]
\node (A) {$X_v \widehat{\times} S$};
\node (B) [right of = A] {$[\widetilde{V}_G/G]$};
\node (C) [below of = A] {$X_v$};
\node (D) [right of = C] {};
\node (E) [right of = B] {$[V_G/G]$};
\node (F) [right of = D] {$\mathfrak{C}_+$};
\draw[->, above] (A) to node {$h$} (B);
\draw[->, right] (E) to node {$\chi_+$} (F);
\draw[->,above] (C) to node {$a$} (F);
\draw[->] (A) to node {} (C);
\draw[->, above] (B) to node {$\pi_G$} (E);
\end{tikzpicture}
\end{center}
Here, $\pi_G$ is induced by the Grothendieck simultaneous resolution and $\iota$ is an isomorphism between $(\pi_G \circ h)|_{X^\bullet_{v, S}}$ and the induced map 
$X_{v, S}^\bullet \xrightarrow{\gamma_+} V_G \rightarrow [V_G/G].$ By replacing $V_G$ with $V_G^\circ$ and $V_G^{\reg}$, we obtain subfunctors $M^{\para}_v(\gamma_+)^\circ$ and $M^{\para}_v(\gamma_+)^{\reg}.$ Since the isomorphism classes of $M^{\para}_v(\gamma_+)$ and $M^{\para}_v(\gamma_+)^\circ$ only depend on $a = \chi_+(\gamma_+)$, we will also denote these as $M^{\para}_v(a)$ and $M^{\para}_v(a)^\circ.$
\end{defn}
For $\gamma \in G^{\rs}(F_v)$ and $\lambda \in X_*(T)^+,$ suppose that $X^{\lambda, \para}_{\gamma}$ is non-empty. By Proposition \ref{nonemp}, there exists an element $\gamma_\lambda \in G_+(F_v)$ such that 
$$X^{\lambda, \para}_{\gamma} \cong  M^{\para}_v(\gamma_\lambda)^\circ, \quad X^{\leq \lambda, \para}_{\gamma} \cong M^{\para}_v(\gamma_\lambda).$$
\begin{defn}[Moduli description] \label{indscheme}
The parabolic multiplicative affine Springer fiber $M^{\para}_v(\gamma_+)$ is the functor that maps a scheme $S$ to the set of quadruples $(E, \varphi, E^B_v, \sigma)$, where
\begin{itemize}
    \item $E$ is a $G$-torsor on the $v$-adic completion $X_v \widehat{\times} S$ of $X_v \times S,$
    \item $\varphi \in H^0(X_v \widehat{\times} S, E \times_G V_G)$,
    \item $E^B_x$ is a $B$-reduction along $\{v\} \times S$,
    \item $\sigma$ is a trivialization $(E, \varphi)|_{X^\bullet_v \widehat{\times} S} \cong (E^0, \gamma_+)$ over the punctured disc, where $E^0$ is the trivial bundle. 
\end{itemize}
Similarly, replacing $V_G$ with $V^0_G$ in the above definition gives rise to $M^{\para}_v(\gamma_+)^\circ.$
\end{defn}

\section{Parabolic multiplicative Hitchin fibration}
We will now introduce the parabolic multiplicative Hitchin fibration and prove its geometric properties. 
\subsection{Global constructions}
Since the Grothendieck-simultaneous resolution \eqref{CartDiag} is $G$-equivariant, we can form the following Cartesian diagram: 
\begin{equation} \label{parab}
\begin{tikzpicture}
\node (A) at (0, 0) {$\underline{\Hom}(X, [\widetilde{V}_G/(T \times G)])$};
\node (B) at (5, 0) {$[\widetilde{V}_G/(T \times G)]$};
\node (C) at (0, -2) {$\underline{\Hom}(X, [V_G/(T \times G)]) \times X$};
\node (D) at (5, -2) {$[V_G/(T \times G)]$};
\draw[->, below] (A) to node {} (B);
\draw[->, left] (A) to node {$\pi_M$} (C);
\draw[->, right] (B) to node {$\pi_G$} (D);
\draw[->, above] (C) to node {$\eva$} (D);
\end{tikzpicture}
\end{equation}
Let $\mathcal{M}^{\para}_X$ be the pre-image of $\mathcal{M}_X \times X \subseteq \underline{\Hom}(X, [V_G/(T \times G)]) \times X$. 
\begin{defn} \label{pmhfib}
The \emph{parabolic multiplicative Hitchin fibration} with total stack $\mathcal{M}^{\para}_X$ is the map
$h^{\para}_X = (h_X \times \id_X) \circ \pi_M : \mathcal{M}^{\para}_X \rightarrow \mathcal{A}_X \times X.$ Fibers of $h^{\para}_X$ over $(a, x) \in \mathcal{A}_X \times X$ are called \emph{parabolic multiplicative Hitchin fibers} and they are denoted by $\mathcal{M}^{\para}_{a, x}$.
\end{defn}
We gain a more concrete description when we twist all of the stacks by a $Z_+$-torsor $L$. Let $\mathcal{M}^{\para}_{X, L}$ be the functor that sends a scheme $S$ to the groupoid of quadruples $(x, E, \varphi, E^B_x)$, where
\begin{itemize}
\item $x: S \rightarrow X$ with graph $\Gamma(x)$.
\item $(E, \varphi) \in \mathcal{M}_{X, L}(S)$ is a multiplicative Higgs bundle
\item $E^B_x$ is a $B$-reduction of $E$ along $\Gamma(x)$
\end{itemize}
such that $\varphi$ is compatible with $E^B_x$, i.e.
$$\varphi |_{\Gamma(x)} \in H^0(\Gamma(x), (E^B_x \wedge^G V_B) \otimes_{\mathcal{O}_S} x^* L).$$
and the parabolic multiplicative Hitchin fibration is given by 
\begin{align*}
h^{\para}_{X, L}: \mathcal{M}^{\para}_{X, L} &\rightarrow \mathcal{A}_{X, L} \times X,\\
(x, E, \varphi, E^B_x) &\mapsto (h_X(E, \varphi), x).
\end{align*}
The left vertical arrow $\pi_M$ in \eqref{parab} is obtained by forgetting the $B$-reduction. When restricted to the loci $\mathcal{A}^\heart_X, \mathcal{A}^{\diamond}_X$ or $\mathcal{A}^{\ani}_X$, we denote the restriction of $\mathcal{M}^{\para}_X$ as $\mathcal{M}^{\para, \heart}_X, \mathcal{M}^{\para, \diamond}_X$ and $\mathcal{M}^{\para, \ani}_X$ respectively.
\begin{defn}
Define the \emph{enhanced multiplcative Hitchin base} or \emph{universal cameral cover} by the Cartesian diagram
\begin{center} 
\begin{tikzpicture}[node distance=2cm, auto]
\node (A) {$\widetilde{\mathcal{A}}_X$};
\node (B) [right of = A] {$[\overline{T}_+/T]$};
\node (C) [below of = A] {$\mathcal{A}_X \times X$};
\node (D) [below of = B] {$[\mathfrak{C}_+/T]$};
\draw[->, below] (A) to node {\hspace{-1.1cm} $\mathlarger{\mathlarger{\mathlarger{\lrcorner}}}$}(B);
\draw[->, left] (A) to node {$q$} (C);
\draw[->] (B) to node {} (D);
\draw[->] (C) to node {$\eva$} (D);
\end{tikzpicture}
\end{center}
\end{defn}
For each $a \in \mathcal{A}_X$, the projection $X_a = q^{-1}(\{a\} \times X) \rightarrow X$ from the cameral curve is a ramified $W$-cover. The commutative diagrams \eqref{paraCart} and \eqref{parab} induce \emph{the enhanced parabolic multiplicative Hitchin fibration} $\widetilde{h}^{\para}_X: \mathcal{M}^{\para}_X \rightarrow \widetilde{\mathcal{A}}_X.$
The various stacks we introduced fit into the following commutative diagram:
\begin{equation}  \label{paraboliccartesian}
\begin{tikzpicture}[node distance=2cm, auto]
\node (A) {};
\node (B) [right of = A] {$\mathcal{M}_{X, L} \times_{\mathcal{A}_X} \widetilde{\mathcal{A}}_{X, L}$};
\node (C) [right of = B] {};
\node (D) [right of = C] {$\mathcal{M}_{X, L} \times X$};
\node (E) [below of = B] {};
\node (F) [below of = C] {$\widetilde{\mathcal{A}}_{X, L}$};
\node (G) [below of = D] {};
\node (H) [right of = G] {$\mathcal{A}_{X, L} \times X$};
\node(I) [below of = E] {$[V^L_G/G] \times_{\mathfrak{C}_+} \overline{T}^+$};
\node(J) [below of = G] {$[V^L_G/G]$};
\node (K) [below of = I] {};
\node (L) [right of = K] {$\overline{T}_+^L$};
\node (M) [below of = H] {};
\node (N) [below of = M] {$\mathfrak{C}_+^L$};
\node (P) [left of = I] {};
\node (O) [left of = P] {$[V^L_B/B]$};
\node (Q) [left of = A] {$\mathcal{M}^{\para}_{X, L}$};
\draw[->] (Q) to node {$\nu_M$}(B);
\draw[->] (B) to node {}(D);
\draw[->] (D) to node {}(H);
\draw[->] (B) to node {}(F);
\draw[->] (D) to node {}(J);
\draw[->] (F) to node {}(H);
\draw[->] (B) to node {} (I);
\draw[->] (I) to node {} (J);
\draw[->] (I) to node {} (L);
\draw[->] (L) to node {} (N);
\draw[->] (J) to node {} (N);
\draw[->] (F) to node {} (L);
\draw[->] (H) to node {} (N);
\draw[->] (Q) to node {} (O);
\draw[->] (O) to node {$\nu$} (I);
\end{tikzpicture}
\end{equation}
The three squares formed by the horizontal and vertical arrows are Cartesian. 
\subsection{Symmetries}
Parabolic multiplicative Hitchin fibers also admit an action by the Picard stacks. We first show that there is a $BJ$-action on $[\widetilde{V}_G/G] = [V_B/B]$.
\begin{lem}
For any $(x, gB) \in \widetilde{V}_G^{\reg}(k)$, $I_x \subset \Ad_g(B)$.
\end{lem}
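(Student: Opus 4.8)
The plan is to use $G$-equivariance to transport the problem onto the fixed Borel submonoid $V_B$, and then exploit that the centralizer of a regular element acts through a \emph{finite} group on the fibre of the Grothendieck simultaneous resolution. First I would reduce to $g=e$. Under the $G$-equivariant isomorphism $G\times_B V_B\xrightarrow{\cong}\widetilde{V}_G$ of \eqref{isom}, the point $(x,gB)$ is represented by a pair $(g,y)$ with $y\in V_B$ and $x=\Ad_g(y)$; since conjugation by $g$ preserves $V_G^{\reg}$, $y$ is again regular, and $I_x=\Ad_g(I_y)$. Hence the asserted inclusion $I_x\subseteq\Ad_g(B)$ is equivalent to $I_y\subseteq B$, so it suffices to prove: if $x\in V_B\cap V_G^{\reg}$ then $G_x:=I_x\subseteq B$.

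Next I would let $G_x$ act by conjugation on the set of Borel submonoids of $V_G$ containing $x$. By the bijection $\varphi$ of Proposition \ref{borelsub} this set is the fibre $\pi_G^{-1}(x)\subseteq G/B$ — closed, because $\pi_G$ is proper, and nonempty, because $x\in V_B$ so that $eB\in\pi_G^{-1}(x)$ — and under this identification $G_x$ acts by left translation, so $c\in G_x$ fixes $eB$ exactly when $c$ normalizes $V_B$, i.e.\ $c\in B$ (Proposition \ref{borelsub}). The crucial point is that $\pi_G^{-1}(x)$ is finite for $x$ regular: over the regular semisimple locus $V_G^{\rs}$ the square \eqref{CartDiag} is Cartesian with $q$ finite, so the fibres of $\pi_G$ are finite there, and I would extend finiteness of the fibres over all of $V_G^{\reg}$ either by quoting quasi-finiteness of $\pi_G$ on the regular locus from \cite{Bou12, Chi22}, or by using the description of $[V_G^{\reg}/G]$ as a finite union of $BJ$-gerbes (\cite{Chi22}, Proposition 2.4.3), which presents $\pi_G^{-1}(x)$ as the fibre of a quasi-finite morphism.

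To conclude, since $I|_{V_G^{\reg}}\cong(\chi_+^{\reg})^*J$ is smooth and commutative with connected fibres (\cite{Chi22}), $G_x=G_x^\circ$ is connected; a connected algebraic group maps trivially to the finite symmetric group of $\pi_G^{-1}(x)$, so $G_x$ fixes $eB$, i.e.\ $G_x\subseteq B$. Undoing the reduction gives $I_x=\Ad_g(I_y)\subseteq\Ad_g(B)$. The hard part will be the finiteness of $\pi_G^{-1}(x)$ over the regular but not regular-semisimple locus: it is exactly here that \eqref{CartDiag} fails to be Cartesian, so one cannot simply argue that a regular element of $V_G$ lies in a unique Borel submonoid, and one must instead appeal to the finer gerbe structure of $[V_G^{\reg}/G]$. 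If one prefers not to invoke connectedness of the regular centralizers $J$, the same argument still yields $G_x^\circ\subseteq\Ad_g(B)$, which is all that is needed to build the $BJ$-action on $[\widetilde{V}_G/G]$ in the sequel.
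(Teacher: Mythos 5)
The reduction to $g=e$ and the identification of $\pi_G^{-1}(x)$ with the set of Borel submonoids containing $x$ are fine, and on the regular semisimple locus your argument coincides with the paper's (there $G_x=T$ is a torus, the fibre has $|W|$ points, and a connected group acts trivially on a finite set). However, your extension to all of $V_G^{\reg}$ has a genuine gap: you assert that the fibres of $J\cong I|_{V_G^{\reg}}$ are connected and cite \cite{Chi22}, but Proposition~\ref{jscheme} (and its source, \cite[Lemma~2.3.1]{Chi22}) only say that $J$ is smooth and commutative, not connected. In fact regular centralizers in $G$ need not be connected: for a regular nilpotent $u$ one has $G_u = Z(G)\cdot U_u$ with component group essentially $Z(G)$, which is a nontrivial finite group for, say, $G=\SL_n$. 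So ``connected group maps trivially to the permutation group of the fibre'' fails as soon as one leaves the rs locus. Your fallback observation that the same argument still gives $G_x^\circ\subseteq\Ad_g(B)$ is correct but strictly weaker than the lemma, and it is not enough for what the lemma is used for downstream: Lemma~\ref{borelsym} needs the identification $J_x=I_{G,x}=I_{B,x}$ over $V_B^{\reg}$, which requires the \emph{whole} centralizer $I_{G,x}$, not just its identity component, to lie in $B$.

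The paper sidesteps both the connectedness issue and the finiteness-of-$\pi_G^{-1}(x)$ issue (which you correctly flag as delicate over the non-rs regular locus, where \eqref{CartDiag} fails to be Cartesian) by replacing your orbit argument with a flatness/density argument. Having settled the rs case, it considers the closed subgroup scheme $H\subseteq(\pi_G^{\reg})^*I$ over $\widetilde{V}_G^{\reg}$ whose fibre over $(x,gB)$ is $I_x\cap\Ad_g(B)$; this agrees with $(\pi_G^{\reg})^*I$ over the dense open $\widetilde{V}_G^{\rs}$, and since $(\pi_G^{\reg})^*I$ is flat, every irreducible component of it dominates $\widetilde{V}_G^{\reg}$ and hence meets the rs locus, forcing $H=(\pi_G^{\reg})^*I$. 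That argument requires neither finiteness of $\pi_G^{-1}(x)$ nor connectedness of $I_x$ away from the rs locus, and it is the step you would need to supply to repair your proof.
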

\begin{proof}
Recall that $G$ is a subgroup of $G_+$ by inclusion of the second coordinate: $g \mapsto (e, g)$. Hence, the $\Ad_G$-action on $V_G$ is given by $h \cdot (t, g) = (e, h) \cdot (t, g) =(t, hgh^{-1}).$ The proof is essentially the same as \cite[Lemma 2.4.3]{Ngo08}.

Let $x \in V^{rs}_G$ be a regular semisimple element. Then, $G_x = T$ and $(G_+)_x = (G_x)_+ = T_+$. So, there are $|W|$-many Borel subgroups $B$ such that $(G_+)_x = T_+ \subseteq B_+$. \cite[Lemma 2.2.14]{Chi22} showed that the centralizer $(G_+)_\gamma$ of a semisimple element $\gamma = te_{I,J} \in V_G$ is the subgroup of $L_M$, where $M = (I \cap J^0) \cup J^c$ generated by $T_+, L_{J^c}$ and $(L_{I \cap J^0})_t$. So $I = \emptyset$ and $J = \Delta$ results in $M = \emptyset$ and $(G_+)_\gamma = T_+$. Taking closures, this means that there are $|W|$-many Borel subgroups $B$ such that $x \in V_{(G_+)_x} = \overline{T}_+ \subseteq V_B$. Since the fiber $\pi_G^{-1}(x)$ is discrete, the $I_x = G_x$-action on $\pi_G^{-1}(x)$ is trivial and the result follows. 

Consider a group scheme $H$ over $\widetilde{V}_G^{\reg}$ in which the fiber $H_{(x, gB)}$ over $(x, gB) \in \widetilde{V}_G^{\reg}$ is a subgroup of the centralizer $I_x$ of $x$ containing elements $h$ such that $h \in gBg^{-1}.$ By construction, it is a closed subgroup scheme of $(\pi_G^{\reg})^*I$ that coincides with $(\pi_G^{\reg})^*I|_{V^{\reg}_G}$ over $\widetilde{V}^{rs}_G$. Since $V_G$ is irreducible, the open subsets $V^{\reg}_G$ and $V^{rs}_G$ are dense in $V_G$. Since $(\pi_G^{\reg})^*I$ is flat over $\widetilde{V}_G^{\reg}$, the group subscheme $H$ must be equal to $(\pi_G^{\reg})^*I$. 
\end{proof}
By Proposition \ref{jscheme}, there is a $G$-equivariant isomorphism $(\chi_+^{\reg})^*J \cong I|_{V^{\reg}_G}$ that extends uniquely to a homomorphism $j_G: \chi^*_+J \rightarrow I$. Let $J_B = \chi_+^*J$ be the pullback of $J$ over $\mathfrak{C}_+$ to $V_B$. Let $I_B$ be the universal centralizer group scheme of the adjoint action of $B$ on $V_B$. 
\begin{lem} \label{borelsym}
There is a natural $j_B: J_B \rightarrow I_B$ such that $j_G|_{V_B}$ factors as $J_B \xrightarrow{j_B} I_B \rightarrow I_G|_{V_B}$. Therefore, the stack $BJ$ acts on the stack $[V_B/B]$ over $\mathfrak{C}_+$.
\end{lem}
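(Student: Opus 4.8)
\noindent\textit{Proof proposal.} The plan is to construct $j_B$ over the regular locus, where it is forced by Proposition~\ref{jscheme} and the preceding lemma, and then to extend it over all of $V_B$ by a scheme‑theoretic density argument; the $BJ$‑action is then formal. Throughout write $V_B^{\reg} = V_B \cap V_G^{\reg}$. Since $V_B = \overline{B_+}$ is irreducible and $\overline{T}_+ \subseteq V_B$ contains regular semisimple elements (the generic points of $\overline{T}_+$ lie over $\mathfrak{C}_+^{\rs}$, hence are regular), $V_B^{\reg}$ is a nonempty, hence dense, open (indeed closed in $V_G^{\reg}$) subscheme of $V_B$. Because $J\to\mathfrak{C}_+$ is smooth, $J_B=\chi_+^*J$ is smooth, hence flat, over the reduced scheme $V_B$, so $J_B$ is reduced and every irreducible component of $J_B$ dominates $V_B$; in particular $J_B|_{V_B^{\reg}}$ is a dense open of the reduced scheme $J_B$, hence schematically dense.

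By Proposition~\ref{jscheme}, the homomorphism $j_G\colon \chi_+^*J\to I_G$ restricts over $V_G^{\reg}$ to a $G$‑equivariant isomorphism onto $I_G|_{V_G^{\reg}}$, which is smooth over $V_G^{\reg}$, hence reduced over the reduced scheme $V_B^{\reg}$. The preceding lemma, applied to the points $(x,eB)\in\widetilde V_G^{\reg}$ — exactly the pairs with $x\in V_B^{\reg}$ — gives $G_x=I_x\subseteq B$ for all $x\in V_B^{\reg}$; since $I_G|_{V_B^{\reg}}$ is reduced and its projection to $G$ lands set‑theoretically in the closed subgroup $B$, it factors through $B$, so $I_G|_{V_B^{\reg}} = I_B|_{V_B^{\reg}}$ as closed subschemes of $G\times V_B^{\reg}$. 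Hence $j_G|_{V_B}$ carries the schematically dense open $J_B|_{V_B^{\reg}}$ into the closed subscheme $I_B\subseteq I_G|_{V_B}$, so the scheme‑theoretic preimage $(j_G|_{V_B})^{-1}(I_B)$ is a closed subscheme of $J_B$ containing a schematically dense open, hence equals $J_B$. Thus $j_G|_{V_B}$ factors uniquely through a morphism $j_B\colon J_B\to I_B$ over $V_B$, giving the claimed factorization $J_B\xrightarrow{\,j_B\,} I_B\to I_G|_{V_B}$; since $I_B\hookrightarrow I_G|_{V_B}$ is a closed immersion compatible with the group laws and $j_G$ is a homomorphism of group schemes, $j_B$ is automatically a homomorphism, and it is $G$‑equivariant because $j_G$ is. The step I expect to be the main obstacle is precisely this extension over the non‑regular locus: one must know that $J_B$ is reduced with $V_B^{\reg}$ schematically dense in it, and that $I_G|_{V_B^{\reg}}=I_B|_{V_B^{\reg}}$ \emph{scheme‑theoretically} and not just on points — this last point is exactly where reducedness and the preceding lemma are used.

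For the $BJ$‑action: the adjoint quotient $\chi_+\colon V_G\to\mathfrak{C}_+$ is $G$‑invariant, so its restriction to $V_B$ is $B$‑invariant, and therefore $J_B=\chi_+^*J$ is $B$‑equivariant and descends to a commutative group scheme on $[V_B/B]=[\widetilde V_G/G]$ that is pulled back from $J$ along $[V_B/B]\to\mathfrak{C}_+$. The group scheme $I_B$ of $B$‑automorphisms fixing the Higgs section descends to the relative automorphism (inertia) group scheme of the tautological object of $[V_B/B]$ over $\mathfrak{C}_+$, and the $B$‑equivariant homomorphism $j_B$ descends to a homomorphism from the pulled‑back $J$ into this inertia — which is exactly the datum of a $BJ$‑action on $[V_B/B]$ over $\mathfrak{C}_+$, in the same way that $j_G$ produces the $BJ$‑action on $[V_G/G]$ in \cite{Chi22} and \cite{Ngo08}.
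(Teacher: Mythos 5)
Your proof is correct, but it takes a genuinely different route to the extension problem than the paper does. The paper constructs $j_B$ only over $V_B^{\reg}$ (using the same fibrewise identification $J_x = I_{G,x} = I_{B,x}$ that you use), then invokes the extension lemmas of Yun and Chi to extend the homomorphism to $B_+ \cup V_B^{\reg}$, and finally uses a codimension-$\geq 2$ argument to reach all of $V_B$ — essentially extending a homomorphism of smooth group schemes across a small closed subset. You instead observe that the morphism $j_G|_{V_B}\colon J_B \to I_G|_{V_B}$ is already defined over all of $V_B$ by Proposition~\ref{jscheme}, so the only issue is whether it factors through the closed subscheme $I_B \subseteq I_G|_{V_B}$; factoring through a closed subscheme is a closed condition on $J_B$, so it suffices to check it over the schematically dense open $J_B|_{V_B^{\reg}}$, where the preceding lemma gives $I_G|_{V_B^{\reg}} = I_B|_{V_B^{\reg}}$. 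Your approach is cleaner in that it avoids the extension machinery and the codimension count entirely (it does not even need $V_B$ to be normal, only reduced and irreducible, which hold since $V_B$ is a closure in the normal variety $V_G$ of the connected group $B_+$). The paper's route is more in line with the template from \cite{Yun11} and \cite{Chi22}, and the intermediate extension to $B_+ \cup V_B^{\reg}$ would be needed if one were building $j_B$ from scratch rather than restricting the already-global $j_G$; but for this particular statement your factorization argument is both shorter and sufficient. The one place you are right to flag as the crux is upgrading the pointwise inclusion $I_x \subseteq B$ to a scheme-theoretic equality $I_G|_{V_B^{\reg}} = I_B|_{V_B^{\reg}}$, and your reducedness argument handles it correctly.
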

\begin{proof}
The proof is essentially the same as \cite[Lemma 2.3.1]{Yun11}. For $x \in V_B^{\reg}$, we have the canonical identification $J_x = I_{G, x} = I_{B, x}$. This gives the desired map $j_B$ over $V_B^{\reg}$. Since $J_B$ is a commutative smooth group scheme over $V_B$, we know that it extends uniquely to a homomorphism $J_B \rightarrow I_B$ over $B_+ \cup V^{\reg}_B$ thanks to \cite[Lemma 2.3.1]{Yun11} and \cite[Lemma 2.3.1]{Chi22}. Since the complement of $B_+ \cup V^{\reg}_B$ has codimension at least 2, it extends further to the whole space $V_B$. 
\end{proof}
Let $J_T$ be the pull-back of $J$ along $\overline{T}_+ \rightarrow \mathfrak{C}_+$. 
\begin{lem} \label{Taction}
There is a $W$-equivariant homomorphism $j_T: J_T \rightarrow T \times \overline{T}_+$ of groups schemes over $\overline{T}_+$, which is an isomorphism over $\overline{T}_+^{rs}$. Here, the $W$-structure on $J_T$ is given by its left action on $\overline{T}_+$ while the $W$-structure on $T \times \overline{T}_+$ is given by the diagonal left action. 
\end{lem}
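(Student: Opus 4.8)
My plan is to build $j_T$ by restricting the Borel-level map $j_B\colon J_B\to I_B$ of Lemma \ref{borelsym} and then projecting onto the torus factor, so that the genuinely hard input (extension of regular-centralizer maps across the codimension-one discriminant) is already absorbed into Lemma \ref{borelsym}. Let $\iota\colon\overline{T}_+\hookrightarrow V_B\subseteq V_G$ be the natural closed embedding of $\overline{T}_+$ (the closure in $V_G$ of the enhanced torus $T_+$), so that $\chi_+\circ\iota=q$, the cameral cover map of \eqref{CartDiag}, and $J_T=\iota^*J_B=q^*J$. Writing $B=TU$ with unipotent radical $U$, the quotient homomorphism $B\to B/U=T$ extends over families to a homomorphism of group schemes $I_B\to T\times V_B$ over $V_B$ (send a fiberwise centralizer element to its semisimple, i.e.\ $T$-, component). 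I then define $j_T$ to be the composite $J_T=\iota^*J_B\xrightarrow{\iota^*j_B}\iota^*I_B=I_B|_{\overline{T}_+}\longrightarrow T\times\overline{T}_+$, which is a homomorphism of group schemes over $\overline{T}_+$.

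To see that $j_T$ is an isomorphism over $\overline{T}_+^{\rs}:=\overline{T}_+\cap V_G^{\rs}$: there $j_B$ is the canonical identification $J_x=I_{G,x}=I_{B,x}$ from the proof of Lemma \ref{borelsym}, and for $t\in\overline{T}_+^{\rs}$ regularity gives $Z_B(t)\subseteq Z_G(t)=T$, so $I_B|_{\overline{T}_+^{\rs}}=T\times\overline{T}_+^{\rs}$ canonically and the $B/U$-projection restricts to the identity; hence $j_T|_{\overline{T}_+^{\rs}}$ is this canonical isomorphism, which visibly does not involve $B$. The key to $W$-equivariance is that $j_T$ is itself independent of the Borel containing $T$: any two such maps $J_T\rightrightarrows T\times\overline{T}_+$ agree on the dense open $\overline{T}_+^{\rs}$, and since $J_T=q^*J$ is smooth over the reduced scheme $\overline{T}_+$ (hence reduced) while $T\times\overline{T}_+$ is separated over $\overline{T}_+$, they agree everywhere. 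Because $N_G(T)$ normalizes $T_+$ and hence $\overline{T}_+$, and because conjugation by a representative $\dot w\in N_G(T)$ transports the entire construction for $B$ (the action of $B$ on $V_B$, the map $j_B$, and the $B/U$-projection) to the construction for $\dot wB\dot w^{-1}$, combining these two facts yields $(w\times w)\circ j_T=j_T\circ w$, where $w$ acts on $J_T$ through the $W$-action on $\overline{T}_+$ and diagonally on $T\times\overline{T}_+$ --- precisely the asserted equivariance. Independence of the lift $\dot w$ is automatic since inner automorphisms by $T$ act trivially on $T$.

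The step I expect to require the most care is not an extension problem --- that is inherited from Lemma \ref{borelsym}, and ultimately from the regular-centralizer theory of \cite[\S2]{Ngo08}, \cite[\S2.3]{Yun11} and \cite[\S2.3]{Chi22} --- but the bookkeeping showing that the auxiliary Borel drops out, which is what promotes $j_T$ from a morphism attached to $B$ to a $W$-equivariant one. A more intrinsic alternative, bypassing this bookkeeping, is to use the cameral description of $J$ over $\mathfrak{C}_+$ (following Donagi and Gaitsgory): realize $J$ as a subsheaf of the Weil restriction $q_*(T\times\overline{T}_+)^W$ with cokernel supported on the discriminant, pull back along $q$, and compose with the tautological evaluation $q^*q_*(T\times\overline{T}_+)^W\to T\times\overline{T}_+$; this is manifestly $W$-equivariant and restricts over $\overline{T}_+^{\rs}$ to the isomorphism above. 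In either approach one should not expect $j_T$ to be an isomorphism away from $\overline{T}_+^{\rs}$: its cokernel is supported on the preimage of the discriminant divisor $\mathfrak{D}_+$ and is controlled by the roots vanishing there.
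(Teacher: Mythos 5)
The paper's ``proof'' of Lemma \ref{Taction} is only a citation to \cite[Lemma 2.2.2]{Yun11}, \cite[Proposition 2.4.7]{Chi22} and \cite[Proposition 11]{Bou14}, so there is no argument in the text to compare against. Your proposal supplies the explicit construction that those references contain, and it is correct. The key observations are all in order: $\chi_+\circ\iota = q$ gives $J_T = \iota^*J_B$; $j_B$ pulls back along $\iota$ and composes with the Levi projection $I_B \subseteq B\times V_B \to T\times V_B$ to give a homomorphism over $\overline{T}_+$; over $\overline{T}_+^{\rs}$ the centralizer is $T$ and the Levi projection is the identity on $T$, so $j_T$ restricts to the canonical isomorphism $J_T\cong T\times \overline{T}_+^{\rs}$; and the density argument ($J_T$ smooth over $\overline{T}_+$, hence reduced; target separated) shows $j_T$ is independent of the Borel, from which $W$-equivariance follows by transporting the construction along $\Ad_{\dot w}$. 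This is precisely the extension-by-density bootstrap that \cite{Yun11} uses for the Lie algebra version and \cite{Chi22}/\cite{Bou14} use for the monoid version. Your alternative via the Donagi--Gaitsgory cameral description of $J$ as a sub\-sheaf of $(q_*(T\times\overline{T}_+))^W$ is also a legitimate route and, as you note, makes the $W$-equivariance automatic rather than something to verify; the trade-off is that one must first establish the cameral description for the Vinberg regular centralizer. Either argument is an acceptable expansion of what the paper leaves as a citation.
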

\begin{proof}
Like \cite[Lemma 2.2.2]{Yun11}, this is due to \cite[Proposition 2.4.7]{Chi22} and \cite[Proposition 11]{Bou14}.
\end{proof}
These lemmas imply that there is a $BJ$-action on $[V_B^L/B]$ that is compatible with its action on $[V_G^L/G]$ and preserves the morphism $[V_B^L/B] \rightarrow \overline{T}^L_+$.
\begin{lem} \label{Paction} The Picard stack $\mathcal{P}_X$ acts on $\mathcal{M}^{\para}_{X, L}$. The action is compatible with its action on $\mathcal{M}_{X, L}$ and preserves the enhanced parabolic Hitchin fibration $\widetilde{h}^{\para}_X :\mathcal{M}^{\para}_{X, L} \rightarrow\widetilde{\mathcal{A}}_{X, L}$. That is, when  $\widetilde{\mathcal{P}}_X = \widetilde{\mathcal{A}}_{X, L} \times_{\mathcal{A}_{X, L}} \mathcal{P}_X$ is viewed as a Picard stack over $\widetilde{\mathcal{A}}_{X, L}$, it acts on $\mathcal{M}^{\para}_{X, L}$ over $\widetilde{\mathcal{A}}_{X, L}$.
\end{lem}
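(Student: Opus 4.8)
The plan is to obtain the $\mathcal{P}_X$-action as a formal consequence of the existing $\mathcal{P}_X$-action on $\mathcal{M}_X$ together with the $BJ$-action on $[V^L_B/B]$ coming from Lemmas~\ref{borelsym} and~\ref{Taction}, using the Cartesian description of $\mathcal{M}^{\para}_X$. After fixing $L$, the diagram \eqref{parab} presents $\mathcal{M}^{\para}_{X,L}$ as the fiber product of $\mathcal{M}_{X,L}\times X$ with $[V^L_B/B]$ (the $L$-twist of $[\widetilde{V}_G/G]=[V_B/B]$) over $[V^L_G/G]$, the first leg being the evaluation $\eva\colon((E,\varphi),x)\mapsto(E,\varphi)|_{\Gamma(x)}$ and the second being $\pi_G$; equivalently an $S$-point is a quadruple $(x,E,\varphi,E^B_x)$ as in the moduli description recalled above.

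First I would record that the $\mathcal{P}_X$-action on $\mathcal{M}_{X,L}$ is induced through $\underline{\Hom}(X,-)$ from the $BJ$-action on $[V^L_G/G]$ furnished by $j_G\colon\chi_+^*J\to I$ (Proposition~\ref{jscheme}); in particular the evaluation morphism $\eva\colon\mathcal{M}_{X,L}\times X\to[V^L_G/G]$ intertwines the $\mathcal{P}_X$-action on the source (acting trivially on $X$) with the $BJ$-action on the target. On the other leg, Lemmas~\ref{borelsym} and~\ref{Taction} and the discussion following them give a $BJ$-action on $[V^L_B/B]$ for which $\pi_G\colon[V^L_B/B]\to[V^L_G/G]$ is equivariant and the structure morphism $[V^L_B/B]\to\overline{T}^L_+$ is preserved. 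Both legs of the fiber square defining $\mathcal{M}^{\para}_{X,L}$ are thus $BJ$-equivariant, so its universal property produces a canonical $\mathcal{P}_X$-action on $\mathcal{M}^{\para}_{X,L}$, for which the forgetful morphism $\mathcal{M}^{\para}_{X,L}\to\mathcal{M}_{X,L}$ (the map $\pi_M$ of \eqref{parab}, followed by projection to $\mathcal{M}_{X,L}$) is equivariant. Concretely, on a quadruple this action twists $(E,\varphi)$ by the relevant $J_a$-torsor exactly as in $\mathcal{M}_{X,L}$ and twists the lift $(E|_{\Gamma(x)},\varphi|_{\Gamma(x)},E^B_x)\in[V^L_B/B]$ by its restriction to $\Gamma(x)$ using $j_B$ of Lemma~\ref{borelsym}; the factorization of $j_G|_{V_B}$ through $j_B$ is precisely what guarantees that the twisted $B$-reduction is a reduction of the twisted Higgs bundle.

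It remains to check that $\widetilde{h}^{\para}_X$ is preserved. The enhanced base $\widetilde{\mathcal{A}}_{X,L}$ is the fiber product of $\mathcal{A}_{X,L}\times X$ with $[\overline{T}^L_+/T]$ over $[\mathfrak{C}^L_+/T]$, and $\widetilde{h}^{\para}_X$ records $(a,x)=h^{\para}_X(x,E,\varphi,E^B_x)$ together with the $\overline{T}^L_+$-point extracted from $E^B_x$ via $[V^L_B/B]\to\overline{T}^L_+$ (the relative quotient $V_B\sslash\Ad(T)\cong\overline{T}_+$). The $\mathcal{P}_X$-action preserves $h_X$, hence $(a,x)$, and the $BJ$-action on $[V^L_B/B]$ preserves the map to $\overline{T}^L_+$; so the entire $\widetilde{\mathcal{A}}_{X,L}$-point is fixed, which gives the asserted equivariance of $\widetilde{h}^{\para}_X$ and the compatibility with the $\mathcal{P}_X$-action on $\mathcal{M}_{X,L}$. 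Base-changing this equivariance along $\widetilde{\mathcal{A}}_{X,L}\to\mathcal{A}_{X,L}$ then exhibits $\widetilde{\mathcal{P}}_X=\widetilde{\mathcal{A}}_{X,L}\times_{\mathcal{A}_{X,L}}\mathcal{P}_X$ acting on $\mathcal{M}^{\para}_{X,L}$ over $\widetilde{\mathcal{A}}_{X,L}$.

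Conceptually the argument is just the manipulation of Cartesian squares, parallel to \cite[\S 2]{Yun11}, so I do not expect a genuine obstacle in the globalization step. The one substantive input—that the local twist along $\Gamma(x)$ really descends to a reduction of the globally twisted Higgs bundle, and that it is compatible with the maps down to $\overline{T}^L_+$—is already supplied by the compatibility clauses of Lemmas~\ref{borelsym} and~\ref{Taction}; that is where the real content of the construction resides.
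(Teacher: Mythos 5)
Your proposal is correct and takes essentially the same approach as the paper: the paper states Lemma~\ref{Paction} without a separate proof, treating it as a direct consequence of the preceding Lemmas~\ref{borelsym} and~\ref{Taction} (which give the $BJ$-action on $[V^L_B/B]$ compatibly with $\pi_G$ and the map to $\overline{T}^L_+$) together with the Cartesian description of $\mathcal{M}^{\para}_{X,L}$ in \eqref{parab}/\eqref{paraboliccartesian}, and your write-up spells out exactly that fiber-product transport and the base change along $\widetilde{\mathcal{A}}_{X,L}\to\mathcal{A}_{X,L}$.
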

\subsection{Geometric properties}
By \cite[Prop 4.2.9]{Chi22} and \cite[Prop 9.1.1]{Wang23}, $\mathcal{M}_X$ is locally of finite type and $\mathcal{M}^{\ani}_X \rightarrow \mathcal{A}^{\ani}_X$ is a relative Deligne Mumford stack of finite type. This is also true for $\mathcal{P}_X$. 
\begin{lem}
$\mathcal{M}^{\para}_X$ is locally of finite type.
\end{lem}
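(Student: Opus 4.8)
The plan is to realize $\mathcal{M}^{\para}_X$ as a base change, along a proper morphism, of a stack we already know is locally of finite type. The key observation is that the Cartesian square \eqref{parab}, after restricting along $\mathcal{M}_X\times X\hookrightarrow \underline{\Hom}(X,[V_G/(T\times G)])\times X$, identifies
\begin{equation*}
\mathcal{M}^{\para}_X \;\cong\; (\mathcal{M}_X\times X)\times_{[V_G/(T\times G)]}[\widetilde V_G/(T\times G)],
\end{equation*}
where the first leg is the evaluation map $\eva$ and the second leg is the morphism of quotient stacks induced by the Grothendieck simultaneous resolution $\pi_G\colon\widetilde V_G\to V_G$. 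In particular $\pi_M\colon\mathcal{M}^{\para}_X\to\mathcal{M}_X\times X$ is the base change of $[\widetilde V_G/(T\times G)]\to[V_G/(T\times G)]$.

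First I would check that $[\widetilde V_G/(T\times G)]\to[V_G/(T\times G)]$ is representable and proper, hence of finite type. Pulling it back along the smooth atlas $V_G\to[V_G/(T\times G)]$ recovers $\pi_G\colon\widetilde V_G\to V_G$, which is proper since $G/B$ is complete (as already noted after \eqref{CartDiag}, $\pi_G$ is proper with closed fibres). Since properness and finiteness of type can be checked after a smooth cover, the map of quotient stacks inherits both properties.

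Next I would invoke stability of ``finite type'' — and a fortiori ``locally of finite type'' — under base change: since $[\widetilde V_G/(T\times G)]\to[V_G/(T\times G)]$ is of finite type, so is $\pi_M\colon\mathcal{M}^{\para}_X\to\mathcal{M}_X\times X$. Now $\mathcal{M}_X$ is locally of finite type over $k$ by \cite[Prop.~4.2.9]{Chi22} and \cite[Prop.~9.1.1]{Wang23}, recalled just above, and $X$ is a smooth projective curve, so $\mathcal{M}_X\times X$ is locally of finite type over $k$; composing with $\pi_M$ shows that $\mathcal{M}^{\para}_X$ is locally of finite type over $k$. (Equivalently, one can argue directly from the moduli description of Definition \ref{pmhfib}: the forgetful morphism $\pi_M$ to $\mathcal{M}_{X,L}\times X$ has fibres that are closed subschemes of the flag bundle $(E\wedge^{G}G/B)|_{\Gamma(x)}$, cut out by the closed compatibility condition $\varphi|_{\Gamma(x)}\in E^B_x\wedge^G V_B$ — closed because $V_B$ is closed in $V_G$ — so $\pi_M$ is proper, hence of finite type, and the same conclusion follows.)

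I do not expect a genuine obstacle: the only step needing a little care is the identification of $\pi_M$ as a base change of the quotient-stack version of $\pi_G$ via \eqref{parab}, together with the standard facts that properness/finiteness of type descend along a smooth atlas and are preserved by base change and composition. Everything else is formal.
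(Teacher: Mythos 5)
Your proof is correct but takes a genuinely different route from the paper. The paper's argument runs through the forgetful morphism $\mathcal{M}^{\para}_X \to \Bun_G \times \Bun_B \times \Bun_{Z_+} \times X$: each fiber is a section space, over the projective curve $X$, of a fixed \'etale-locally trivial fiber bundle with affine fibers, and such section spaces are of finite type. This is a self-contained re-run, in the parabolic setting, of the argument that $\mathcal{M}_X$ itself is locally of finite type; it does not actually use the cited fact about $\mathcal{M}_X$. Your argument is more economical: you treat that fact as a black box, identify $\pi_M\colon\mathcal{M}^{\para}_X\to\mathcal{M}_X\times X$ as the base change of the representable proper (hence finite-type) morphism $[\widetilde V_G/(T\times G)]\to[V_G/(T\times G)]$ (checked on the smooth atlas $V_G$, where it is $\pi_G$, proper because $\widetilde V_G$ is closed in $V_G\times G/B$), and conclude by stability of finite type under base change and composition. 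Your parenthetical variant — $\pi_M$ has fibers that are closed in the proper flag bundle $(E\wedge^G G/B)|_{\Gamma(x)}$, cut out by $\varphi|_{\Gamma(x)}\in E^B_x\wedge^G V_B$ with $V_B$ closed in $V_G$ — is just a concrete repackaging of the same observation and also works. The paper's approach buys independence from the input result on $\mathcal{M}_X$; yours buys brevity and makes transparent that the only new ingredient over $\mathcal{M}_X$ is the properness of the monoid Springer resolution.
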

\begin{proof}
The fibers of the natural morphism $\mathcal{M}^{\para}_X \rightarrow \Bun_G \times \Bun_B \times \Bun_{Z_+} \times X$ contain multiplicative Higgs fields $\varphi \in H^0(X, E \wedge^G V_G)$ that are compatible with the $B$-reduction at a point. So, each fiber is the section space of a fixed \'{e}tale-locally trivial fiber bundle with affine fibers. Since $X$ is projective, this section space must be of finite type. Since $ \Bun_G \times \Bun_B \times \Bun_{Z_+} \times X$ is locally of finite type, so is $\mathcal{M}^{\para}_X$. 
\end{proof}
Recall that a morphism $f:X \rightarrow Y$ of algebraic spaces is called schematic if for every scheme $U$ and $y \in Y(U)$, the fiber product $U \times_{y,Y, f} X$ is a scheme.
\begin{lem} $\mathcal{M}^{\para, \ani}_X$ is a Deligne-Mumford stack.
\end{lem}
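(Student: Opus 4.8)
The plan is to leverage the Cartesian structure in diagram \eqref{paraboliccartesian} together with the already-established fact that $\mathcal{M}^{\ani}_X \to \mathcal{A}^{\ani}_X$ is a Deligne--Mumford stack of finite type. Recall that a stack is Deligne--Mumford exactly when its diagonal is unramified (equivalently, representable, quasi-compact, separated, and with unramified — indeed étale — fibers), so it suffices to show that the automorphism groups of objects of $\mathcal{M}^{\para, \ani}_X$ are finite and reduced, and that the forgetful map $\pi_M \colon \mathcal{M}^{\para}_X \to \mathcal{M}_X \times X$ is representable and sufficiently nice. First I would observe that $\pi_M$ (locally, $\nu_M$ in \eqref{paraboliccartesian}) is schematic and proper: fixing a multiplicative Higgs bundle $(E,\varphi)$ and a point $x$, the fiber of $\pi_M$ parametrizes $B$-reductions $E^B_x$ of $E$ along $\Gamma(x)$ that are compatible with $\varphi|_{\Gamma(x)}$, i.e. sections of the fiber $(E\wedge^G (G/B))|_{\Gamma(x)}$ lying in the closed subscheme cut out by the compatibility condition $\varphi|_{\Gamma(x)} \in E^B_x \wedge^G V_B$. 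This is a closed subscheme of the flag bundle $E\wedge^G(G/B)$ restricted to $\Gamma(x)\cong S$, hence proper and schematic over $\mathcal{M}_X \times X$.

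Next I would conclude that $\mathcal{M}^{\para,\ani}_X \to \mathcal{M}^{\ani}_X \times X$ is representable by proper schemes. Since $\mathcal{M}^{\ani}_X$ is itself a Deligne--Mumford stack of finite type over $\mathcal{A}^{\ani}_X$, and $X$ is a smooth projective curve, the product $\mathcal{M}^{\ani}_X \times X$ is Deligne--Mumford of finite type; a stack that is representable (by schemes, in fact) over a Deligne--Mumford stack is again Deligne--Mumford. Concretely, the diagonal of $\mathcal{M}^{\para,\ani}_X$ fits into a commutative triangle over the diagonal of $\mathcal{M}^{\ani}_X \times X$ with schematic (hence unramified-diagonal) correction term coming from $\pi_M$, so $\Delta_{\mathcal{M}^{\para,\ani}_X}$ is unramified; combined with finite type (the previous lemma) and separatedness (inherited from $\mathcal{M}^{\ani}_X$ together with properness of $\pi_M$), this gives the Deligne--Mumford property. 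Equivalently, one checks directly that the automorphism group of a quadruple $(x,E,\varphi,E^B_x)$ is a closed subgroup of $\Aut(E,\varphi)$ — namely those automorphisms preserving $E^B_x$ — and the latter is finite and reduced in the anisotropic locus by \cite{Chi22, Wang23}, so the former is too.

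The main obstacle I anticipate is making the compatibility condition precise enough to see it really cuts out a \emph{closed} subscheme of the flag bundle over $\Gamma(x)$, and verifying that $\pi_M$ is separated (not merely its fibers proper) so that separatedness descends correctly; this is where one must be careful about the non-smoothness of $\mathcal{M}_X$ and the fact that $V_G$ is only a monoid. I would handle this by working étale-locally on $\mathcal{M}_X \times X$ where $E$ and $L$ trivialize near $\Gamma(x)$, reducing the compatibility locus to the incidence condition $\{gB : \varphi(x) \in \Ad_g V_B\}$ inside $G/B$, which is closed by Proposition \ref{borelsub} and Lemma \ref{putcha} (it is the fiber of $\pi_G$ over $\varphi(x)$ under the Grothendieck resolution, hence closed and proper as noted after \eqref{CartDiag}). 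The valuative criterion for separatedness of $\pi_M$ then follows from properness of the flag bundle together with the uniqueness of limits of $B$-reductions once they exist. Packaging these observations yields that $\mathcal{M}^{\para,\ani}_X$ is Deligne--Mumford.
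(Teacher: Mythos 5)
Your proof is correct and essentially matches the paper's argument: the paper likewise observes that $\pi_M$ is schematic and of finite type (inherited by base change from the properness and schematicness of the Grothendieck simultaneous resolution $\pi_G$), and concludes that $\mathcal{M}^{\para,\ani}_X$ is Deligne--Mumford since it admits a representable morphism to the Deligne--Mumford stack $\mathcal{M}^{\ani}_X \times X$. You spell out in more detail why the compatibility locus is closed in the flag bundle and why separatedness holds, but the underlying strategy is the same.
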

\begin{proof}
Since $\pi$ is proper, it is of finite type. Since $\pi$ is schematic and of finite type, so is $\pi_M$. Hence, $\mathcal{M}^{\para, \ani}_X$ is Deligne-Mumford.
\end{proof}

\begin{prop}
The restrictions of $h^{\para}_X$ and $\widetilde{h}^{\para}_X$ to $\mathcal{A}^{\ani}_X$ are proper.
\end{prop}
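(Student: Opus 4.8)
The plan is to factor $h^{\para}_X$ over the anisotropic locus as a composition of two proper morphisms, and then to transfer properness to $\widetilde{h}^{\para}_X$ by a cancellation argument. First I would observe that the Cartesian square \eqref{parab} exhibits the forgetful morphism $\pi_M\colon \mathcal{M}^{\para}_X \to \mathcal{M}_X \times X$ as the base change, along the evaluation map $\eva$, of the morphism of quotient stacks $\pi_G\colon [\widetilde{V}_G/(T\times G)] \to [V_G/(T\times G)]$ induced by the Grothendieck simultaneous resolution. Since $\widetilde{V}_G$ sits as a closed subscheme of $V_G \times G/B$ with $G/B$ projective, the $(T\times G)$-equivariant morphism $\widetilde{V}_G \to V_G$ is representable and proper; hence $\pi_G$ is representable and proper, and therefore so is $\pi_M$. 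Because $h^{\para}_X = (h_X \times \id_X)\circ \pi_M$, the anisotropic total stack is exactly $\mathcal{M}^{\para,\ani}_X = \pi_M^{-1}(\mathcal{M}^{\ani}_X \times X)$, so restricting $\pi_M$ to the preimage of this open substack gives a proper (representable) morphism $\mathcal{M}^{\para,\ani}_X \to \mathcal{M}^{\ani}_X \times X$; in particular, combined with the fact (already recorded in the excerpt) that $\mathcal{M}^{\para,\ani}_X$ is Deligne--Mumford, ``proper'' makes sense here.

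Next I would invoke the properness of the anisotropic multiplicative Hitchin fibration $h_X\colon \mathcal{M}^{\ani}_X \to \mathcal{A}^{\ani}_X$ (the multiplicative analogue of the Hitchin-fibration properness of \cite{Ngo08}, established in \cite{Chi22} and \cite{Wang23}); its base change along the projection $\mathcal{A}^{\ani}_X \times X \to \mathcal{A}^{\ani}_X$ is $h_X \times \id_X$, which is therefore proper. Composing, $h^{\para}_X|_{\mathcal{A}^{\ani}_X} = (h_X \times \id_X)\circ \pi_M\colon \mathcal{M}^{\para,\ani}_X \to \mathcal{M}^{\ani}_X \times X \to \mathcal{A}^{\ani}_X \times X$ is a composition of proper morphisms, hence proper. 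This settles the first half of the statement.

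For $\widetilde{h}^{\para}_X$ I would use that the cameral cover map $q\colon \widetilde{\mathcal{A}}_X \to \mathcal{A}_X \times X$ is, by its defining Cartesian square, the base change of $[\overline{T}_+/T] \to [\mathfrak{C}_+/T]$ along $\eva$, hence finite — in particular separated — since $\overline{T}_+ \to \mathfrak{C}_+$ is finite. As $\widetilde{h}^{\para}_X$ refines $h^{\para}_X$ in the sense that $q \circ \widetilde{h}^{\para}_X = h^{\para}_X$, and $\widetilde{h}^{\para}_X$ is of finite type over the anisotropic locus (both $\mathcal{M}^{\para,\ani}_X$ and $\widetilde{\mathcal{A}}^{\ani}_X$ are of finite type there), the cancellation property for proper morphisms — if $g\circ f$ is proper, $g$ is separated, and $f$ is of finite type, then $f$ is proper — yields that $\widetilde{h}^{\para}_X|_{\mathcal{A}^{\ani}_X}$ is proper.

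The main obstacle is the input to the middle step: the properness of the anisotropic multiplicative Hitchin fibration $h_X$ itself. Should one wish to avoid quoting it, one would have to run the valuative criterion directly, using boundedness of anisotropic Hitchin fibers together with the structure of cameral covers, which is exactly the technical heart of the corresponding statement in the non-parabolic theory; since that argument is already available in \cite{Chi22} and \cite{Wang23}, I would simply cite it. Beyond this, the only delicate points are the verification of representability and properness at the level of quotient stacks in the first step (which reduces cleanly to the projectivity of $G/B$ and the closedness of $\widetilde{V}_G$ in $V_G\times G/B$) and the finite-type hypothesis needed for the cancellation step.
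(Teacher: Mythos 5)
Your argument is correct and matches the paper's strategy: both invoke \cite[Proposition 8.1.2]{Wang23} for properness of $h_X$ over $\mathcal{A}^{\ani}_X$ and deduce properness of $\pi_M$ by base change from the proper Grothendieck simultaneous resolution, using projectivity of $G/B$ and closedness of $\widetilde{V}_G$ in $V_G \times G/B$. The only minor difference is for $\widetilde{h}^{\para}_X$, where the paper directly base-changes the proper morphism $[V_B^L/B]\to[V_G^L/G]\times_{\mathfrak{C}_+^L}\overline{T}^L_+$ from diagram \eqref{paraboliccartesian}, whereas you deduce it from properness of $h^{\para}_X$ via the cancellation law with the finite cameral map $q$; both routes are sound.
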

\begin{proof}
By \cite[Proposition 8.1.2]{Wang23}, $h_X|_{\mathcal{A}^{\ani}_X}$ is proper. Since $\mathcal{M}^{\para}_X$ fits into \eqref{paraboliccartesian},
$\mathcal{M}^{\para, \ani}_X$ is obtained by base change from proper maps $[V_B^L/B] \rightarrow [V_G^L/G]$ and $[V_B^L/B] \rightarrow [V_G^L/G] \times_{\mathfrak{C}_+^L} \overline{T}^L_+$. Hence, $\mathcal{M}^{\para, \ani}_X$ is proper over  $\mathcal{M}^{\ani}_X \times X$ and $\mathcal{M}^{\ani}_X \times_{\mathcal{A}^{\ani}_X} \widetilde{\mathcal{A}}^{\ani}_X$. 
\end{proof}
\subsection{Product formula}
We would like to reproduce the Product formula but for parabolic multiplicative affine Springer fibers and parabolic multiplicative Hitchin fibers. Let $a \in \mathcal{A}^\heart_X(k)$. For each $v \in X$, consider the restriction $a_v = a|_{\Spec(\mathcal{O}_v)}$ satisfying
$$\beta_G(a_v) \in t^{-w_0(\lambda_{\ad})}T_{\ad}(\mathcal{O}_v) \subset A_G(\mathcal{O}_v) \cap T_{\ad}(F_v)$$ for some $\lambda_{\ad} \in X_*(T_{\ad})^+$. 
\begin{prop}[Product formula] \label{paraprod}
Let $(a, x) \in \mathcal{A}^{\ani}_X(k) \times X(k)$ and let $U_a = a^{-1}([\mathfrak{C}_+^{rs}/Z^+])$ be the dense open subset of $X$. We have a homeomorphism of stacks:
$$\mathcal{P}_a \times^{P_x(a_x) \times P'} \left(M^{\para}_x(a_x) \times M' \right) \rightarrow \mathcal{M}^{\para}_{a, x},$$
where 
\begin{align*}
P' = \prod_{y \in X \setminus (U_a \cup \{x\})} P_y(a_y), \quad M' = \prod_{y \in X \setminus (U_a \cup \{x\})} M_y(a_y).
\end{align*}
Since $M' = P'$ by \cite[Theorem 5.2.1]{Chi22}, we have that
$$ \dim M^{\para}_x(a_x) - \dim P_x(a_x) = \dim \mathcal{M}^{\para}_{a, x} - \dim \mathcal{P}_a.$$
\end{prop}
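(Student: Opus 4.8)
The strategy is to reduce to the non-parabolic Product formula \cite[Proposition 6.9.1]{Wang23} by presenting the morphism $\pi_M\colon\mathcal{M}^{\para}_{a,x}\to\mathcal{M}_a$ that forgets the $B$-reduction as a base change along the local factor at $x$. The guiding principle is that a $B$-reduction $E^B_x$ of $E$ along $\{x\}$ that is compatible with the Higgs field $\varphi$ is a datum depending only on the restriction $(E,\varphi)|_{X_x}$ to the formal disc; and, by the moduli description of Definition \ref{indscheme}, the local parabolic multiplicative affine Springer fiber $M^{\para}_x(a_x)$ is precisely $M_x(a_x)$ equipped with such a reduction. So passing to the parabolic moduli should replace the factor $M_x(a_x)$ in the Product formula by $M^{\para}_x(a_x)$ and leave $\mathcal{P}_a$, the $P_v(a_v)$, and the remaining $M_y(a_y)$ untouched.

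First I would record an $x$-refined form of the non-parabolic Product formula: by \cite[Proposition 6.9.1]{Wang23}, together with \cite[Theorem 5.2.1]{Chi22} (i.e. $M_y(a_y)=P_y(a_y)$ for $y\in U_a$) to adjoin the harmless factor at $x$ when $x\in U_a$, one obtains a $\mathcal{P}_a$-equivariant homeomorphism
$$\mathcal{M}_a\;\cong\;\mathcal{P}_a\times^{P_x(a_x)\times P'}\bigl(M_x(a_x)\times M'\bigr),$$
with $P'$ and $M'$ as in the statement. Next, by Lemma \ref{Paction} the Picard stack $\mathcal{P}_a$ acts on $\mathcal{M}^{\para}_{a,x}$ compatibly with its action on $\mathcal{M}_a$; combining this action with the gluing map that attaches the trivial multiplicative Higgs bundle over $X\setminus\{x\}$ to a point of $M^{\para}_x(a_x)$ over $X_x$ and to points of $M_y(a_y)$ over the $X_y$ (no $B$-reduction entering at the $y\neq x$), and noting equivariance under $P_x(a_x)\times P'$, produces the candidate morphism
$$\Phi\colon\mathcal{P}_a\times^{P_x(a_x)\times P'}\bigl(M^{\para}_x(a_x)\times M'\bigr)\longrightarrow\mathcal{M}^{\para}_{a,x}.$$

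To prove $\Phi$ is a homeomorphism, I would fit it into the square whose bottom edge is the $x$-refined homeomorphism above, whose right edge is $\pi_M$, and whose left edge is induced by the projection $M^{\para}_x(a_x)\to M_x(a_x)$ in the first factor; the square commutes because forgetting the $B$-reduction commutes with gluing and with the $\mathcal{P}_a$-action. The crux is that the square is Cartesian. For $m=(E,\varphi)\in\mathcal{M}_a$, the fibre of $\pi_M$ over $m$ is the scheme of $B$-reductions of $E$ along $\{x\}$ compatible with $\varphi$; this depends only on $(E,\varphi)|_{X_x}$, and under the isomorphism $\widetilde{V}^{\lambda_{\ad}}_G(\mathcal{O})\cong G\times_B I^{\lambda_{\ad}}$ it is identified with the fibre of $M^{\para}_x(a_x)\to M_x(a_x)$ over the image of $m$; by Lemma \ref{borelsym} (the $BJ$-action on $[V_B/B]$) this identification is $P_x(a_x)$-equivariant, $\mathcal{P}_a$ acting on $(E,\varphi)|_{X_x}$ through $P_x(a_x)$, so it descends through the contracted products and the square is Cartesian. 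Base-changing the homeomorphism along the bottom edge then shows $\Phi$ is a homeomorphism.

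Finally, for the dimension identity: from the associated-bundle presentation --- the group $P_x(a_x)\times P'$ acting with the same finite-dimensional stabilisers as in the non-parabolic case, since adjoining the $B$-reduction does not enlarge them --- one has
$$\dim\mathcal{M}^{\para}_{a,x}=\dim\mathcal{P}_a+\dim M^{\para}_x(a_x)+\dim M'-\dim P_x(a_x)-\dim P',$$
and since $M'=P'$ by \cite[Theorem 5.2.1]{Chi22} the terms $\dim M'$ and $\dim P'$ cancel, giving $\dim M^{\para}_x(a_x)-\dim P_x(a_x)=\dim\mathcal{M}^{\para}_{a,x}-\dim\mathcal{P}_a$. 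The main obstacle is the Cartesian claim: one must verify carefully that ``compatible $B$-reduction at $x$'' is genuinely local at $x$ and matches $M^{\para}_x(a_x)\to M_x(a_x)$ equivariantly for the local Picard $P_x(a_x)$, so that the identification descends through the contracted products; the remaining steps are formal manipulations or direct citations.
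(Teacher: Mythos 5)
Your proposal is correct, and it genuinely differs in organization from what the paper's terse citation of \cite{Yun11}, \cite{Ngo08}, and \cite{Wang23} implies. The implied approach is to directly redo the Beauville--Laszlo gluing argument of the non-parabolic case with the $B$-reduction along $\Gamma(x)$ carried along, showing essential surjectivity and full faithfulness of the gluing map by hand; you instead treat the non-parabolic Product formula as a black box and prove the parabolic version by exhibiting a Cartesian square in which the left vertical arrow is induced by the local forgetful map $M^{\para}_x(a_x)\to M_x(a_x)$ and the right vertical arrow is $\pi_M$. This is cleaner in two respects: the nontrivial gluing argument is invoked exactly once (for the non-parabolic case), and the parabolic datum is isolated as a single base-change along the evaluation at $x$, which makes the role of the compatibility Lemma \ref{borelsym} (that the $BJ$-action passes to $[V_B/B]$, hence $\pi_M$ is $\mathcal{P}_a$-equivariant and the local forgetful map is $P_x(a_x)$-equivariant) transparent. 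Your handling of the case $x\in U_a$ via $M_x(a_x)=P_x(a_x)$ to adjoin the extra factor before base-changing is correct, and the dimension computation $\dim(\mathcal{P}_a\times^{G}M)=\dim\mathcal{P}_a+\dim M-\dim G$ applied to the contracted product, followed by cancellation of $\dim M'$ and $\dim P'$, is exactly what the statement asserts. The one claim to be slightly more careful with is the identification of the fibre of $\pi_M$ over $m\in\mathcal{M}_a$ with the fibre of $M^{\para}_x(a_x)\to M_x(a_x)$ over its image: as you note, the $B$-reduction lives on $\Gamma(x)\subset X_x$ and hence depends only on $(E,\varphi)|_{X_x}$, but one must also check that the Picard twist in the contracted product acts on this fibre through $P_x(a_x)$ alone (it does, since $J_a$-twists away from $x$ do not touch $\Gamma(x)$), which you address; this makes the Cartesian claim hold.
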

\begin{proof}
The proof follows in the same way as the Lie algebra case \cite[Proposition 2.4.1]{Yun11} and the non-parabolic case \cite[Proposition 4.13.1]{Ngo08} and \cite[\S 6.9]{Wang23}.
\end{proof}
\subsection{Global admissible union}
We would like a parabolic version of the global affine Schubert scheme in order to construct a local model of singularities for $\mathcal{M}^{\para}_X.$ For each $b = (L, \theta) \in \mathcal{B}^{\ad}_X(S),$ let $\mathfrak{B}_b$ be the pull-back of the numerical divisor $\mathfrak{B}_+$ along $\theta$ and let $\widehat{X}_{\mathfrak{B}_b}$ be the formal completion of $X \times S$ at $\mathfrak{B}_b.$  
\subsubsection{Global Iwahori subgroup} Recall that by Bruhat-Tits theory \cite[Example 1.2.9]{zhu16}, there is a group scheme $\mathcal{G}_I \rightarrow \Spec(\mathcal{O})$ called the \emph{Iwahori group scheme of $G$} such that $\mathcal{G}_I(\mathcal{O})$ is the Iwahori subgroup $I$ of $G(\mathcal{O})$. 
For each $b \in \mathcal{B}^{\ad}_X(S),$ we can construct a group scheme $I \rightarrow X \times S$ such that $I|_{(X \times S) \setminus \mathfrak{B}_b}$ is the trivial group scheme $G \times ((X \times S) \setminus \mathfrak{B}_b)$ and for every closed point $(x, s) \in \mathfrak{B}_b,$ the restriction $I|_{\Spec(\mathcal{O}_{x, s})}$ is an Iwahori group scheme. By letting $b$ vary, we can construct a global Iwahori subgroup
$$I_{\mathcal{B}^{\ad}_X}(S) = \{(b,g): b \in \mathcal{B}^{\ad}_X(S), g \in I(\widehat{X}_{\mathfrak{B}_b})\}.$$
 
\subsubsection{Global Iwahori submonoid}
Recall that for each $b = (L, \theta)\in \mathcal{B}^{\ad}_X(S)$, the construction of $V^L_{G, b}$ from \eqref{localcart} involves choosing a trivialization of $L$ over the disc $\widehat{X}_{\mathfrak{B}_b}$ and a lift of $L$ to a $T$-torsor $\widetilde{L}$ over $\widehat{X}_{\mathfrak{B}_b}$. So, define $I^L_b \rightarrow \widehat{X}_{\mathfrak{B}_b}$ as the scheme such that for each closed point $(x, s) \in \widehat{X}_{\mathfrak{B}_b},$ the restriction $I^L_b|_{\Spec(\mathcal{O}_{x, s})}$ is an Iwahori submonoid of $V^L_{G, b}(\mathcal{O}_{x, s})$. We can assemble this into a family, 
$$I_{\mathcal{B}^{\ad}_X}^{L}(S) = \{(b, g): b \in \mathcal{B}^{\ad}_X(S), g \in I^L_b(\widehat{X}_{\mathfrak{B}_b})\}.$$
\subsubsection{Global Grothendieck simultaneous resolution}
Define
$$\widetilde{V}^{\widetilde{L}}_G = \{(g, B) \in V^{\widetilde{L}}_G \times G/B: g_x \in V^{\widetilde{L}}_{B, x}, x \in \widehat{X}_{\mathfrak{B}_b}\}\subseteq V^{\widetilde{L}}_G \times G/B.$$
The condition $g_x \in V^{\widetilde{L}}_{B, x}$ means that when restricted to $x \in \widehat{X}_{\mathfrak{B}_b}$, the element $g_x$ belongs to the $B$-reduction $V^{\widetilde{L}}_{B, x}$ of the fiber $V^{\widetilde{L}}_{G,x}.$ Now, consider the following Cartesian diagram: 
\begin{equation} \label{globalgroth}
\begin{tikzpicture}[node distance=2cm]
\node (A) {$V^{L}_{G, b}$};
\node (B) [right of = A] {$V_G^{\widetilde{L}}$};
\node (C) [below of = A] {$\widehat{X}_{\mathfrak{B}_b}$};
\node (D) [right of = C] {$A_G^L$};
\node (E) [above of = A] {$\widetilde{V}^{L}_{G, b}$};
\node (F) [above of = B] {$\widetilde{V}^{\widetilde{L}}_G$};
\draw[->, below] (A) to node {\hspace{-1.3cm} $\mathlarger{\mathlarger{\mathlarger{\lrcorner}}}$} (B);
\draw[->, right] (B) to node {$\alpha_G$} (D);
\draw[->, above] (C) to node {$\theta$} (D);
\draw[->] (A) to node {} (C);
\draw[->] (E) to node {} (A);
\draw[->, right] (F) to node {$\pi_G$} (B);
\draw[->, below] (E) to node {\hspace{-1.3cm} $\mathlarger{\mathlarger{\mathlarger{\lrcorner}}}$} (F);
\end{tikzpicture}
\end{equation}
Just as in equation \eqref{isom}, $\widetilde{V}^{\widetilde{L}}_G \cong G \times_{B} V^{\widetilde{L}}_B$ and $\widetilde{V}^{L}_{G, b} \cong G \times_B V^{L}_{B, b}$. So, we define the arc space 
\begin{align*} 
L^+_{\mathcal{B}^{\ad}_X}(\widetilde{V}^L_G/A_G)(S) &:= \{(b, x): b \in \mathcal{B}^{\ad}_X(S), x \in \widetilde{V}^L_{G, b}(\widehat{X}_{\mathfrak{B}_b})\} = L^+_{\mathcal{B}^{\ad}_X} G\times_{I_{\mathcal{B}^{\ad}_X}} I_{\mathcal{B}^{\ad}_X}^{L} 
\end{align*}
For each $b \in \mathcal{B}^{\ad}_X(k)$, denote the fiber of $I_{\mathcal{B}^{\ad}_X}(k)$ over $b$ by $I_b(k) = \prod_{v \in X(k)} I_v$, where $I_v$ is the Iwahori subgroup of $\mathcal{O}_v.$ If $b$ corresponds to boundary divisor $\sum_{v \in X(k)}\lambda_v v,$ then the set-theoretic fiber $L^+_b(\widetilde{V}^L_{G, b})(k)$ over $b \in \mathcal{B}^{\ad}_X(k)$ is isomorphic to the product
$$L^+_bG(k) \times_{I_b(k)} \prod_{v \in X(k)} \left( \bigcup_{w \in \Adm(-w_0(\lambda_v), \lambda_v)} I_vwI_v \right).$$
\begin{defn}
The \emph{global admissible union associated with $V_G$} is the fpqc quotient
$$\widetilde{Q}^{\ad}_X := L^+_{\mathcal{B}^{\ad}_X}(\widetilde{V}^L_G/A_G)/L^+_{\mathcal{B}^{\ad}_X}G.$$
Its fiber over $b \in \mathcal{B}^{\ad}_X(k)$ is isomorphic to 
$$L^+_bG(k) \times_{I_b(k)} \prod_{v \in X(k)} I^{\leq \lambda_v}_{\ad} := L^+_bG(k)  \times_{I_b(k)} \prod_{v \in X(k)} \left( \bigcup_{w \in \Adm(\lambda_v)} I_{\ad, v}wI_{\ad, v} \right).$$
\end{defn}
Replacing $V^L_G$ with $V^{0, L}_G$ in \eqref{globalgroth} gives rise to the fpqc quotient 
$$\widetilde{Q}^{\ad, 0}_X := L^+_{\mathcal{B}^{\ad}_X}(\widetilde{V}^{0,L}_G/A_G)/L^+_{\mathcal{B}^{\ad}_X}G,$$
whose fiber over $b \in \mathcal{B}^{\ad}_X(k)$ is isomorphic to 
$$L^+_bG  \times_{I_b(k)} \prod_{v \in X(k)} I^{\lambda_v}_{\ad} := L^+_bG  \times_{I_b(k)} \prod_{v \in X(k)} \left( \bigcup_{w \in W} I_{\ad, v}t_v^{w(\lambda_v)}I_{\ad, v} \right).$$
In particular, if $\lambda_v = 0$ for some $v$, then $I^{\leq \lambda_v}_{\ad} = I^{\lambda_v}_{\ad} = I_{\ad, v}$. Let the pullback of $\widetilde{Q}^{\ad}_X \rightarrow \mathcal{B}^{\ad}_X$ to $\mathcal{B}_X$ be $\widetilde{Q}_X.$ The arc space $L^+_{\mathcal{B}_X} G$ acts on $\widetilde{Q}_X$ by left-translation.
\begin{defn} \label{globparahecke}
The pro-algebraic stack $[\widetilde{Q}_X]_G := [L^+_{\mathcal{B}_X} G \setminus\widetilde{Q}_X]$ is called  the \emph{global parabolic Hecke stack}. For a positive integer $N$, the stack $[\widetilde{Q}_X]_{G, N} := [L^+_{\mathcal{B}_{X, N}}G \setminus \widetilde{Q}_X]$ is called the \emph{$N$-truncated global parabolic Hecke stack}.
\end{defn}
More concretely, over the point $b \in \mathcal{B}_X(k)$ with boundary divisor $\lambda_b = \sum_{v \in X(k)} \lambda_v v,$
the fiber of $[\widetilde{Q}_X]_G$ is isomorphic to 
\begin{align*}
\left[L^+_b G \setminus \left(L^+_b G \times_{I_b} \prod_{v \in X(k)} I^{\leq \lambda_{\ad, v}}_{\ad, v}\right)\right] &= \left[ I_b \setminus \prod_{v \in X(k)} I_{\ad, v}^{\leq \lambda_{\ad, v}} \right]=  \prod_{v \in X(k)}\left[I_v \setminus I_{\ad, v}^{\leq \lambda_{\ad, v}} \right].
\end{align*}
\subsection{Local model of singularities}
The total parabolic multiplicative Hitchin stack $\mathcal{M}^{\para}_X$ is not smooth. The goal of this section is to construct its local model of singularities and show that the local evaluation map from $\mathcal{M}^{\para}_X$ to these models are (formally) smooth. 

We will first use cotangent complexes to compute the complex $\mathcal{T}^\bullet$ that controls the deformation of points in $\mathcal{M}_X$. By incorporating the parabolic structure at a point, we can compute the complex $\mathcal{K}^\bullet$ that controls the deformation of points in $\mathcal{M}^{\para}_X$. There is a distinguished triangle relating $\mathcal{T}^\bullet$ to $\mathcal{K}^\bullet$ and a long exact sequence relating their associated cohomology groups. If we can show that a certain cohomology group $H^1(X, H^0(\mathcal{K}^\bullet))$ is zero, then the global obstruction to deformation would be completely determined by its local obstructions. Using spectral sequences, we will see that $H^1(X, H^0(\mathcal{K}^\bullet)) = 0$ if $H^1(X, H^0(\mathcal{T}^\bullet))= 0$. This serves as a condition for the (formal) smoothness of the local evaluation map. Once we show that $\mathcal{M}^{\para}_X$ surjectively maps to the global admissible union $[\widetilde{Q}_X]_G$ on the level of tangent spaces, we would have all of the ingredients required to prove the main result, Theorem \ref{localmodel}.
\subsubsection{Cotangent complexes}
Let $L$ be a $T$-torsor. The map $\chi: V^L_G \rightarrow [V^L_G/G]$ is a $G$-torsor and it gives rise to a distinguished triangle of cotangent complexes:
$$\chi^*\mathbb{L}_{[V^L_G/G]} \rightarrow \mathbb{L}_{V^L_G} \rightarrow \mathbb{L}_{V^L_G/[V^L_G/G]} \rightarrow.$$
Since $\chi$ is a $G$-torsor, $\mathbb{L}_{V^L_G/[V^L_G/G]} \cong \mathcal{O}_{V^L_G} \otimes \mathfrak{g}^*[0]$ and $\mathbb{L}_{V^L_G}$ is $G$-equivariant. Descending to the base $[V^L_G/G]$ gives
$$\mathbb{L}_{[V^L_G/G]} \rightarrow (\chi_*\mathbb{L}_{V_G})^G \rightarrow {V^L_G} \times^G \mathfrak{g}^*[0].$$
Let $m$ be an $X$-point of $[V^L_G/G]$, which comes with a $G$-torsor $E$ over $X$ and a $G$-equivariant map $m': E \rightarrow V^L_G$. Applying $m^*$ to the distinguished triangle above gives 
$$m^*\mathbb{L}_{[V^L_G/G]} \rightarrow m^*(\chi_*\mathbb{L}_{V^L_G})^G = (m^*\chi_*\mathbb{L}_{V^L_G})^G \rightarrow m^*({V^L_G} \times^G \mathfrak{g}^*[0]).$$
In order to compute $m^*\mathbb{L}_{[V^L_G/G]}$, we will first compute $m^*(\chi_*\mathbb{L}_{V^L_G})^G$. Consider the commutative diagram:
\begin{center} 
\begin{tikzpicture}[node distance=2.5cm]
\node (A) {$E$};
\node (B) [right of = A] {$V^L_G$};
\node (C) [below of = A] {$X$};
\node (D) [right of = C] {$[V^L_G/G]$};
\node (E) [left of = A] {$E \times V^L_G$};
\node (F) [below of = E] {$E \times^G V^L_G$};
\node (G) [left of = E] {$V^L_G$};
\draw[->, above] (A) to node {$m'$} (B);
\draw[->, right] (B) to node {$\chi$} (D);
\draw[->, above] (C) to node {$m$} (D);
\draw[->, left] (A) to node {$\chi'$} (C);
\draw[->, above] (E) to node {$p_E$} (A);
\draw[->, left] (E) to node {$\chi''$} (F);
\draw[->] (F) to node {} (C);
\draw[->, above] (E) to node {$p_{V^L_G}$} (G);
\end{tikzpicture}
\end{center}
Here, $p_{V^L_G}$ and $p_E$ are the projections from $E \times V^L_G$ to $V^L_G$ and $E$ respectively. There is a map $\phi': E \rightarrow E \times V^L_G$ that sends $e \in E$ to the graph of $m'$, i.e. $\phi'(e) = (e, m'(e)).$ Since it is $G$-equivariant, it descends to a section $\phi:X \rightarrow E \times^G V^L_G.$ 
Since the right square is commutative, 
$$(m^*\chi_* \mathbb{L}_{V^L_G})^G = (\chi'_* (m')^* \mathbb{L}_{V^L_G})^G.$$ 
Since $m$ is $G$-equivariant, $m^*$ commutes with taking $G$-fixed points and
$$m^*(\chi_*\mathbb{L}_{V^L_G})^G = (m^*\chi_* \mathbb{L}_{V^L_G})^G = (\chi'_* (m')^* \mathbb{L}_{V^L_G})^G.$$ 
For the right hand side, it follows that $(m')^*\mathbb{L}_{V^L_G} = (\phi')^*(p^*_{V^L_G} \mathbb{L}_{V^L_G})$ by definition. Applying $\chi'_*$ to both sides of this yields
$$\chi'_*(m')^*\mathbb{L}_{V^L_G} = \chi'_*(\phi')^*(p^*_{V^L_G} \mathbb{L}_{V^L_G}) = \phi^*\chi''_*(p_{V^L_G}^*\mathbb{L}_{V^L_G}) = \phi^*\chi''_*(\mathbb{L}_{E \times V^L_G/E})$$
because $p_{V^L_G}^*\mathbb{L}_{V^L_G} = \mathbb{L}_{E \times V^L_G/E}$. Taking the $G$-fixed points gives
$$(\chi'_* (m')^* \mathbb{L}_{V^L_G})^G = (\phi^* \chi''_*(p_{V^L_G}^*\mathbb{L}_{V^L_G}))^G = (\phi^* \chi''_*(\mathbb{L}_{E \times V^L_G/E}))^G = \phi^*\mathbb{L}_{E \times^G V^L_G/X}.$$
So, $m^*\mathbb{L}_{[V^L_G/G]}$ is isomorphic to the cone
$$\mathrm{cone}((\phi^* \mathbb{L}_{E \times^G V^L_G/X} \rightarrow \ad(E)^*[0])[-1]).$$
Let $\mathcal{T}^\bullet = \underline{\RHom}_X(m^*\mathbb{L}_{[V^L_G/G]}, \mathcal{O}_X).$ Then, it is isomorphic to the cone 
$$\mathrm{cone}(\ad(E)[0] \rightarrow \underline{\RHom}_X(\phi^* \mathbb{L}_{E \times^G V^L_G}, \mathcal{O}_X)).$$
Since $\mathbb{L}_{V^L_G}$ is supported on degrees $(-\infty, 0]$, so is $\phi^* \mathbb{L}_{E \times^G V^L_G}.$ Since $\underline{\Hom}_X(\cdot, \mathcal{O}_X)$ is left exact, $\underline{\RHom}_X(\phi^* \mathbb{L}_{E \times^G V^L_G}, \mathcal{O}_X)$ is supported on $[0, \infty)$. Therefore, $\mathcal{T}^\bullet$ is the complex
$$\ad(E)[1] \rightarrow \underline{\RHom}_X(\phi^* \mathbb{L}_{E \times^G V^L_G} \mathcal{O}_X),$$
which is supported on $[-1, \infty).$ The deformation of $m$ in the mapping stack $\underline{\Hom}(X, [V^L_G/G])$ is controlled by $\mathcal{T}^\bullet.$ The obstruction space is $H^1(X, \mathcal{T}^\bullet),$ the tangent space of $\mathcal{M}_X/\mathcal{B}_X$ is $H^0(X, \mathcal{T}^\bullet)$ and the infinitesimal automorphism group is $H^{-1}(X, \mathcal{T}^\bullet).$

\subsubsection{Incorporating the parabolic structure}
In order to add a parabolic structure at a point, we need to consider the following commutative diagrams. 

\begin{center} 
\begin{tikzpicture}[node distance=2.5cm]
\node (A) {$\Spec(k)$};
\node (B) [right of = A] {$[V^L_B/B]$};
\node (C) [below of = A] {$X$};
\node (D) [right of = C] {$[V^L_G/G]$};
\node (E) [right of = B] {$\Spec(k)$};
\node (F) [right of = E] {$[V^L_B/B]$};
\node (G) [below of = E] {};
\node (H) [right of = G] {$X$};
\draw[->, above] (A) to node {$m_x$} (B);
\draw[->, right] (B) to node {$\pi$} (D);
\draw[->, above] (C) to node {$m$} (D);
\draw[right hook ->] (A) to node {} (C);
\draw[->, above] (E) to node {$m_x$} (F);
\draw[->, above] (F) to node {} (H);
\draw[->, above right] (E) to node {$x$} (H);
\end{tikzpicture}
\end{center}
By the same computations as in the previous section, the deformation of $m_x$ is controlled by 
$$\mathcal{T}^\bullet_x = \underline{\RHom}_{X}(m^*_x\mathbb{L}_{[V^L_B/B]}, \mathcal{O}_x) = [\ad(E^B_x) \rightarrow \underline{\RHom}_x(\phi^*_x\Omega^1_{E^B_x \times^B V^L_B}, \mathcal{O}_x)],.$$
Let $d\pi^*: \pi^*\mathbb{L}_{[V^L_G/G]} \rightarrow \mathbb{L}_{[V^L_B/B]}$ be the map between cotangent complexes induced by $\pi$ and let $\Gamma(x)$ be the graph of $x: \Spec(R) \rightarrow X.$ Then, have the following commutative diagram
\begin{center} 
\begin{tikzpicture}
\node (A) at (0, 0) {$\Spec(R)$};
\node (B) at (4, 0) {$[V_B^L/B]$};
\node (C) at (0, -2.5) {$\Spec(R) \times_{\Spec(k)} X$};
\node (D) at (4, - 2.5) {$[V_G^L/G]$};
\node (E) at (-3, -2.5) {$\Gamma(x)$};
\draw[->, above] (A) to node {$m_x$} (B);
\draw[->, right] (B) to node {$\pi$} (D);
\draw[->, above] (C) to node {$m$} (D);
\draw[right hook ->, left] (A) to node {$(\id, x)$} (C);
\draw[right hook ->, above] (E) to node {$i$} (C);
\end{tikzpicture}
\end{center}
Since it commutes, $m^*_x \pi^* = i^*m^*$. So, the deformations of points $(x, E, \phi, E^B_x) \in \mathcal{M}^{\para}_X$ are controlled by the complex $\mathcal{K}^\bullet$,
 which fits into the exact triangle 
\begin{align*}
\mathcal{K}^\bullet &\rightarrow \underline{\RHom}_X(m^*\mathbb{L}_{[V^L_G/G]}, \mathcal{O}_X) \oplus \underline{\RHom}_x(m_x^* \mathbb{L}_{[V^L_B/B]}, \mathcal{O}_x) \\
&\xrightarrow{(i^*, -\eta^*)} \underline{\RHom}_x(i^*m^*\mathbb{L}_{[V^L_G/G]}, \mathcal{O}_x) = \underline{\RHom}_x(m_x^*\pi^*\mathbb{L}_{[V^L_G/G]}, \mathcal{O}_x) \rightarrow \mathcal{K}^\bullet[1]
\end{align*}
where $\eta^*$ is the map defined as 
\begin{align*}
\eta^*: \underline{\RHom}_x(m_x^* \mathbb{L}_{[V^L_B/B]}, \mathcal{O}_x) &\rightarrow \underline{\RHom}_x(m_x^*\pi^*\mathbb{L}_{[V^L_G/G]}, I) \\
f &\mapsto f \circ m^*_x d\pi^*.
\end{align*}
We will express this exact triangle as 
\begin{equation}\label{exacttri}
    \mathcal{K}^\bullet \rightarrow \mathcal{T}^\bullet \oplus \mathcal{T}^\bullet_x \rightarrow i^*\mathcal{T}^\bullet \rightarrow \mathcal{K}^\bullet[1].
\end{equation}
Similar to $\mathcal{T}^\bullet$, the obstruction space is $H^1(X, \mathcal{K}^\bullet),$ the tangent space $\mathcal{M}^{\para}_X/\mathcal{B}_X$ is $H^0(X, \mathcal{K}^\bullet)$ and the infinitesimal automorphism group is $H^{-1}(X, \mathcal{K}^\bullet).$ Consider the following two-step truncations of $\mathcal{T}^\bullet$, $\mathcal{T}^\bullet_x$ and $i^*\mathcal{T}^\bullet:$
\begin{align*}
\mathcal{T}^{\leq 0} &= [\ad(E) \xrightarrow{D_{\ad}} \underline{\Hom}_X(\phi^*\Omega^1_{E \times^G V^L_G}, \mathcal{O}_X)],
\\
\mathcal{T}^{\leq 0}_x &= [\ad(E^B_x) \rightarrow \underline{\Hom}_x(\phi^*_x\Omega^1_{E^B_x \times^B V^L_B}, \mathcal{O}_x)],\\
i^*\mathcal{T}^{\leq 0} &= [i^*\ad(E) \rightarrow i^*\underline{\Hom}_X(\phi^*\Omega^1_{E \times^G V^L_G}, \mathcal{O}_X)].
\end{align*}
Since $\eqref{exacttri}$ is exact, $i^*\mathcal{T}^\bullet$ is homotopy equivalent to the cone of the map $\mathcal{K}^\bullet \rightarrow \mathcal{T}^\bullet \oplus \mathcal{T}^\bullet_x.$ Hence, we can work out that a two-step truncation $\mathcal{K}^{\leq 0}$ of $\mathcal{K}^\bullet$ is equivalent to $\mathcal{K}^{\leq 0} = [\mathcal{F} \rightarrow \mathcal{G}],$
where $\mathcal{F} = \ker(\ad(E) \twoheadrightarrow i_*(i^*\ad(E)/\ad(E^B_x))$ is in degree -1 and $$\mathcal{G} = \ker((\phi^*\Omega^1_{E \times^G V^L_G})^* \twoheadrightarrow i_*(i^*(\phi^*\Omega^1_{E \times^G V^L_G})^*/(\phi^*_x\Omega^1_{E^B_x \times^B V^L_B})^*)$$ is in degree 0. As subsheaves of locally free sheaves, $\mathcal{F}$ and $\mathcal{G}$ are locally free over $\mathcal{O}_X.$

\subsubsection{Computing cohomology groups} \label{subsec:spectral}
Let $\mathcal{X} = \{X_i\}_{i\in I}$ be a finite Zariski open affine covering of $X$. For each $q \geq 0,$ we can form a \v{C}ech double complex $C^\bullet(\mathcal{X}, H^q(\mathcal{T}^\bullet))$ and a spectral sequence $$E^{p, q}_2 = H^p(\mathrm{Tot}(C^\bullet(\mathcal{X}, H^q(\mathcal{T}^\bullet)))) \Longrightarrow H^{p+q}(X, \mathcal{T}^\bullet).$$ 
Since the intersections of the open affine subsets $X_i$ are affine and $\mathcal{T}^i$ is quasi-coherent for any $i$, the double complex $C^\bullet(\mathcal{X}, H^q(\mathcal{T}^\bullet))$ vanishes when $q > 0.$ Thus, the spectral sequence degenerates at $E_2$ and 
$$H^p(X, \mathcal{T}^\bullet) \cong H^p(\mathrm{Tot}(C^\bullet(\mathcal{X}, H^0(\mathcal{T}^\bullet)))).$$
We have the following two facts regarding $\mathcal{T}^\bullet$:
\begin{itemize}
\item Since $X$ is a curve, $H^i(X, H^j(\mathcal{T}^\bullet)) = 0$ for $i \geq 2$ and for all $j$.
\item Since $H^j(\mathcal{T}^\bullet)$ for $j \geq 1$ is supported on finitely many points, $H^1(X, H^j(\mathcal{T}^\bullet)) = 0$ for $j \geq 1.$
\end{itemize}
These show that the $E_2$-page of the spectral sequence is given by two non-zero rows:
\begin{center}
\begin{tikzpicture}
\node (A) at (0, 0) {$H^1(X, H^{-1}(\mathcal{T}^\bullet))$};
\node (B) at (4, 0) {$H^1(X, H^{0}(\mathcal{T}^\bullet))$};
\node (C) at (7, 0) {$0$};
\node (D) at (10, 0) {$0$};
\node (E) at (13, 0) {$\cdots$};
\node (F) at (0, -1) {$H^0(X, H^{-1}(\mathcal{T}^\bullet))$};    
\node (G) at (4, -1) {$H^0(X, H^{0}(\mathcal{T}^\bullet))$};
\node (H) at (7, -1) {$H^0(X, H^{1}(\mathcal{T}^\bullet))$};
\node (I) at (10, -1) {$H^0(X, H^{2}(\mathcal{T}^\bullet))$};
\node (J) at (13, -1) {$\cdots$};
\end{tikzpicture}
\end{center}
Therefore, the spectral sequence for $\mathcal{T}^\bullet$ degenerates at the second page, leaving us with isomorphisms
$$H^i(X, \mathcal{T}^\bullet) \cong H^0(X, H^i(\mathcal{T}^\bullet)), \quad i = -1 \text{ or } i \geq 2,$$
as well as exact sequences 
\begin{align*}
    0 \rightarrow H^1(X, H^{-1}(\mathcal{T}^\bullet)) &\rightarrow H^0(X, \mathcal{T}^\bullet) \rightarrow H^0(X, H^0(\mathcal{T}^\bullet)) \rightarrow 0, \\
    0 \rightarrow H^1(X, H^0(\mathcal{T}^\bullet)) &\rightarrow H^1(X, \mathcal{T}^\bullet) \rightarrow H^0(X, H^1(\mathcal{T}^\bullet)) \rightarrow 0.
\end{align*}
The same holds for $\mathcal{K}^\bullet.$

Since $x$ is a point, $H^i(\Gamma(x), H^j(\mathcal{T}^\bullet_x)) = 0$ for $i \geq 1$ and for all $j$. So, the second page of the spectral sequence for $\mathcal{T}^\bullet_x$ only has one non-zero row:
\begin{center} 
\begin{tikzpicture}
\node (A) at (0,0) {$H^0(\Gamma(x), H^{-1}(\mathcal{T}^\bullet_x))$};
\node (B) at (3.5, 0) {$H^0(\Gamma(x), H^0(\mathcal{T}^\bullet_x))$};
\node (C) at (7, 0) {$H^0(\Gamma(x), H^1(\mathcal{T}^\bullet_x))$};
\node (D) at (10.5, 0) {$H^0(\Gamma(x), H^2(\mathcal{T}^\bullet_x))$};
\node (E) at (13.5, 0) {$\cdots$};
\end{tikzpicture}
\end{center}
Therefore, the spectral sequence degenerates at the second page and we have canonical isomorphisms
$$H^i(\Gamma(x), \mathcal{T}^\bullet_x) \cong H^0(\Gamma(x), H^i(\mathcal{T}^\bullet_x)), \quad \text{ for all }i.$$
The same holds for $i^*\mathcal{T}^\bullet$.
\subsubsection{Obstruction spaces}
Let $X_v$ be the formal disk at a point $v \in X$. By \cite[\S 7.1.3]{Wang23}, there is an injective map 
$$H^0(X, H^1(\mathcal{T}^\bullet)) \hookrightarrow \prod_v H^0(X_v, H^1(\mathcal{T}^\bullet|_{X_v}),$$
where $v$ ranges over points on $X$ such that $m(v)$ is singular in $[V_G/G].$ The right hand side is the local obstruction space. The same holds when $\mathcal{T}^\bullet$ is replaced by $\mathcal{K}^\bullet.$ 
The obstruction to deforming points $(x, E, \phi, E^B_x)$ in $\mathcal{M}^{\para}_X$ is controlled by the complex $\mathcal{K}^\bullet.$ So if $H^1(X, H^0(\mathcal{K}^\bullet)) = 0,$ then the earlier exact sequence 
$$0 \rightarrow H^1(X, H^0(\mathcal{K}^\bullet)) \rightarrow H^1(X, \mathcal{K}^\bullet) \rightarrow H^0(X, H^1(\mathcal{K}^\bullet)) \rightarrow 0$$
tells us that 
$$H^1(X, \mathcal{K}^\bullet)) \cong H^0(X, H^1(\mathcal{K}^\bullet)) \hookrightarrow \prod_v H^0(X_v, H^1(\mathcal{K}^\bullet|_{X_v}).$$
In other words, it tells us that the global obstruction is completely determined by its local obstructions. We will now show that if $H^1(X, H^0(\mathcal{T}^\bullet) = 0,$ then $H^1(X, H^0(\mathcal{K}^\bullet)) = 0$. 

To compute
$H^1(X, H^0(\mathcal{K}^\bullet))$, we would like to compute $$H^1(X, H^0(\mathcal{K}^{\leq 0})) = H^1(X, \mathrm{coker}(\mathcal{F} \rightarrow \mathcal{G})).$$ 
The sheaf $\mathcal{F}$ is a subsheaf of $\ad(E)(-x_0)$ which contains sections that have a zero at $x_0$. Similarly, $\mathcal{G}$ is a subsheaf of $(\phi^*\Omega^1_{E \times^G V^L_G})^*(-x_0).$ Thus, 
$\mathcal{F}^*$ and $\mathcal{G}^*$ are subsheaves of $\ad(E)^*(x_0)$ and $(\phi^* \Omega^1_{E \times^G V^L_G})(x_0)$ respectively. By Serre Duality,
\begin{align*}
H^1(X, \mathrm{coker}(\mathcal{F} \rightarrow \mathcal{G}))
&=  H^0(X, \ker(\mathcal{G}^* \rightarrow \mathcal{F}^*) \otimes_{O_X} K_X)^*\\
&\subseteq H^0(X, \ker((\phi^* \Omega^1_{E \times^G V^L_G})(x_0) \rightarrow \ad(E)^*(x_0)) \otimes_{O_X} K_X)^* \\
&= H^1(X, \mathrm{coker}(\ad(E)(-x_0) \rightarrow (\phi^* \Omega^1_{E \times^G V^L_G})^*(-x_0)) \\
&= H^1(X, (\Lie(I^{\mathrm{sm}}_{(E, \varphi)})/\mathfrak{z}_G)^*(-x_0))
\end{align*}
because $H^1(X, \mathrm{coker}(D_{\ad})) = H^1(X, (\Lie(I^{\mathrm{sm}}_{(E, \varphi)})/\mathfrak{z}_G)^*)$ by \cite[\S 7.3.4]{Wang23}. By \cite[Proposition 7.3.6]{Wang23}, $H^1(X, (\Lie(I^{\mathrm{sm}}_{(E, \varphi)})/\mathfrak{z}_G)^*(-x_0)) = 0$ when $L(-x_0)$ is very $(G, \delta_a)$-ample, i.e. 
\begin{align*}
\deg(\omega(L(-x_0))) &> 2r(g-1) + 2 + \delta_a \\
\deg(\omega(L)) &> 2r(g-1) + 2 + \delta_a + \deg(\omega(\mathcal{O}(x_0))) \\
\deg(\omega(L)) &> 2r(g-1) + 2 + \delta_a + r.
\end{align*}
In other words, $H^1(X, H^0(\mathcal{K}^{\leq 0})) = 0$ when $L$ is very $(G, \delta_a + r)$-ample.
\subsubsection{Tangent spaces}
Applying the right derived functor $R\Gamma$ to \eqref{exacttri} gives a long exact sequence: 
\begin{align*} \cdots \rightarrow H^{-1}(\Gamma(x), i^*\mathcal{T}^\bullet) \rightarrow H^0(X, \mathcal{K}^\bullet) &\rightarrow H^0(X, \mathcal{T}^\bullet) \oplus H^0(\Gamma(x), \mathcal{T}^\bullet_x) \rightarrow \rightarrow H^0(\Gamma(x), i^*\mathcal{T}^\bullet) \rightarrow \cdots
\end{align*}
Restricting to formal discs $X_v$ where $v$ ranges over points in $X$ that are not sent to $[V_G^0/G \times T]$ gives a commutative diagram with exact rows:
\begin{align*}
\begin{tikzpicture}
\node (A) at (-1,0) {$H^{-1}(\Gamma(x), i^*\mathcal{T}^\bullet)$};
\node (B) at (2, 0) {$H^0(X, \mathcal{K}^\bullet)$};
\node (C) at (6, 0) {$H^0(X, \mathcal{T}^\bullet) \oplus H^0(\Gamma(x), \mathcal{T}^\bullet_x)$};
\node (D) at (10.5, 0) {$H^0(\Gamma(x), i^*\mathcal{T}^\bullet)$};
\node (F) at (-1, -1.5) {$H^{-1}(U_{x, v}, i^*\mathcal{T}^\bullet)$};
\node (G) at (2, -1.5) {$H^0(X_v, \mathcal{K}^\bullet)$};
\node (H) at (6, -1.5) {$H^0(X_v, \mathcal{T}^\bullet) \oplus H^0(U_{x, v}, \mathcal{T}^\bullet_x)$};
\node (I) at (10.5, -1.5) {$H^0(U_{x, v}, i^*\mathcal{T}^\bullet)$};
\draw[->, right] (B) to node {$q$} (G);
\draw[->, right] (C) to node {$p$} (H);
\draw[->, right] (D) to node {$h$}(I);
\draw[->, above] (A) to node {$a$} (B);
\draw[->, above] (B) to node {$b$} (C);
\draw[->, above] (C) to node {$c$}(D);
\draw[->, above] (F) to node {$d$} (G);
\draw[->, above] (G) to node {$e$} (H);
\draw[->, above] (H) to node {$f$}(I);
\draw[->, right] (A) to node {$g$} (F);
\end{tikzpicture}
\end{align*}
where $U_{x, v} = \Gamma(x) \cap X_v$. The goal is to show that $q$ is surjective. 

If $U_{x,v}$ is empty, then the cohomology groups over $U_{x, v}$ are 0. This means that 
$$H^0(X_v, \mathcal{K}^\bullet) \cong H^0(X_v, \mathcal{T}^\bullet) \oplus H^0(U_{x, v}, \mathcal{T}^\bullet_x).$$
Since the restriction $p: H^0(X, \mathcal{T}^\bullet)  \rightarrow H^0(X_v, \mathcal{T}^\bullet)$ is surjective by \cite[\S 7.3.7]{Wang23}, $q$ is surjective. On the other hand, if $U_{x, v}$ is non-empty, then $U_{x, v} = \Gamma(x)$ because $\Gamma(x) \subseteq X_v$. In this case, $g$ and $h$ are equalities and $p$ is surjective. By the Four lemma, $g$ and $p$ being surjective and $h$ being injective implies that $q$ is surjective. 

So in both cases, the restriction
\begin{equation} \label{surjectivetangents} q:T_m(\mathcal{M}^{\para}_X/\mathcal{B}_X) = H^0(X, \mathcal{K}^\bullet) \rightarrow  T_{\eva(m)}([\widetilde{Q}_X/\mathcal{B}_X])  = H^0(X_v, \mathcal{K}^\bullet)
\end{equation}
is surjective.
\subsubsection{Parabolic local models}
We are now prepared to prove the main theorem of this section.
\begin{thrm} \label{localmodel} Let $m = (L, x, E, \varphi, E^B_x) \in \mathcal{M}^{\para, \heart}_X.$ If $H^1(X, (\Lie(I^{\mathrm{sm}}_{(E, \varphi)})/\mathfrak{z}_G)^*(-x_0)) = 0$, then the local evaluation map
$$\eva^{\para}: \mathcal{M}^{\para, \heart}_X \rightarrow [\widetilde{Q}_X]_G,$$
is formally smooth at $m$. Meanwhile, 
$\eva^{\para}_N: \mathcal{M}^{\para, \heart}_X \rightarrow [\widetilde{Q}_X]_{G,N}$
is smooth at $m$. This is true in particular when $L$ is very $(G, \delta_a + r)$-ample, where $a = h_X(m).$
\end{thrm}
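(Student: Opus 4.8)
The plan is to run the infinitesimal lifting criterion in the same shape used for the non-parabolic statement \cite[Theorem 6.10.2]{Wang23}: once source and target carry deformation theories governed by the complexes computed in the preceding subsections, the evaluation morphism is formally smooth at $m$ provided it is surjective on tangent spaces at $m$ and injective on obstruction spaces at $m$. Here $\mathcal{M}^{\para, \heart}_X$ has its deformations over $\mathcal{B}_X$ controlled by $\mathcal{K}^\bullet$ --- tangent space $H^0(X, \mathcal{K}^\bullet)$, obstruction space $H^1(X, \mathcal{K}^\bullet)$ --- while $[\widetilde{Q}_X]_G$, assembled from the arc spaces at the finitely many points $v$ at which $m(v)$ is singular in $[V_G/G]$ together with the parabolic point $x_0$, has its deformations controlled by the local restrictions $\mathcal{K}^\bullet|_{X_v}$, with obstruction space $\prod_v H^1(X_v, \mathcal{K}^\bullet|_{X_v})$; the morphism $\eva^{\para}$ induces the obvious restriction maps on all of these.

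The first step I would carry out is injectivity on obstruction spaces. The \v{C}ech spectral sequence of Subsection \ref{subsec:spectral} supplies the short exact sequence $0 \to H^1(X, H^0(\mathcal{K}^\bullet)) \to H^1(X, \mathcal{K}^\bullet) \to H^0(X, H^1(\mathcal{K}^\bullet)) \to 0$, and since $H^1(\mathcal{K}^\bullet)$ is a skyscraper supported at the singular points, the quotient $H^0(X, H^1(\mathcal{K}^\bullet))$ injects into the local obstruction space $\prod_v H^0(X_v, H^1(\mathcal{K}^\bullet|_{X_v}))$. Thus it is enough to prove $H^1(X, H^0(\mathcal{K}^\bullet)) = 0$, which then forces $H^1(X, \mathcal{K}^\bullet) \xrightarrow{\sim} H^0(X, H^1(\mathcal{K}^\bullet))$ and hence injectivity of the obstruction map. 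Using the two-step model $\mathcal{K}^{\leq 0} = [\mathcal{F} \to \mathcal{G}]$ and the Serre-duality computation of the obstruction-spaces subsection, $H^1(X, H^0(\mathcal{K}^\bullet)) = H^1(X, \mathrm{coker}(\mathcal{F} \to \mathcal{G}))$ embeds into $H^1(X, (\Lie(I^{\mathrm{sm}}_{(E, \varphi)})/\mathfrak{z}_G)^*(-x_0))$, which is zero by hypothesis. For the last assertion of the theorem, this vanishing follows instead from \cite[Proposition 7.3.6]{Wang23}: if $L$ is very $(G, \delta_a + r)$-ample then $L(-x_0)$ is very $(G, \delta_a)$-ample, giving $H^1(X, (\Lie(I^{\mathrm{sm}}_{(E, \varphi)})/\mathfrak{z}_G)^*(-x_0)) = 0$.

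The second step is surjectivity on tangent spaces, which is exactly \eqref{surjectivetangents}: the restriction $q\colon H^0(X, \mathcal{K}^\bullet) \to H^0(X_v, \mathcal{K}^\bullet|_{X_v})$ is onto, established in the tangent-spaces subsection by a four-lemma chase against the long exact sequence of \eqref{exacttri}, fed by Wang's surjectivity $H^0(X, \mathcal{T}^\bullet) \twoheadrightarrow H^0(X_v, \mathcal{T}^\bullet|_{X_v})$. Combining the two steps, the infinitesimal criterion yields formal smoothness of $\eva^{\para}$ at $m$. To pass to $\eva^{\para}_N$, I would note that for $N$ large the $L^+_{\mathcal{B}_X}G$-action on $\widetilde{Q}_X$ factors through the $N$-jet group, so $[\widetilde{Q}_X]_{G, N}$ is an algebraic stack locally of finite type over $k$; since $\mathcal{M}^{\para}_X$ is also locally of finite type over $k$, the morphism $\eva^{\para}_N$ is locally of finite presentation, and a formally smooth morphism that is locally of finite presentation is smooth.

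I expect the only step requiring real care --- it is precisely the content of the subsections immediately preceding the theorem, so in the final write-up the proof proper reduces to a short assembly of references --- to be the local-to-global bookkeeping for $\mathcal{K}^\bullet$: the Serre-duality reduction of $H^1(X, H^0(\mathcal{K}^\bullet))$ to the twisted cohomology $H^1(X, (\Lie(I^{\mathrm{sm}}_{(E, \varphi)})/\mathfrak{z}_G)^*(-x_0))$, and the four-lemma argument behind \eqref{surjectivetangents}. The twist by $-x_0$, forced by the compatibility of $\varphi$ with the $B$-reduction at $x_0$ (which cuts $\ad(E)$ and the relevant cotangent sheaf down to subsheaves vanishing at $x_0$), is where the parabolic case genuinely departs from \cite[Theorem 6.10.2]{Wang23} and accounts for the extra $+r$ in the ampleness bound; the rest of the argument is formally identical to the non-parabolic one.
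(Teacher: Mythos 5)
Your proposal is correct and follows essentially the same route as the paper's proof: run the square-zero lifting criterion, feed in the vanishing $H^1(X, H^0(\mathcal{K}^\bullet)) = 0$ from the Serre-duality computation of the obstruction-spaces subsection to control obstructions, and use the four-lemma surjectivity of $q$ in \eqref{surjectivetangents} on tangent spaces. The paper's write-up is terser (it simply references the preceding subsections), but the content — including the observation that very $(G, \delta_a + r)$-ampleness of $L$ yields very $(G, \delta_a)$-ampleness of $L(-x_0)$ and hence the hypothesis — is the same; your extra remark on passing from formal smoothness of $\eva^{\para}$ to smoothness of $\eva^{\para}_N$ via local finite presentation is a standard step the paper leaves implicit.
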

\begin{proof}
Let $I \subseteq R$ be a square-zero ideal of an Artinian local $k$-algebra $R$. Let $R \rightarrow R/I$ be a small extension of Artinian $k$-algebras. Fix $m_0 \in \mathcal{M}^{\heart, \para}_X(R/I)$ and let $\overline{m}_0$ be its image in $[\widetilde{Q}_X].$  Suppose that $\overline{m}_0$ lifts to $\overline{m}$ over $R$. Then, the local obstruction of deforming $\overline{m}$ vanishes. If $H^1(X, (\Lie(I^{\mathrm{sm}}_{(E, \varphi)})/\mathfrak{z}_G)^*(-x_0)) = 0$, then $H^1(X, H^0(\mathcal{K}^\bullet)) = 0$ by \S \ref{subsec:spectral}.  This means that the global obstruction of deforming $m_0$ also vanishes. By the surjectivity of $q$ in \eqref{surjectivetangents}, there is a lift of $m_0$ to $R$ lying over $\overline{m}.$ This gives us the desired theorem.
\end{proof}
Here, the local evaluation map sends a point $(x, E, \varphi, E^B_x) \in \mathcal{M}^{\para}_X$ with boundary divisor $\lambda = \sum_{v \in X(k)} \lambda_v v$ to the product $(\gamma_v)_{v \in X(k)} \in \prod_{v \in X(k)} I^{\leq \lambda_v}_{\ad}$ of trivializations of $\varphi$ over formal neighbourhoods of $v \in X(k)$. By construction, the local evaluation map $\eva^{\para}_N: \mathcal{M}^{\para, \heart}_X \rightarrow [\widetilde{Q}_X]_G$ factors through the limit $\eva_N: \mathcal{M}^{\heart}_X \rightarrow [\widetilde{Q}_X]_G$.

\section{Dimension formula and equidimensionality}
Compared to the linear case, the strategy for proving the equidimensionality of multiplicative affine Springer fibers is very different. Indeed, \cite{KL88} first proved the equidimensionality of parabolic affine Springer fibers using ``lines of type $\alpha$", where $\alpha$ is a simple root. Then, \cite{Bez96} used this result to conclude the equidimensionality of affine Springer fibers and subsequently, found their dimensions. For multiplicative affine Springer fibers, this is the reverse. \cite{Chi22} first found the dimensions of affine Springer fibers before proving their equidimensionality. He also resorted to global methods such as using the Product formula and the mutliplicative Hitchin fibration. 

Unfortunately, the notion of ``lines of type $\alpha$" does not generalize readily when passing to monoids \cite{BC18}. Instead, we will use the same strategy as \cite[\S 3, \S 5]{Chi22}. We will first compute the dimension of parabolic multiplicative affine Springer fibers for unramified, regular semisimple elements. This will involve computing the dimension of generalized Mirkovi\'{c}-Vilonen cycles in the affine flag variety that are indexed by two dominant coweights. We will then prove the dimension formula and equidimensionality of parabolic multiplicative affine Springer fibers for general regular semisimple elements using the Product formula and local constancy of the fibers.
\subsection{The case for unramified elements}
Let $\gamma \in G^{\rs}(F)$ be a split or unramified element, i.e. we can assume that $\gamma \in t^\mu T(\mathcal{O}) \cap G^{\rs}(F)$ with $\mu = \nu_\gamma \in X_*(T)^+$ after conjugation. Suppose that $\mu \leq \lambda$. We wish to find 
\begin{align*}
\dim X^{\lambda, \para}_\gamma &= \dim \left(\left\{ g \in \Fl_G: \Ad_{g^{-1}}(\gamma) \in \bigcup_{w \in W} It^{w(\lambda)}I\right\}\right) \\
&= \dim \left(\left\{ g \in \Fl_G: \Ad_{g^{-1}}(t^\mu) \in \bigcup_{w \in W} It^{w(\lambda)}I\right\}\right)\\
&= \dim \left(\bigcup_{w\in W} \{ g \in \Fl_G: \Ad_{g^{-1}}(t^\mu) \in It^{w(\lambda)}I \}\right)\\
&= \max_{w \in W} \dim ( \{ g \in \Fl_G: \Ad_{g^{-1}}(t^\mu) \in It^{w(\lambda)}I \}).
\end{align*}
For a fixed $w \in W$, we denote 
$$X_{t^{w(\lambda)}}(t^\mu) := \{ g \in \Fl_G: \Ad_{g^{-1}}(t^\mu) \in It^{w(\lambda)}I \},$$
which is an example of an \emph{affine Lusztig variety} from \cite{He23}. Let $U$ be the unipotent radical of $B$ with its opposite $U^-$. Since $T(F)$ does not permute the $U^-(F)$-orbits, $U^-(F)vI$ with $v \in \widetilde{W}$, in the affine flag variety transitively, we have the following formula: 
\begin{align*}
\dim X_{t^{w(\lambda)}}(t^\mu) = \sup_{v \in W} \dim(X_{t^{w(\lambda)}}(t^\mu) \cap U^-(F)vI).
\end{align*}
The left multiplication map by $v^{-1}$ gives an isomorphism 
$$X_{t^{w(\lambda)}}(t^\mu) \cap U^-(F)vI \cong X_{t^{w(\lambda)}}(t^{v^{-1}(\mu)}) \cap v^{-1}U^-(F)vI.$$
So, it suffices to describe $\dim X_{t^{w(\lambda)}}(t^\mu) \cap U^-(F)I.$ We will do so by relating them to certain intersections of $I$-orbits and $U^-(F)$-orbits in the affine flag variety. Consider the map
\begin{align*}
f_\mu: U^-(F) &\rightarrow U^-(F), \\
u&\mapsto u^{-1}t^\mu u t^{-\mu}.
\end{align*}
We have that
$$X_{t^{w(\lambda)}}(t^\mu) \cap U^-(F)I \cong \frac{f_{\mu}^{-1}(It^{w(\lambda)}It^{-\mu} \cap U^-(F))}{I \cap U^-(F)}.$$
We also have the isomorphism 
\begin{equation} \label{MVcycle} \frac{It^{w(\lambda)}I \cap U^-(F)t^{\mu}I}{I} \cong \frac{It^{w(\lambda)}It^{-\mu} \cap U^-(F)}{I \cap t^\mu U^-(F)t^{-\mu}}.\end{equation}
A Mirkovi\'{c}-Vilonen cycle in the affine Grassmannian is an irreducible component of the closure of the intersection between a $G(\mathcal{O})$-orbit and a $U^-(F)$-orbit. 
\begin{defn}
Denote the intersection of an Iwahori-orbit and a $U^-(F)$-orbit by  
$$I^w \cap S^v_{w_0} := \frac{IwI \cap U^-(F)vI}{I}, \quad w, v \in \widetilde{W}.$$
A \emph{generalized Mirkovi\'{c}-Vilonen cycle} in the affine flag variety is an irreducible component of $\overline{I^w \cap S^v_{w_0}}.$
\end{defn}
In particular, \eqref{MVcycle} is $I^{t^{w(\lambda)}} \cap S^{t^\mu}_{w_0}$. In summary, we have the diagram:
\begin{equation} \label{diagram}
\begin{tikzpicture}
\node (A) at (0,0) {$f_{\mu}^{-1}(It^{w(\lambda)}It^{-\mu} \cap U^-(F))$};
\node (B) at (8, 0) {$It^{w(\lambda)}It^{-\mu} \cap U^-(F)$};
\node (C) at (0, -2) {$X_{t^{w(\lambda)}}(t^\mu) \cap U^-(F)I = \frac{f_{\mu}^{-1}(It^{w(\lambda)}It^{-\mu} \cap U^-(F))}{I \cap U^-(F)}$};
\node (D) at (8, -2) {$I^{t^{w(\lambda)}} \cap S^{t^\mu}_{w_0} = \frac{It^{w(\lambda)}It^{-\mu} \cap U^-(F)}{I \cap t^\mu U^-(F)t^{-\mu}}$};
\draw[->, above] (A) to node {$f_\mu$} (B);
\draw[->, above] (A) to node {} (C);
\draw[->, right] (B) to node {} (D);
\end{tikzpicture}
\end{equation}
The left vertical map is a $(I \cap U^-(F))$-torsor. Since 
$$I \cap t^\mu U^-(F)t^{-\mu} = t^\mu(I \cap U^-(F))t^{-\mu},$$
the right vertical map is a $(t^\mu(I \cap U^-(F))t^{-\mu})$-torsor.
\subsubsection{Dimensions of generalized Mirkovi\'{c}-Vilonen cycles}
The dimension of generalized Mirkovi\'{c}-Vilonen cycles indexed by two general elements in $\widetilde{W}$ is unknown. However, the situation is easier where the cycles are indexed by two translation elements. For a dominant coweight $\lambda$, let $P_\lambda$ be the parabolic subgroup of $G$ generated by $T$ and the root subgroups $U_\alpha$ such that $\langle \lambda, \alpha \rangle \leq 0$. Let $W_\lambda$ be the Weyl group of $P_\lambda$ with longest element $w^{\lambda}_0$ and let $W^{\lambda} = W/W_{\lambda}.$
\begin{prop} \label{iwahorimvcycle} The intersection $I^{t^{w(\lambda)}} \cap S^{t^\mu}_{w_0}$ in $\Fl_G$ is equidimensional with dimension 
\begin{align*} \dim(I^{t^{w(\lambda)}} \cap S^{t^\mu}_{w_0}) &= \langle \lambda + \mu, \rho \rangle - \dim(G/P_\lambda) + \dim(X_w) \\
&= \langle \lambda + \mu, \rho \rangle - \ell(w^{\lambda}_0) + \ell(w^\lambda),
\end{align*}
where $w^\lambda$ is the image of $w \in W^\lambda.$
\end{prop}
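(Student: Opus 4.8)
\emph{Strategy.} I would organize the computation around the projection $\pi\colon\Fl_G\to\Gr_G$, a Zariski-locally trivial fibration with fiber $G/B$, the point being that $\dim\bigl(I^{t^{w(\lambda)}}\cap S^{t^\mu}_{w_0}\bigr)$ should split as a ``semi-infinite'' contribution, already visible on $\Gr_G$ and governed by the classical Mirkovi\'{c}-Vilonen dimension formula, plus a ``fiber'' contribution from the $G/B$-direction. As a first move I would pass to the avatar of the intersection inside the pro-unipotent group $U^-(F)$ via \eqref{MVcycle},
\[
I^{t^{w(\lambda)}}\cap S^{t^\mu}_{w_0}\;\cong\;\frac{It^{w(\lambda)}It^{-\mu}\cap U^-(F)}{I\cap t^\mu U^-(F)t^{-\mu}},
\]
where numerator and denominator are explicit (ind-)schemes whose dimensions can be read off coordinate-wise and the torsor structure of \eqref{diagram} is transparent.

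\emph{Carrying it out.} Using the Iwahori factorization $I=U(\mathcal{O})\,T(\mathcal{O})\,U^-(t\mathcal{O})$ together with the conjugation rule $t^{\nu}u_{\beta}(x)t^{-\nu}=u_{\beta}(t^{\langle\nu,\beta\rangle}x)$, the condition that $u^-\in U^-(F)$ lie in $It^{w(\lambda)}It^{-\mu}$ unwinds into a finite system of valuation inequalities on the root-group coordinates $x_\beta\in F$ of $u^-$, one family per positive root $\beta$, dictated by $\langle\mu,\beta\rangle$ and by the affine Schubert cell occupied by the translation element $t^{w(\lambda)}$ (whose dominant part is $\lambda$). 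This locus is isomorphic to a product of finite-dimensional affine spaces $\mathbb{A}^{d_\beta}$, copies of $\mathbb{G}_m$, and some infinite-dimensional factors; the latter are cancelled upon dividing by $I\cap t^\mu U^-(F)t^{-\mu}=t^\mu(I\cap U^-(F))t^{-\mu}$, leaving a finite-dimensional quotient. Summing the surviving exponents, the directions $\beta$ with $\langle\mu,\beta\rangle\neq0$ reproduce the affine-Grassmannian count $\langle\lambda+\mu,\rho\rangle$ for the corresponding $G(\mathcal{O})$-orbit/semi-infinite-orbit intersection, diminished by $\dim(G/P_\lambda)$ coming from the passage to the appropriate $I$-orbit inside $\Gr^\lambda$, while the residual ``Iwahori'' directions at the top --- equivalently, the part of the $G/B$-fiber of $\pi$ that survives inside $It^{w(\lambda)}I$ over the $I$-orbit of $t^{w(\lambda)}$ in $\Gr_G$, which is an open cell of $X_w\subseteq G/P_\lambda$ --- contribute $\ell(w^\lambda)=\dim(X_w)$. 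Adding these yields the formula, and the identities $\dim(G/P_\lambda)=\ell(w^\lambda_0)$ and $\dim(X_w)=\ell(w^\lambda)$ are the standard length bookkeeping for Bruhat cells in $G/P_\lambda$.

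\emph{Equidimensionality and the main obstacle.} Because every stratum above is an affine space (the $\mathbb{G}_m$ factors not affecting dimension), the intersection admits an affine paving and is therefore equidimensional of the asserted dimension; more structurally it is an iterated affine-space bundle over the $G/P_\lambda$-cell $X_w$. I expect the delicate point to be the bookkeeping in the second step: making the valuation conditions uniform across all roots and all $w\in W$, and in particular reading off the correct inequalities from the whole affine Schubert cell $It^{w(\lambda)}I$ rather than a single Iwahori double coset, so that no spurious cancellation or extra freedom appears at the interface between the ``semi-infinite'' and the ``Iwahori'' directions. Should this get unwieldy, a cleaner route is to establish the $\Gr_G$-level statement first --- that the $I$-orbit of $t^{w(\lambda)}$ meets the relevant semi-infinite orbit in dimension $\langle\lambda+\mu,\rho\rangle-\dim(G/P_\lambda)$, by reduction to the Mirkovi\'{c}-Vilonen formula --- and then show $\pi$ restricts to a Zariski-locally trivial fibration on $I^{t^{w(\lambda)}}\cap S^{t^\mu}_{w_0}$ with fiber an open cell of $X_w$, so that the dimensions add and equidimensionality is inherited.
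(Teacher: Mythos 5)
Your fallback ``cleaner route'' resembles the paper's actual strategy --- project along $\pi\colon\Fl_G\to\Gr_G$ and account for the $G/B$-fiber separately --- but the way you split the dimension count does not match what happens. The paper invokes \cite[Theorem 6.1]{Zho19}, which says that the $\Gr_G$-level intersection $It^{w(\lambda)}G(\mathcal{O})\cap U^-(F)t^\mu G(\mathcal{O})$ is already equidimensional of dimension $\langle\lambda+\mu,\rho\rangle-\ell(w^\lambda_0)+\ell(w^\lambda)$; the term $\ell(w^\lambda)=\dim(X_w)$ lives entirely in the base. The $G/B$-fiber contribution is then pinned down by the Richardson bound $\dim(BxB/B\cap B^-yB/B)=\ell(x)-\ell(y)$, which is zero for $x=y=1$. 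You instead assign the base the $w$-independent value $\langle\lambda+\mu,\rho\rangle-\dim(G/P_\lambda)$ and the fiber the value $\ell(w^\lambda)$; both identifications are false, and your total is right only because the two errors cancel. For $G=\SL_2$, $\lambda=n\alpha^\vee$, $\mu=m\alpha^\vee$, $w$ the simple reflection, the Iwahori orbit of $t^{-\lambda}$ in $\Gr_G$ meets $U^-(F)t^\mu G(\mathcal{O})$ in dimension $n+m$, not $n+m-1$, while the $G/B$-fiber over a base point meets $I^{t^{-\lambda}}\cap S^{t^\mu}_{w_0}$ in the single point $B/B\cap B^-B/B$, not a one-dimensional cell; so the $X_w$-fibration you posit does not exist. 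Equidimensionality is likewise inherited from Zhou's statement about the base (plus the fibration), not from an affine paving of the whole intersection, which you have not produced.

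Your primary strategy --- unwinding valuation inequalities on root coordinates of $U^-(F)$ via the Iwahori factorization --- is a genuinely different and much more laborious route, one that would in effect re-derive Zhou's theorem rather than cite it. It is not unreasonable, but it was left as a sketch precisely at the step you yourself flag as delicate, and the heuristic you give for where the $\ell(w^\lambda)$ directions arise (the ``residual Iwahori directions'' in the $G/B$-fiber over the Iwahori orbit of $t^{w(\lambda)}$) repeats the same misallocation as the cleaner route. So as written the proposal has a real gap in both branches: the main computation is unexecuted, and the fallback's two structural claims --- the $\Gr_G$-level dimension and the fiber identification --- are each incorrect, even though their sum happens to be right.
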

\begin{proof}
This proof is inspired by \cite[Corollary 6.4]{Zho19}. By \cite[Thoerem 6.1]{Zho19}, the intersection $It^{\lambda}G(\mathcal{O}) \cap U^-(F)t^\mu G(\mathcal{O})$ has dimension 
$$\langle \lambda + \mu, \rho \rangle - \dim(G/P_\lambda) + \dim(X_w) = \langle \lambda + \mu, \rho \rangle - \ell(w^{\lambda}_0) + \ell(w^\lambda).$$
The projection $\Fl_G \rightarrow \Gr_G$ is a $G/B$-bundle. For any $x, y \in W$, 
both $t^{w(\lambda)}x$ and $t^\mu y$ are elements of $\widetilde{W}$. So, the intersection $I^{t^{w(\lambda)}x} \cap S^{t^\mu y}_{w_0}$ lies in a $G/B$-bundle above $It^{w(\lambda)}G(\mathcal{O}) \cap U^-(F)t^\mu G(\mathcal{O})$. Thus, 
$$\dim(It^{w(\lambda)}G(\mathcal{O}) \cap U^-(F)t^\mu G(\mathcal{O})) \leq \dim(I^{t^{w(\lambda)}x} \cap S^{t^\mu y}_{w_0}).$$
When $x \geq y,$ the intersection $I^{t^{w(\lambda)}x} \cap S^{t^\mu y}_{w_0}$ in the $G/B$-fiber above any point of $It^{w(\lambda)}G(\mathcal{O}) \cap U^-(F)t^\mu G(\mathcal{O})$ is a subset of the Richardson variety $BxB/B \cap B^-yB/B$, which has dimension $\ell(x) - \ell(y).$ Hence, 
$$\dim(I^{t^{w(\lambda)}x} \cap S^{t^\mu y}_{w_0}) \leq \dim(It^{w(\lambda)}G(\mathcal{O}) \cap U^-(F)t^\mu G(\mathcal{O})) + \ell(x) - \ell(y).$$
However in our case, $x = y = 1$. So we get equalities of dimensions, 
$$\dim(I^{t^{w(\lambda)}} \cap S^{t^\mu}_{w_0}) = \dim(It^{w(\lambda)}G(\mathcal{O}) \cap U^-(F)t^\mu G(\mathcal{O})).$$
These generalized Mirkovi\'{c}-Vilonen cycles are equidimensional because the base $I^{t^{w(\lambda)}} \cap S^{t^\mu}_{w_0}$ is equidimensional \cite[Theorem 6.1]{Zho19}.
\end{proof}
\subsubsection{Admissible subsets}
To relate the dimension of $X_{t^{w(\lambda)}}(t^\mu) \cap U^-(F)I$ with the dimension of $I^{t^{w(\lambda)}} \cap S^{t^\mu}_{w_0}$, we need to compute the dimension of the fibers of the maps in \eqref{diagram}. Doing so requires the dimension theory of admissible subsets \cite[\S 6]{GHKR10}. In this section, we will be proving an Iwahori-analogue of the results in \cite[\S 3.4]{Chi22}. 

Let $P = MN$ be the standard parabolic subgroup, where $N$ is the unipotent radical of $P$ and $M$ is the standard Levi subroup. Let $LN$ be the loop space of $N$, $L^+N$ be the arc space and $L^+_n N$ be the $n$-th jet-scheme. 
Let $\mathcal{G}$ be the Bruhat-Tits parahoric group scheme corresponding to the Iwahori subgroup $I$ such that $\mathcal{G}(\mathcal{O}) = I.$ For $n \geq 0$, let $I_n = \ker(\mathcal{G}(\mathcal{O}) \rightarrow \mathcal{G}(\mathcal{O}/t^n \mathcal{O}))$ be the $n$th principal congruence subgroups of $I$. These are normal subgroups of $I$.

Define the normal subgroup $N_n = I_n \cap N(\mathcal{O}) \subseteq I \cap N(\mathcal{O})$. Choose a Borel subgroup $B$ contained in $P$ and let $\delta_N$ be the sum of the $B$-fundamental coweights $\omega_\alpha$, where $\alpha$ ranges over the simple $B$-positive roots appearing in $\Lie(N).$ Let $N[i]$ be the product of the root subgroups $U_\beta \subset N$ for $\beta$ satisfying $\langle \beta, \delta_N\rangle \geq i.$ These subgroups $N[i]$ are stable under conjugation by $M$ and have abelian successive quotients $N\langle i \rangle := N[i]/N[i+1].$ Define $N_n[i] = N_n \cap N[i](\mathcal{O})$ and $N_n\langle i \rangle = N_n[i]/N_n[i+1].$ 

For each $\gamma \in M(F) \cap G(F)^{\rs},$ the map 
$$f_\gamma: LN \rightarrow LN, \quad u \mapsto u^{-1}\gamma u \gamma^{-1}$$
preserves each normal subgroup $N[i]$ and induces maps $f_\gamma[i]: LN[i] \rightarrow LN[i]$ and $f_\gamma \langle i \rangle: LN\langle i \rangle \rightarrow LN \langle i \rangle.$ There is an $M$-equivariant isomorphism $N\langle i \rangle \cong \Lie N\langle i \rangle.$ For each $i$, let 
\begin{align*} 
r_i &:= \mathrm{val}\det(f_\gamma \langle i \rangle ) = \mathrm{val} \det(\ad_\gamma: \Lie N\langle i \rangle(F) \rightarrow \Lie N\langle i \rangle(F) )\\
r_N(\gamma)&:= \mathrm{val}\det(\ad_\gamma: \Lie N(F) \rightarrow \Lie N(F)).
\end{align*}
Let $\ell = \max_{\alpha \in \Phi} \langle \delta_N, \alpha).$ Then,
$$r_N(\gamma) = \sum^\ell_{i = 1} r_i.$$
\begin{lem} \label{lem1} For any $1 \leq i \leq \ell + 1$ and 
any positive integer $n$ such that $n \geq \sum^{\ell + 1}_{j = 1} r_j,$ we have that $N_n[i] \subset f_\gamma(I \cap N[i](\mathcal{O})).$
\end{lem}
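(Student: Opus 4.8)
# Proof Proposal for Lemma \ref{lem1}

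The plan is to induct downward on $i$, from $i = \ell+1$ to $i = 1$, using the filtration of $N$ by the subgroups $N[i]$ and their abelian quotients $N\langle i \rangle$. The key point is that the map $f_\gamma$ respects this filtration, so we can control $f_\gamma$ one graded piece at a time, where it becomes (after the identification $N\langle i \rangle \cong \Lie N\langle i \rangle$) an affine-linear map governed by $\ad_\gamma$, whose determinant has valuation $r_i$. Since $N[\ell+1] = 1$ is trivial (by definition of $\ell = \max_{\alpha}\langle\delta_N,\alpha\rangle$, no root subgroups survive past level $\ell$), the base case is vacuous, and the real content is the inductive step.

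First I would set up the inductive hypothesis: assume $N_n[i+1] \subset f_\gamma(I \cap N[i+1](\mathcal{O}))$. Given $x \in N_n[i]$, I want to find $u \in I \cap N[i](\mathcal{O})$ with $f_\gamma(u) = x$. I would first solve the problem in the quotient $N\langle i\rangle$: the image $\bar{x} \in N_n\langle i\rangle$ must be hit by $f_\gamma\langle i\rangle$ restricted to $L^+(N[i]) \cap (\text{Iwahori part})$. Here is where the valuation bound enters — because $\mathrm{val}\det(f_\gamma\langle i\rangle) = r_i$ and $n \geq \sum_{j=1}^{\ell+1} r_j \geq r_i$ (in fact we need the cumulative bound, see below), the affine-linear map $f_\gamma\langle i\rangle$ on lattices in $\Lie N\langle i\rangle(F)$ hits everything in the congruence-level-$n$ sublattice; concretely, $t^n \Lie N\langle i\rangle(\mathcal{O})$ lies in the image of $t^0 \Lie N\langle i\rangle(\mathcal{O})$ under a map whose cokernel has length $r_i$, so $n \geq r_i$ suffices at this single level. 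Lift the solution $\bar{u}$ to some $u_0 \in I \cap N[i](\mathcal{O})$. Then $f_\gamma(u_0)^{-1}x$ lies in $N[i+1](\mathcal{O})$, and I need to check it actually lands in $N_n[i+1]$ — this is the step that forces the \emph{cumulative} bound $n \geq \sum_{j=1}^{\ell+1} r_j$ rather than just $n \geq r_i$, since passing from level $i$ to level $i+1$ the congruence level can drop by up to $r_i$. Then apply the inductive hypothesis to correct by an element of $f_\gamma(I \cap N[i+1](\mathcal{O}))$, and multiply the two corrections together (using that $f_\gamma(u_0) \cdot f_\gamma(u_1)$ and $f_\gamma(u_0 u_1)$ differ by controllable commutator terms lying in deeper filtration steps, which can be absorbed — or, more cleanly, argue that $f_\gamma$ on $N[i]$ modulo $N[i+1]$ being surjective onto the right congruence piece plus surjectivity on $N[i+1]$ gives surjectivity on $N[i]$ onto the target by a standard successive-approximation / completeness argument since $N$ is unipotent and $t$-adically complete).

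The main obstacle I anticipate is bookkeeping the congruence levels precisely through the filtration: controlling exactly how much the $t$-adic valuation of the "error term" $f_\gamma(u_0)^{-1}x$ can degrade when pushed into the next graded piece $N[i+1]$, and verifying that the hypothesis $n \geq \sum_{j=1}^{\ell+1} r_j$ is exactly what is needed to absorb all these successive losses. This is essentially the Iwahori-level refinement of the arc-space/jet-scheme dimension estimates of \cite[\S 6]{GHKR10} and the parahoric versions in \cite[\S 3.4]{Chi22}; the unipotent structure of $N$ (so that all the relevant maps are "triangular" with respect to the filtration and the group is pro-unipotent, hence exponential-like and $t$-adically complete) is what makes the successive-approximation argument terminate. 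I would also need to double-check that working with $I \cap N[i](\mathcal{O})$ rather than the full $L^+N[i]$ does not lose surjectivity — the Iwahori condition only cuts out a specific "positive part" of the root subgroups crossing level zero, and since all roots in $\Lie N$ are $B$-positive (as $N \subset P$ and $B \subset P$), the relevant arc groups coincide with $I \cap N[i](\mathcal{O}) = N[i](\mathcal{O})$ anyway, so this subtlety is mild. Once this is in place, the downward induction closes and yields $N_n[1] = N_n \subset f_\gamma(I \cap N(\mathcal{O}))$ as the $i = 1$ case, with the intermediate statements for all $i$ along the way.
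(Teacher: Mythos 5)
Your proposal follows the same overall architecture as the paper's proof --- downward induction on $i$, solving first in the graded piece $N\langle i\rangle$, then correcting inductively in $N[i+1]$ --- but it leaves the hardest step as a handwave and contains one wrong claim.

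The step you correctly identify as the ``main obstacle'' (combining the solution $u_0$ on the graded piece with the inductive correction) is exactly where the paper introduces its key device, and you would need this or something equivalent. Since $f_\gamma$ is not a homomorphism, writing $f_\gamma(u_0)^{-1}x$ and trying to ``absorb commutators'' is genuinely awkward: $f_\gamma(u_0 u_1) = u_1^{-1}f_\gamma(u_0)\,\gamma u_1\gamma^{-1}$ does not split. The paper sidesteps this entirely by defining the twisted-conjugation operation $x \ast u := u^{-1}x\gamma u\gamma^{-1}$, observing that $x\ast u = 1 \Leftrightarrow f_\gamma(u^{-1}) = x$, and --- crucially --- that $(x\ast u)\ast v = x\ast(uv)$. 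With this cocycle identity the inductive step is literal composition: find $u_i$ with $x\ast u_i \in N_{n-r_i}[i+1]$, invoke the inductive hypothesis to get $v$ with $(x\ast u_i)\ast v = 1$, and set $u = u_iv$. Your ``successive approximation'' fallback would, once carried out, essentially rediscover this $\ast$-operation; as written it is a gap rather than a proof.

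The second issue is the claim that ``since all roots in $\Lie N$ are $B$-positive, $I\cap N[i](\mathcal{O}) = N[i](\mathcal{O})$, so this subtlety is mild.'' This is false in the case that actually gets used: in Theorem~\ref{unramdim} the lemma is applied with $N = U^-$, while $I$ is the Iwahori attached to the \emph{fixed} Borel $B$ (not $B^-\subset P$). Then $I\cap U^-(\mathcal{O})$ is the first congruence subgroup $\{u \in U^-(\mathcal{O}): u\equiv 1 \bmod t\}$, a proper subgroup of $U^-(\mathcal{O})$. The statement still goes through, but for a different reason: the Iwahori graded piece $I_{N\langle i\rangle} = (N[i]\cap I)/(N[i+1]\cap I)$ is a shifted $\mathcal{O}$-lattice in $\Lie N\langle i\rangle(F)$, and since $f_\gamma\langle i\rangle$ is $\mathcal{O}$-linear (up to the affine shift), the containment $t^{r_i}N\langle i\rangle(\mathcal{O}) \subset f_\gamma\langle i\rangle(N\langle i\rangle(\mathcal{O}))$ immediately gives $t^{r_i}I_{N\langle i\rangle} \subset f_\gamma\langle i\rangle(I_{N\langle i\rangle})$. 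The paper introduces $I_{N\langle i\rangle}$ precisely to make this refinement explicit; your dismissal of it rests on a sign error about which Borel defines $I$.

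Your bookkeeping of the congruence drop by $r_i$ at each level and of how the cumulative bound $n\geq\sum r_j$ enters the inductive hypothesis is correct and matches the paper.
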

\begin{proof}
We follow \cite[Lemma 3.4.1]{Chi22} and prove this by descending induction on $i$. When $i = \ell + 1,$ we have $N[\ell + 1]$ is the trivial subgroup. Hence, $N_n[\ell + 1] = N_n \cap N[\ell + 1](\mathcal{O})$ is also trivial and it certainly lies in $f_\gamma(I \cap N[\ell+1](\mathcal{O})) = f_\gamma(I \cap N[\ell + 1](\mathcal{O})).$

Assume that the statement is true for $i+1$. Let $x \in N_n[i].$ Define an operation $x \ast u = u^{-1}x\gamma u \gamma^{-1}.$ To show that $x \in f_\gamma(I \cap N[i](\mathcal{O})),$ we need to find $u \in (I \cap N[i](\mathcal{O}))$ with $x \ast u = 1$. Indeed if $x \ast u = 1,$ then $x\gamma u \gamma^{-1} = u$ and 
\begin{align*}
f_\gamma(u^{-1}) &= u\gamma u^{-1} \gamma^{-1} = (x\gamma u \gamma^{-1})\gamma u^{-1} \gamma^{-1} = x.
\end{align*}
Let $x_i \in N_n\langle i \rangle$ be the image of $x$. Since $\mathrm{val} \det(f_\gamma \langle i \rangle) = r_i,$ we have that $t^{r_i} N\langle i \rangle (\mathcal{O}) \subset f_\gamma \langle i \rangle (N\langle i \rangle(\mathcal{O}))$. Let $$I_{N\langle i \rangle} := \frac{N[i] \cap I}{N[i+1]\cap I}.$$
Then, we also have $t^{r_i} (I_{N\langle i \rangle}) \subset f_\gamma \langle i \rangle (I_{N\langle i \rangle})$. There exists $u_i \in N_{n-r_i}[i]$ such that $x_i \ast u_i = 1$ in $I_{N\langle i \rangle}$ and hence, $x \ast u_i \in N_{n-r_i}[i+1]$. By the inductive hypothesis, there exists $v \in I \cap N[i+1](\mathcal{O})$ such that $(x \ast u_i) \ast v = 1.$ Then, $u = u_iv$ satisfies $x \ast u = 1.$
\end{proof}
\begin{defn}
A subset of $L^+N$ is \emph{admissible} if it is the pre-image of a locally closed subset of $L^+_nN$ for some $n$. A subset of $LN$ is \emph{admissible} if it is $G(F)$-conjugate to an admissible subset of $L^+N$.  
\end{defn}
For example when $N = U^-$, the sets $It^{w(\lambda)}It^{-\mu} \cap U^-(F)$ and $I \cap t^\mu U^-(F)t^{-\mu}$ are both admissible subsets of $U^-(F).$
\begin{lem} \label{f0admiss}
Let $V$ be an admissible subset of $L^+N$ and $f_0 = f_\gamma|_{I \cap L^+N}$. Let $n \geq r_N(\gamma)$ such that $V$ is right invariant under $N_n$ and suppose that $V \subset f_0(I \cap L^+N)$. Then, the set $f_0^{-1}(V)$ is admissible and right-invariant under $N_n$ and $f_0$ induces a smooth surjective map $$f_0^{-1}(V)/N_n \rightarrow V/N_n,$$
whose fibers are isomorphic to $\mathbb{A}^{r_N}(\gamma)$. \end{lem}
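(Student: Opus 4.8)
The plan is to mirror the structure of \cite[Lemma 3.4.2]{Chi22}, adapting each step to the Iwahori setting, where the relevant group is $I \cap L^+N$ rather than $L^+N$, and the normal filtration $N_n[i]$ plays the role of the congruence filtration. First I would record that $f_0^{-1}(V)$ is right-invariant under $N_n$: if $x \in f_0^{-1}(V)$ and $w \in N_n$, then using the $\ast$-operation $x \ast u = u^{-1}x\gamma u\gamma^{-1}$ one computes $f_0(xw) = f_0(x)\cdot(\text{something in }N_n)$ because $f_\gamma(w) = w^{-1}\gamma w\gamma^{-1} \in N_n$ (as $f_\gamma$ preserves each $N[i]$ and hence $N_n$, using $n \geq r_N(\gamma)$ and Lemma \ref{lem1}), so $f_0(xw)$ lands in $V \cdot N_n = V$; the converse inclusion is symmetric. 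This shows $f_0^{-1}(V)$ is a union of $N_n$-cosets, and the induced map $f_0^{-1}(V)/N_n \to V/N_n$ is well-defined.

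Next I would establish admissibility of $f_0^{-1}(V)$. Since $V$ is admissible, it is the preimage of a locally closed subset $\overline{V} \subset L^+_m N$ for some $m$, which we may take $\geq n$. The map $f_0$ descends to a morphism of jet schemes $f_{0,m}: (I \cap L^+N)/I_m \to L^+_m N$ at a finite level (after possibly enlarging $m$ so that $f_\gamma$ does not decrease depth below level $m$ — this is where $n \geq r_N(\gamma)$ enters, controlling the loss of precision), and $f_0^{-1}(V)$ is the preimage of $\overline{V}$ under this morphism between finite-dimensional schemes, hence admissible. Right-invariance under $N_n$ has already been checked, so $f_0^{-1}(V)/N_n$ is a well-defined finite-dimensional quotient.

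For smoothness, surjectivity, and the fiber computation, I would argue fiberwise over $V/N_n$. Surjectivity is exactly the hypothesis $V \subset f_0(I \cap L^+N)$, passed to quotients. For a point $\bar v \in V/N_n$ with a lift $v \in V$, the fiber of $f_0^{-1}(V)/N_n \to V/N_n$ over $\bar v$ is identified with $\{x \in I \cap L^+N : f_0(x) \in vN_n\}/N_n$. Fix one solution $x_0$ with $f_0(x_0) = v$; then the fiber is a torsor under $\ker(f_0 \bmod N_n) = \{u \in (I\cap L^+N)/N_n : u^{-1}\gamma u\gamma^{-1} \in N_n\}$, and this kernel is, by the same filtration-by-$N\langle i\rangle$ argument used in Lemma \ref{lem1} together with the regular semisimplicity of $\gamma$ (which makes $\ad_\gamma$ invertible on each $\Lie N\langle i\rangle(F)$), an affine space of dimension $\sum_i r_i = r_N(\gamma)$; here one uses that the successive quotients $N\langle i\rangle \cong \Lie N\langle i\rangle$ are abelian so the obstruction to solving $x \ast u = 1$ layer-by-layer is linear, contributing $\operatorname{val}\det(\operatorname{ad}_\gamma)$ on each layer. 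Since all fibers of the $\ast$-action are smooth of constant dimension $r_N(\gamma)$ and the map is a torsor under a smooth unipotent group scheme in each layer, the total map $f_0^{-1}(V)/N_n \to V/N_n$ is smooth of relative dimension $r_N(\gamma)$.

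The main obstacle I anticipate is the bookkeeping in the descending induction that identifies the kernel (equivalently, the fiber) with $\mathbb{A}^{r_N(\gamma)}$ \emph{on the nose}, not just up to dimension: one must carefully track that passing from $N[i]$ to $N[i]/N[i+1]$ and applying the bound $t^{r_i}(I_{N\langle i\rangle}) \subset f_\gamma\langle i\rangle(I_{N\langle i\rangle})$ from Lemma \ref{lem1} produces exactly an affine-space fiber at each stage, with the splitting $N_n\langle i\rangle \cong \Lie N_n\langle i\rangle$ being $M$-equivariant but not canonical — so one should fix these isomorphisms once and check the resulting trivializations are compatible across layers. The smoothness statement then follows because an iterated torsor under vector groups is smooth.
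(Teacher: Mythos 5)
Your proposal follows essentially the same route as the paper: admissibility via descent to a jet-level map, surjectivity from the hypothesis $V \subset f_0(I \cap L^+N)$, and then identification of every fiber with the single set $\{u : f_0(u) \in N_n\}$ (a coset of the $\ast$-stabilizer of $1$, which the paper calls $S$), followed by the descending-induction-through-$N[i]$ computation that pins down $S \cong \mathbb{A}^{r_N(\gamma)}$ using Lemma~\ref{lem1}. Your ``torsor under $\ker(f_0 \bmod N_n)$'' phrasing is the same object the paper identifies as $S = \overline{f}_0^{-1}(1)$, and your right-invariance check $f_0(xw) = f_0(x)\cdot(\text{element of }N_n)$ is exactly what makes $\overline{f}_0$ well-defined, so the two arguments coincide.
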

\begin{proof}
We will essentially follow \cite[Lemma 3.4.2]{Chi22}. Let $\overline{f}_0: I \cap L^+_nN \rightarrow I \cap L^+_nN$ be the map induced by $f_0$. Since $V$ is right invariant under $N_n$, so is $f_0^{-1}(V).$ Since $f_0^{-1}(V)/N_n = \overline{f}_0^{-1}(V/N_n)$ is a locally closed subset of $I \cap L^+_nN \subseteq L^+_nN$, we have that $f_0^{-1}(V)$ is admissible. Since $V \subset f_0(I \cap L^+N)$, the induced map $f_0^{-1}(V)/N_n \rightarrow V/N_n$ is surjective. It remains to show that the fibers are isomorphic to $\mathbb{A}^{r(\gamma)}.$

Let $H = N_n$ and define normal subgroups $H[i]_j = t^jH[i]$ (resp. $H\langle i \rangle_j = t^j H\langle i \rangle$). Define the product $v \ast u = u^{-1}v\gamma u\gamma^{-1}$ for $u, v \in N_n.$ In particular, $\overline{f}_0(u) = 1 \ast u$ and all fibers of $\overline{f}_0$ are isomorphic to the stabilizer $S:= \overline{f}_0^{-1}(1).$ 

The $\ast$-operation descends to $H[i]$ and $H\langle i \rangle$. Let $S[i]$ and $S\langle i\rangle$ be the corresponding stabilizers of 1. In order to describe $S$, we will describe each $S\langle i\rangle$. First, we claim that the homomorphism $S[i] \rightarrow S\langle i\rangle$ is surjective for all $i$. Indeed, let $s \in S\langle i\rangle$ and choose $h$ to be its representative in $H[i].$ Since 
$$S \langle i \rangle = \ker(\overline{f}_0\langle i \rangle) \subset t^{n-r_i}H\langle i \rangle,$$
we have that $h \in H[i]_{n-r_i}$ and $1 \ast h \in H[i+1]_{n-r_i}$. Since $n-r_i \geq \sum^{\ell + 1}_{j = i+1} r_j,$ we can apply the previous lemma to obtain an element $h' \in H[i+1]$ satisfying $1 \ast (hh') = 1$. Thus, $hh' \in S[i]$ maps to $s \in S\langle i \rangle.$ The kernel of this surjective homomorphism is $S[i+1]$ and $$S \langle i \rangle \cong (f_0 \langle i \rangle)^{-1}(t^nI_{N\langle i \rangle})/t^nI_{N\langle i \rangle} \cong \mathbb{A}^{r_i}. $$
From this, we see that $S \cong \mathbb{A}^{r_N(\gamma)}.$
\end{proof}
To extend this result from $f_0$ to $f_\gamma$, we need the following lemma.  
\begin{lem} \label{inward}
For any $n \geq r_N(\gamma)$, we have that $f_{\gamma}^{-1}(N_n) \subset N_{n- r_N(\gamma)}.$
\end{lem}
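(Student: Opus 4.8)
The plan is to exploit the filtration of $N$ by the subgroups $N[i]$ together with the valuation bounds $r_i = \val\det(f_\gamma\langle i\rangle)$ that were introduced just above, in exactly the same spirit as \cite[Lemma 3.4.1]{Chi22}. Write $f = f_\gamma$ for brevity. The statement to prove is that if $u \in LN(F)$ satisfies $f(u) = u^{-1}\gamma u\gamma^{-1} \in N_n$ for some $n \geq r_N(\gamma)$, then $u \in N_{n - r_N(\gamma)}$, i.e. $u \in N(\mathcal{O})$ and $u \equiv 1 \bmod t^{\,n - r_N(\gamma)}$.

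First I would reduce to the graded pieces. Because $f$ preserves each normal subgroup $N[i]$, it induces $f\langle i\rangle \colon LN\langle i\rangle \to LN\langle i\rangle$, and under the $M$-equivariant identification $N\langle i\rangle \cong \Lie N\langle i\rangle$ this is the linear map $\ad_\gamma - 1$ (or rather $1 - \ad_{\gamma^{-1}}$, depending on convention) acting on $\Lie N\langle i\rangle(F)$. The key elementary input is: for a linear automorphism $\Phi$ of a finite-dimensional $F$-vector space with an $\mathcal{O}$-lattice $\Lambda$, if $\Phi(v) \in t^m\Lambda$ then $v \in t^{\,m - \val\det\Phi}\Lambda$; this is just the statement that $\Phi^{-1}$ has entries with valuation $\geq -\val\det\Phi$ relative to $\Lambda$ (by Cramer's rule / the adjugate). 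Applying this with $\Phi = f\langle i\rangle$, $\Lambda = I_{N\langle i\rangle}$ and $m = n$ gives the graded-piece version: if the image of $f(u)$ in $LN\langle i\rangle$ lies in $t^n I_{N\langle i\rangle}$, then the image of $u$ in $LN\langle i\rangle$ lies in $t^{\,n - r_i}I_{N\langle i\rangle} \subset N_{n - r_i}\langle i\rangle$.

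Then I would run a descending induction on $i$ from $\ell+1$ down to $1$ to assemble these graded statements into the claim for $N$ itself, using that $f(u) \in N_n$ forces its image in every quotient $N[i]/N[i+1]$ to lie in the appropriate congruence subgroup. At stage $i$, one has already controlled the image of $u$ in $N[i+1]$, namely $u \in N[i+1](\mathcal{O})$ modulo the needed power of $t$; combining with the graded bound at level $i$ and the multiplicativity of the filtration (the successive quotients $N\langle i\rangle$ are abelian and the $\ast$-action is compatible with the filtration, exactly as used in Lemma \ref{f0admiss}), one upgrades this to control of the image of $u$ in $N[i](\mathcal{O})$. Since $r_N(\gamma) = \sum_{i=1}^{\ell+1} r_i$ (with $r_{\ell+1} = 0$ as $N[\ell+1]$ is trivial) and the congruence levels add along the filtration, the cumulative loss is bounded by $r_N(\gamma)$, yielding $u \in N_{n - r_N(\gamma)}$. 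The main obstacle — really the only subtlety — is bookkeeping the congruence levels correctly through the non-abelian filtration: one must check that when passing from the graded quotient $N\langle i\rangle$ back to $N[i]$, the correction term needed to fix up $u$ lies in $N[i+1]$ at a congruence level no worse than $n - \sum_{j \geq i} r_j$, so that the induction closes; this is precisely the place where Lemma \ref{lem1} (applied with the hypothesis $n \geq \sum_{j} r_j$) does the work, and I would invoke it rather than re-derive it.
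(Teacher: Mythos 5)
You have the right raw ingredients --- the filtration $N[i]$, the graded maps $f_\gamma\langle i\rangle$, and the adjugate observation bounding preimages on each graded piece by $\val\det f_\gamma\langle i\rangle = r_i$ --- but the plan for assembling them does not close. The step ``$f(u) \in N_n$ forces its image in every quotient $N[i]/N[i+1]$ to lie in the appropriate congruence subgroup'' is not available as stated: the image of $f(u)$ in $N\langle i\rangle$ is only defined once you already know $f(u) \in N[i]$, which for $i > 1$ is exactly what needs to be established. The paper's argument is an \emph{ascending} induction from $i = 1$ to $i = \ell + 1$, proving $u \in N[i](F)\cdot N_{n - \sum_{j < i}r_j}$ at each stage; the inductive step writes $u = u_iv$ with $u_i \in N[i](F)$ and $v$ controlled, and the essential computation is
$$f_\gamma(u_iv) = v^{-1}\bigl(u_i^{-1}\gamma u_i\gamma^{-1}\bigr)\gamma v\gamma^{-1},$$
which, using normality of $N_n$, isolates $f_\gamma(u_i)$ inside $N_{n - \sum_{j<i}r_j}[i]$ and then lets the adjugate bound be applied to the image $\overline{u}_i$ in $N\langle i\rangle$. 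That conjugation argument is the heart of the step, not an afterthought to be deferred, and it is absent from your sketch.

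Relatedly, invoking Lemma \ref{lem1} for the correction terms is a misstep. Lemma \ref{lem1} is a surjectivity statement, $N_n[i] \subset f_\gamma(I\cap N[i](\mathcal{O}))$: it guarantees that preimages exist, not that every preimage is close to the identity, which is what Lemma \ref{inward} asserts. The two lemmas run in opposite directions, and the paper's proof of Lemma \ref{inward} is entirely self-contained --- it never cites Lemma \ref{lem1}. The two are combined only afterward, in the proof of Proposition \ref{admissiblesubset}, where Lemma \ref{lem1} verifies the hypothesis $\Ad_{t^{\mu_0}}(Z) \subset f_\gamma(I\cap L^+N)$ needed for Lemma \ref{f0admiss}, while Lemma \ref{inward} justifies the $\Ad_{t^{\mu_0}}$-conjugation reduction. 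So although your strategy (filtration, determinant valuation, induction) is the right one in outline, the proof as sketched would not go through.
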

\begin{proof}
We will prove that if $u \in N(F)$ with $f_\gamma(u) \in N_n,$ then $u \in N[i](F) \cdot N_{n - \sum_{j < i} r_j}$ using induction. When $i = 1,$ the statement says $u \in N[1](F) = N(F),$ which holds by assumption. Suppose that the statement is true for all integers $k$ satisfying $1 \leq k \leq i$. By the inductive hypothesis, we have $u = u_iv$ with $u_i \in N[i](F)$ and $v \in N_{n- \sum_{j < i} r_j}$. By assumption, 
\begin{align*}
 f_\gamma(u) &= f_\gamma(u_iv) \\
 &= v^{-1}u_i^{-1} \gamma u_iv \gamma^{-1} \\
 &= v^{-1}u_i^{-1} \gamma u_i(\gamma^{-1} \gamma) v \gamma^{-1} \\
 &= v^{-1}(u_i^{-1} \gamma u_i\gamma^{-1}) \gamma v \gamma^{-1} \in N_n. 
\end{align*}
Hence, $u_i^{-1}\gamma u_i \gamma^{-1} \in N[i](F) \cap (v N_n \gamma v \gamma^{-1}) \in N_{n - \sum_{j < i} r_j}[i].$ Let $\overline{u}_i \in N\langle i \rangle$ be the image of $u_i$. Then, $f_{\gamma}\langle i \rangle (\overline{u}_i) \in N_{n - \sum_{j < i}r_j}\langle i \rangle.$ Since $\val \det(f_\gamma \langle i \rangle ) = r_i,$ we have that $\overline{u}_i \in N_{n - \sum_{j < i+1} r_j} \langle i \rangle.$ Hence, 
$$u = u_iv \in N[i+1](F) \cdot N_{n - \sum_{j < i+1}r_j}.$$
This proves the statement by induction. Now, if we set $i = \ell + 1$, then the lemma follows because $\sum^{\ell + 1}_{j = 1} r_j = r_N(\gamma)$ and $N[\ell + 1]$ is trivial. 
\end{proof}
\begin{prop} \label{admissiblesubset}
Let $Z$ be an admissible subset of the loop space $LN$. Then, $f^{-1}_\mu(Z)$ is admissible and there exists a positive integer $m$ such that for all $n \geq m$, the subsets $f^{-1}_\mu(Z)$ and $Z$ are right invariant under $N_n$. Moreover, the map
$$f^{-1}_\mu(Z)/N_n \rightarrow Z/N_n$$
induced by $f_\mu$ is smooth and surjective with irreducible geometric fibers of dimension $r_N(\gamma)$
\end{prop}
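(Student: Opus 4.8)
The plan is to prove Proposition~\ref{admissiblesubset} as the loop‑group refinement of Lemma~\ref{f0admiss}, following the template of \cite[\S 3.4]{Chi22} (itself modelled on \cite[\S 6]{GHKR10}). I would proceed in three stages: first reduce to the case in which $Z$ is an admissible subset of the arc space $L^+N$; then establish surjectivity of $f_\mu$ on all of $LN$ and fix the congruence subgroups controlling invariance; and finally carry out the fibre computation by descending induction along the filtration $N=N[1]\supseteq N[2]\supseteq\cdots$.

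For the reduction, conjugating by a suitable $t^\nu\in T(F)$ contracts $N(F)$ into $N(\mathcal O)$, so $t^\nu Z t^{-\nu}$ is an admissible subset of $L^+N$. This conjugation intertwines $f_\mu$ with the analogous map attached to $t^\nu\gamma t^{-\nu}$, and since $\det$ of its linearisation (the operator $\Ad_\gamma-1$ on $\Lie N(F)$, invertible because the centraliser of the regular semisimple $\gamma$ is a torus meeting $\Lie N$ trivially) is a conjugation invariant, the quantity $r_N(\gamma)$ is unchanged; as conjugation by $t^{\pm\nu}$ is an isomorphism shifting the congruence filtration only by a bounded amount, the statement for $Z$ follows from the statement for $t^\nu Z t^{-\nu}$ after enlarging $m$.

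Next, on the abelian successive quotients $N\langle i\rangle\cong\Lie N\langle i\rangle$ the map $f_\mu\langle i\rangle$ is the $F$‑linear, $F$‑invertible operator $\Ad_\gamma-1$, hence surjective on the loop space $LN\langle i\rangle$; a successive‑approximation argument up the filtration then shows $f_\mu\colon LN\to LN$ is surjective, which removes the hypothesis ``$V\subseteq f_0(I\cap L^+N)$'' of Lemma~\ref{f0admiss}. Writing $Z$ as the preimage of a locally closed subset of $L^+_{n_0}N$, it is stable under right multiplication by $N_{n_0}$ and, because $N(\mathcal O)$‑conjugation preserves the congruence filtration, also under $N_{n_0}$‑conjugation. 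Using the identity $f_\mu(uh)=h^{-1}f_\mu(u)\,(t^\mu h t^{-\mu})$ and Lemma~\ref{inward} (which confines $f_\mu^{-1}(zN_n)$, for $z$ in one of the finitely many cosets of $Z/N_n$, to a bounded translate), one deduces that $f_\mu^{-1}(Z)$ is admissible and right $N_n$‑invariant, provided $n\ge m$ for $m$ chosen larger than $n_0$, larger than $r_N(\gamma)$ (so Lemmas~\ref{lem1} and~\ref{inward} apply), and large enough that $t^\mu N_n t^{-\mu}\subseteq N_{n_0}$, so that the twist in the displayed identity stays inside the invariance group of $Z$.

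Finally, with invariance in place the map descends to $\overline f_\mu\colon f_\mu^{-1}(Z)/N_n\to Z/N_n$ of finite‑type $k$‑schemes, surjective by the previous step. Equivariance of $f_\mu$ for the twisted right $N_n$‑action $z\mapsto h^{-1}z(t^\mu h t^{-\mu})$ identifies each geometric fibre with a torsor under the stabiliser $S=\overline f_\mu^{-1}(1)$, and $S$ is computed exactly as in the proof of Lemma~\ref{f0admiss}: the maps $S[i]\to S\langle i\rangle$ are surjective (this is where Lemma~\ref{lem1} supplies the correction term in $N[i+1]$), with kernel $S[i+1]$, and $S\langle i\rangle\cong\mathbb A^{r_i}$ since $\val\det f_\mu\langle i\rangle=r_i$; hence $S\cong\mathbb A^{\sum_i r_i}=\mathbb A^{r_N(\gamma)}$, which is irreducible, and presenting $\overline f_\mu$ via the same filtration as an iterated affine‑space fibration yields smoothness. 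I expect the main obstacle to be precisely the bookkeeping of the third‑to‑last step: because $f_\mu$ is equivariant only for the \emph{twisted} $N_n$‑action, and a general admissible $Z$ is a priori neither contained in the arc space nor in the image of $f_\mu$, one has to choose the threshold $m$ uniformly so that the twist is absorbed and $f_\mu^{-1}(Z)$ is genuinely admissible; once this is arranged, the remaining content reduces to the arguments already recorded in Lemmas~\ref{f0admiss} and~\ref{inward}.
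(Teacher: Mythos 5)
Your proposal follows the same overall route as the paper: conjugate $Z$ by a torus element into the arc space so as to reduce to Lemma~\ref{f0admiss}, use Lemma~\ref{inward} to confine preimages, and then descend to jet quotients. There are two small but genuine differences. First, the paper takes the conjugating cocharacter $\mu_0$ from $X_*(Z(M)^0)$, so that $t^{\mu_0}$ is central in $M$ and therefore commutes with $\gamma$; this gives $\Ad_{t^{\mu_0}}\circ f_\gamma = f_\gamma\circ\Ad_{t^{\mu_0}}$ on the nose, so there is no need to pass to the conjugate element $t^\nu\gamma t^{-\nu}$ or to argue that $r_N$ is conjugation-invariant, as you do. (If you want to keep a general $t^\nu\in T(F)$ you should at least require $\langle\alpha,\nu\rangle>0$ for the roots of $N$ so that conjugation actually contracts $Z$ into $N(\mathcal O)$; a single $t^\nu$ never contracts all of $N(F)$.) Second, to produce the hypothesis $V\subset f_0(I\cap L^+N)$ of Lemma~\ref{f0admiss}, the paper appeals directly to Lemma~\ref{lem1} (which gives $N_{n_0}\subset f_\gamma(I\cap L^+N)$), whereas you observe surjectivity of $f_\mu$ on all of $LN$ and then confine the preimage with Lemma~\ref{inward}; your phrasing that surjectivity ``removes'' the hypothesis is slightly misleading (surjectivity on $LN$ alone does not control whether the preimage sits in $I\cap L^+N$), but together with Lemma~\ref{inward} the hypothesis is in fact verified, not removed. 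Finally, the paper does not re-run the stabiliser analysis: it cites Lemma~\ref{f0admiss} for the conjugate data and transports the conclusion back through two commuting squares comparing quotients by $N_n$ and $t^{-\mu_0}N_{n'}t^{\mu_0}$, while you redo the fiber computation via the twisted right $N_n$-action. None of these differences constitute a gap; your argument is sound.
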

\begin{proof}
The proof is essentially \cite[Proposition 3.4.4]{Chi22}. Fix a positive integer $n_0 \geq r_N(\gamma)$ and choose $\mu \in X_*(Z(M)^0)$ such that $\Ad_{t^{\mu_0}}(Z) \subset N_{n_0}$. By Lemma \ref{inward}, $$f_{\gamma}^{-1}(\Ad_{t^{\mu_0}}(Z)) \subset f_{\gamma}^{-1}(N_{n_0}) \subset N_{n_0-r_N(\gamma)}.$$
Hence, $\Ad_{t^{\mu_0}}(f^{-1}_\gamma(Z)) = f^{-1}_\gamma(\Ad_{t^{\mu_0}}(Z)) = f_0^{-1}(\Ad_{t^{\mu_0}}(Z)).$ Since $\Ad_{t^{\mu_0}}(Z)$ is an admissible subset of $L^+N,$ we have that $f_0^{-1}(\Ad_{t^{\mu_0}}(Z))$ is an admissible subset by Lemma \ref{f0admiss}.

Let $n_1 > n_0$ be a positive integer such that $\Ad_{t^{\mu_0}}(Z)$ and $f^{-1}_\gamma(\Ad_{t^{\mu_0}}(Z))$ are invariant under $N_{n_1}.$ For all $n \geq n_1$, the subsets $Z$ and $f_{\gamma}^{-1}(Z)$ are right-invariant under the group $t^{-\mu_0} N_n t^{\mu_0}$. Thus, we have the following commutative diagram 
\begin{center} 
\begin{tikzpicture}
\node (A) at (0,0) {$f_\gamma^{-1}(Z)/t^{-\mu_0} N_n t^{\mu_0}$};
\node (B) at (4, 0) {$Z/t^{-\mu_0} N_n t^{\mu_0}$};
\node (C) at (0, -2) {$f_{\gamma}^{-1}(\Ad_{t^{\mu_0}}(Z))/N_n$};
\node (D) at (4, -2) {$\Ad_{t^{\mu_0}}(Z)/N_n$};
\draw[->, above] (A) to node {} (B);
\draw[->, left] (A) to node {$\cong$} (C);
\draw[->, right] (B) to node {$\cong$} (D);
\draw[->, above] (C) to node {} (D);
\end{tikzpicture}
\end{center}
where the vertical arrows are isomorphisms induced by $\Ad_{t^{\mu_0}}.$ By Lemma \ref{lem1}, $\Ad_{t^{\mu_0}}(Z) \subset N_{n_0} \subset f_\gamma(I\cap L^+N).$ By Lemma \ref{f0admiss}, the bottom horizontal arrow is smooth, surjective and has fibers isomorphic to $\mathbb{A}^{r_N(\gamma)}.$

Let $m$ be a positive integer such that
for all $n \geq m,$ $N_n$ contains 
$t^{-\mu_0} N_{n'} t^{\mu_0}$ for some $n' \geq n_1.$ Consider the following diagram 
\begin{center} 
\begin{tikzpicture}
\node (A) at (0,0) {$f_\gamma^{-1}(Z)/t^{-\mu_0} N_{n'} t^{\mu_0}$};
\node (B) at (4, 0) {$Z/t^{-\mu_0} N_{n'} t^{\mu_0}$};
\node (C) at (0, -2) {$f_{\gamma}^{-1}(Z)/N_n$};
\node (D) at (4, -2) {$Z/N_n$};
\draw[->, above] (A) to node {} (B);
\draw[->, left] (A) to node {} (C);
\draw[->, right] (B) to node {} (D);
\draw[->, above] (C) to node {} (D);
\end{tikzpicture}
\end{center}
Both vertical maps are smooth and surjective with fibers isomorphic to the irreducible scheme $N_n/t^{-\mu_0} N_{n'} t^{\mu_0}$ while the top horizontal map is smooth and surjective with fibers $\mathbb{A}^{r_N(\gamma)}.$ Hence, the lower map is also smooth and surjective with fibers $\mathbb{A}^{r_N(\gamma)}$.
\end{proof}
\subsubsection{Dimension formula for unramified elements}
We will now apply the dimension theory of admissible subsets to prove the equidimensionality and dimension formula of parabolic multiplicative affine Springer fibers for unramified elements. In particular, we set $N = U^-$ and define $U^-_n$ similarly to $N_n$ from the previous section.   
\begin{thrm} \label{unramdim}
The parabolic multiplicative affine Springer fiber $X^{\lambda, \para}_{\gamma}$ of an unramified element $\gamma \in G^{\rs}(F)$ is an equidimensional, quasi-projective variety of dimension $$\langle \rho, \lambda \rangle + \frac{1}{2}d(\gamma).$$  
\end{thrm}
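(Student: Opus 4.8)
The plan is to follow the strategy of \cite{Chi22} (and, on the combinatorial side, \cite{He23}) for affine Springer and affine Lusztig varieties, but carried out at Iwahori level, using the reductions already set up. After conjugating we may assume $\gamma \in t^\mu T(\mathcal{O}) \cap G^{\rs}(F)$ with $\mu = \nu_\gamma \in X_*(T)^+$, and $\mu \leq \lambda$ by Corollary \ref{pmasfnonemp}. As recorded above,
$$\dim X^{\lambda,\para}_\gamma \;=\; \max_{w \in W}\, \dim X_{t^{w(\lambda)}}(t^\mu), \qquad \dim X_{t^{w(\lambda)}}(t^\mu) \;=\; \sup_{v \in W}\, \dim\bigl(X_{t^{w(\lambda)}}(t^\mu) \cap U^-(F)vI\bigr),$$
and via the left-translation isomorphism by $v^{-1}$ it suffices to understand each piece of the form $X_{t^{w(\lambda)}}(t^\mu) \cap U^-(F)I$, with $\mu$ and $U^-$ replaced by their $v$-conjugates as needed. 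I would also note at this point that, together with the boundedness of the $\Ad$-orbit condition, this exhibits $X^{\lambda,\para}_\gamma$ as a locally closed subscheme of finite type inside a single projective Schubert subvariety of $\Fl_G$, so that it is a quasi-projective variety.

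The central step is to run the diagram \eqref{diagram}: its left vertical arrow is an $(I \cap U^-(F))$-torsor onto $X_{t^{w(\lambda)}}(t^\mu) \cap U^-(F)I$, its right vertical arrow is a $\bigl(t^\mu(I\cap U^-(F))t^{-\mu}\bigr)$-torsor onto the generalized Mirkovi\'{c}-Vilonen cycle $I^{t^{w(\lambda)}} \cap S^{t^\mu}_{w_0}$ of \eqref{MVcycle}, and its top arrow is $f_\mu$. Applying Proposition \ref{admissiblesubset} with $N = U^-$ to the admissible subset $Z = It^{w(\lambda)}It^{-\mu} \cap U^-(F)$ (this is exactly where Lemmas \ref{lem1}, \ref{f0admiss} and \ref{inward} enter), the induced map $f_\mu^{-1}(Z)/U^-_n \to Z/U^-_n$ is, for $n$ sufficiently large, smooth and surjective with irreducible affine-space fibers of dimension $r_{U^-}(\gamma)$. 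Dividing further by the finite-dimensional unipotent groups $(I\cap U^-(F))/U^-_n$ on the source and $\bigl(t^\mu(I\cap U^-(F))t^{-\mu}\bigr)/U^-_n$ on the target gives
$$\dim\bigl(X_{t^{w(\lambda)}}(t^\mu) \cap U^-(F)I\bigr) \;=\; \dim\bigl(I^{t^{w(\lambda)}} \cap S^{t^\mu}_{w_0}\bigr) + r_{U^-}(\gamma) + \Bigl(\dim\tfrac{t^\mu(I\cap U^-(F))t^{-\mu}}{U^-_n} - \dim\tfrac{I\cap U^-(F)}{U^-_n}\Bigr).$$
A short root-by-root computation, using that conjugation by $t^\mu$ shifts the depth of the $\alpha$-root subgroup by $\langle\alpha,\mu\rangle$, evaluates the last parenthesis to $\langle 2\rho,\mu\rangle$ and identifies $r_{U^-}(\gamma) + \langle 2\rho,\mu\rangle = \tfrac12 d(\gamma) - \langle\rho,\mu\rangle$ by comparing $\val\det\bigl(\id - \Ad_\gamma \mid \Lie U^-(F)\bigr)$ with the definition of the discriminant valuation $d(\gamma)$; for unramified $\gamma$ the only contributions not of the form $\langle\alpha,\mu\rangle$ come from the $\Ad_\gamma$-eigenvalues on $\Lie U^{\pm}$ with trivial $t$-part, which enter $d(\gamma)$ with twice the multiplicity with which they enter $r_{U^-}(\gamma)$. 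Hence this piece has dimension $\dim\bigl(I^{t^{w(\lambda)}} \cap S^{t^\mu}_{w_0}\bigr) + \tfrac12 d(\gamma) - \langle\rho,\mu\rangle$.

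Now I would plug in Proposition \ref{iwahorimvcycle}, namely $\dim\bigl(I^{t^{w(\lambda)}} \cap S^{t^\mu}_{w_0}\bigr) = \langle\lambda+\mu,\rho\rangle - \ell(w_0^\lambda) + \ell(w^\lambda)$, so that the piece above has dimension $\langle\rho,\lambda\rangle + \tfrac12 d(\gamma) - \ell(w_0^\lambda) + \ell(w^\lambda)$. Taking the supremum over $v\in W$ — which, after the left-translation isomorphism and the $W$-equivariant (hence $v$-conjugated) form of Proposition \ref{iwahorimvcycle}, raises the correction $-\ell(w_0^\lambda) + \ell(w^\lambda)$ to its maximal value $0$, uniformly in $w$ — shows that $X_{t^{w(\lambda)}}(t^\mu)$ is equidimensional of dimension $\langle\rho,\lambda\rangle + \tfrac12 d(\gamma)$ for every $w\in W$ for which it is nonempty, the equidimensionality being inherited from that of the generalized Mirkovi\'{c}-Vilonen cycles through the chain of torsors and affine-space bundles in \eqref{diagram}. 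Finally, since the translation elements $t^\mu$ for $\mu\in W(\lambda)$ are pairwise incomparable in the Bruhat order, each cell $It^\mu I$ is closed in $\bigcup_{\mu'\in W(\lambda)}It^{\mu'}I$; hence $X^{\lambda,\para}_\gamma$ is the disjoint union of the schemes $X_{t^{w(\lambda)}}(t^\mu)$, all of the same dimension $\langle\rho,\lambda\rangle + \tfrac12 d(\gamma)$, and the theorem follows.

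The step I expect to be the main obstacle is the uniformity in $w$ underlying both the equidimensionality and the fact that every nonempty $X_{t^{w(\lambda)}}(t^\mu)$ attains the full dimension: I need to show that for each such $w$ there is a $v\in W$ for which $X_{t^{w(\lambda)}}(t^\mu)\cap U^-(F)vI$ has dimension $\langle\rho,\lambda\rangle + \tfrac12 d(\gamma)$, so that no lower-dimensional cell survives; this will rely on the $W$-equivariance built into Proposition \ref{iwahorimvcycle} and the combinatorics of the coset spaces $W^\lambda$. By contrast, once Propositions \ref{iwahorimvcycle} and \ref{admissiblesubset} are available, the dimension bookkeeping itself — including the cancellation of $r_{U^-}(\gamma)$ against the torsor-dimension defect and the comparison with $d(\gamma)$ — is routine.
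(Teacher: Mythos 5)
Your proposal follows the same route as the paper's proof: reduce to the pieces $X_{t^{w(\lambda)}}(t^\mu)\cap U^-(F)I$, run diagram \eqref{diagram}, invoke Propositions \ref{iwahorimvcycle} and \ref{admissiblesubset}, and then take suprema over $v,w\in W$. The master identity $\dim(\text{piece}) = \dim(I^{t^{w(\lambda)}}\cap S^{t^\mu}_{w_0}) + r_{U^-}(\gamma) + (d_2-d_1)$ you write is the one the paper uses. One caution: you record $d_2 - d_1 = \langle2\rho,\mu\rangle$ and then claim $r_{U^-}(\gamma)+\langle2\rho,\mu\rangle = \tfrac12 d(\gamma)-\langle\rho,\mu\rangle$; a direct root-by-root evaluation of $r_{U^-}(\gamma)=\sum_{\alpha<0}\val(\alpha(\gamma)-1)$ does not give this (the paper itself records the opposite sign for $d_2-d_1$), so the two signs cancel in a way you should re-derive carefully rather than take on faith — the combined total and the final answer $\langle\rho,\lambda\rangle+\tfrac12 d(\gamma)$ do come out right.

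The gap you flag at the end is genuine, and notably the paper's own proof does not close it either: the computation yields a $w$-dependent quantity $\langle\lambda,\rho\rangle - \ell(w_0^\lambda) + \ell(w^\lambda) + \tfrac12 d(\gamma)$, and the paper silently passes from $\max_w$ of this to the theorem's statement, without arguing that every nonempty piece attains the maximum. Your observation that the cells $It^\mu I$ for $\mu\in W(\lambda)$ are pairwise Bruhat-incomparable (equal length) and hence closed in their union is correct, but it reduces equidimensionality of $X^{\lambda,\para}_\gamma$ to equidimensionality of the disjoint pieces and does not by itself prove they all have the same dimension. Your suggested mechanism — letting the $v$-translation shift which $w$-correction term appears — is the right thing to try, but as you acknowledge, it still needs to be shown that for each $w$ with nonempty piece there is a $v$ realizing $\ell(w^\lambda)=\ell(w_0^\lambda)$, i.e. a correction of $0$; this is the part that remains to be supplied in both your write-up and the paper's.
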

\begin{proof}
Suppose that $\gamma \in t^\mu T(\mathcal{O}) \cap G^{\rs}(F)$ with $\mu = \nu_\gamma \in X_*(T)^+$ after conjugation. By Proposition \ref{admissiblesubset}, there exists a sufficiently large positive integer $n$ such that the map
$$(f_{\mu}^{-1}(It^{w(\lambda)}It^{-\mu} \cap U^-(F)))/U^-_n \rightarrow (It^{w(\lambda)}It^{-\mu} \cap U^-(F))/U^-_n$$
has fibers of dimension $r_{U^-}(t^\mu)$. Since $f_\mu^{-1}(Z)$ and $Z$ are right invariant under $U^-_n$, they are also right invariant under $U^-_n.$ Taking the quotient of \eqref{diagram} by $U^-_n$ gives the following:
\begin{equation}  \label{thediagram}
\begin{tikzpicture}
\node (A) at (0,0) {$f_{\mu}^{-1}(It^{w(\lambda)}It^{-\mu} \cap U^-(F))/U^-_n$};
\node (B) at (8, 0) {$(It^{w(\lambda)}It^{-\mu} \cap U^-(F))/U^-_n$};
\node (C) at (0, -2) {$X_{t^{w(\lambda)}}(t^\mu) \cap U^-(F)I$};
\node (D) at (8, -2) {$I^{t^{w(\lambda)}} \cap S^{t^\mu}_{w_0}$};
\draw[->, above] (A) to node {$f_\mu$} (B);
\draw[->, above] (A) to node {} (C);
\draw[->, right] (B) to node {} (D);
\end{tikzpicture}
\end{equation}
The left (resp. right) vertical map is smooth and surjective whose fibers are isomorphic to the  irreducible scheme $(I \cap U^-(F))/U^-_n$ (resp. $(t^\mu(I \cap U^-(F))t^{-\mu}/U^-_n)$). Since $U^-_n$ lies in $t^\mu(I \cap U^-(F))t^{-\mu}$ and $I \cap U^-,$ the schemes $X_{t^{w(\lambda)}}(t^\mu) \cap U^-(F)I$ and $I^{t^{w(\lambda)}} \cap S^{t^\mu}_{w_0}$ remain the same after taking the quotient by $U^-_n.$ By Propositions \ref{iwahorimvcycle} and \ref{admissiblesubset}, we have that 
\begin{align*}
\dim(X_{t^{w(\lambda)}}(t^\mu) \cap U^-(F)I) &= \dim(I^{t^{w(\lambda)}} \cap S^{t^\mu}_{w_0}) + \dim(t^\mu(I \cap U^-(F))t^{-\mu}) + r_{U^-}(t^\mu) \\
&- \dim(I \cap U^-(F)) \\
&= \langle \lambda + \mu, \rho \rangle - \ell(w^{\lambda}_0) + \ell(w^\lambda) + \dim(I \cap U^-(F))  - \langle \mu, 2\rho \rangle \\ &+ r_{U^-}(t^\mu)
- \dim(I \cap U^-(F)) \\
&= \langle \lambda + \mu, \rho \rangle - \ell(w^{\lambda}_0) + \ell(w^\lambda)  - \langle \mu, 2\rho \rangle + \frac{1}{2}d(t^\mu) + \langle \mu , \rho \rangle \\
&= \langle \lambda, \rho \rangle - \ell(w^{\lambda}_0) + \ell(w^\lambda) + \frac{1}{2}d(t^\mu) 
\end{align*}
For $\gamma \in t^\mu T(\mathcal{O}) \cap G^{\rs}(F),$ taking the supremum and maximum gives 
\begin{align*}
\dim X^{\lambda,\para}_\gamma &= \max_{w \in W} \dim X_{t^{w(\lambda)}}(\gamma) \\
&= \max_{w\in W} \sup_{v \in W} \dim(X_{t^{w(\lambda)}}(t^\mu) \cap U^-(F)vI) \\
&= \max_{w \in W} \left(\langle \lambda, \rho \rangle - \ell(w^{\lambda}_0) + \ell(w^\lambda) + \frac{1}{2}d(t^\mu) \right)\\
&=  \langle \lambda, \rho \rangle + \frac{1}{2}d(\gamma).
\end{align*}
Since the cycles $I^{t^{w(\lambda)}} \cap S^{t^\mu}_{w_0}$ are equidimensional and the maps in \eqref{thediagram} are smooth, surjective and their fibers are irreducible schemes, $X^{\lambda,\para}_\gamma$ is equidimensional as well.
\end{proof}
\subsection{Local constancy of parabolic multiplicative affine Springer fibers}
In this section, we will show that for two ``close enough" points $a$ and $a'$ in the Hitchin base, their associated parabolic multiplicative Hitchin fibers are isomorphic to each other. 
We first need a description of the fibers in terms of the regular centralizer. Fix $w \in \Cox(W, S)$ and let $\gamma = \varepsilon^w_+(a)$. By Proposition \ref{jscheme}, we have the isomorphism
$$J_a \cong I_{\gamma} := \gamma^*I.$$
\begin{lem}[\cite{Chi22}, Lemma 5.1.2] \label{lemma5.1.2}
For any $g \in G(F_v)$, 
$\Ad_{g^{-1}}(\gamma) \in V_G(\mathcal{O}_v)$ if and only if $\Ad_{g^{-1}}(\gamma I_\gamma(\mathcal{O}_v))\subset V_G(\mathcal{O}_v).$
\end{lem}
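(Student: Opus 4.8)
The statement is a multiplicative/monoid analogue of a standard fact in the Lie algebra setting (see \cite{Chi22}, Lemma 5.1.2, whose proof I will follow closely), so the plan is to adapt that argument to $V_G$. The ``only if'' direction is the substantive one: the ``if'' direction is trivial since $\gamma = \gamma\cdot e \in \gamma I_\gamma(\mathcal{O}_v)$, so $\Ad_{g^{-1}}(\gamma I_\gamma(\mathcal{O}_v)) \subset V_G(\mathcal{O}_v)$ immediately gives $\Ad_{g^{-1}}(\gamma) \in V_G(\mathcal{O}_v)$. For the forward direction, I would start from $\Ad_{g^{-1}}(\gamma) \in V_G(\mathcal{O}_v)$ and take an arbitrary element $h \in I_\gamma(\mathcal{O}_v)$; I want to show $\Ad_{g^{-1}}(\gamma h) = \Ad_{g^{-1}}(\gamma)\cdot \Ad_{g^{-1}}(h) \in V_G(\mathcal{O}_v)$. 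Since $V_G$ is a monoid and $\Ad_{g^{-1}}(\gamma) \in V_G(\mathcal{O}_v)$, it suffices to show that $\Ad_{g^{-1}}(h) \in G_+(\mathcal{O}_v)$ (so that the product of an $\mathcal{O}_v$-point of $V_G$ with an $\mathcal{O}_v$-point of the unit group $G_+$ stays in $V_G(\mathcal{O}_v)$).

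The crux is therefore to prove $\Ad_{g^{-1}}(I_\gamma(\mathcal{O}_v)) \subset G_+(\mathcal{O}_v)$, equivalently $\Ad_{g^{-1}}(I_\gamma) \subset G$ as subgroup schemes over $\mathcal{O}_v$ after conjugation (using that $G$ sits inside $G_+$ via the second coordinate, as recalled in the proof of Lemma~\ref{borelsym}). Here I would invoke the structure of the regular centralizer: since $a = \chi_+(\gamma) \in \mathfrak{C}_+(\mathcal{O}_v) \cap \mathfrak{C}_+^{\times}(F_v)^{\rs}$, the group scheme $I_\gamma \cong J_a$ is a smooth commutative group scheme over $\mathcal{O}_v$ which over $F_v$ is a torus — in fact $\gamma$ is regular semisimple over $F_v$, so $I_\gamma|_{F_v} = G_\gamma$ is a maximal torus of $G(F_v) \subset G_+(F_v)$. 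The element $\Ad_{g^{-1}}(\gamma)$ lies in $V_G(\mathcal{O}_v)$ and, being a unit over $F_v$, actually lies in $G_+(F_v) \cap V_G(\mathcal{O}_v)$; I would argue that its centralizer $I_{\Ad_{g^{-1}}(\gamma)} = \Ad_{g^{-1}}(I_\gamma)$ — a maximal torus of $G_+(F_v)$ centralizing an element of $V_G(\mathcal{O}_v)$ with image $a \in \mathfrak{C}_+(\mathcal{O}_v)$ — must be contained in the $\mathcal{O}_v$-points of $G_+$. This is where I would use that $\alpha_G(\Ad_{g^{-1}}(\gamma)) = \alpha_G(\gamma)$ is fixed and lies in $T_{\ad}(\mathcal{O}_v)$, reducing the claim to the derived/semisimple part, and then invoke the analogous integrality statement for $G$ together with the fact (from Proposition~\ref{jscheme} and its $\mathcal{O}_v$-base change) that $J_a$ already has a canonical homomorphism to the centralizer that is an isomorphism over the regular semisimple locus.

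I expect the main obstacle to be exactly this last point: controlling integrality of the conjugated centralizer when $\Ad_{g^{-1}}(\gamma)$ is merely in $V_G(\mathcal{O}_v)$ and not in $G_+(\mathcal{O}_v)$ — i.e. when $\gamma$ degenerates at $v$. One clean way around it is to work with the canonical map $\chi_+^*J \to I$ over $V_G$: pulling back along the $\mathcal{O}_v$-point $\Ad_{g^{-1}}(\gamma)\colon \Spec\mathcal{O}_v \to V_G$ (which factors through $V_G$ precisely because $\Ad_{g^{-1}}(\gamma)\in V_G(\mathcal{O}_v)$) produces a homomorphism $J_a \to I_{\Ad_{g^{-1}}(\gamma)}$ of group schemes \emph{over $\mathcal{O}_v$}, and since $J_a$ acts on the fibers of the Higgs field it acts through $G$-points integrally; composing with $\Ad_g$ identifies $J_a$-translates of $\gamma$ inside $V_G(\mathcal{O}_v)$, which is the statement wanted. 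Concretely: $\Ad_{g^{-1}}$ intertwines the $J_a$-action on $\gamma$ (translation via $\chi_+^*J \to I$) with the $J_a$-action on $\Ad_{g^{-1}}(\gamma)$, and the latter preserves $V_G(\mathcal{O}_v)$ because $\chi_+^*J \to I$ is defined over all of $V_G$ and $\Ad_{g^{-1}}(\gamma)$ is an $\mathcal{O}_v$-point. Thus $\Ad_{g^{-1}}(\gamma\, I_\gamma(\mathcal{O}_v)) = \Ad_{g^{-1}}(\gamma)\cdot (J_a(\mathcal{O}_v)\text{-orbit data}) \subset V_G(\mathcal{O}_v)$, completing the proof. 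I would present the argument in this second form, as it avoids case analysis on the degeneration of $\gamma$ at $v$ and mirrors \cite{Chi22} most transparently.
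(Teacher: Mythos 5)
The paper does not give its own proof of this lemma; it cites \cite{Chi22} and immediately uses the statement to deduce Corollary~\ref{iwahoriver}. So the comparison is with the argument the citation points to, which is exactly the ``second form'' of your proposal. That version is correct: since $\gamma = \varepsilon^w_+(a)$ lies in $V_G^{\reg}(\mathcal{O}_v)$, Proposition~\ref{jscheme} gives the isomorphism $J_a \cong I_\gamma$ over $\mathcal{O}_v$; since $\Ad_{g^{-1}}(\gamma)$ is an $\mathcal{O}_v$-point of $V_G$ with $\chi_+(\Ad_{g^{-1}}(\gamma)) = a$, pulling back the everywhere-defined homomorphism $\chi_+^*J \to I$ along it gives $J_a \to I_{\Ad_{g^{-1}}(\gamma)}$ over $\mathcal{O}_v$; the $G$-equivariance of $\chi_+^*J \to I$ identifies, over $F_v$, the image of $j \in J_a$ under this map with $\Ad_{g^{-1}}(h)$, where $h \in I_\gamma(F_v)$ corresponds to $j$. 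Hence $h \in I_\gamma(\mathcal{O}_v)$ forces $\Ad_{g^{-1}}(h) \in I_{\Ad_{g^{-1}}(\gamma)}(\mathcal{O}_v) \subset G(\mathcal{O}_v)$, and then $\Ad_{g^{-1}}(\gamma h) = \Ad_{g^{-1}}(\gamma)\cdot\Ad_{g^{-1}}(h) \in V_G(\mathcal{O}_v) \cdot G(\mathcal{O}_v) \subset V_G(\mathcal{O}_v)$.

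Two small remarks. First, you were right to abandon the initial sketch that tried to show $\Ad_{g^{-1}}(I_\gamma(\mathcal{O}_v)) \subset G_+(\mathcal{O}_v)$ ``abstractly'' by reasoning about integrality of the conjugated maximal torus: that statement is exactly what the integral model $\chi_+^*J \to I$ is designed to deliver, and there is no obvious elementary route around it, so the clean version you settled on is not a stylistic choice but the necessary one. Second, a minor slip: $I_\gamma$ is the centralizer for the $\Ad(G)$-action and over $F_v$ is a maximal torus of $G$, not of $G_+$ — you write both in different places; only the former is correct, though it does not affect the argument since $G(\mathcal{O}_v) \subset G_+(\mathcal{O}_v)$.
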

Let $I$ be the Iwahori monoid of $V_G(\mathcal{O}_v),$ i.e. $I$ is the pre-image $\eva^{-1}_0(V_B(k))$ of $V_B(k)$ under the evaluation map. 
\begin{cor} \label{iwahoriver}
For any $g \in G(F_v)$, $\Ad_{g^{-1}}(\gamma) \in I$ if and only if $\Ad_{g^{-1}}(\gamma (I_\gamma(\mathcal{O}_v) \cap I)) \subset I$.
\end{cor}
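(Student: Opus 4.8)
The plan is to follow the proof of Lemma \ref{lemma5.1.2} while keeping track of the extra condition cut out by the reduction map $\eva_0$, and to feed in the compatibility of the regular centralizer with the Borel submonoid furnished by Lemma \ref{borelsym}. Throughout, write $I=\eva_0^{-1}(V_B(k))$ for the Iwahori submonoid; it is a submonoid of $V_G(\mathcal{O}_v)$ containing the unit $1$. Recall that $\gamma=\varepsilon^w_+(a)$ is regular with $a=\chi_+(\gamma)\in\mathfrak{C}_+(\mathcal{O}_v)$, so that by Proposition \ref{jscheme} the homomorphism $\chi_+^*J\to I$ restricts to an isomorphism $J_a\xrightarrow{\ \sim\ }I_\gamma$ over $\Spec\mathcal{O}_v$; in particular $1\in I_\gamma(\mathcal{O}_v)\cap I$. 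The ``if'' direction is then immediate: applying the hypothesis $\Ad_{g^{-1}}(\gamma(I_\gamma(\mathcal{O}_v)\cap I))\subset I$ to $1\in I_\gamma(\mathcal{O}_v)\cap I$ and using $\gamma\cdot 1=\gamma$ yields $\Ad_{g^{-1}}(\gamma)\in I$.

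For the converse, assume $\Ad_{g^{-1}}(\gamma)\in I$, set $\delta=\Ad_{g^{-1}}(\gamma)\in V_G(\mathcal{O}_v)$, and fix $j\in I_\gamma(\mathcal{O}_v)\cap I$. The first step is to show $\Ad_{g^{-1}}(j)\in I_\delta(\mathcal{O}_v)\subseteq G(\mathcal{O}_v)$. Since $\chi_+$ is $\Ad$-invariant, $\chi_+(\delta)=\chi_+(\gamma)=a$, so pulling $\chi_+^*J\to I$ back along $\delta$ gives a homomorphism $J_a\to I_\delta$ of group schemes over $\Spec\mathcal{O}_v$. Over the generic point both $\gamma$ and $\delta$ are regular semisimple, hence $J_a|_{F_v}\xrightarrow{\sim}I_\gamma|_{F_v}$ and $J_a|_{F_v}\xrightarrow{\sim}I_\delta|_{F_v}$, and by the $G$-equivariance of $\chi_+^*J\to I$ these two identifications are intertwined by the conjugation isomorphism $\Ad_{g^{-1}}\colon I_\gamma|_{F_v}\xrightarrow{\sim}I_\delta|_{F_v}$. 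Therefore $\Ad_{g^{-1}}(j)\in G(F_v)$ coincides with the image of $j\in J_a(\mathcal{O}_v)\cong I_\gamma(\mathcal{O}_v)$ under $J_a(\mathcal{O}_v)\to I_\delta(\mathcal{O}_v)$, which lies in $I_\delta(\mathcal{O}_v)$.

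Since $\gamma j=j\gamma$ we have $\Ad_{g^{-1}}(\gamma j)=\delta\cdot\Ad_{g^{-1}}(j)$, a product in the monoid $V_G(\mathcal{O}_v)$ of $\delta\in I$ and $\Ad_{g^{-1}}(j)\in G(\mathcal{O}_v)$; in particular $\Ad_{g^{-1}}(\gamma j)\in V_G(\mathcal{O}_v)$ (alternatively this is Lemma \ref{lemma5.1.2}), and applying the monoid homomorphism $\eva_0$ gives $\eva_0(\Ad_{g^{-1}}(\gamma j))=\eva_0(\delta)\cdot\eva_0(\Ad_{g^{-1}}(j))$. We know $\eva_0(\delta)\in V_B(k)$, and I claim $\eva_0(\Ad_{g^{-1}}(j))\in B(k)$. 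Indeed, by the previous step $\eva_0(\Ad_{g^{-1}}(j))$ is the image of $\eva_0(j)\in J_a(k)$ under the fibre at $\eva_0(\delta)$ of $j_G\colon\chi_+^*J\to I$; since $\eva_0(\delta)\in V_B(k)$, Lemma \ref{borelsym} says this fibre map factors through the centralizer $I_{B,\eva_0(\delta)}(k)$ of the adjoint action of $B$ on $V_B$, and $I_{B,\eva_0(\delta)}(k)\subseteq B(k)$. As $B\subseteq B_+$ and $V_B$ is a submonoid of $V_G$, we conclude $\eva_0(\Ad_{g^{-1}}(\gamma j))\in V_B(k)\cdot B(k)\subseteq V_B(k)$, i.e.\ $\Ad_{g^{-1}}(\gamma j)\in\eva_0^{-1}(V_B(k))=I$; since $j$ was arbitrary, this proves $\Ad_{g^{-1}}(\gamma(I_\gamma(\mathcal{O}_v)\cap I))\subset I$.

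The main point to be careful about is the interplay with the group scheme $J$: that the conjugation isomorphism $\Ad_{g^{-1}}\colon I_\gamma|_{F_v}\to I_\delta|_{F_v}$ really is the one induced by the two identifications with $J_a|_{F_v}$, and --- crucially, since $\delta$ need not be regular at the closed point of $\Spec\mathcal{O}_v$ --- that the factorization of $j_G$ through $j_B$ still applies verbatim at the (possibly non-regular) point $\eva_0(\delta)\in V_B(k)$, forcing $\eva_0(\Ad_{g^{-1}}(j))$ into $B(k)$. Both follow from the fact that the homomorphisms in Proposition \ref{jscheme} and Lemma \ref{borelsym} are defined over all of $V_G$, respectively $V_B$, and not merely over the regular loci, together with functoriality under pullback along the $\mathcal{O}_v$-points $\gamma$ and $\delta$; I would make this precise by pulling these constructions back to $\Spec\mathcal{O}_v$ and comparing with the generic fibre, exactly in the manner of Lemma \ref{lemma5.1.2}.
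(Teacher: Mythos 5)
Your proof is correct, and it is exactly the argument the paper leaves implicit: the corollary is stated immediately after Lemma \ref{lemma5.1.2} with no proof, but as you rightly note, upgrading ``$\subset V_G(\mathcal{O}_v)$'' to ``$\subset I$'' is not formal and requires the Borel factorization $j_G|_{V_B}\colon J_B \to I_B \to I_G|_{V_B}$ of Lemma \ref{borelsym}. Your chain --- identify $\Ad_{g^{-1}}(j)$ with the image of $j$ under $J_a(\mathcal{O}_v)\to I_\delta(\mathcal{O}_v)$ via the $G$-equivariance of $\chi_+^*J\to I$ and comparison at the generic fibre, reduce mod $t$, and then use that the fibre of $j_G$ at the closed point $\eva_0(\delta)\in V_B(k)$ factors through $I_{B,\eva_0(\delta)}(k)\subseteq B(k)$ --- is sound, and the final monoid inclusion $V_B(k)\cdot B(k)\subseteq V_B(k)$ is justified by Lemma \ref{putcha}. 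One small remark worth recording: your argument never uses $j\in I$, only $j\in I_\gamma(\mathcal{O}_v)$, so it in fact establishes the stronger equivalence $\Ad_{g^{-1}}(\gamma)\in I \iff \Ad_{g^{-1}}(\gamma\, I_\gamma(\mathcal{O}_v))\subset I$; the intersection with $I$ in the statement is harmless but redundant for the forward implication.
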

Consider $a \in \mathfrak{C}_+(\mathcal{O}_v) \cap \mathfrak{C}^{\rs, \times}_+(F_v)$ with its associated cameral cover $\widetilde{X}_{a, v}$. By \cite[Lemma 3.5.2]{Ngo08}, there exists an integer $N$ such that for all $a' \in \mathfrak{C}_+(\mathcal{O}_v) \cap \mathfrak{C}^{\rs, \times}_+(F_v)$ satisfying $a \cong a'$ mod $t^N_v$, there is a $W$-equivariant isomorphism between the covers $\widetilde{X}_{a, v}$ and $\widetilde{X}_{a', v}$.
\begin{lem}[\cite{Chi22}, Lemma 5.1.3] \label{lemma5.1.3}
Let $a, a'\in \mathfrak{C}_+(\mathcal{O}_v) \cap \mathfrak{C}^{\rs, \times}_+(F_v)$ with $a \equiv a'$ mod $t^N_v$. Suppose that there exists a $W$-equivariant isomorphism between the cameral covers $\widetilde{X}_{a, v}$ and $\widetilde{X}_{a', v}.$ Then, there exists $g \in G(\mathcal{O}_v)$ such that $\Ad_{g^{-1}}(\gamma I_\gamma(\mathcal{O}_v)) = \gamma' I_{\gamma'}(\mathcal{O}_v)$, where $\gamma = \varepsilon^w_+(a)$ and $\gamma' = \varepsilon^w_+(a')$.
\end{lem}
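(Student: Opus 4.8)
The plan is to follow the local constancy argument of \cite[Lemma 3.5.2]{Ngo08}, in the monoid form of \cite[Lemma 5.1.3]{Chi22}: transport the cameral-cover data to an isomorphism of regular centralizers, then produce the conjugating element by a $t$-adic Newton approximation, and finally upgrade that to the asserted equality of centralizer cosets.

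First I would identify the centralizers. By Proposition \ref{jscheme} there are canonical isomorphisms $J_a\cong I_\gamma$ and $J_{a'}\cong I_{\gamma'}$ of smooth commutative group schemes over $\Spec\mathcal{O}_v$, and the group scheme $J$ over $\mathfrak{C}_+$ is reconstructed from the cameral cover together with its $W$-action (via \cite[Proposition 2.4.7]{Chi22} and \cite[Proposition 11]{Bou14}). Hence the given $W$-equivariant isomorphism $\widetilde{X}_{a,v}\xrightarrow{\sim}\widetilde{X}_{a',v}$ induces an isomorphism $J_a\xrightarrow{\sim}J_{a'}$ over $\mathcal{O}_v$, and so an isomorphism $I_\gamma\xrightarrow{\sim}I_{\gamma'}$; this records that the two cosets $\gamma I_\gamma(\mathcal{O}_v)$ and $\gamma'I_{\gamma'}(\mathcal{O}_v)$ have ``the same shape''.

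Next I would construct $g$. Since $\varepsilon^w_+$ is a morphism of $k$-schemes, $a\equiv a'$ mod $t^N_v$ forces $\gamma\equiv\gamma'$ mod $t^N_v$ in $V_G(\mathcal{O}_v)$, with $\gamma,\gamma'\in V^{\reg}_G$. I would study the orbit map $g\mapsto\Ad_{g^{-1}}(\gamma)$, whose linearization at the identity has cokernel over $F_v$ identified with $\Lie(I_\gamma)_{F_v}$, the $t$-adic defect being measured by the extended discriminant valuation $d_+(a)=\val(a^*\mathfrak{D}_+)$. Assuming $N$ is large relative to $d_+(a)$, iterating the linearized equation — discarding at each stage the component along $\Lie(I_{\gamma'})$, which is harmless since membership in a coset is all that is required — produces $g\in G(\mathcal{O}_v)$, congruent to $1$ modulo a positive power of $t_v$, with $\Ad_{g^{-1}}(\gamma)\in\gamma'I_{\gamma'}(\mathcal{O}_v)$; the sequence converges $t$-adically because $\mathcal{O}_v$ is complete.

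Finally I would pass to the cosets: writing $\delta:=\Ad_{g^{-1}}(\gamma)=\gamma'h_0$ with $h_0\in I_{\gamma'}(\mathcal{O}_v)$, the element $\delta$ lies over $F_v$ in the same maximal torus of $V_G$ as $\gamma'$, so $I_\delta=I_{\gamma'}$ and $\delta I_\delta(\mathcal{O}_v)=\gamma'I_{\gamma'}(\mathcal{O}_v)$, while $\Ad_{g^{-1}}$ conjugates the centralizer of $\gamma$ onto that of $\delta$, so $\Ad_{g^{-1}}(I_\gamma(\mathcal{O}_v))=I_{\gamma'}(\mathcal{O}_v)$ and therefore $\Ad_{g^{-1}}(\gamma I_\gamma(\mathcal{O}_v))=\delta\, I_{\gamma'}(\mathcal{O}_v)=\gamma'I_{\gamma'}(\mathcal{O}_v)$. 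The hard part will be the Newton step: pinning down how large $N$ must be in terms of $d_+(a)$ for the iteration to start and to converge, and — the genuinely new feature beyond the Lie-algebra case — controlling the approximation when $\gamma=\varepsilon^w_+(a)$ specialises into the boundary of the Vinberg monoid, where one must use smoothness of $\chi_+$ along $V^{\reg}_G$ rather than of the adjoint quotient of a reductive group; this is precisely the analysis of \cite[\S\S 2.4, 5.1]{Chi22}, which I would invoke.
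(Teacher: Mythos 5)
The paper offers no internal proof of this lemma---it is imported verbatim from \cite[Lemma 5.1.3]{Chi22}---so there is no in-text argument to compare against; I can only assess the plan on its own terms. Your three-step architecture (the $W$-equivariant cameral isomorphism identifies $J_a\cong J_{a'}$ hence $I_\gamma\cong I_{\gamma'}$ over $\mathcal{O}_v$; a Newton iteration produces $g\in G(\mathcal{O}_v)$ with $\Ad_{g^{-1}}(\gamma)\in\gamma' I_{\gamma'}(\mathcal{O}_v)$; passage from the element to the coset) is the right shape, and the coset bookkeeping is essentially sound. One small repair: you justify $I_\delta=I_{\gamma'}$ only over $F_v$ (common maximal torus), but the statement needs equality of $\mathcal{O}_v$-group schemes. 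This follows instead from commutativity: $I_{\gamma'}$ centralizes $\gamma'$ and $h_0$, hence $\delta=\gamma'h_0$, giving $I_{\gamma'}\subseteq I_\delta$; and $h_0\in I_{\gamma'}\subseteq I_\delta$ with $I_\delta$ commutative (as $\delta$ is regular) means $I_\delta$ centralizes $h_0$ and hence $\gamma'=\delta h_0^{-1}$, giving $I_\delta\subseteq I_{\gamma'}$.

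The genuine problem is the one you name yourself and then do not resolve. The entire Newton step---how large $N$ must be relative to $d_+(a)$, why the iteration can be started at all, and how to control the error when $\varepsilon^w_+(a)$ degenerates into the boundary of $V_G$---is deferred to ``the analysis of \cite[\S\S 2.4, 5.1]{Chi22}, which I would invoke.'' But the lemma under proof \emph{is} \cite[Lemma 5.1.3]{Chi22}; invoking Chi's analysis of exactly this step is circular. To close the gap you would need to actually run the lifting argument---most naturally via the smoothness of $\chi_+|_{V^{\reg}_G}$ (equivalently, the $BJ$-gerbe structure of $[V^{\reg}_G/G]\to\mathfrak{C}_+$), so that formal smoothness over the complete ring $\mathcal{O}_v$ supplies successive lifts modulo $t^{n+1}_v$---together with the initial valuation estimate guaranteeing that the error $\gamma-\gamma'$ is small enough relative to $d_+(a)$ for the iteration to converge inside $G(\mathcal{O}_v)$. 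As written, the proposal correctly identifies the hard step but leaves it open.
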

We will now prove the local constancy of parabolic multiplicative affine Springer fibers using these lemmas.
\begin{thrm} \label{localconstancy}
There exists an integer $N$ such that for all $a' \in \mathfrak{C}_+(\mathcal{O}_v) \cap \mathfrak{C}^{\rs, \times}_+(F_v)$ with $a \cong a'$ mod $ t^N_v$, the parabolic multiplicative affine Springer fiber $M^{\para}_v(a)$, equipped with the action of $P_v(a),$ is isomorphic to $M^{\para}_v(a')$, equipped with the action of $P_v(a')$. 
\end{thrm}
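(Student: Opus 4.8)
The plan is to adapt Chi's proof of local constancy for (non-parabolic) multiplicative affine Springer fibers \cite[\S 5.1]{Chi22} to the affine-flag setting, carrying the $B$-reduction along essentially for free. First I would take $N$ to be the integer furnished by Lemma \ref{lemma5.1.3}; this simultaneously guarantees, through \cite[Lemma 3.5.2]{Ngo08}, a $W$-equivariant isomorphism of cameral covers $\widetilde{X}_{a,v} \cong \widetilde{X}_{a',v}$ whenever $a \equiv a' \bmod t_v^N$. Fixing a Coxeter element $w$ and writing $\gamma = \varepsilon^w_+(a)$, $\gamma' = \varepsilon^w_+(a')$, Proposition \ref{jscheme} identifies $J_a \cong I_\gamma$ and $J_{a'} \cong I_{\gamma'}$; since the regular centralizer is recovered by Galois descent from the cameral cover, the cameral-cover isomorphism descends to an isomorphism $J_a \cong J_{a'}$ of smooth commutative $\mathcal{O}_v$-group schemes, hence to an isomorphism $\iota_P \colon P_v(a) = \Gr_{J_a} \xrightarrow{\ \sim\ } \Gr_{J_{a'}} = P_v(a')$ of local Picard functors.

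Next I would invoke Lemma \ref{lemma5.1.3} to obtain $g \in G(\mathcal{O}_v)$ with $\Ad_{g^{-1}}(\gamma\, I_\gamma(\mathcal{O}_v)) = \gamma'\, I_{\gamma'}(\mathcal{O}_v)$; in particular $\Ad_{g^{-1}}(\gamma) = \gamma' u$ for some $u \in I_{\gamma'}(\mathcal{O}_v)$, which defines an $\mathcal{O}_v$-point of $P_v(a')$. I then define a natural transformation $M^{\para}_v(a) \to M^{\para}_v(a')$ on $S$-points $(E, \varphi, E^B_v, \sigma)$ in two steps: first replace the punctured-disc trivialization $\sigma$ by $\Ad_{g^{-1}} \circ \sigma$, which is legitimate because $g$ extends over the whole disc and conjugation preserves the Grothendieck resolution $\widetilde{V}_G$ $G$-equivariantly, so the $B$-reduction $E^B_v$ at $v$ is unchanged; then apply the Picard action of $u$ to pass from the model $\Ad_{g^{-1}}(\gamma) = \gamma' u$ over the puncture back to $\gamma'$, using Lemma \ref{Paction}. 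The point that the output genuinely lies in $M^{\para}_v(a')$ — that is, that $\varphi$ still takes values in the Iwahori submonoid $\mathcal{I} = \eva_0^{-1}(V_B(k))$ and is compatible with the $B$-reduction — is exactly the content of Corollary \ref{iwahoriver}, the Iwahori analogue of Lemma \ref{lemma5.1.2}: membership depends only on the coset $\gamma\,(I_\gamma(\mathcal{O}_v) \cap \mathcal{I})$, which $\Ad_{g^{-1}}$ carries onto $\gamma'\,(I_{\gamma'}(\mathcal{O}_v) \cap \mathcal{I})$. Running the construction with $g^{-1}$ in place of $g$ gives a two-sided inverse, so the map is an isomorphism, and it restricts to the corresponding isomorphisms of the subfunctors $M^{\para}_v(\cdot)^\circ$ and $M^{\para}_v(\cdot)^{\reg}$.

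Finally I would verify that this isomorphism intertwines the $P_v(a)$-action with the $P_v(a')$-action through $\iota_P$. As in \cite[\S 5.1]{Chi22} and \cite[\S 3.5]{Ngo08}, this reduces to the compatibility of $\Ad_{g^{-1}}$ with the two centralizer actions together with the fact that the Picard correction by $u$ is itself $P_v(a')$-equivariant because $\mathcal{P}_X$ is commutative; the $B$-reduction is inert under all of these, so Lemmas \ref{borelsym}, \ref{Taction} and \ref{Paction} supply everything needed on the parabolic side. The step I expect to be the main obstacle is precisely this equivariance bookkeeping — in particular, arranging the isomorphism $J_a \cong J_{a'}$ to be canonical enough (independent of the auxiliary choices of $w$ and of the trivializations underlying the cameral covers) to be compatible with the $\Ad_{g^{-1}}$-identification $I_\gamma(\mathcal{O}_v) \cong I_{\gamma'}(\mathcal{O}_v)$, which is where one must lean hardest on Ngô's analysis of local Picard groups and the cameral interpretation of the regular centralizer.
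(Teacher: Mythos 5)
Your proposal takes essentially the same route as the paper's proof: choose $N$ via Lemma \ref{lemma5.1.3} (itself resting on Ng\^{o}'s local constancy of cameral covers), use the resulting $g \in G(\mathcal{O}_v)$ with $\Ad_{g^{-1}}(\gamma I_\gamma(\mathcal{O}_v)) = \gamma' I_{\gamma'}(\mathcal{O}_v)$, and then invoke Corollary \ref{iwahoriver} (the Iwahori analogue of Lemma \ref{lemma5.1.2}) to see that the condition defining the parabolic fiber is preserved. Your write-up is more explicit than the paper's about the identification $J_a \cong J_{a'}$ and the compatibility with the Picard actions $P_v(a) \cong P_v(a')$ — points the paper leaves implicit — but the key lemmas, the role of the Steinberg quasi-section, and the reduction to the coset $\gamma(I_\gamma(\mathcal{O}_v) \cap I)$ are exactly the ones the paper uses, so there is no substantive divergence.
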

\begin{proof} There exists an integer $N$ such that for all $a' \in \mathfrak{C}_+(\mathcal{O}_v) \cap \mathfrak{C}^{\rs, \times}_+(F_v)$ satisfying $a \cong a'$ mod $t^N_v$, the cover $\widetilde{X}_{a', v}$ is isomorphic to the cover $\widetilde{X}_{a, v}$ with the actions by $W$. By Lemma \ref{lemma5.1.3}, there exists $g \in G(\mathcal{O}_v)$ such that $\Ad_{g^{-1}}(\gamma I_\gamma(\mathcal{O}_v)) = \gamma' I_{\gamma'}(\mathcal{O}_v).$ 

Let $I$ be the Iwahori monoid of $V_G(\mathcal{O}),$ i.e. $I$ is the pre-image $\eva^{-1}_0(V_B(k))$ of $V_B(k)$ under the evaluation map. Then, $g$ satisfies $\Ad_{g^{-1}}(\gamma (I_\gamma(\mathcal{O}_v)\cap I)) = \gamma'(I_{\gamma'}(\mathcal{O}_v) \cap I) \subset I$ as well. By Corollary \ref{iwahoriver}, such a $g$ satisfies $\Ad_{g^{-1}}(\gamma) \in I$ and $\Ad_g(\gamma') \in I.$ This proves the isomorphism between  $M^{\para}_v(a)$ and $M^{\para}_v(a')$. 
\end{proof}

\subsection{Dimension formula}
In this section, we prove the dimension formula of parabolic multiplicative affine Springer fibers. Its proof is based on \cite[Theorem 5.2.1]{Chi22}. 
\begin{prop} \label{paradim}
Let $(a, x) \in \mathcal{A}^{\para, \heart}_X \times X$ be a point with a very $(G, \delta_a)$-ample boundary divisor $b \in \mathcal{B}_X$. Then,  
$$\dim M^{\para}_x(a_x) \cong \dim P_x(a_x).$$
\end{prop}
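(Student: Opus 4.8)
The plan is to use the Product formula to replace the desired local statement by a global one, and then to establish that global statement by pitting the symmetry $\mathcal{P}_X$ against the local model of singularities. Concretely, the Product formula (Theorem~\ref{paraprod}) already records
$$\dim M^{\para}_x(a_x)-\dim P_x(a_x)=\dim\mathcal{M}^{\para}_{a,x}-\dim\mathcal{P}_a ,$$
so it suffices to prove the global equality $\dim\mathcal{M}^{\para}_{a,x}=\dim\mathcal{P}_a$. Since $\dim\mathcal{M}_a=\dim\mathcal{P}_a$ by the non-parabolic theorem of \cite{Chi22}, this is equivalent to the assertion that the forgetful morphism $\pi_M\colon\mathcal{M}^{\para}_{a,x}\to\mathcal{M}_a$ has relative dimension $0$.

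The inequality $\dim\mathcal{M}^{\para}_{a,x}\ge\dim\mathcal{P}_a$ comes from the symmetry. By Lemma~\ref{Paction} the Picard stack $\mathcal{P}_a$ acts on $\mathcal{M}^{\para}_{a,x}$, and by Lemmas~\ref{borelsym} and~\ref{Taction} this action restricts on the regular locus to a $BJ$-gerbe-type situation along $[V^{\reg}_B/B]\to\overline{T}_+$; globalising, the regular locus $\mathcal{M}^{\para,\reg}_{a,x}$ is a torsor under $\mathcal{P}_a$ over the (finite) fibre of the cameral cover above $x$. It is non-empty, since a regular element of $V_G$ lies in some Borel submonoid (Proposition~\ref{borelsub}), so the Higgs field produced by any quasi-section of $\chi_+$ acquires a compatible $B$-reduction at $x$. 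Hence $\dim\mathcal{M}^{\para}_{a,x}\ge\dim\mathcal{M}^{\para,\reg}_{a,x}=\dim\mathcal{P}_a$.

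The reverse inequality is the heart of the argument and the step I expect to be the main obstacle. When $x\notin\mathfrak{D}_a$, the Higgs field of any $(E,\varphi)\in\mathcal{M}_a$ is regular semisimple at $x$, so the fibres of $\pi_M$ — the monoid Springer fibres $\pi_G^{-1}(\varphi(x))$ of the Grothendieck simultaneous resolution — are finite and there is nothing to prove. At a discriminant point $x\in\mathfrak{D}_a$ these fibres become positive-dimensional, and one must show that the moduli they contribute are exactly cancelled by the codimension, inside $\mathcal{M}_a$, of the locus on which $\varphi(x)$ has the relevant type. Here the ampleness of $b$ is used: by Theorem~\ref{localmodel} the local evaluation map $\eva^{\para}\colon\mathcal{M}^{\para,\heart}_X\to[\widetilde{Q}_X]_G$ is formally smooth and $\eva^{\para}_N$ is smooth, so $\dim\mathcal{M}^{\para}_{a,x}$, and the codimensions of its strata by Springer type, can be read off from the fibre $\prod_v[I_v\setminus(\bigcup_{w\in\Adm(\lambda_v)}I_{\ad,v}wI_{\ad,v})]$ of the truncated global admissible union. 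Because each admissible union $\bigcup_{w\in\Adm(\lambda_v)}I_{\ad,v}wI_{\ad,v}$ has the same dimension $\langle 2\rho,\lambda_v\rangle$ as the affine Schubert variety $\overline{G_{\ad}(\mathcal{O}_v)t^{\lambda_v}G_{\ad}(\mathcal{O}_v)}$, and because the forgetful map to the non-parabolic local model $[Q_X]_{G,N}$ is compatible with $\pi_M$ through the Cartesian diagram~\eqref{paraboliccartesian}, the smallness estimates of \cite[\S5]{Chi22} for the affine Schubert stratification transfer to the Iwahori-orbit stratification of the admissible union, and yield $\dim\mathcal{M}^{\para}_{a,x}\le\dim\mathcal{M}_a=\dim\mathcal{P}_a$. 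Combined with the lower bound this gives $\dim\mathcal{M}^{\para}_{a,x}=\dim\mathcal{P}_a$, and hence the Proposition. The delicate point throughout is verifying that the Iwahori stratification of the global admissible union is small enough — that the positive-dimensional $B$-reduction fibres occur only in codimension at least their own dimension — which is the parabolic incarnation of Chi's equidimensionality argument and is the only place the ampleness hypothesis on $b$ enters.
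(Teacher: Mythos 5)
Your reduction via the Product formula to the global equality $\dim\mathcal{M}^{\para}_{a,x}=\dim\mathcal{P}_a$ is exactly the first move in the paper, and your lower bound from the $\mathcal{P}_a$-action on the regular locus is plausible. But the upper bound, which you correctly single out as the heart of the matter, is where your proposal has a genuine gap. You assert that ``the smallness estimates of \cite[\S5]{Chi22} for the affine Schubert stratification transfer to the Iwahori-orbit stratification of the admissible union'' because the two local models have the same total dimension. This transfer is not established anywhere in the paper or in the cited source, and equality of total dimensions does not imply that the codimension-by-Springer-type estimates match: the admissible union is stratified by $\widetilde{W}$ rather than by $X_*(T)^+$, the Springer fibers $\pi_G^{-1}(\varphi(x))$ through the full flag variety are genuinely larger than their Grassmannian counterparts, and \cite{Chi22}~\S5 in fact contains no smallness estimate of the kind you invoke --- Chi's dimension argument there is the deformation-to-unramified argument the paper follows, not a stratification-by-codimension argument.

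What the paper actually does for the upper bound is entirely different, and your proposal never touches it. The anchor is Theorem~\ref{unramdim}: the dimension of $X^{\lambda,\para}_\gamma$ for \emph{unramified} $\gamma$ is computed directly via generalized Mirkovi\'{c}-Vilonen cycles in $\Fl_G$ (Proposition~\ref{iwahorimvcycle}) and the admissible-subset dimension theory of Proposition~\ref{admissiblesubset}, giving $\dim M^{\para}_x=\dim P_x$ there explicitly. One then interpolates a general $a$ to an unramified $a_0$ through a one-parameter family $a_s:=a(st_x+(1-s)t^e_x)$, lifts this family into $\mathcal{A}^{\ani}_{X,L}$ via a section $\sigma$ of a surjective local evaluation map, and uses properness of the parabolic Hitchin fibration over $\mathcal{A}^{\ani}$ together with upper semicontinuity of fiber dimension to bound $\dim\mathcal{M}^{\para}_{\sigma(a_s),x}\leq\dim\mathcal{M}^{\para}_{\sigma(a_0),x}=\dim\mathcal{P}_{\sigma(a_0)}$; smoothness of $\mathcal{P}$ and local constancy (Theorem~\ref{localconstancy}) then transfer this back to $a$ itself. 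The ampleness hypothesis enters to make the lift $\sigma$ exist and land in the anisotropic locus, not to furnish a smallness estimate. You will need to incorporate the unramified computation (Theorem~\ref{unramdim}), the deformation family $a_s$, and the properness-plus-semicontinuity step to complete the argument.
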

\begin{proof}
Let $\mathcal{O}_x$ be the completed local ring at $x$ with fraction field $F_x$ and uniformizer $t_x$ at $x$. With $a \in \mathfrak{C}_+(\mathcal{O}_x)$, form the Cartesian diagram 
\begin{center} 
\begin{tikzpicture}[node distance=2cm, auto]
\node (A) {$X_a$};
\node (B) [right of = A] {$\overline{T}_+$};
\node (C) [below of = A] {$\Spec(\mathcal{O}_x)$};
\node (D) [below of = B] {$\mathfrak{C}_+$};
\draw[->, below] (A) to node {\hspace{-1.5cm} $\mathlarger{\mathlarger{\mathlarger{\lrcorner}}}$}(B);
\draw[->, left] (A) to node {$\pi_a$} (C);
\draw[->] (B) to node {$\pi$} (D);
\draw[->] (C) to node {$a$} (D);
\end{tikzpicture}
\end{center}
where $X_a = \Spec R_a$ for a finite flat $\mathcal{O}_x$-algebra $R_a.$ Since $\charac(k)$ is coprime to $|W|$ by assumption, the product $F_a = R_a \otimes_{\mathcal{O}_x} F_x$ is a finite tamely ramified extension of $F_x$ of degree $e$. For each $s \in k,$ define $$a_s:= a(st_x + (1-s)t^e_x) \in \mathfrak{C}_+(\mathcal{O}_x) \cap \mathfrak{C}_+^{\times, \rs}(F_x).$$ It interpolates between $a_1 = a$ and the unramified conjugacy class $a_0 = a(t^e_x).$ Also, $M^{\para}_x(a_s) \cong M^{\para}_x(a)$ for $s \neq 0$ because we can obtain $a_s$ from $a$ by changing the uniformizer.

Let $N$ be a non-zero positive integer such that $M^{\para}_v(a)$ and $M^{\para}_v(a_0)$ only depends $a$ and $a_0$ respectively modulo $t^N_x$. Choose a $T$-torsor $L$ on $X$ that is trivialized on the formal neighbourhood of $x$ such that there exists a $T$-torsor $L'$ satisfying $(L')^{\otimes c} \cong L$ and for all $y \in X\setminus \{x\},$ the local evaluation map
\begin{equation} \label{localeval} \mathcal{A}_{X, L} = H^0(X, \mathfrak{C}^L_+) \rightarrow \mathfrak{C}_+(\mathcal{O}_x/t^N_x) \times \mathfrak{C}_+(\mathcal{O}_y/t^2_y)\end{equation}
is surjective. 

The family $\{a_s\}_s$ defines a curve $C$ in $\mathfrak{C}_+(\mathcal{O}_x/t^N_x \mathcal{O}_x)$. So, let $L_C$ be the inverse image of $C$ under the map \eqref{localeval}, which is closed in $\mathcal{A}_{X, L}.$ By \cite[Lemma 5.2.6]{Chi22}, there exists a constructible subset $Z_C$ of $L_C$ whose elements $a_+$ lie in $\mathcal{A}^{\ani}_{X, L}$ and $a_+(X \setminus \{x\})$ intersect the discriminant divisor transversally. In particular, $Z_C$ is fiberwise dense with respect to the projection $L_C \rightarrow C$ and it contains a fiberwise dense open subset $U_C$ of $L_C$. Thus, we can choose a section $\sigma$ of the map \eqref{localeval} such that $\sigma(C) \cap U_C \subseteq \mathcal{A}^{\ani}_{X, L}$ is non-empty and contains $\sigma(a_0).$

By the Product formula in Proposition \ref{paraprod}, we have 
$$ \dim M^{\para}_x(\sigma(a_0)_x) - \dim P_x(\sigma(a_0)_x) = \dim \mathcal{M}^{\para}_{\sigma(a_0), x} - \dim \mathcal{P}_{\sigma(a_0)}.$$
Since $\sigma(a_0)_x$ is unramified, we have that $\dim M^{\para}_x(\sigma(a_0)_x) = \dim P_x(\sigma(a_0)_x)$ by Theorem \ref{unramdim}. Thus, $\dim \mathcal{M}^{\para}_{\sigma(a_0), x} = \dim \mathcal{P}_{\sigma(a_0)}.$ Since $\sigma(C) \cap U_C \subseteq \mathcal{A}^{\ani}_{X, L}$, the restriction of the multiplicative Hitchin fibration to $\sigma(C) \cap U_C$ is proper. By upper semicontinuity of the fiber dimension, 
$$\dim \mathcal{M}^{\para}_{\sigma(a_s), x} \leq \dim \mathcal{M}^{\para}_{\sigma(a_0), x} = \dim \mathcal{P}_{\sigma(a_0)}$$
for all $\sigma(a_s) \in \sigma(C) \cap U_C$ with $s \neq 0.$ Since $\mathcal{P}$ is smooth over $\mathcal{A}_{X, L}$, we have $\dim \mathcal{P}_{\sigma(a_s)} = \mathcal{P}_{\sigma(a_0)}.$ Hence, $\dim \mathcal{M}^{\para}_{\sigma(a_s), x} = \dim \mathcal{P}_{\sigma(a_s)}.$
Applying the Product formula again gives
$$ \dim M^{\para}_x(\sigma(a_s)_x) - \dim P_x(\sigma(a_s)_x) = \dim \mathcal{M}^{\para}_{\sigma(a_s), x} - \dim \mathcal{P}_{\sigma(a_s)} = 0,$$
which shows that $\dim M^{\para}_x(\sigma(a_s)_x) = \dim P_x(\sigma(a_s)_x).$ Since $s \neq 0$ and $\sigma(a_s)_x = a_s$, the isomorphism $$M^{\para}_x(\sigma(a_s)_x) = M^{\para}_x(a_s) \cong M^{\para}_x(a_1) = M^{\para}_x(a)$$ implies that $\dim M^{\para}_x(a_x) = \dim P_x(a_x).$
\end{proof}

\subsection{Equidimensionality}
In this section, we will show that the parabolic multiplicative affine Springer fibers are equidimensional. First, we need to know the geometry of the local model of singularities, which is an admissible union of Iwahori cells. The latter arises as the special fiber of the global affine Schubert scheme constructed by \cite{Zhu14}. Here, Zhu considered the global affine Grassmannian $\Gr_{\mathcal{G}}$ of a Bruhat-Tits group scheme $\mathcal{G}$ over a curve $X$, equipped with isomorphisms
$$\Gr_{\mathcal{G}}\big|_{X^\times} \cong \Gr_G \times X^\times, \quad (\Gr_{\mathcal{G}}) \big|_0 \cong \Fl_G.$$
For $\lambda \in X_*(T)^+$, define $\overline{\Gr}^\lambda_{\mathcal{G}}$ as the reduced closure of $\Gr^{\leq \lambda}_G \times X^\times$ in $\Gr_{\mathcal{G}}$. 
\begin{thrm}[\cite{Zhu14}, Theorem 3] The reduced fiber $(\overline{\Gr}^\lambda_{\mathcal{G}})|_0$ is isomorphic to $\bigcup_{w\in \Adm(\lambda)} IwI.$
\end{thrm}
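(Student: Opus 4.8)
The plan is to prove the two inclusions $\bigcup_{w\in\Adm(\lambda)}IwI\subseteq(\overline{\Gr}^{\lambda}_{\mathcal{G}})|_0$ and $(\overline{\Gr}^{\lambda}_{\mathcal{G}})|_0\subseteq\bigcup_{w\in\Adm(\lambda)}IwI$ separately, after two structural reductions. Write $Z_0:=(\overline{\Gr}^{\lambda}_{\mathcal{G}})|_0$ (the reduced fibre) and $Y_\lambda:=\bigcup_{w\in\Adm(\lambda)}IwI=\bigcup_{\mu\in W(\lambda)}\overline{It^\mu I}$; the second equality holds because $\Adm(\lambda)$ is downward-closed in the Bruhat order with maximal elements $\{t^\mu:\mu\in W(\lambda)\}$, and each such $t^\mu$ has length $\ell(t^\mu)=\langle 2\rho,\lambda\rangle$ since $\lambda$ is dominant, so $Y_\lambda$ is a closed, reduced, equidimensional subscheme of $\Fl_G$ of dimension $\langle 2\rho,\lambda\rangle$. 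First I would observe that $\overline{\Gr}^{\lambda}_{\mathcal{G}}$, being the reduced closure of the integral scheme $\Gr^{\leq\lambda}_G\times X^{\times}$, is integral and dominates the smooth curve $X$, hence is flat over $X$; therefore $Z_0$ is equidimensional of dimension $\langle 2\rho,\lambda\rangle=\dim\Gr^{\leq\lambda}_G$. Second, the positive loop group scheme $L^+\mathcal{G}$ acts on $\Gr_{\mathcal{G}}$ preserving $\overline{\Gr}^{\lambda}_{\mathcal{G}}$ (its generic fibre $\Gr^{\leq\lambda}_G$ is $G(\mathcal{O})$-stable, and invariance propagates to the closure), and $L^+\mathcal{G}$ specializes over $0$ to the Iwahori group $I$; hence $Z_0$ is $I$-stable, so $Z_0=\bigcup_{w\in S}IwI$ for a finite, Bruhat-downward-closed subset $S\subseteq\widetilde{W}$, and equidimensionality forces every maximal element of $S$ to have length $\langle 2\rho,\lambda\rangle$. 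It then remains to identify $S$ with $\Adm(\lambda)$.

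For the inclusion $Y_\lambda\subseteq Z_0$ I would use the $T$-fixed loci. The $T$-fixed locus of $\Gr_{\mathcal{G}}$ over $X$ is a disjoint union of sections $\{s_\nu\}_{\nu\in X_*(T)}$, where $s_\nu$ is constantly $t^\nu$ over $X^{\times}$ and $s_\nu(0)=t^\nu\in\Fl_G$. Since $\overline{\Gr}^{\lambda}_{\mathcal{G}}$ is $T$-stable and, for each $\mu\in W(\lambda)$, contains the $T$-fixed point $t^\mu\in\Gr^{\mu}_G\subseteq\Gr^{\leq\lambda}_G$ over every point of $X^{\times}$, it contains the whole section $s_\mu(X)$, so $t^\mu\in Z_0$. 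As $Z_0$ is closed and $I$-stable, $\overline{It^\mu I}\subseteq Z_0$; taking the union over $\mu\in W(\lambda)$ gives $Y_\lambda\subseteq Z_0$, i.e. $\Adm(\lambda)\subseteq S$.

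For the reverse inclusion $Z_0\subseteq Y_\lambda$ — the main obstacle — I would first exploit the morphism $p\colon\Gr_{\mathcal{G}}\to\Gr_G\times X$ that is the identity over $X^{\times}$ and the projection $\pi\colon\Fl_G\to\Gr_G$ over $0$; it carries $\overline{\Gr}^{\lambda}_{\mathcal{G}}$ into the closure of $\Gr^{\leq\lambda}_G\times X^{\times}$ in $\Gr_G\times X$, which is $\Gr^{\leq\lambda}_G\times X$, so $\pi(Z_0)\subseteq\overline{G(\mathcal{O})t^\lambda G(\mathcal{O})}=\bigcup_{w\in\Adm(\lambda)}G(\mathcal{O})wG(\mathcal{O})$ by \eqref{affineadmiss}. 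This already forces each $w\in S$ to have double coset $WwW\leq Wt^\lambda W$, but it does not by itself exclude extra maximal Iwahori orbits in $Z_0$. To finish I would refine this with the semi-infinite (Iwasawa/Mirkovi\'c--Vilonen) stratifications: the $U((t))$-orbit decompositions of $\Gr_G$ and of $\Fl_G$ fit into a stratification of the degeneration $\Gr_{\mathcal{G}}$ whose strata specialize compatibly, and $\Gr^{\leq\lambda}_G$ meets the semi-infinite orbit $S_\nu$ only for $\nu\in\operatorname{conv}(W(\lambda))\cap(\lambda+\Phi^\vee_{\mathbb{Z}})$; transporting this to the special fibre and combining it with the length constraint of the first reduction shows that any maximal $w\in S$ has translation part lying in $W(\lambda)$, hence equals some $t^\mu$ with $\mu\in W(\lambda)$, so $S\subseteq\Adm(\lambda)$. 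I expect this last bookkeeping to be the hard part: it is precisely the combinatorics of the admissible set in the sense of \cite{Rap05} and \cite{GHKR10}, and an equivalent but more expensive route is to invoke the identification of the support of the nearby cycles of $\mathrm{IC}_{\Gr^{\leq\lambda}_G}$ with $Y_\lambda$ (Gaitsgory, Haines--Ng\^{o}), which in addition pins down the reduced-scheme structure asserted in \cite{Zhu14}. Combining the two inclusions yields $Z_0=Y_\lambda$, which is the statement.
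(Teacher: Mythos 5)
The paper gives no proof of this statement: it is quoted verbatim from \cite{Zhu14} (Theorem~3), so what you are really proposing is a new proof of Zhu's result. Your two structural reductions and the inclusion $Y_\lambda\subseteq Z_0$ are sound: flatness of the integral scheme $\overline{\Gr}^\lambda_{\mathcal{G}}$ over the smooth curve gives equidimensionality of the special fibre, $L^+\mathcal{G}$-stability of the closure makes $Z_0$ a Bruhat-closed union of Iwahori orbits, and the $T$-fixed sections $s_\mu$ for $\mu\in W(\lambda)$ together with $I$-stability put $\overline{It^\mu I}$ inside $Z_0$. This is all correct.

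The reverse inclusion is where the proof genuinely fails. The morphism $p\colon\Gr_{\mathcal{G}}\to\Gr_G\times X$ (which does exist, since $\mathcal{G}$ is a dilatation of the constant group scheme) only yields $\pi(Z_0)\subseteq\Gr^{\leq\lambda}_G$, i.e. $S\subseteq\{w\in\widetilde{W}\colon WwW\leq Wt^\lambda W\}$, and the flatness bound only yields $\ell(w)\leq\langle 2\rho,\lambda\rangle$ for $w\in S$. These two constraints together do \emph{not} force $S\subseteq\Adm(\lambda)$: the set $\{w\colon WwW\leq Wt^\lambda W\}$ is strictly larger than $\Adm(\lambda)$ once $\lambda$ is not minuscule, and it is not true in general that an element of this larger set with maximal length must be a translation. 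Your intended fix via semi-infinite strata of $\Gr_{\mathcal{G}}$ ``specializing compatibly'' is the crux of Zhu's argument, but you only gesture at it --- you neither exhibit the stratification of the degeneration nor carry out the bookkeeping showing the translation part of a maximal $w\in S$ lies in $W(\lambda)$ rather than merely in $\operatorname{conv}(W(\lambda))\cap(\lambda+\Phi^\vee_{\mathbb{Z}})$; you acknowledge as much. The proposed fallback (support of nearby cycles of $\mathrm{IC}_{\Gr^{\leq\lambda}_G}$) is not a fix but a citation of a statement of comparable strength to the theorem itself, so as a self-contained proof the reverse inclusion remains open.
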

By \cite[\S 9.1]{PZ13} and \cite[\S 2.1]{HR19}, the scheme $\overline{\Gr}^\lambda_{\mathcal{G}}$ is normal, integral and Cohen-Macaulay. Hence, we have the following corollary.
\begin{cor}[\cite{Cass21}, Corollary 2.3.1] \label{cass}
The fiber $(\overline{\Gr}^\lambda_{\mathcal{G}})|_0 = \bigcup_{w\in \Adm(\lambda)} IwI$ is Cohen-Macaulay, connected and equidimensional of dimension $\dim \Gr^{\leq \lambda}_G.$
\end{cor}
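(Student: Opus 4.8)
The plan is to derive all three properties from the geometry of the global object $\overline{\Gr}^\lambda_{\mathcal{G}}$ over $X$ together with the identification of its special fiber recalled above. First I would note that $\overline{\Gr}^\lambda_{\mathcal{G}}$, being the reduced closure of the irreducible scheme $\Gr^{\leq\lambda}_G \times X^\times$, is integral, and that it dominates the smooth curve $X$; hence the structure morphism $f \colon \overline{\Gr}^\lambda_{\mathcal{G}} \to X$ is flat, because locally the coordinate ring is a domain, in particular torsion-free, over the localizations of $\mathcal{O}_X$, which are discrete valuation rings. Moreover $f$ is projective, hence proper. By \cite[Theorem 3]{Zhu14}, the fiber of $f$ over $0$ is $f^{-1}(0) = \bigcup_{w \in \Adm(\lambda)} IwI$, so it suffices to transfer the relevant properties of $\overline{\Gr}^\lambda_{\mathcal{G}}$ to this special fiber.

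For the dimension and equidimensionality, $f$ is a flat morphism of finite type with irreducible source, so for each point $z \in f^{-1}(0)$ one has $\dim_z f^{-1}(0) = \dim \overline{\Gr}^\lambda_{\mathcal{G}} - \dim_0 X = \dim \overline{\Gr}^\lambda_{\mathcal{G}} - 1$; since $\Gr^{\leq\lambda}_G \times X^\times$ is dense open in $\overline{\Gr}^\lambda_{\mathcal{G}}$, the latter has dimension $\dim \Gr^{\leq\lambda}_G + 1$, and therefore $f^{-1}(0)$ is equidimensional of dimension $\dim \Gr^{\leq\lambda}_G$. For the Cohen-Macaulay property I would use that $\overline{\Gr}^\lambda_{\mathcal{G}}$ is Cohen-Macaulay (by \cite[\S 9.1]{PZ13} and \cite[\S 2.1]{HR19}, as recalled above) and that, by flatness over a discrete valuation ring, a uniformizer $t$ of $\mathcal{O}_{X,0}$ pulls back to a nonzerodivisor in $\mathcal{O}_{\overline{\Gr}^\lambda_{\mathcal{G}}, z}$ for every $z \in f^{-1}(0)$; since the quotient of a Cohen-Macaulay local ring by a nonzerodivisor is again Cohen-Macaulay, $\mathcal{O}_{f^{-1}(0), z}$ is Cohen-Macaulay. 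This step uses crucially that the base curve $X$ is regular.

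For connectedness I would apply Stein factorization: $f$ is proper with integral source, so $f_*\mathcal{O}_{\overline{\Gr}^\lambda_{\mathcal{G}}}$ is a finite $\mathcal{O}_X$-algebra which is a domain with fraction field $k(X)$, the generic fiber $\Gr^{\leq\lambda}_G \otimes_k k(X)$ being geometrically integral over $k(X)$; as $X$ is normal this forces $f_*\mathcal{O}_{\overline{\Gr}^\lambda_{\mathcal{G}}} = \mathcal{O}_X$, whence every fiber of $f$, in particular $f^{-1}(0)$, is connected. Alternatively one can argue combinatorially: all $t^{x\lambda}$ for $x \in W$ lie in a single coset of the affine Weyl group in $\widetilde{W}$, since $x\lambda - \lambda \in \Phi^\vee_{\mathbb{Z}}$, so $\Adm(\lambda)$ lies in one connected component of $\Fl_G$ and, being downward closed for the Bruhat order, equals the union of the Schubert varieties $\overline{It^\mu I}$ for $\mu \in W(\lambda)$, all of which contain the unique minimal point of that component; their union is therefore connected. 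I expect the only genuinely delicate point to be the descent of the Cohen-Macaulay property to the special fiber, which hinges on flatness of $f$ together with regularity of $X$; the combinatorial inputs (Zhu's description of the fiber, downward-closedness of $\Adm(\lambda)$) are already in place from the earlier parts of the paper.
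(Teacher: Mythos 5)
Your proof is correct. The paper itself does not give a proof: it records that $\overline{\Gr}^\lambda_{\mathcal{G}}$ is normal, integral and Cohen--Macaulay (via PZ13 and HR19) and then cites Cass21 for the corollary. What you have written is exactly the argument that is being gestured at and that Cass21 carries out: (a) integrality of $\overline{\Gr}^\lambda_{\mathcal{G}}$ plus dominance over a regular one-dimensional base forces flatness of $f$ (torsion-freeness over a DVR); (b) flatness plus irreducibility of the total space gives the fiber-dimension formula, hence equidimensionality of $f^{-1}(0)$ of dimension $\dim\Gr^{\leq\lambda}_G$; (c) Cohen--Macaulayness of the special fiber descends from that of the total space because the uniformizer is a nonzerodivisor; and (d) connectedness follows from Stein factorization together with geometric integrality of the generic fiber, with your alternative combinatorial argument (all $t^{x\lambda}$ share a coset of $W_a$, $\Adm(\lambda)$ is downward closed, and each $\overline{It^{x\lambda}I}$ contains the length-zero element of that coset) serving as a self-contained check. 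The only point you could have stated more explicitly is that the dimension formula $\dim_z f^{-1}(0) = \dim\mathcal{O}_{Y,z} - \dim\mathcal{O}_{X,0}$ relies on the local dimension formula for flat morphisms of Noetherian schemes together with $Y$ being integral (hence biequidimensional) and proper over $X$, but this is standard and does not affect correctness.
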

This allows us to conclude that some parabolic multiplicative Hitchin fibers are Cohen-Macaulay and that the parabolic multiplicative Hitchin fibration is flat when restricted to certain very $(G, N)$-ample boundary divisors.
\begin{cor} \label{cohen}
If $b$ is very $(G, \delta_a)$-ample, the stack $\mathcal{M}^{\heart, \para}_{b, x}$ is Cohen-Macaulay.
\end{cor}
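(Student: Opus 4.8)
The plan is to read off Cohen--Macaulayness of $\mathcal{M}^{\heart, \para}_{b, x}$ from the local model of singularities together with Corollary \ref{cass}. First I would invoke Theorem \ref{localmodel}: if $b$ is very $(G, \delta_a)$-ample, the truncated local evaluation map $\eva^{\para}_N \colon \mathcal{M}^{\para, \heart}_X \to [\widetilde{Q}_X]_{G, N}$ is smooth at every point lying over $(b, x)$ --- here one uses that $\delta_a$ stays bounded on the relevant part of the fiber $\mathcal{A}_b$ (e.g.\ its anisotropic locus), so a single ampleness bound works uniformly. Since $\eva^{\para}_N$ is a morphism over the base $\mathcal{B}_X \times X$, base-changing to the point $(b, x)$ produces a smooth morphism of stacks $\mathcal{M}^{\heart, \para}_{b, x} \to \bigl( [\widetilde{Q}_X]_{G, N} \bigr)_{b, x}$. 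A smooth morphism is flat with geometrically regular, hence Cohen--Macaulay, fibers, so its source is Cohen--Macaulay whenever its target is; thus it suffices to show $\bigl( [\widetilde{Q}_X]_{G, N} \bigr)_{b, x}$ is Cohen--Macaulay.

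Next I would unwind this target via the explicit fiber description of the global parabolic Hecke stack. With boundary divisor $\lambda_b = \sum_{v \in X(k)} \lambda_v v$ of $b$ (only finitely many $\lambda_v$ nonzero), the fiber of $[\widetilde{Q}_X]_{G, N}$ over $b$ is, up to isomorphism,
\[
\prod_{v \in X(k)} \Bigl[ I_{\ad, v, N} \setminus \bigcup_{w \in \Adm(\lambda_v)} I_{\ad, v}\, w\, I_{\ad, v} \Bigr],
\]
a quotient of a finite product of admissible unions in $\Fl_{G_{\ad}}$ by the smooth finite-dimensional $N$-truncated Iwahori groups $I_{\ad, v, N}$. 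By Corollary \ref{cass} each admissible union $\bigcup_{w \in \Adm(\lambda_v)} I_{\ad, v}\, w\, I_{\ad, v}$ is Cohen--Macaulay; a finite product over the field $k$ of Cohen--Macaulay schemes is Cohen--Macaulay (a Cohen--Macaulay morphism is stable under base change and over a field flatness is automatic); and, the jet groups being smooth, the tautological atlas $\prod_v (\cdots) \to \prod_v [ I_{\ad, v, N} \setminus (\cdots) ]$ exhibits the quotient stack as Cohen--Macaulay. The twist of everything by the $Z_+$-torsor is harmless, since --- as already noted for $\widetilde{Q}^{\ad}_X$ --- the stack is \'{e}tale-locally on the base isomorphic to the untwisted one and Cohen--Macaulayness is \'{e}tale-local. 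Combined with the first paragraph this gives the corollary.

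The hard part will be organizational rather than geometric: pinning down precisely what $\mathcal{M}^{\heart, \para}_{b, x}$ denotes (a base change of $\mathcal{M}^{\para, \heart}_X$ over $\mathcal{B}_X \times X$, equivalently the union of the Hitchin fibers $\mathcal{M}^{\para}_{a, x}$ as $a$ ranges over $\mathcal{A}_b$) so that the smoothness of $\eva^{\para}_N$ transfers to the fiber, and checking that the pointwise ampleness hypothesis of Theorem \ref{localmodel} --- stated in terms of $\delta_a$ with $a = h_X(m)$ --- can be imposed uniformly over that fiber. The substantive inputs, the smoothness of the local evaluation map and the Cohen--Macaulayness of admissible unions, are already in hand.
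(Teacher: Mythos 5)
Your proposal is correct and takes essentially the same approach as the paper: both deduce Cohen--Macaulayness by combining the smoothness of the truncated local evaluation map $\eva^{\para}_N$ (Theorem \ref{localmodel}) with the Cohen--Macaulayness of admissible unions (Corollary \ref{cass}). The paper's proof is a two-sentence version of the same argument; your write-up simply fills in the intermediate bookkeeping (base change to $(b,x)$, explicit fiber of $[\widetilde{Q}_X]_{G,N}$, descent of Cohen--Macaulayness along smooth maps and group quotients, and the uniformity of the $\delta_a$-ampleness bound).
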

\begin{proof}
Over $b \in \mathcal{B}_X$, the local evaluation map to $[\widetilde{Q}_X]_{G, N}$ is smooth for some $N$ in Theorem \ref{localmodel}. Since the admissible union $\bigcup_{w\in \Adm(\lambda)} IwI$ is Cohen-Macaulay by Corollary \ref{cass}, the result follows. \end{proof}
\begin{cor} \label{flatflat}
Let $(a, x) \in \mathcal{A}^{\heart}_X \times X$ be a point with a very $(G, \delta_a)$-ample divisor $b \in \mathcal{B}_X.$ The restricted parabolic multiplicative Hitchin fibration $$h_{b, x}^{\para}: \mathcal{M}^{\ani, \para}_{b, x} \rightarrow \mathcal{A}^{\ani}_b \times \{x\}$$
is flat. 
\end{cor}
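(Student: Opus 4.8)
The plan is to deduce flatness from the classical ``miracle flatness'' criterion: a morphism of finite type between Noetherian schemes (or Deligne–Mumford stacks) whose source is Cohen–Macaulay, whose target is regular, and whose fibers all have the expected (constant) dimension is flat. So I would proceed in three steps. First, I would record that the target $\mathcal{A}^{\ani}_b \times \{x\}$ is smooth: $\mathcal{A}_X$ is an open substack of the mapping stack into $[\mathfrak{C}_+/T]$, and over a fixed very $(G,\delta_a)$-ample boundary $b$ the fiber $\mathcal{A}_b$ is an affine space (a direct sum of $H^0(X,\omega_i(L))$'s, as spelled out after the definition of the multiplicative Hitchin fibration), hence regular; the anisotropic locus is open in it, so $\mathcal{A}^{\ani}_b$ is smooth. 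Second, I would invoke Corollary \ref{cohen}, which already tells us that under the very $(G,\delta_a)$-ampleness hypothesis the total space $\mathcal{M}^{\heart,\para}_{b,x}$ (hence its open anisotropic substack $\mathcal{M}^{\ani,\para}_{b,x}$) is Cohen–Macaulay — this is exactly where the local model $[\widetilde{Q}_X]_{G,N}$ and the Cohen–Macaulayness of the admissible union $\bigcup_{w\in\Adm(\lambda)}IwI$ from Corollary \ref{cass} enter. Third — and this is the crux — I would show that all fibers of $h^{\para}_{b,x}$ over $\mathcal{A}^{\ani}_b\times\{x\}$ have the same dimension.

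For the equidimensionality-of-fibers step I would argue as follows. By the Product formula (Proposition \ref{paraprod}), for any $a\in\mathcal{A}^{\ani}_b$ we have
\[
\dim\mathcal{M}^{\para}_{a,x}-\dim\mathcal{P}_a=\dim M^{\para}_x(a_x)-\dim P_x(a_x),
\]
and by Proposition \ref{paradim}, under the very $(G,\delta_a)$-ampleness assumption the right-hand side is $0$; hence $\dim\mathcal{M}^{\para}_{a,x}=\dim\mathcal{P}_a$. Since $\mathcal{P}_X\to\mathcal{A}_X$ is smooth (it is the relative Picard stack $\Pic(J_X/X\times\mathcal{A}_X/\mathcal{A}_X)$), the dimension $\dim\mathcal{P}_a$ is locally constant on $\mathcal{A}_b$, and in fact constant on the connected anisotropic locus; therefore $\dim\mathcal{M}^{\para}_{a,x}$ is constant as $a$ ranges over $\mathcal{A}^{\ani}_b$. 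With the source Cohen–Macaulay, the target regular, the morphism of finite type (the restriction of the proper morphism $h^{\para}_X|_{\mathcal{A}^{\ani}_X}$, which is of finite type by the earlier geometric-properties discussion), and all fibers of the same dimension equal to $\dim\mathcal{M}^{\ani,\para}_{b,x}-\dim(\mathcal{A}^{\ani}_b\times\{x\})$, miracle flatness applies and $h^{\para}_{b,x}$ is flat.

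The main obstacle I anticipate is checking the dimension-count input cleanly in the stacky setting: one must make sure that ``constant fiber dimension'' is being verified for $h^{\para}_{b,x}$ with the correct normalization of dimensions of stacks (so that $\dim\mathcal{M}^{\ani,\para}_{b,x}=\dim\mathcal{P}_a+\dim\mathcal{A}^{\ani}_b$ holds globally, not just fiberwise), and that Cohen–Macaulayness of $\mathcal{M}^{\ani,\para}_{b,x}$ really does descend from the smoothness of the evaluation map to $[\widetilde{Q}_X]_{G,N}$ together with Cohen–Macaulayness of the admissible union — i.e. that the truncation level $N$ furnished by Theorem \ref{localmodel} is harmless. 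A secondary technical point is that ``miracle flatness'' for Deligne–Mumford stacks should be reduced to the scheme case by passing to a smooth atlas, which is legitimate since all the hypotheses (Cohen–Macaulay, regular, equidimensional fibers, finite type) are smooth-local. Once these bookkeeping issues are handled, the argument is short.
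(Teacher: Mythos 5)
Your proof is correct and follows essentially the same route as the paper's: combine the Product formula with Proposition \ref{paradim} to get constant fiber dimension, use smoothness of $\mathcal{P}$ over $\mathcal{A}^\heart_X$, invoke Cohen--Macaulayness of the total space from Corollary \ref{cohen}, and conclude via miracle flatness. The paper's proof is more terse (it does not explicitly record the regularity of the target $\mathcal{A}^{\ani}_b$ or discuss the reduction to a smooth atlas), so your write-up just fills in details the paper leaves implicit.
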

\begin{proof}
By the Product formula and Proposition \ref{paradim}, we have that $\dim \mathcal{M}^{\para}_{a, x} = \dim \mathcal{P}_a$ for each $(a, x) \in \mathcal{A}^{\ani}_b \times X.$ Since $\mathcal{P}$ is a smooth Deligne-Mumford stack over $\mathcal{A}^\heart_X$, the fiber dimension $h^{\ani}_{b, x}$ is constant. Since $\mathcal{M}^{\ani, \para}_{b, x}$ is Cohen-Macaulay by Corollary \ref{cohen}, $h^{\ani}_{b, x}$ is flat.
\end{proof}
\begin{prop} Let $\gamma \in G^{\rs}(F)$ and $\lambda \in X_*(T)^+$. The parabolic multiplicative affine Springer fiber $X^{\lambda, \para}_\gamma = M^{\para}_x(a)$ with $a = \chi_+(\gamma_\lambda)$ is equidimensional.
\end{prop}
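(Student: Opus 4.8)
The plan is to deform $\gamma$ to an unramified representative inside a global anisotropic Hitchin base, where the Cohen--Macaulayness and flatness established in this section apply, and then transport equidimensionality back through the Product formula. We may assume $X^{\lambda,\para}_\gamma$ is non-empty (otherwise there is nothing to prove, cf.\ Corollary \ref{pmasfnonemp}), so that $\kappa_G(\gamma)=p_G(\lambda)$ and $X^{\lambda,\para}_\gamma\cong M^{\para}_x(a)$ with $a=\chi_+(\gamma_\lambda)\in\mathfrak{C}_+(\mathcal{O}_x)\cap\mathfrak{C}^{\times,\rs}_+(F_x)$; it suffices to treat the closed version $M^{\para}_x(a)$, since the open one is an open subscheme of it and open subschemes of equidimensional schemes remain equidimensional.

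First I would globalize, following the proof of Proposition \ref{paradim}. Fix $N$ so large that $M^{\para}_x(\cdot)$ depends only on its argument modulo $t_x^N$ (Theorem \ref{localconstancy}), choose a very $(G,\delta_a)$-ample $Z_+$-torsor $L$ on $X$ trivialized near $x$ for which $\mathcal{A}_{X,L}\to\mathfrak{C}_+(\mathcal{O}_x/t_x^N)$ is surjective, and invoke \cite[Lemma 5.2.6]{Chi22} to produce $a_+\in\mathcal{A}^{\ani}_{X,L}$ with $(a_+)|_{X_x}\equiv a\bmod t_x^N$ and with $a_+(X\setminus\{x\})$ anisotropic and transverse to the discriminant divisor. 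Local constancy then gives $M^{\para}_x((a_+)_x)\cong M^{\para}_x(a)\cong X^{\lambda,\para}_\gamma$.

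Next I would apply the Product formula (Proposition \ref{paraprod}) to $(a_+,x)$. Writing $P'=\prod_{y\neq x}P_y((a_+)_y)$ and $M'=\prod_{y\neq x}M_y((a_+)_y)$ and using $M'\cong P'$, it presents $\mathcal{M}^{\para}_{a_+,x}$ as the quotient, by a $P_x((a_+)_x)$-torsor, of $(\mathcal{P}_{a_+}/P')\times M^{\para}_x((a_+)_x)$, where $\mathcal{P}_{a_+}/P'$ is smooth because $\mathcal{P}_{a_+}$ is. A smooth surjection of constant relative dimension both preserves and reflects equidimensionality, so $X^{\lambda,\para}_\gamma\cong M^{\para}_x((a_+)_x)$ is equidimensional if and only if the anisotropic Hitchin fiber $\mathcal{M}^{\para}_{a_+,x}$ is. Everything thus reduces to the equidimensionality of $\mathcal{M}^{\para}_{a_+,x}$.

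Finally, since $b\in\mathcal{B}_X$ is very $(G,\delta_a)$-ample and $a_+\in\mathcal{A}^{\ani}_b$, Corollary \ref{flatflat} makes $h^{\para}_{b,x}\colon\mathcal{M}^{\ani,\para}_{b,x}\to\mathcal{A}^{\ani}_b\times\{x\}$ flat, and Corollary \ref{cohen} makes the total stack Cohen--Macaulay, over the smooth irreducible base $\mathcal{A}^{\ani}_b$. Cohen--Macaulay descent along a flat morphism to a regular base then shows every fiber is Cohen--Macaulay, and, granted the total stack is itself equidimensional, the dimension formula for flat morphisms forces every fiber to be equidimensional of dimension $\dim\mathcal{M}^{\ani,\para}_{b,x}-\dim\mathcal{A}^{\ani}_b$; in particular $\mathcal{M}^{\para}_{a_+,x}$ is equidimensional, which finishes the argument. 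The hard part is precisely establishing that $\mathcal{M}^{\ani,\para}_{b,x}$ is equidimensional (equivalently, that each of its irreducible components dominates $\mathcal{A}^{\ani}_b$): one deduces this from the smoothness of the local evaluation map $\eva^{\para}_N$ (Theorem \ref{localmodel}) together with the fact, from Corollary \ref{cass}, that the fibers of the local model $[\widetilde{Q}_X]_{G,N}$ are finite products $\prod_v[I_v\backslash I^{\le\lambda_v}_{\ad}]$ of quotients of admissible unions, each connected and equidimensional of the expected dimension. This connectedness-and-equidimensionality of the admissible unions, imported through the smooth local model, is the one genuinely nontrivial input beyond the flatness/Cohen--Macaulay formalism.
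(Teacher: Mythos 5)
Your proposal follows the same route as the paper: globalize by choosing a very $(G,\delta_a)$-ample boundary divisor, use local constancy (Theorem~\ref{localconstancy}) to replace $a$ by a nearby anisotropic $a_+$, invoke the Product formula (Proposition~\ref{paraprod}) to pass between $M^{\para}_x(a_x)$ and the Hitchin fiber $\mathcal{M}^{\para}_{a_+,x}$, and conclude via flatness and Cohen--Macaulayness of the restricted parabolic Hitchin fibration. The paper actually uses a boundary divisor $\lambda_b = \lambda x + \lambda_0 x_0$ supported at two points and cites \cite[Lemma~5.3.4]{Chi22} (whereas you cite \cite[Lemma~5.2.6]{Chi22}, the one used in the dimension formula, Proposition~\ref{paradim}); this is a small mismatch but not an error in approach.

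One remark on precision: your parenthetical ``equivalently, that each of its irreducible components dominates $\mathcal{A}^{\ani}_b$'' is not an equivalence --- flatness over the irreducible base already forces every component to dominate, so this condition is automatic and strictly weaker than equidimensionality of the total stack. What actually has to be checked is that all irreducible components of $\mathcal{M}^{\ani,\para}_{b,x}$ share the same total dimension, which one gets by combining component-domination with the dimension formula of Proposition~\ref{paradim} (which pins the generic fiber dimension to $\dim\mathcal{P}_a$ over the transversal locus). You do essentially close this in the subsequent sentence via the local model $[\widetilde{Q}_X]_{G,N}$ and Corollary~\ref{cass}, so the overall argument is sound. In fact you are more explicit than the paper here: the paper cites Corollary~\ref{flatflat}, which literally asserts only flatness, as the source of equidimensionality, leaving the CM-descent and total-stack equidimensionality steps implicit. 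Your write-up surfaces them correctly.
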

\begin{proof}
Pick two distinct points $x, x_0 \in X$ and consider the boundary divisor 
$$\lambda_b = \lambda x + \lambda_0 x_0,$$
where $\lambda_0$ is chosen so that $\lambda_b$ is very $(G, \delta_a)$-ample and it satisfies the inequalities in \cite[\S 5.3.1]{Chi22}. By Theorem \ref{localconstancy}, there exists a positive integer $N$ such that $M^{\para}_x(a_x)$, equipped with the action of $P_x(a_x)$, only depends on $N$. By \cite[Lemma 5.3.4]{Chi22}, there exists a point $a_+ \in \mathcal{A}^{\ani}_b \subseteq \mathcal{A}^\heart_b$ such that $x_0 \notin \Supp(\mathfrak{D}_{a_+})$, $a_+(X \setminus x)$ is transversal to the discriminant divisor and $a_+ \equiv a$ mod $t^N_x.$ By the Product formula in Proposition \ref{paraprod}, there is a homeomorphism of stacks
$$[\mathcal{M}^{\para}_{a_+, x}/\mathcal{P}_{a_+}] \simeq [M^{\para}_x(a_{+, x})/P_x(a_{+, x})] \simeq [M^{\para}_x(a_x)/P_x(a_x)].$$
Since $\mathcal{M}^{\para}_{a_+, x}$ is equidimensional by Corollary \ref{flatflat}, $M^{\para}_x(a_x)$ is equidimensional. 
\end{proof}
\begin{cor} \label{parcor}
Let $\gamma \in G^{\rs}(F)$ and $\lambda \in X_*(T)^+$. Then, $\dim X^{\lambda, \para}_\gamma = \dim X^{\lambda}_\gamma.$ 
\end{cor}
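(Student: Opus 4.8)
The plan is to obtain the corollary by combining Proposition \ref{paradim} with its non-parabolic counterpart \cite[Theorem 5.2.1]{Chi22}: both assert that, over a sufficiently ample boundary divisor, a (parabolic or not) multiplicative affine Springer fiber has the same dimension as the corresponding local Picard $P_x(a_x)$. So the whole argument is the observation that $X^{\lambda,\para}_\gamma$ and $X^{\lambda}_\gamma$ have the same ``$P$-side'', after passing to a global model. If $\kappa_G(\gamma)\neq p_G(\lambda)$ or $\nu_\gamma\not\leq\lambda$, both fibers are empty by Lemma \ref{nonemp} and Corollary \ref{pmasfnonemp}, and there is nothing to prove; otherwise Proposition \ref{newelem} (and the non-emptiness criterion of Lemma \ref{nonemp}) furnishes $\gamma_\lambda\in G_+(F)$ with $a:=\chi_+(\gamma_\lambda)\in\mathfrak{C}_+(\mathcal{O}_x)$ and identifications $X^{\lambda,\para}_\gamma\cong M^{\para}_x(\gamma_\lambda)=M^{\para}_x(a)$, $X^{\lambda}_\gamma\cong M_x(\gamma_\lambda)=M_x(a)$.

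First I would reduce to the ample case, mirroring the proof of the preceding equidimensionality proposition. Pick $x_0\in X$ distinct from $x$ and form the boundary divisor $\lambda_b=\lambda x+\lambda_0 x_0$ with $\lambda_0$ chosen so that $b\in\mathcal{B}_X$ is very $(G,\delta_a)$-ample and satisfies the inequalities of \cite[\S 5.3.1]{Chi22}. By Theorem \ref{localconstancy} (resp.\ the non-parabolic local constancy of \cite[\S 5.1]{Chi22}) there is an integer $N$ for which $M^{\para}_x(a_x)$ with its $P_x(a_x)$-action (resp.\ $M_x(a_x)$ with its $P_x(a_x)$-action) depends only on $a\bmod t_x^N$. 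By \cite[Lemma 5.3.4]{Chi22} we may choose $a_+\in\mathcal{A}^{\ani}_b$ with $a_+\equiv a\bmod t_x^N$, with $x_0\notin\Supp(\mathfrak{D}_{a_+})$, and with $a_+(X\setminus\{x\})$ transversal to the discriminant divisor. Then $M^{\para}_x(a_{+,x})\cong M^{\para}_x(a_x)=X^{\lambda,\para}_\gamma$ and $M_x(a_{+,x})\cong M_x(a_x)=X^{\lambda}_\gamma$.

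Then I would apply the two dimension results at $(a_+,x)$. Proposition \ref{paradim} gives $\dim M^{\para}_x(a_{+,x})=\dim P_x(a_{+,x})$, and \cite[Theorem 5.2.1]{Chi22} gives $\dim M_x(a_{+,x})=\dim P_x(a_{+,x})$; chaining these with the identifications above yields
\[
\dim X^{\lambda,\para}_\gamma=\dim M^{\para}_x(a_{+,x})=\dim P_x(a_{+,x})=\dim M_x(a_{+,x})=\dim X^{\lambda}_\gamma ,
\]
which is the claim. By \cite[Theorem 1.2.1]{Chi22} this common value equals $\langle\rho,\lambda\rangle+\tfrac12\bigl(d(\gamma)-c(\gamma)\bigr)$, consistent with the formula for unramified $\gamma$ in Theorem \ref{unramdim}.

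The substantive content is entirely upstream, in Proposition \ref{paradim} (and behind it the parabolic Product formula, the local model $[\widetilde{Q}_X]_G$, and the Cohen--Macaulayness of admissible unions from Corollary \ref{cass}); the corollary itself is a bookkeeping assembly of that result with Chi's non-parabolic dimension theorem. The only points demanding care are the non-emptiness dichotomy, so that $a=\chi_+(\gamma_\lambda)$ is defined and the moduli identifications are legitimate, and the passage from the purely local datum $a$ to a section over a very $(G,\delta_a)$-ample divisor, both of which are supplied verbatim by Theorem \ref{localconstancy} and \cite[Lemma 5.3.4]{Chi22}. I do not expect any genuine obstacle beyond reusing this machinery carefully.
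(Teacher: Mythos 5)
Your proof is correct, but it takes a genuinely different route from the paper's. The paper's proof is short and purely local: it uses the surjection $\pi:X^{\lambda,\para}_\gamma\to X^{\lambda}_\gamma$ (Lemma \ref{nonemppar}) to get $\dim X^{\lambda,\para}_\gamma\geq\dim X^\lambda_\gamma$, observes that $\pi$ restricts to an isomorphism $X^{\lambda,\para,\reg}_\gamma\to X^{\lambda,\reg}_\gamma$ over the regular locus, and then invokes the equidimensionality of $X^{\lambda,\para}_\gamma$ (the proposition immediately preceding this corollary) to conclude $\dim X^{\lambda,\para}_\gamma=\dim X^{\lambda,\para,\reg}_\gamma=\dim X^{\lambda,\reg}_\gamma\leq\dim X^\lambda_\gamma$, following the pattern of \cite[\S 2.5.14]{Yun16}. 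You instead bypass equidimensionality entirely and re-run the local-constancy/ample-divisor reduction (the same one used inside the equidimensionality proposition and inside Proposition \ref{paradim}) to land on a common very ample $a_+$, then chain the equalities $\dim M^{\para}_x(a_{+,x})=\dim P_x(a_{+,x})=\dim M_x(a_{+,x})$ coming from Proposition \ref{paradim} and from Chi's non-parabolic theorem. Both arguments are valid. What the paper's route buys is economy: given the equidimensionality proposition it has just established, the corollary follows from the elementary fibration picture and requires no further appeal to Chi's machinery. What your route buys is independence from the regular-locus analysis of the Grothendieck simultaneous resolution (and, relatedly, from the equidimensionality result), at the cost of explicitly invoking the non-parabolic local constancy and dimension theorem of Chi alongside their parabolic counterparts, with the attendant bookkeeping of choosing $N$ and $a_+$ compatible with both. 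The one point worth flagging in your write-up is that you must take $N$ large enough to trigger \emph{both} local constancy statements simultaneously and verify that the single choice of $a_+$ from \cite[Lemma 5.3.4]{Chi22} meets the ampleness hypotheses of both Proposition \ref{paradim} and Chi's Theorem; this is routine but should be stated.
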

\begin{proof}
We follow \cite[\S 2.5.14]{Yun16}. Since the projection $X^{\lambda, \para}_\gamma \rightarrow X^{\lambda}_\gamma$ is surjective, we have that $\dim X^{\lambda, \para}_\gamma \geq \dim X^{\lambda}_\gamma.$ Let $X^{\lambda, \para, \reg}_\gamma$ be the pre-image of $X^{\lambda, \reg}_\gamma$ under this projection map. Then, $X^{\lambda, \para, \reg}_\gamma \rightarrow X^{\lambda, \reg}_\gamma$ is an isomorphism. Since $X^{\lambda, \para}_\gamma$ is equidimensional, 
$$\dim X^{\lambda, \para}_\gamma = \dim X^{\lambda, \para, \reg}_\gamma = \dim X^{\lambda, \reg}_\gamma \leq \dim X^{\lambda}_\gamma.$$
\end{proof}
\printbibliography
\end{document}